\newtheorem{thm}{Theorem}[section]
\newtheorem{theorem}[thm]{Theorem}
\newtheorem{lem}[thm]{Lemma}
\newtheorem{lemma}[thm]{Lemma}
\newtheorem{prop}[thm]{Proposition}
\newtheorem{proposition}[thm]{Proposition}
\newtheorem{cor}[thm]{Corollary}
\newtheorem{corollary}[thm]{Corollary}
\theoremstyle{definition}
\newtheorem*{claim*}{Claim}
\newtheorem{definition}[thm]{Definition}
\newtheorem{example}[thm]{Example}
\newtheorem{remark}[thm]{Remark}
\setlist[description]{leftmargin=0cm}
\newcommand{\eps}{\varepsilon}
\newcommand{\dee}{\ensuremath{\partial}}
\newcommand{\mc}[1]{\ensuremath{\mathcal{#1}}}
\newcommand{\N}{\ensuremath{\mathbb{N}}}
\newcommand{\Z}{\ensuremath{\mathbb{Z}}}
\newcommand{\R}{\ensuremath{\mathbb{R}}}
\newcommand{\C}{\ensuremath{\mathbb{C}}}
\newcommand{\X}{\ensuremath{\mathbb{X}}}
\renewcommand{\H}{\ensuremath{\mathbb{H}}}
\renewcommand{\P}{\ensuremath{\mathbb{P}}}
\DeclareMathOperator{\Hom}{Hom}
\DeclareMathOperator{\PSL}{PSL}
\DeclareMathOperator{\PGL}{PGL}
\DeclareMathOperator{\PO}{PO}
\DeclareMathOperator{\Fix}{Fix}
\DeclareMathOperator{\Cay}{Cay}
\DeclareMathOperator{\Stab}{Stab}
\DeclareMathOperator{\PU}{PU}
\DeclareMathOperator{\SU}{SU}
\newcommand{\identity}{\mathrm{id}}
\newcommand{\minus}{\setminus}
\newcommand{\st}{:}
\newcommand{\Opp}{\mathrm{Opp}}
\newcommand{\flags}{\ensuremath{G/Q}}
\newcommand{\cay}{\ensuremath{\mathrm{Cay}}}
\newcommand{\ghom}{\mathrm{Hom}_{\mathrm{geom}}}
\newcommand{\relghom}{\ghom}
\newcommand{\dgam}{\ensuremath{d_{\Gamma}}}
\newcommand{\dcon}{\ensuremath{d_{\hat{\Gamma}}}}
\newcommand{\dcusp}{\ensuremath{d_X}}
\newcommand{\lgam}[1]{\ensuremath{|#1|_{\Gamma}}}
\newcommand{\lcon}[1]{\ensuremath{|#1|_{\hat{\Gamma}}}}
\newcommand{\lcusp}[1]{\ensuremath{|#1|_{X}}}
\newcommand{\dgams}{\ensuremath{d_{\Gamma}^\sigma}}
\newcommand{\dcusps}{\ensuremath{d_X^\sigma}}
\newcommand{\lgams}[1]{\ensuremath{|#1|^\sigma_\Gamma}}
\newcommand{\lcusps}[1]{\ensuremath{|#1|_X^\sigma}}
\newcommand{\lcons}[1]{\ensuremath{|#1|_{\hat{\Gamma}}^\sigma}}
\newcommand{\bdist}{\ensuremath{d_{\partial}}}
\newcommand{\hcay}{\widehat{\operatorname{Cay}}}
\newcommand{\conedgraph}{\ensuremath{\hcay(\Gamma; \mc{P})}}
\newcommand{\Hull}{\mathrm{Hull}}
\newcommand{\chab}{\mathcal{C}}
\newcommand{\bgamp}{\ensuremath{\partial(\Gamma, \pargps)}}
\newcommand{\bigamp}{\ensuremath{\partial(\Gamma, \ipargps)}}
\newcommand{\pargps}{\ensuremath{\mathcal{P}}}
\newcommand{\ipargps}{\ensuremath{\mathcal{P}_\infty}}
\newcommand{\horb}[1]{\ensuremath{\mathcal{H}(#1)}}
\newcommand{\repquot}{\Gamma^\sigma}
\newcommand{\bquot}{\partial(\repquot, \mathcal{P}^\sigma)}
\newcommand{\biquot}{\partial(\repquot, \mathcal{P}_\infty^\sigma)}
\newcommand{\quotgps}{\mathcal{P}^\sigma}
\newcommand{\iquotgps}{\mathcal{P}_\infty^\sigma}
\newcommand{\quotpair}{(\repquot, \mathcal{P}^\sigma)}
\NewDocumentCommand\nbhd{o m m}{
  \IfNoValueTF{#1}
  {\ensuremath{\mathcal{N}_{#3}(#2)}}
  {\ensuremath{\mathcal{N}_{#3}(#2; #1)}}
}
\NewDocumentCommand\ball{o m m}{
  \IfNoValueTF{#1}
  {\ensuremath{\mathcal{B}_{#3}(#2)}}
  {\ensuremath{\mathcal{B}_{#3}(#2; #1)}}
}
\newsavebox{\@brx}
\newcommand{\llangle}[1][]{\savebox{\@brx}{\(\m@th{#1\langle}\)}%
  \mathopen{\copy\@brx\kern-0.5\wd\@brx\usebox{\@brx}}}
\newcommand{\rrangle}[1][]{\savebox{\@brx}{\(\m@th{#1\rangle}\)}%
  \mathclose{\copy\@brx\kern-0.5\wd\@brx\usebox{\@brx}}}
\title{Dehn filling in semisimple Lie groups}
\author{Theodore Weisman}
\address{Department of Mathematics, University of Michigan, Ann Arbor,
  MI 48109, USA
}
\email{tjwei@umich.edu}
\begin{document}

\maketitle

\begin{abstract}
  We generalize one part of Thurston's hyperbolic Dehn filling theorem
  to arbitrary-rank semisimple Lie groups by showing that certain
  deformations of extended geometrically finite subgroups of a
  semisimple Lie group are still extended geometrically finite. As a
  special case, our theorem gives a criterion which guarantees that a
  deformation of a relatively Anosov subgroup is (non-relatively)
  Anosov, and also ensures that limit sets vary continuously. Our
  result also applies to several higher-rank examples in convex
  projective geometry which are outside of the relatively Anosov
  setting.
\end{abstract}

\tableofcontents

\section{Introduction}

Thurston's celebrated hyperbolic Dehn surgery theorem is a
foundational result in the theory of hyperbolic manifolds. Given a
single noncompact finite-volume hyperbolic 3-manifold $M_\infty$, the
theorem provides a construction for a countable family $M_n$ of
pairwise distinct isometry classes of hyperbolic $3$-manifolds, each
of which is a topological Dehn filling of $M_\infty$; moreover the
hyperbolic structures on $M_n$ converge to the hyperbolic structure on
$M_\infty$ in a geometric sense.

Thurston's theorem is striking both because it provides an abundance
of closed hyperbolic 3-manifolds and because it closely connects the
topological study of hyperbolic manifolds to the analysis of character
varieties of their fundamental groups. It is natural to ask if similar
phenomena can occur for other types of geometric manifolds, \emph{not}
locally modeled on $\H^3$, and in fact examples of ``geometric Dehn
filling'' have since been observed in several different contexts. See
e.g. \cite{Schroeder,schwartz07,acosta2016,acosta2019,mr2018,clm2020,BDLM}.

In this article, our goal is to provide a general framework for
studying and constructing analogs and generalizations of hyperbolic
Dehn filling in the context of ``geometrically finite'' discrete
subgroups of a semisimple Lie group $G$ with arbitrary rank. This
framework gives a means to study geometric convergence of manifolds
and orbifolds locally modeled on homogeneous $G$-spaces; it describes
several instances of ``geometric Dehn filling'' previously studied by
other authors, as well as some new examples to be explored in
forthcoming work.

Our main result gives general conditions which guarantee that, if
$\Gamma$ is a relatively hyperbolic group, $G$ is a semisimple Lie
group, and $\rho:\Gamma \to G$ is a representation which is
``geometrically finite'' in a precise sense, then a sequence of
representations $\rho_n$ converging to $\rho$ also converges
\emph{strongly} to $\rho$, and gives a countable family of
representations which are also ``geometrically finite.'' To make all
of this concrete, we work in the context of (relativized) \emph{Anosov
  representations}.

\subsection{Anosov representations and extended geometrical
  finiteness}

Originally introduced in a seminal paper of Labourie
\cite{Labourie2006}, and subsequently studied by many other authors
(see e.g. \cite{gw2012anosov,GGKW2017,KLP2017,bps2019anosov}), Anosov
representations provide a natural arbitrary-rank generalization of
convex cocompact subgroups of rank-one Lie groups. There are many
equivalent definitions of an Anosov representation; the most relevant
one for this paper is in terms of a \emph{boundary embedding}, i.e. an
equivariant embedding from the Gromov boundary of a hyperbolic group
into a flag manifold associated to a semisimple Lie group.

Several notions of \emph{relative} Anosov representation have recently
been introduced to study generalizations of geometrical finiteness in
higher rank \cite{kl2018relativizing,Z2021,CZZ,zz1}. In this paper, we
mainly work with the most general definition available: that of an
\emph{extended geometrically finite} (or EGF) representation,
previously introduced by the author in \cite{Weisman2022}. There are
two main advantages to doing so. First, several of the
previously-studied examples of discrete groups undergoing ``Dehn
filling'' phenomena are simply not covered by alternative
definitions. Second, also unlike other definitions, EGF
representations are well-suited to an understanding of the transition
between two relatively hyperbolic subgroups $\Gamma, \Gamma'$ of some
semisimple Lie group in the situation where $\Gamma$ and $\Gamma'$
have qualitatively different peripheral behavior. This is precisely
what is needed to describe both Dehn filling in $\PSL(2, \C)$ and many
of its analogs in other Lie groups.

Although our results apply in the broad context of EGF
representations, our theorems are new even when we consider special
cases corresponding to more specific notions of relative Anosov
representation (see \Cref{thm:rel_anosov_fillings}). In fact, we
obtain useful results even in the case of geometrically finite groups
in rank one. See \Cref{thm:rank_one_dehn_filling} and
\Cref{thm:rank_one_relative_strong} below.

\subsection{The general Dehn filling theorem}

To get an idea of our main theorem, first note that the proof of
Thurston's hyperbolic Dehn filling theorem can essentially be broken
down into two parts:
\begin{enumerate}[label=\arabic*.]
\item\label{item:analyze_character_variety} Prove that if
  $\Gamma < \PSL(2, \C)$ is the holonomy group of a finite-volume
  hyperbolic 3-manifold $M$ with $n$ cusps, then the character variety
  $X(\Gamma, \PSL(2, \C))$ is locally an $n$-dimensional complex
  manifold in a neighborhood of the inclusion
  $\Gamma \hookrightarrow \PSL(2, \C)$. This manifold is identified
  with a neighborhood $U$ of $(\infty, \ldots, \infty)$ in
  $(\mathbb{C}\mathbb{P}^1)^n$, where $\mathbb{C}\mathbb{P}^1$ is
  identified with $\C \cup \{\infty\}$.
\item\label{item:show_geom_finite} Show that (possibly after shrinking
  $U$) every point $(z_1, \ldots, z_n) \in U$ such that each $z_j$
  lies in $\Z[i] \cup \{\infty\}$ corresponds to a representation of
  $\Gamma$ whose image is a lattice in $\PSL(2, \C)$.
\end{enumerate}

The Dehn filling theorem in this paper can be thought of as giving a
general tool for carrying out the analog of step
\ref{item:show_geom_finite} above, when $\Gamma$ is an arbitrary
relatively hyperbolic group included into an arbitrary semisimple Lie
group in a ``geometrically finite'' way (to be precise, when
$\Gamma \hookrightarrow G$ is an EGF representation). We note that,
although our proof does not follow anything resembling Thurston's
original approach, our result can still be used to recover this part
of the original hyperbolic Dehn filling theorem (see
\Cref{ex:classical_example} below).

\begin{remark}
  It is difficult to envision a general theorem giving an analog of
  step \ref{item:analyze_character_variety} above for an arbitrary EGF
  representation $\Gamma \to G$. Analysis of the character variety
  $X(\Gamma, G)$ can already be challenging for specific examples of a
  relatively hyperbolic group $\Gamma$ and semisimple Lie group $G$,
  even in a neighborhood of an explicit EGF representation
  $\Gamma \to G$.
\end{remark}
  
Our main theorem is stated using the language of EGF representations
and \emph{extended Dehn filling spaces}. We give all of the details in
\Cref{sec:egf_background}, but the rough idea of these spaces is the
following. If $\Gamma$ is a relatively hyperbolic group, relative to a
collection $\pargps$ of peripheral subgroups, we can equip the set of
representations $\Gamma \to G$ with the topology of \emph{relative
  geometric convergence}; this space is denoted
$\ghom(\Gamma, G; \pargps)$. A sequence of representations $\rho_n$
converges in $\ghom(\Gamma, G; \pargps)$ to some representation $\rho$
if and only if $\rho_n$ converges to $\rho$ in the compact-open
topology, \emph{and} if, for each $P \in \pargps$, the restrictions
$\rho_n|_P$ converge \emph{strongly} or \emph{geometrically} to the
restriction $\rho|_P$ (see \Cref{sec:chabauty}). In particular, if
$\rho$ is discrete and faithful, then for large $n$, the restrictions
$\rho_n|_P$ are discrete (but not necessarily faithful) for all
$P \in \pargps$.

Each element $\rho \in \Hom_{\mathrm{geom}}(\Gamma, G; \pargps)$ also
determines an action of each $P \in \pargps$ on the various flag
manifolds associated to $G$. An \emph{extended Dehn filling space} is
a certain subset of $\Hom_{\mathrm{geom}}(\Gamma, G; \pargps)$ where
the images of the orbit maps associated to these peripheral actions
deform in a ``uniformly continuous'' way.

\begin{remark}
  The precise definition of an extended Dehn filling space (given in
  \Cref{sec:edf}) is rather technical, but in special cases there can
  be a more concrete description. In fact, in some situations, it
  turns out that the whole space
  $\Hom_{\mathrm{geom}}(\Gamma, G; \pargps)$ already satisfies the
  definition (see e.g. \Cref{thm:rank_one_dehn_filling}).
\end{remark}

For the formal statement of the main theorem, we need a bit more
notation.
\begin{definition}
  Let $(\Gamma, \mc{P})$ be a relatively hyperbolic pair, and let
  $\sigma:\Gamma \to G$ be a homomorphism. We let $\repquot$ denote
  the quotient group $\Gamma / N^\sigma$, where
  $N^\sigma = \llangle \bigcup_{P \in \pargps}
  \ker(\sigma|_P)\rrangle$, and we let $\mc{P}^\sigma$ denote the
  collection of groups $\{P / \ker(\sigma|_P)\}_{P \in \mc{P}}$.
\end{definition}

Our main theorem is then as follows:
\begin{theorem}
  \label{thm:main_theorem}
  Let $(\Gamma, \mc{P})$ be a relatively hyperbolic pair, let $Q$ be a
  symmetric parabolic subgroup of a semisimple Lie group $G$, let
  $\rho:\Gamma \to G$ be a $Q$-EGF representation, and let
  $W \subseteq \ghom(\Gamma, G; \pargps)$ be an extended Dehn filling
  space for $\rho$.

  Then there is a neighborhood $O$ of $\rho$ in
  $\ghom(\Gamma, G; \pargps)$ such that, if $\sigma \in O \cap W$,
  then $(\repquot, \mc{P}^\sigma)$ is a relatively hyperbolic pair,
  and $\sigma$ induces a $Q$-EGF representation $\repquot \to G$.
\end{theorem}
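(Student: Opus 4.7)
The plan is to decompose the result into a group-theoretic Dehn filling step and a representation-theoretic deformation step. The group-theoretic step establishes that $(\repquot, \quotgps)$ is relatively hyperbolic whenever the filling kernels $\ker(\sigma|_P)$ are ``sufficiently deep'' inside each $P \in \pargps$, which will hold automatically when $\sigma$ is close enough to $\rho$ in $\ghom(\Gamma, G; \pargps)$. The representation-theoretic step uses the EGF structure of $\rho$ together with the uniform continuity of peripheral orbit maps (built into the definition of $W$) to construct a boundary extension $\hat\phi^\sigma \colon \bquot \to G/Q$ realizing $\sigma$ as a $Q$-EGF representation of the filled pair.

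First I would handle relative hyperbolicity. Because $\sigma \to \rho$ geometrically on each peripheral subgroup, for large $n$ the filling kernels $\ker(\sigma|_P)$ avoid any fixed finite set of nontrivial elements of $P$. Combined with the fact that $\rho|_P$ is EGF, this forces $\ker(\sigma|_P)$ to be contained in an arbitrarily long ball inside the cusped space of $(\Gamma, \pargps)$ once $O$ is chosen small enough. The relatively hyperbolic Dehn filling theorem of Osin and of Groves--Manning then yields that $(\repquot, \quotgps)$ is relatively hyperbolic, and moreover gives a description of its Bowditch boundary $\bquot$ as a quotient of $\bgamp$ in which the stabilizer of each parabolic point is replaced by its image in $\repquot$.

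Next I would build the boundary extension for $\sigma$. The EGF structure of $\rho$ provides a $\Gamma$-equivariant map $\phi \colon \bgamp \to G/Q$ satisfying the usual transversality/antipodality and convergence dynamics. Using the data encoded in the extended Dehn filling space $W$, the peripheral orbit maps associated with each $P \in \pargps$ vary uniformly continuously as $\sigma$ varies over $W \cap O$; this produces, for each $\sigma$, a modified map $\phi^\sigma \colon \bgamp \to G/Q$ that is equivariant for the factorization $\Gamma \to \repquot \xrightarrow{\sigma} G$, and which agrees with $\phi$ outside arbitrarily small neighborhoods of the parabolic fixed points. The key point is that on each peripheral orbit the image of $\phi^\sigma$ is controlled by the corresponding orbit on $\sigma(P)$-fixed data, so $\phi^\sigma$ descends through the quotient $\bgamp \to \bquot$ to a well-defined $\repquot$-equivariant map $\hat\phi^\sigma \colon \bquot \to G/Q$.

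The main obstacle is verifying the EGF axioms for $\hat\phi^\sigma$ on the quotient boundary: one must show transversality against an appropriate opposite flag, antipodality, and the convergence-group-like dynamics at the ``new'' parabolic points created by the filling. Transversality and antipodality will follow from openness of the corresponding conditions in $G/Q \times G/Q$ together with the uniform proximity of $\phi^\sigma$ to $\phi$ away from parabolic neighborhoods. The delicate part is the dynamical condition at filled peripheral points: an infinite sequence in $\repquot$ escaping every compact set must lift (possibly after passing to a subsequence and multiplying by peripheral elements) to a sequence in $\Gamma$ which is either conical or captured by one of the peripheral subgroups, and one must then use the uniform continuity provided by $W$ to push forward the contracting behavior of $\rho$ to $\sigma$. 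Shrinking $O$ (and relying on the uniform bounds inherent in $W$) will allow the attracting-repelling structure of $\phi$ to be transferred to $\hat\phi^\sigma$ on all of $\bquot$, completing the verification that $\sigma$ is $Q$-EGF on $(\repquot, \quotgps)$.
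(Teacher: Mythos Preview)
Your outline has the group-theoretic step right: Osin/Groves--Manning applies once the peripheral kernels avoid a fixed finite set, and relative strong convergence to $\rho$ gives exactly that. But the representation-theoretic step rests on two assertions that do not hold and misses the main technical device of the paper.

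First, an EGF boundary extension is a map $\phi:\Lambda \to \bigamp$ from a compact subset of $G/Q$ \emph{to} the Bowditch boundary, not the other way around; in general $\phi$ is not injective, so there is no inverse map $\bgamp \to G/Q$ to perturb. Second, and more seriously, there is no equivariant quotient map $\bgamp \to \bquot$ through which anything can descend. The paper says this explicitly: ``there is not a nice equivariant quotient map from a subset of $\bgamp$ to $\biquot$.'' Your construction of $\hat\phi^\sigma$ as a map that ``descends through the quotient $\bgamp \to \bquot$'' therefore has no foundation. What the paper does instead is lift geodesic rays in $X^\sigma$ to quasi-geodesic rays in $X$, obtaining for each $z \in \bquot$ one or many \emph{lifts} $\tilde z \in \bgamp$; the fiber $\phi_\sigma^{-1}(z)$ is then defined via $\mc{G}$-paths limiting to such lifts, and one must check this is independent of the choice of lift.

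The tool that makes this work---and which your proposal does not mention---is the relative quasi-geodesic automaton $\mc{G}$ with its system of open sets $\{U_v\}$ in $G/Q$. The automaton gives a finite combinatorial encoding of the extended convergence action, and the crux of the argument (Lemma~\ref{lem:perturbed_nesting}) is that the nesting condition $\sigma(\alpha_i)\cdot\overline{U_{v_{i+1}}}\subset U_{v_i}$ survives along any $\mc{G}$-path that tracks the lift of a quasi-geodesic in $X^\sigma$, even though it may fail along arbitrary $\mc{G}$-paths. This is what allows one to ignore the parts of $\Gamma$ killed by the filling. Without the automaton, the phrase ``push forward the contracting behavior of $\rho$ to $\sigma$'' has no mechanism behind it: the contracting behavior for $\rho$ is expressed precisely through the nested sets $U_v$, and showing that $\sigma$ inherits it requires the careful interplay between $\mc{G}$-paths in $\Gamma$ and geodesics in the filled cusped space $X^\sigma$.
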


\begin{remark}\
  \label{rem:main_thm}
  \begin{enumerate}[label=(\alph*)]
  \item Our proof of \Cref{thm:main_theorem} also gives us some
    control over the ``limit sets'' of the deformed EGF
    representations $\sigma$, as well as geometric convergence of
    $\sigma$ as it approaches $\rho$ in $\ghom(\Gamma, G;
    \pargps)$. For the precise statements, see
    \Cref{prop:main_prop,prop:relative_strong_to_strong}.
  \item It is not obvious from the definition that the inclusions
    $P \hookrightarrow \Gamma$ induce injective maps
    $P / \ker(\sigma|_P) \to \Gamma^\sigma$. This follows from the
    \emph{relatively hyperbolic Dehn filling theorem} of Osin
    \cite{Osin2007} and Groves--Manning \cite{GM2008} (see
    \Cref{sec:rel_hyp_dehn_filling}).
  \item\label{main_thm_faithful_deformation} \Cref{thm:main_theorem}
    can be viewed as a generalization of the main result in
    \cite{Weisman2022}, which also gives a criterion determining when
    a deformation of an EGF representation is still EGF (see also
    \cite{MMW24} for an analogous result in a different
    context). Crucially, the deformed actions considered in both
    \cite{Weisman2022} and \cite{MMW24} have kernel no larger than
    that of the original action, which means they do not come from
    representations of nontrivial Dehn fillings of the original
    relatively hyperbolic group.
  \end{enumerate}
\end{remark}

\subsubsection{Obtaining Anosov representations by filling}

By applying an Anosov relativization theorem (see \cite[Section
4]{Weisman2022}), we can use \Cref{thm:main_theorem} to describe
situations where a non-Anosov subgroup occurs as a limit of Anosov
subgroups. This generalizes the way that the original hyperbolic Dehn
filling theorem describes situations in which nonuniform lattices in
$\PSL(2, \C)$ occur as limits of uniform lattices. The precise
statement is the following.
\begin{corollary}
  \label{cor:fill_to_anosov}
  In the context of \Cref{thm:main_theorem}, if, for each
  $P \in \mc{P}$, the group $P / \ker(\sigma|_P)$ is hyperbolic and
  $\sigma$ induces a $Q$-Anosov representation of
  $P / \ker(\sigma|_P)$, then $\repquot$ is hyperbolic and $\sigma$
  induces a $Q$-Anosov representation of $\repquot$.
\end{corollary}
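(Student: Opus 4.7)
The plan is to combine \Cref{thm:main_theorem} with the Anosov relativization theorem from \cite[Section 4]{Weisman2022} referenced in the text above. Since the hypotheses of \Cref{thm:main_theorem} are assumed to hold, we immediately obtain that $(\repquot, \mc{P}^\sigma)$ is a relatively hyperbolic pair and that $\sigma$ descends to a $Q$-EGF representation $\repquot \to G$. The remaining task is to promote this relative structure to a bona fide non-relative $Q$-Anosov structure on the group $\repquot$.

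The first key step is to show that $\repquot$ is word hyperbolic. I would appeal to the well-known fact that a group which is hyperbolic relative to a collection of word hyperbolic subgroups is itself word hyperbolic (a result going back to Osin and others, and provable either via the combinatorial cusped space or via linear isoperimetric inequalities). Since by hypothesis every peripheral group $P/\ker(\sigma|_P) \in \mc{P}^\sigma$ is hyperbolic, applying this fact to the relatively hyperbolic pair $(\repquot, \mc{P}^\sigma)$ yields hyperbolicity of $\repquot$. The second key step is then to invoke the Anosov relativization theorem: a $Q$-EGF representation of a relatively hyperbolic pair $(\Gamma', \mc{P}')$ in which every peripheral restriction is $Q$-Anosov and in which $\Gamma'$ is itself word hyperbolic is necessarily $Q$-Anosov. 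Applied with $\Gamma' = \repquot$ and $\mc{P}' = \mc{P}^\sigma$, this yields the desired conclusion.

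The main technical point, and the part I would expect to absorb most of the work if one had to reproduce rather than cite the relativization theorem, is constructing the $\repquot$-equivariant boundary embedding $\partial \repquot \to G/Q$. One must glue the EGF boundary extension (which is defined on a topological model of the Bowditch-type boundary $\bquot$ produced by the EGF structure) with the peripheral Anosov boundary maps into a single continuous, transverse, dynamics-preserving map on the Gromov boundary of the now word hyperbolic group $\repquot$. The key compatibility to verify is that, once the peripherals become hyperbolic, the model of $\bquot$ collapses correctly onto $\partial \repquot$ --- so that what were parabolic points in the relative picture are absorbed into the orbits of the Anosov peripheral limit sets --- and that the contraction and transversality estimates from the two sources patch together to give the uniform contraction required by the Anosov definition. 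This is precisely the content of the relativization result in \cite[Section 4]{Weisman2022}, so the corollary should follow by direct citation once the EGF output of \Cref{thm:main_theorem} is in hand.
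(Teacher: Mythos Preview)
Your proposal is correct and follows essentially the same approach as the paper: apply \Cref{thm:main_theorem} to obtain the $Q$-EGF structure on $(\repquot,\mc{P}^\sigma)$, then invoke the relativization theorem (stated in the paper as Wang's theorem in \Cref{sec:relativization}, equivalently \cite[Section 4]{Weisman2022}) to upgrade to $Q$-Anosov. The only difference is that you establish hyperbolicity of $\repquot$ as a separate step before applying relativization, whereas in the paper's formulation the hyperbolicity of $\repquot$ is part of the \emph{conclusion} of the relativization theorem rather than a hypothesis; your extra step is therefore harmless but redundant.
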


\begin{example}[Dehn filling in $\H^3$]
  \label{ex:classical_example}
  Consider the case where $\Gamma$ is the fundamental group of a
  finite-volume hyperbolic 3-manifold $M$, and $\pargps$ is the
  collection of cusp subgroups.  Then $(\Gamma, \pargps)$ is
  relatively hyperbolic, and the holonomy representation
  $\rho:\Gamma \to \PSL(2, \C)$ is geometrically finite (in
  particular, EGF). Below we explain how to apply
  \Cref{thm:main_theorem} towards hyperbolic Dehn fillings of $M$.

  It turns out that in this situation, the entire space
  $\ghom(\Gamma, \PSL(2, \C); \pargps)$ is an extended Dehn filling
  space about $\rho$ (see \Cref{thm:rank_one_dehn_filling} below). For
  simplicity, assume that $M$ has one cusp $C$. Then, to apply
  \Cref{thm:main_theorem}, one must check that, if
  $\rho_{p,q}:\Gamma \to \PSL(2, \C)$ is a sequence of representations
  parameterized by Dehn filling slopes $(p,q)$ tending to infinity,
  then the restrictions $\rho_{p,q}|_{\pi_1C}$ converge
  \emph{strongly} to the restriction $\rho|_{\pi_1C}$. This
  essentially boils down to considering the behavior of the developing
  maps for the corresponding sequence of complex affine structures on
  a torus cross-section of $C$.

  Once this has been verified, one can use \Cref{cor:fill_to_anosov}
  to see that all but finitely many $\rho_{p,q}$ descend to Anosov
  (i.e. convex cocompact) representations
  $\pi_1M_{p,q} \to \PSL(2, \C)$, where $M_{p,q}$ is the $(p,q)$ Dehn
  filling of $M$. From here, to establish that $\rho_{p,q}(\Gamma)$ is
  actually a lattice (without circularly relying on the original Dehn
  filling theorem), one can for example employ \cite{GMS2019} to see
  that the limit set of $\rho_{p,q}(\Gamma)$ is eventually
  homeomorphic to a 2-sphere, and therefore equal to $\dee \H^3$. (An
  alternative approach might use an analog of
  \cite[Lem. 6.10]{MMW24}.)

  We emphasize that none of the steps above necessarily rely on
  anything like a triangulation of $M$, a well-behaved ``thick-thin''
  decomposition of $\H^3/\Gamma$, or a nice fundamental domain for the
  $\Gamma$-action on $\H^3$. This points towards the applicability of
  \Cref{thm:main_theorem} in settings where such tools might not be
  available (in particular, in higher rank).
\end{example}

\subsection{Relatively Anosov representations}

An important special case of $Q$-EGF representations are the
relatively $Q$-Anosov representations originally defined by by
Kapovich--Leeb \cite{kl2018relativizing} and Zhu \cite{Z2021} (see also
\cite{zz1}). Our main \Cref{thm:main_theorem} already applies to
relatively Anosov representations, but in this special case, we also
get better control over the convergence $\rho_n \to \rho$ and over the
convergence of the corresponding $Q$-limit sets.
\begin{theorem}
  \label{thm:rel_anosov_fillings}
  Let $(\Gamma, \pargps)$ be a relatively hyperbolic pair, and let
  $\rho:\Gamma \to G$ be a relatively $Q$-Anosov representation with
  $Q$-limit set $\Lambda$. Suppose that $\rho_n$ converges to $\rho$
  in an extended Dehn filling space
  $W \subseteq \ghom(\Gamma, G; \pargps)$. Then:
  \begin{enumerate}[label=(\arabic*)]
  \item $\rho_n$ converges strongly to $\rho$, and
  \item for sufficiently large $n$, $\rho_n$ induces a $Q$-EGF
    representation of $\Gamma^{\rho_n}$.
  \end{enumerate}
  If, in addition, $\rho_n(\Gamma)$ is $Q$-divergent (in particular,
  if it is relatively $Q$-Anosov) for every $n$, then the $Q$-limit
  sets $\Lambda_n$ of $\rho_n$ converge to $\Lambda$ with respect to
  Hausdorff distance on $\flags$.
\end{theorem}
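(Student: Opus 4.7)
My plan is to handle the three conclusions of \Cref{thm:rel_anosov_fillings} separately. Part (2) is essentially immediate: a relatively $Q$-Anosov representation is in particular $Q$-EGF, so \Cref{thm:main_theorem} applies verbatim and yields that $\rho_n$ induces a $Q$-EGF representation of $\Gamma^{\rho_n}$ for large $n$. For part (1), I would invoke the strengthened convergence statement promised in \Cref{rem:main_thm}(a) (namely \Cref{prop:relative_strong_to_strong}), specialized to the relatively Anosov case. The heuristic is that the uniform contraction/expansion along the thick part of a relatively Anosov representation rules out any Chabauty accumulation point of $\rho_n(\Gamma)$ outside $\rho(\Gamma)$, beyond what is already captured by the strong peripheral convergence built into the definition of convergence in $\ghom(\Gamma, G; \pargps)$.

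For part (3), let $\xi : \bgamp \to \flags$ be the $Q$-boundary map of $\rho$, so $\Lambda = \xi(\bgamp)$. By the $Q$-divergence hypothesis each $\Lambda_n$ is well-defined, and by part (2) there is an analogous equivariant boundary extension $\xi_n$ for the $Q$-EGF representation induced on $\Gamma^{\rho_n}$. I would establish the two Kuratowski inclusions for Hausdorff convergence separately. The lower inclusion $\Lambda \subseteq \liminf \Lambda_n$ should follow from density of conical limit points in $\Lambda$: each such point is realized as $\lim_k \rho(\gamma_k) F$ for some flag $F$ and some sequence $\gamma_k \in \Gamma$, and a diagonal argument using compact-open convergence $\rho_n \to \rho$ produces approximating points $\rho_n(\gamma_k) F \in \Lambda_n$ converging to it.

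The main obstacle is the upper inclusion $\limsup \Lambda_n \subseteq \Lambda$. Given $x_n \in \Lambda_n$ with $x_n \to x_\infty \in \flags$, I would split into cases according to whether the $x_n$ arise as conical or peripheral limit points of $\rho_n$. In the conical case, uniform EGF contraction estimates (inherited from part (2) and the extended Dehn filling space property) should force $x_\infty$ to be a conical limit point of $\rho$. The harder case is the peripheral one: here I expect to use the defining uniform-continuity property of the extended Dehn filling space $W$, which constrains how peripheral flag images deform along $W$, together with strong convergence on each $P \in \pargps$, to trap $x_\infty$ inside the peripheral part of $\Lambda$. Simultaneously keeping track of which $P \in \pargps$ produces the $x_n$ (possibly after passing to a subsequence and using the relatively hyperbolic Dehn filling theorem to match up peripherals in $\Gamma^{\rho_n}$ with those in $\Gamma$) and patching the two cases together is where I expect the bulk of the technical work to lie.
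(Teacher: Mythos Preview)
Your handling of parts (1) and (2) matches the paper exactly: \Cref{thm:main_theorem} (via \Cref{prop:main_prop}) gives (2), and \Cref{prop:relative_strong_to_strong} gives (1). You should note explicitly that the hypothesis of \Cref{prop:relative_strong_to_strong}---that each parabolic fiber $\phi^{-1}(q)$ has empty interior---is satisfied precisely because in the relatively Anosov case the boundary extension is a \emph{homeomorphism} (\Cref{prop:rel_anosov_characterize}), so each fiber is a single point.

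Your approach to part (3), however, is both more laborious than the paper's and contains a gap. For the lower inclusion you write that a diagonal argument ``produces approximating points $\rho_n(\gamma_k)F \in \Lambda_n$.'' But $\rho_n(\gamma_k)F$ lies in the $Q$-limit set $\Lambda_n$ only if $F$ itself already lies in $\Lambda_n$ (since $\Lambda_n$ is $\rho_n$-invariant, not all of $\flags$). There is no a priori reason a fixed flag $F$ used to realize a conical limit point of $\rho$ should lie in $\Lambda_n$, and you have not yet established any control on where $\Lambda_n$ sits. One can repair this, but the repair essentially reproves what \Cref{prop:main_prop} already gives you for free. Your upper-inclusion plan (case analysis on conical versus peripheral limit points of $\rho_n$) is plausible but vastly more work than needed.

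The paper's argument for part (3) avoids all of this by reading both Hausdorff inclusions directly off the ``moreover'' clause of \Cref{prop:main_prop}. That clause says two things: the boundary set $\Lambda_\sigma$ lies in $\nbhd{\Lambda}{\eps}$ (upper inclusion, done), and $\Lambda_\sigma$ meets $\nbhd{\phi^{-1}(Z)}{\eps}$ for any prescribed compact $Z \subseteq \bgamp$. Since $\phi$ is a homeomorphism, one covers $\bgamp$ by finitely many compacts $Z_i$ with $\diam \phi^{-1}(Z_i) < \eps/2$; then $\Lambda_n$ meets each $\nbhd{\phi^{-1}(Z_i)}{\eps/2}$, hence $\nbhd{\Lambda_n}{\eps} \supseteq \bigcup_i \phi^{-1}(Z_i) = \Lambda$ (lower inclusion, done). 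The identification of the boundary set $\Lambda_\sigma$ with the actual $Q$-limit set under the $Q$-divergence hypothesis is \Cref{rem:bdry_limitset}. So the entire proof of (3) is about three lines, and the real content was already packaged into the automaton machinery behind \Cref{prop:main_prop}.
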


Note that \Cref{cor:fill_to_anosov} applies in this context, meaning
that the theorem shows how to obtain (non-relatively) Anosov
representations as fillings of relatively Anosov representations.

\begin{remark}
  It is typically not difficult to check the extra hypothesis in the
  last part of the theorem, since a result of Wang \cite{wang} implies
  that it can often be reduced to a peripheral criterion (see
  \Cref{sec:relativization}). Also, as $Q$-EGF representations
  themselves have a (weaker) notion of ``limit set,'' a version of
  this part of the theorem still holds even without the added
  assumptions on the Dehn fillings $\rho_n$, but we defer the
  statement until \Cref{sec:relative_anosov}.
\end{remark}

\subsection{The rank one case}

When $(\Gamma, \pargps)$ is a relatively hyperbolic pair with every
$P \in \pargps$ virtually nilpotent, then a representation $\rho$ from
$\Gamma$ to some rank one Lie group $G$ is EGF precisely when $\rho$
has finite kernel and geometrically finite image (see
\cite{GW2024}). We can assume in this case that the peripheral
subgroups $\pargps$ are a maximal collection of pairwise non-conjugate
\emph{$\rho$-horospherical} subgroups (see
\Cref{defn:horospherical_subgroups}).

In this situation, the notion of an extended Dehn filling space in
$\relghom(\Gamma, G; \pargps)$ is conceptually much simpler.
\begin{theorem}
  \label{thm:rank_one_dehn_filling}
  Let $G$ be a rank-one semisimple Lie group, let $\rho:\Gamma \to G$
  be a geometrically finite representation, and let $\mc{P}$ be a
  maximal collection of pairwise-nonconjugate $\rho$-horospherical
  subgroups of $\Gamma$. Then $\relghom(\Gamma, G; \pargps)$ is an
  extended Dehn filling space about $\rho$.
\end{theorem}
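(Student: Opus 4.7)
The plan is to verify the defining conditions of an extended Dehn filling space directly for every $\sigma \in \relghom(\Gamma, G; \pargps)$ sufficiently close to $\rho$. Unpacking the definition, this amounts to an equicontinuity statement for the family of peripheral orbit maps $P \to G/Q$ as $\sigma$ varies in a neighborhood of $\rho$ and $P$ ranges over $\pargps$. Since $G$ is rank one, the flag manifold $G/Q$ is the visual boundary $\partial G$, and because each $P \in \pargps$ is $\rho$-horospherical, $\rho(P)$ is a discrete virtually nilpotent subgroup of $G$ fixing a single point $\xi_P \in \partial G$ and acting cocompactly on the horospheres based at $\xi_P$. The EDF condition in this setting should reduce to the following: as $\sigma$ varies, the $\sigma(P)$-orbit maps into $\partial G$ deform continuously, and their images contract to an analogous ``peripheral fixed locus'' for $\sigma(P)$ at a rate uniform in both $P$ and $\sigma$.

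The main technical inputs I would use are: (i) geometric finiteness of $\rho$, which gives the cocompact action of $\rho(P)$ on horospheres at $\xi_P$; (ii) the defining property of $\relghom(\Gamma, G; \pargps)$, namely that $\sigma|_P$ converges strongly, hence geometrically, to $\rho|_P$; and (iii) the Kazhdan--Margulis lemma in rank one, giving a uniform identity neighborhood $U \subset G$ in which every discrete subgroup is virtually nilpotent. The first step is to combine (ii) and (iii) to show that, after possibly shrinking the neighborhood of $\rho$ and applying a small conjugation, $\sigma(P)$ is generated by elements of $U$, lies near the stabilizer of a point $\xi_{P,\sigma} \in \partial G$ close to $\xi_P$, and (approximately) preserves a horoball close to one preserved by $\rho(P)$. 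The second step combines the compact-open part of strong convergence with this horoball-preservation property: on any finite generating set of $P$ the orbit maps converge uniformly, while on elements of arbitrarily large word length the orbits are trapped in a small neighborhood of $\xi_{P,\sigma}$ by the horoball-preservation property. Together these yield the required equicontinuity.

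The main obstacle I anticipate is the Dehn filling transition, where $\sigma|_P$ has nontrivial kernel and $\sigma(P)$ contains loxodromic elements whose attracting and repelling fixed points emerge near $\xi_P$. In this regime the orbit map $P \to \partial G$ is non-injective and its image may accumulate at multiple boundary points. The key observation is that strong (Chabauty) convergence $\sigma_n|_P \to \rho|_P$ forces the full limit set of $\sigma_n(P)$, and in particular any such attractor/repeller pairs, to collapse onto $\xi_P$ in the Hausdorff topology on $\partial G$. Making this quantitative enough to yield uniform continuity of the orbit maps on the quotients $P/\ker(\sigma|_P)$ is the technical heart of the proof. I would argue this by working equivariantly on a fixed horoball $\mathcal{H}$ based at $\xi_P$: geometric convergence $\sigma_n(P) \to \rho(P)$ implies the $\sigma_n(P)$-action on $\mathcal{H}$ is close to the $\rho(P)$-action in the equivariant Chabauty topology, so the displacements of a fixed basepoint by $\sigma_n(p)$ are comparable to those by $\rho(p)$. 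Translating these cusp-geometric displacement estimates back to $\partial G$ should yield the desired uniform control on the orbit maps, completing the verification that $\relghom(\Gamma, G; \pargps)$ is an extended Dehn filling space about $\rho$.
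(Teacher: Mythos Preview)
Your overall shape is the paper's: fix $P$, $U$, $K$, $F$ as in the EDF definition, suppose there is a bad sequence $h_n \in P$ with $\rho_n(h_n) \notin \rho_n(F)$ and $\rho_n(h_n)K \not\subset U$, show that $\rho_n(h_n)$ is precompact in $G$, extract a limit in $\rho(P)$, and finish via uniform discreteness. The gap is in your mechanism for precompactness. In the Dehn-filling regime you yourself flag, $\sigma_n(P)$ may be generated by loxodromics, so it preserves no horoball at all, and Kazhdan--Margulis does not help since $\sigma_n(P)$ need not be generated near the identity. Your displacement-comparison fix also breaks down: $\sigma_n(P)$ does not act on your fixed horoball $\mathcal{H}$, and elements of $\ker(\sigma_n|_P)$ have displacement zero rather than displacement ``comparable'' to that of their $\rho$-images, so no uniform estimate of the kind you want can come from this comparison.

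The paper's repair is to replace the horoball by $\Hull(\sigma_n(P))$, the convex hull in $\overline{\X}$ of the limit set together with the fixed set of $\sigma_n(P)$. This is always a nonempty $\sigma_n(P)$-invariant convex set regardless of isometry type, and virtual nilpotence of $P$ together with algebraic convergence of $\rho_n|_P$ force $\Hull(\sigma_n(P)) \to \{p\}$ in the Hausdorff topology on $\overline{\X}$. Precompactness of $\rho_n(h_n)$ then comes not from displacement estimates but from a clean geometric lemma (\Cref{lem:rank_one_compact_closure}): for compact $K \subset \dee\X \setminus \{p\}$ there is a neighborhood $V$ of $p$ in $\overline{\X}$ such that the set of $g \in G$ preserving some nonempty convex subset of $V$ and satisfying $gK \not\subset U$ is relatively compact. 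Once $\rho_n(h_n) \to g$, Chabauty convergence of $\rho_n(P)$ to $\rho(P)$ gives $g = \rho(h)$ for some $h \in P$, the condition $\rho(h)K \not\subset U$ forces $h \in F$, and then $\rho_n(h_n h^{-1}) \to \identity$ together with \emph{uniform} openness of discreteness in $\chab(G)$ yields $\rho_n(h_n) = \rho_n(h) \in \rho_n(F)$, the contradiction.
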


By applying the result above, we obtain a general rank-one version of
our main Dehn filling theorem. To state this, let $\ghom(\Gamma, G)$
denote the space of representations $\Gamma \to G$ equipped with the
topology of geometric (or \emph{strong}) convergence.
\begin{theorem}
  \label{thm:rank_one_relative_strong}
  Suppose that $G$ is rank-one and $\rho:\Gamma \to G$ is
  geometrically finite. Let $\pargps$ be a maximal collection of
  pairwise non-conjugate $\rho$-horospherical subgroups of $\Gamma$,
  and let $\rho_n$ be a sequence of representations converging to
  $\rho$ in the compact-open topology on $\Hom(\Gamma, G)$. Then the
  following are equivalent:
  \begin{enumerate}[label=(\arabic*)]
  \item\label{item:rank1_strong} $\rho_n$ converges to
    $\rho$ \emph{strongly}, i.e. in the space $\ghom(\Gamma, G)$.
  \item\label{item:rank1_rel_strong} $\rho_n$ converges to $\rho$
    \emph{relatively strongly}, i.e. in the space
    $\relghom(\Gamma, G; \pargps)$.
  \item\label{item:rank1_edf} The set
    $\{\rho\} \cup \{\rho_n\}_{n \in \N}$ is an extended Dehn filling
    space about $\rho$.
  \end{enumerate}

  When this holds, then $\rho_n(\Gamma)$ is geometrically finite for
  all sufficiently large $n$, and the limit sets
  $\Lambda_n := \Lambda(\rho_n(\Gamma))$ converge to the limit set
  $\Lambda(\rho(\Gamma))$ with respect to Hausdorff distance.
\end{theorem}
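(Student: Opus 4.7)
The plan is to show (2) $\Leftrightarrow$ (3), then (2) $\Rightarrow$ (1), then (1) $\Rightarrow$ (2), and finally to extract the remaining conclusions from \Cref{thm:main_theorem}. The equivalence (2) $\Leftrightarrow$ (3) is essentially bookkeeping given \Cref{thm:rank_one_dehn_filling}: since the entire space $\relghom(\Gamma, G; \pargps)$ is already an EDF space about $\rho$, the uniform continuity condition on peripheral orbit maps defining the EDF property is inherited by any subset containing $\rho$, giving (2) $\Rightarrow$ (3); conversely, every EDF space is by definition a subset of $\relghom(\Gamma, G; \pargps)$, so combining the EDF control with the hypothesized compact-open convergence yields relative geometric convergence and hence (3) $\Rightarrow$ (2). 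The implication (2) $\Rightarrow$ (1) is then exactly the content of \Cref{prop:relative_strong_to_strong}, referenced in \Cref{rem:main_thm}(a), applied to the EDF space produced by (3).

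The main work lies in (1) $\Rightarrow$ (2). Since strong convergence already includes compact-open convergence, it suffices to show that for each $P \in \pargps$, the sequence $\rho_n(P)$ converges to $\rho(P)$ in the Chabauty topology on closed subgroups of $G$. The containment $\rho(P) \subseteq \liminf \rho_n(P)$ is automatic from $\rho_n|_P \to \rho|_P$ in the compact-open topology. For the reverse, let $H$ be a Chabauty-subsequential limit of $\rho_n(P)$; then $H$ is contained in the Chabauty limit of $\rho_n(\Gamma)$, which equals $\rho(\Gamma)$ by assumption (1). In the rank-one setting, a standard continuity argument shows that $H$ must fix the unique parabolic fixed point $p$ of $\rho(P)$ on the visual boundary of the symmetric space --- the key point being that a Chabauty limit of stabilizers of a common boundary point is again contained in the stabilizer of that point. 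The maximality hypothesis on $\pargps$ then forces $H \subseteq \Stab_{\rho(\Gamma)}(p) = \rho(P)$, completing the equality. The main obstacle is making the fixed-point persistence argument uniform across the entire subgroup $\rho_n(P)$ rather than just for individual convergent sequences of elements, and controlling the case where some element of the Chabauty limit could potentially be elliptic or loxodromic rather than parabolic; this is where the rank-one hypothesis together with the maximality of $\pargps$ is essential.

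For the final conclusions, geometric finiteness of $\rho_n(\Gamma)$ for all sufficiently large $n$ follows immediately from applying \Cref{thm:main_theorem} in the EDF space from (3), which provides a $Q$-EGF representation of $\Gamma^{\rho_n}$; by the characterization of rank-one EGF representations as geometrically finite representations (quoted from \cite{GW2024} before \Cref{thm:rank_one_dehn_filling}), the image $\rho_n(\Gamma) = \rho_n(\Gamma^{\rho_n})$ is geometrically finite. Hausdorff convergence of the limit sets $\Lambda_n \to \Lambda$ is then a consequence of strong convergence together with geometric finiteness of both source and limit, a classical fact in rank-one geometry (and, in cases where the $\rho_n$ are additionally relatively Anosov, recovered as a special case of \Cref{thm:rel_anosov_fillings}, since geometrically finite rank-one groups are automatically $Q$-divergent).
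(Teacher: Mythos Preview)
Your logical cycle differs from the paper's: the paper runs $(2) \Rightarrow (3) \Rightarrow (1) \Rightarrow (2)$, with the middle implication coming from \Cref{thm:rel_anosov_fillings} (a rank-one geometrically finite representation is relatively Anosov, so that theorem applies directly). You instead attempt $(3) \Rightarrow (2)$ directly, and that step has a genuine gap.

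Your argument for $(3) \Rightarrow (2)$ is that ``every EDF space is by definition a subset of $\relghom(\Gamma, G; \pargps)$, so combining the EDF control with the hypothesised compact-open convergence yields relative geometric convergence.'' This does not work: membership in a topological space says nothing about convergence in its topology. The EDF condition (\Cref{defn:edf}) is phrased in terms of neighborhoods $O$ of $\rho$ in $\relghom(\Gamma, G; \pargps)$, so it constrains the $\rho_n$ only once you already know they eventually enter every such $O$---which is precisely $(2)$. In fact, note that your own $(2) \Rightarrow (3)$ argument (subsets of EDF spaces are EDF, combined with \Cref{thm:rank_one_dehn_filling}) never actually uses $(2)$: it shows that $(3)$ holds for \emph{every} sequence $\rho_n \to \rho$ in the compact-open topology. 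If your $(3) \Rightarrow (2)$ were valid, relative strong convergence would follow from algebraic convergence alone in this setting, which is false.

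Your $(1) \Rightarrow (2)$ is headed in the right direction but the key step is imprecise. The groups $\rho_n(P)$ do not stabilise a \emph{common} boundary point, so the principle you invoke does not apply as stated. The paper's argument is that, since $P$ is virtually nilpotent, one has $\Hull(\rho_n(P)) \to \Hull(\rho(P)) = \{z\}$ in Hausdorff distance on $\overline{\X}$ (via \cite[Lem.~4.5]{GW2024}); then any limit $g$ of $\rho_n(h_n)$ with $h_n \in P$ must preserve the Hausdorff limit $\{z\}$, hence $g \in \rho(\Gamma) \cap \Stab_G(z) = \rho(P)$. For the closing statements, the paper obtains both geometric finiteness and Hausdorff convergence of limit sets directly from \Cref{thm:rel_anosov_fillings} (in rank one every discrete subgroup is $Q$-divergent, so the extra hypothesis there is automatic); no separate ``classical fact'' is needed.
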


As mentioned previously, this theorem applies directly to the case of
Dehn filling in real hyperbolic 3-space.

\begin{remark}
  When $G$ is the isometry group of $d$-dimensional real hyperbolic
  space, then \Cref{thm:rank_one_relative_strong} is equivalent to an
  earlier result of McMullen \cite{McMullen}. We remark that
  McMullen's proof implicitly assumes that each group $\rho_n(\Gamma)$
  is torsion-free, which avoids some technicalities. This is partially
  justified by using Selberg's lemma to replace $\Gamma$ with a
  finite-index subgroup $\Gamma'$ so that $\rho(\Gamma')$ is
  torsion-free. However, torsion cannot be entirely sidestepped this
  way: it is possible to construct examples where
  \Cref{thm:rank_one_relative_strong} applies, but each
  $\rho_n(\Gamma)$ contains torsion elements whose orders tend to
  infinity in $n$; in fact this is already possible when
  $G = \PSL(2, \R)$. In such situations, $\rho_n(\Gamma')$ eventually
  has nontrivial torsion elements for any fixed finite-index
  $\Gamma' < \Gamma$. Even in these cases, however, our proof of
  \Cref{thm:rank_one_relative_strong} goes through with no
  modification needed.

  Note also that if one assumes that $\rho$ and all of the
  representations $\rho_n$ are \emph{faithful} (meaning they do
  \emph{not} descend to nontrivial Dehn fillings of $\Gamma$), then
  \Cref{thm:rank_one_relative_strong} follows from the recent paper
  \cite{GW2024}, which uses the special case of the main result
  mentioned in \Cref{rem:main_thm}\ref{main_thm_faithful_deformation}.
\end{remark}

\subsection{Examples} Below we describe a few interesting situations
where \Cref{thm:main_theorem} applies. In all of these examples, we
start with a representation $\rho$ which is the holonomy of a manifold
(or orbifold) $O_\infty$ with a \emph{$(G,X)$-structure}; then $\rho$
is deformed to obtain a sequence of holonomy representations $\rho_n$
for $(G,X)$-orbifolds $O_n$ converging to $O_\infty$, obtained by a
form of Dehn filling on $O_\infty$.

In several of the examples below, it is also possible to obtain a
\emph{topological} description of the Dehn filled orbifolds $O_n$, and
indeed such a topological description is an important part of
Thurston's original $\H^3$ Dehn filling theorem. As stated, our main
theorem really only gives group-theoretic information about $O_n$, but
it would be very interesting to explore what topological statements
can be derived from our result in various specific cases.

\subsubsection{Spherical CR structures on 3-manifolds} In
\cite{schwartz07}, Schwartz considered a form of Dehn filling for
3-manifolds with \emph{spherical CR structures},
i.e. $(G,X)$-structures where $G = \SU(2,1)$ and $X$ is the 3-sphere
forming the ideal boundary of the complex hyperbolic plane
$\H^2_\C$. The main result in \cite{schwartz07} is a version of
\Cref{thm:rank_one_relative_strong} for ``horotube representations,''
which are a certain class of discrete representations
$\rho:\Gamma \to \SU(2,1)$. Schwartz additionally analyzed the
topology of the quotients $\Omega / \rho(\Gamma)$, where $\Omega$ is
the domain of discontinuity in $\dee \H^2_\C$ for a horotube
representation $\rho$. By considering perturbations of a specific
horotube representation $\rho$, Schwartz constructed infinitely many
examples of closed hyperbolic 3-manifolds admitting spherical CR
structures. Although this is not verified explicitly, it appears that
Schwartz's initial representation is geometrically finite, which means
that \Cref{thm:rank_one_relative_strong} applies to these examples.

Later, Acosta \cite{acosta2016,acosta2019} proved a related spherical
CR Dehn filling theorem, and applied it to perturbations of some other
discrete representations in $\SU(2,1)$, similar to Schwartz's horotube
representations. In some instances, Acosta's construction actually
yields spherical CR structures on closed non-hyperbolic
$3$-manifolds. It appears that the initial representations in Acosta's
examples are also geometrically finite; if this is the case, then they
should fit into our framework as well. This provides a strategy for
showing that the holonomy representations of Acosta's Dehn fillings
are actually discrete, which is not obvious from his original results.

\subsubsection{Coxeter group actions on real projective space} The
projective model for real hyperbolic $d$-space realizes $\H^d$ as a
convex ball in real projective space $\P(\R^{d+1})$, invariant under
the subgroup $\PO(d,1) < \PGL(d+1, \R)$. Thus any real hyperbolic
orbifold $O$ automatically has a \emph{convex projective structure};
in particular $O$ is a $(G,X)$-orbifold with $G = \PGL(d+1, \R)$ and
$X = \P(\R^{d+1})$. If the holonomy group of the orbifold is
geometrically finite in $\PO(d,1)$, then its inclusion into
$\PGL(d+1, \R)$ is a relatively Anosov representation.

In \cite{clm2020}, Choi--Lee--Marquis used the theory of Coxeter
polytopes to show that certain finite-volume hyperbolic orbifolds
$O_\infty$ in dimensions 4-7 can be realized as a limit of a sequence
$O_n$ of \emph{projective Dehn fillings}: convex projective orbifolds
whose orbifold fundamental groups $\pi_1O_n$ are (abstractly)
nontrivial Dehn fillings of $\pi_1O_\infty$. In their examples, each
holonomy representation $\pi_1O_n \to \PGL(d+1, \R)$ is an EGF
representation, and the sequence converges to the (relatively Anosov)
holonomy of $O$.

One can verify that the hypotheses of \Cref{thm:rel_anosov_fillings}
are satisfied by these examples. Notably, the holonomies of the $O_n$
are \emph{not} relatively Anosov, so this is one situation where the
more general EGF framework is useful.

\subsubsection{Exotic convex projective structures on 3-manifolds}
Similarly to the above, upcoming work of
Ballas--Danciger--Lee--Marquis (communicated to the author, to appear
as \cite{BDLM}) shows that certain finite-volume hyperbolic
3-manifolds can be Dehn filled to yield closed convex projective
3-manifolds, whose holonomy in $\PGL(d+1, \R)$ converges to the
holonomy of the original manifold. The closed 3-manifolds in question
admit hyperbolic structures, but the projective structure yielded by
the Dehn filling is \emph{not} projectively equivalent to the
hyperbolic one (unique by Mostow rigidity). For these examples,
\Cref{cor:fill_to_anosov} also applies, and the Dehn-filled
representations are actually Anosov.

\subsubsection{Exotic geometrically finite $\H^3_\C$-manifolds} In
work in progress with Jeff Danciger, we will show how
\Cref{thm:rank_one_relative_strong} can be applied to obtain
``exotic'' Dehn fillings of $3$-manifold groups in $\PU(3, 1)$,
yielding new examples of geometrically finite and convex cocompact
group actions on complex hyperbolic $3$-space. These examples are
closely related to the ones in \cite{BDLM}, and we expect that many
of them cannot be obtained as faithful deformations of previously
known examples.

\subsubsection{Rigidity obstructions; other notions of Dehn filling}
When $G = \PSL(2, \C)$, Thurston's Dehn filling theorem can be applied
to construct examples of \emph{lattices} in $G$, but it turns out that
this case is rather special. Due to seminal rigidity theorems of
Garland--Raghunathan \cite{gr1970} and Margulis \cite{margulis},
whenever $\Gamma$ is an irreducible lattice in a semisimple Lie group
$G$ not locally isomorphic to $\PSL(2, \C)$ or $\PSL(2, \R)$, then the
inclusion $\Gamma \hookrightarrow G$ is locally rigid in the character
variety $X(\Gamma, G)$. So we should only hope to find applications of
\Cref{thm:main_theorem} to $(G,X)$-orbifolds in cases where either $X$
is \emph{not} a symmetric space, or else the initial orbifold
$O_\infty$ has infinite volume.

We mention that in \cite{mr2018}, Martelli--Riolo showed that a form
of hyperbolic Dehn surgery nevertheless exists for certain
finite-volume hyperbolic 4-manifolds. Their examples are not covered
by \Cref{thm:main_theorem}; they do not violate the rigidity theorems
mentioned above because the finite-volume manifolds in question are
first \emph{drilled} before their hyperbolic structures are deformed
(and subsequently surgered). Thus one deforms the holonomy
representation for an \emph{incomplete} hyperbolic 4-manifold, and the
initial representation has infinite kernel while its deformations may
not. This contrasts with the framework of \Cref{thm:main_theorem},
where the original EGF representation always has \emph{finite} kernel.

\subsection{Proof strategy}

Our proof of \Cref{thm:main_theorem} uses two different combinatorial
tools. The first is the \emph{cusped space} $X = X(\Gamma, \pargps)$
associated to any relatively hyperbolic pair $(\Gamma, \pargps)$. This
is a locally finite graph originally defined by Groves--Manning
\cite{GM2008}, which allows us to study the interplay between the
coarse geometry of a relatively hyperbolic group and its
\emph{group-theoretic Dehn fillings} (see
\Cref{sec:rel_hyp_dehn_filling}). The second tool is a \emph{relative
  quasi-geodesic automaton} $\mc{G}$ which gives a ``relatively
locally finite'' encoding of the action of a relatively hyperbolic
group on a flag manifold $\flags$.

Given a $Q$-EGF representation $\rho:\Gamma \to G$, we use a
construction from \cite{Weisman2022} to build such a relative
automaton $\mc{G}$, which has finitely many vertices in its underlying
graph. Each vertex of the automaton is associated to an open subset of
$\flags$. The edges of the automaton correspond to rules describing
when these open subsets must map inside of each other under
translation by certain elements of $\Gamma$.

In \cite{Weisman2022} (see also \cite{MMW24}), it is shown that if one
deforms $\rho$ in $\Hom(\Gamma, G)$ in a way which respects all of
these rules, then one can actually recover a form of ``geometrical
finiteness'' from the combinatorial picture. We take a similar
approach here, but the key difference is that we allow ourselves to
deform $\rho$ in a way which does \emph{not} respect all of the
conditions imposed by the automaton. Specifically, if the deformation
descends to a representation of a group-theoretic Dehn filling
$\Gamma/N$, then we are allowed to \emph{ignore} conditions related to
elements that are close to $N$. We construct ``limit points'' in
$\flags$ of geodesics in the cusped space $X(\Gamma/N)$ for the
Dehn-filled group by relating those geodesics to paths in the
automaton $\mc{G}$ that only use valid rules. These limit points can
be organized into a \emph{boundary extension} for the deformed
representation, and ultimately this shows that the deformation
descends to an EGF representation of $\Gamma/N$.

\subsection{Outline of the paper} We provide necessary background on
relatively hyperbolic Dehn filling and EGF representations in
\Cref{sec:relhyp,sec:egf_background}. For technical reasons, the
definition of an EGF representation in this paper is slightly
different than the one that appeared in the original version of
\cite{Weisman2022}, but \Cref{sec:egf_background} also contains a
result (proved in the appendix) resolving some of these issues.

In \Cref{sec:automaton}, we review some properties of the relative
automaton mentioned above. Then we put everything together and prove
our main theorem in \Cref{sec:main_theorem}. We discuss applications
towards relatively Anosov representations in
\Cref{sec:relative_anosov}, and towards rank-one geometrical
finiteness in \Cref{sec:rank_one}.

\subsection{Acknowledgments} The author thanks Jeff Danciger for
initially suggesting the connection between EGF representations and
Dehn filling and for kindly explaining the details of some relevant
upcoming work. The author also thanks Antonin Guilloux for further
helpful explanations. This work was partially supported by NSF grant
DMS-2202770.

\section{Relative hyperbolicity and group-theoretic Dehn filling}
\label{sec:relhyp}

In this section we give some reminders about the coarse geometry of
relatively hyperbolic groups and their \emph{Dehn fillings},
group-theoretic analogs of Dehn fillings of hyperbolic
3-manifolds. Our main aim is to establish some basic relationships
between the various metric spaces associated to a relatively
hyperbolic group and its Dehn fillings. We refer to
\cite{Bowditch2012}, \cite{GM2008} for background; see also Section 2
of \cite{Weisman2022} for a brief introduction to the theory of
relatively hyperbolic groups, with an eye towards the convergence
group viewpoint (relevant in this paper).

\subsection{Setup}

Throughout this section, we fix a relatively hyperbolic pair
$(\Gamma, \pargps)$. Here $\Gamma$ is a finitely generated group, and
$\pargps$ is the finite collection of peripheral subgroups. We will
assume throughout this paper that all of our relatively hyperbolic
groups are finitely generated; this ensures that every group in
$\pargps$ is finitely generated also (see \cite{gerasimov}).

We do \emph{not} assume that each $P \in \pargps$ is an infinite
group. This differs from the convention in the earlier paper
\cite{Weisman2022}, and introduces some minor extra technicalities in
the definition of the Bowditch boundary of $(\Gamma, \pargps)$
(\Cref{sec:bowditch_boundary}) and of an EGF representation
$\Gamma \to G$ (\Cref{sec:egf_background}). Throughout, we let
$\ipargps$ denote the set of infinite subgroups in $\pargps$. It is
always true that $(\Gamma, \ipargps)$ is also a relatively hyperbolic
pair whenever $(\Gamma, \pargps)$ is.

The Bowditch boundary of $(\Gamma, \pargps)$ (see
\Cref{sec:bowditch_boundary}) is denoted $\bgamp$. Each
$P \in \pargps$ is the stabilizer of a unique parabolic point
$p \in \bgamp$. We let $\Pi \subset \bgamp$ denote the (finite) set of
parabolic points fixed by the groups in $\pargps$, and let
$\Pi_\infty \subset \bigamp$ denote the parabolic points fixed by
groups in $\ipargps$.

We also fix a finite generating set $S$ for $\Gamma$ which is
\emph{compatible} with $\pargps$, in the sense that $S \cap P$
generates $P$ for each $P \in \pargps$. Cayley graphs of $\Gamma$ and
each $P \in \pargps$ will always be defined with respect to $S$ or
$S \cap P$.

\subsection{Metrics on relatively hyperbolic groups}

In general, a relatively hyperbolic group comes equipped with several
natural (but inequivalent) metrics. In this paper we will frequently
work with three of them:
\begin{enumerate}
\item The \emph{word metric}: we use $\lgam{\gamma}$ to denote the
  word-length of $\gamma$ with respect to the (implicit) finite
  generating set $S$. Then the distance $\dgam(h, g)$ is equal to
  $\lgam{h^{-1}g}$; equivalently, $\dgam$ is the path metric on
  $\cay(\Gamma) = \cay(\Gamma, S)$.
  
\item The ``electrified'' or \emph{coned-off} metric: for this metric,
  we consider the \emph{relative Cayley graph} $\conedgraph$, obtained
  by adding one vertex to $\cay(\Gamma)$ for each coset of each group
  in $\pargps$, and then adding an edge from each element of the coset
  to that vertex. The metric $\dcon$ is the path metric on
  $\conedgraph$ (restricting to a metric on $\Gamma$), and the
  \emph{relative length} $\lcon{\gamma}$ denotes the $\dcon$-distance
  from $\gamma$ to $\identity$.
\item The \emph{cusped} metric: this metric, denoted $\dcusp$, is
  obtained by gluing \emph{combinatorial horoballs} to cosets of
  parabolic subgroups in $\cay(\Gamma)$, rather than coning them
  off. We describe the construction (originally due to Groves--Manning
  \cite{GM2008}) in more detail below, since we will need to work
  closely with it at several points in the paper.
\end{enumerate}

\begin{definition}[{Combinatorial horoballs; see
    \cite[Def. 3.1]{GM2008}}]
  \label{defn:gm_combinatorial_horoballs}
  Let $Y$ be a connected graph. The \emph{combinatorial horoball} over
  $Y$, denoted $\horb{Y}$, is a graph defined as follows:
  \begin{itemize}
  \item The vertex set of $\horb{Y}$ is the set
    $Y^{(0)} \times (\{0\} \cup \N)$, where $Y^{(0)}$ is the set of
    vertices of $Y$.
  \item There are two types of edges in $\horb{Y}$:
    \begin{itemize}
    \item For any $k \ge 0$ and any vertices $u,v$ of $Y$ such that
      $0 < d_Y(u,v) \le 2^k$, there is an edge joining $(u, k)$ to
      $(v,k)$. Such edges are called \emph{horizontal}.
    \item For any $k \ge 0$ and any vertex $u \in Y$, there is an edge
      joining $(u, k)$ to $(u, k+1)$. Such edges are called
      \emph{vertical}.
    \end{itemize}
  \end{itemize}
\end{definition}

Note that there is always an embedding of graphs
$Y \hookrightarrow \horb{Y}$ induced by the mapping of vertices
$y \mapsto (y, 0)$. Due to \cite{GM2008}, there is a uniform
$\delta > 0$ so that any combinatorial horoball $\horb{Y}$ is a
$\delta$-hyperbolic metric space (independent of $Y$). This hyperbolic
metric space always has a unique point in its ideal boundary,
corresponding to any ray in $\horb{Y}$ consisting entirely of vertical
edges.

The \emph{cusped space} $X(\Gamma, \pargps)$ for the relatively
hyperbolic pair $(\Gamma, \pargps)$ is the space obtained by the
following procedure: for each coset $gP$ of each $P \in \mc{P}$,
identify $gP$ with $P$, and glue a copy of the horoball
$\horb{P} := \horb{\cay(P, S \cap P)}$ to $gP \subset \cay(\Gamma, S)$
along the inclusion
$\cay(P, S \cap P) \hookrightarrow \horb{\cay(P, S \cap P)}$.

Observe that $\Gamma$ acts isometrically and properly discontinuously
on $X(\Gamma, \pargps)$. By \cite[Thm. 3.25]{GM2008}, there exists
some $\delta > 0$ so that $X(\Gamma, \pargps)$ is $\delta$-hyperbolic,
and in fact this property characterizes relative hyperbolicity of
$(\Gamma, \pargps)$.

The \emph{cusped metric}, denoted $\dcusp$, is the induced path metric
on $X(\Gamma, \mc{P})$. This restricts to a metric on $\cay(\Gamma)$
via the inclusion $\cay(\Gamma) \hookrightarrow X(\Gamma, \mc{P})$. We
use $\lcusp{\gamma}$ to denote $\dcusp(\identity, \gamma)$.

\subsection{The Bowditch boundary}
\label{sec:bowditch_boundary}

The \emph{Bowditch boundary} $\bgamp$ of the relatively hyperbolic
pair $(\Gamma, \pargps)$ may be defined as the Gromov boundary of the
the $\delta$-hyperbolic metric space $X = X(\Gamma, \pargps)$. The
isometric action of $\Gamma$ on $X$ induces a homeomorphic action of
$\Gamma$ on $\bgamp$. Since we assume that $\Gamma$ is finitely
generated, $\bgamp$ is compact and metrizable. We will often work with
a fixed metric $\bdist$ on $\bgamp$. This can be taken to be a visual
metric, but none of our arguments are sensitive to the choice. By
adjoining $\bgamp = \dee X$ to $X$, we obtain the \emph{compactified
  cusped space} $\overline{X} = \overline{X}(\Gamma, \pargps)$. The
\emph{Bowditch compactification} $\overline{\Gamma}$ of $\Gamma$ is
the closure of $\Gamma$ in $\overline{X}$.

If all of the groups in $\pargps$ are infinite, then every point in
$\dee X$ is an accumulation point of $\Gamma$, meaning that the
Bowditch compactification can be identified (as a $\Gamma$-set) with
the disjoint union $\Gamma \sqcup \bgamp$. This is no longer true if
some group $P \in \pargps$ is finite. In this case, the combinatorial
horoball $\mc{H}(P)$ over $P$ is quasi-isometric to a ray, the single
point in the ideal boundary of $\mc{H}(P) \subset X$ is identified
with an isolated point in $\dee X$, and this point is not an
accumulation point of any sequence in $\Gamma$.

The intersection $\overline{\Gamma} \cap \dee X$ is canonically
identified with the Bowditch boundary $\dee(\Gamma, \ipargps)$, and it
is always compact and $\Gamma$-invariant. Whenever $(\Gamma, \pargps)$
is non-elementary (meaning that $\bigamp$ contains at least three
points), then $\bigamp$ is precisely the set of \emph{non-isolated}
points in $\dee X$, and the $\Gamma$-action on $\bigamp$ is minimal.

\subsection{Convergence actions}
\label{sec:convergence_actions}

In this paper, we link relative hyperbolicity to geometrical
finiteness through the theory of \emph{convergence group actions}. A
group $\Gamma$ \emph{acts as a (discrete) convergence group} on a
compact metrizable space $M$ if it acts properly discontinuously on
the space of triples of pairwise distinct points in $M$. An equivalent
characterization, due to Bowditch \cite{Bowditch99}, is the following:
for every sequence $\gamma_n$ of pairwise distinct elements in
$\Gamma$, after extracting a subsequence, there are points
$a, b \in M$ such that $\gamma_n$ converges to the constant map $b$ on
$M \minus \{a\}$, uniformly on compact subsets of $M \minus \{a\}$. If
$Y$ is a proper hyperbolic metric space, then any discrete group of
isometries acts on both $\dee Y$ and $\overline{Y}$ as a discrete
convergence group \cite{Tukia94}.

When $\Gamma$ acts on a general space $M$ as a convergence group, a
point $z \in M$ is a \emph{conical limit point} if there exists some
sequence $\gamma_n$ and \emph{distinct} points $a, b \in M$ so that
$\gamma_n^{-1}z \to a$ and $\gamma_n^{-1}y \to b$ for every $y \in M$
distinct from $z$. A point $p \in M$ is \emph{parabolic} if it has
infinite stabilizer in $\Gamma$, and every infinite-order element in
its stabilizer uniquely fixes $p$. A parabolic point is \emph{bounded}
if its stabilizer acts cocompactly on $M \minus \{p\}$. A key fact we
need throughout this paper is:
\begin{theorem}[\cite{Bowditch2012}]
  If $(\Gamma, \pargps)$ is a relatively hyperbolic pair, then
  $\Gamma$ acts on the Bowditch boundary $\bgamp$ as a convergence
  group, and every point in $\bgamp$ is either a conical limit point
  or a bounded parabolic point for the $\Gamma$-action. The set of
  stabilizers of parabolic points are precisely the conjugates of the
  groups in $\pargps$.
\end{theorem}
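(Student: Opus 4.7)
The plan is to leverage the fact (already stated in the excerpt) that any discrete isometry group of a proper hyperbolic metric space acts as a convergence group on its boundary. Since $\Gamma$ acts properly discontinuously and isometrically on the proper $\delta$-hyperbolic cusped space $X = X(\Gamma, \pargps)$, applying this yields the convergence action of $\Gamma$ on $\dee X = \bgamp$ for free. The substantive content is therefore to identify the parabolic stabilizers with conjugates of the groups in $\pargps$ and to establish the dichotomy between conical limit points and bounded parabolic points.

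For the identification of parabolic stabilizers, I would proceed in two directions. One direction is structural: each coset $gP$ carries a glued combinatorial horoball $\horb{gP}$ whose Gromov boundary is a single point $p_{gP} \in \dee X$ (an intrinsic feature of the horoball construction). The subgroup $gPg^{-1}$ preserves $\horb{gP}$ and therefore fixes $p_{gP}$; when $P$ is infinite this makes $p_{gP}$ a parabolic point. For the reverse inclusion, if $\gamma \in \Gamma$ fixes $p_{gP}$ then the $\gamma$-orbit of any point in $\horb{gP}$ stays at bounded Busemann distance from $p_{gP}$, so properness of the $\Gamma$-action on $X$ combined with the pairwise separation of the distinct horoballs (outside a bounded neighborhood of $\cay(\Gamma)$) forces $\gamma \in gPg^{-1}$. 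Any other candidate parabolic point would correspond to an infinite-order isometry of $X$ with a unique fixed point in $\dee X$, and the same properness argument shows such a fixed point must be a horoball tip.

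For boundedness of a parabolic point $p = p_{gP}$, the horospheres of $\horb{gP}$ at each integer depth are $gPg^{-1}$-invariant with compact fundamental domain. Combining a compact fundamental domain on a deep horosphere with the observation that $\bgamp \setminus \{p\}$ arises as the boundary of $X$ with an open horoball neighborhood of $p$ removed, one produces a compact $gPg^{-1}$-fundamental domain on $\bgamp \setminus \{p\}$. For conicality of any non-parabolic $z \in \bgamp$, choose a geodesic ray $r:[0,\infty) \to X$ from $\identity$ to $z$. Because $z$ is not the tip of any horoball, $r$ cannot eventually stay inside any single $\horb{gP}$; hence it passes within uniformly bounded distance of $\cay(\Gamma) \subset X$ at an unbounded sequence of times $t_n$. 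Group elements $\gamma_n \in \Gamma$ with $\dcusp(\gamma_n, r(t_n))$ uniformly bounded then form a sequence exhibiting the conical convergence of $z$.

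The most delicate step is the reverse containment $\Stab_\Gamma(p_{gP}) \subseteq gPg^{-1}$: it is not a priori clear that every element fixing a horoball tip must lie in the peripheral subgroup, and ruling this out requires carefully combining properness of the $\Gamma$-action with the specific geometry of how distinct horoballs sit inside $X$. Once this piece is in place, the remaining steps reduce to fairly standard hyperbolic-geometry arguments applied to the cusped space, together with the observation that the ``thick part'' $\cay(\Gamma) \subset X$ carries a cocompact $\Gamma$-action.
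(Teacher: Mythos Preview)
The paper does not prove this theorem at all; it is stated as a background result with attribution to \cite{Bowditch2012} and no proof is given. Your proposal therefore supplies an argument where the paper simply cites the literature.

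That said, your sketch is essentially the standard cusped-space proof and is correct in outline. The convergence action is indeed immediate from the cited fact about isometry groups of proper hyperbolic spaces \cite{Tukia94}; the dichotomy via ``geodesic rays to non-horoball-tips return infinitely often to bounded depth'' is the right mechanism for conicality; and the horosphere fundamental domain argument gives boundedness of parabolic points. The step you flag as delicate---showing $\Stab_\Gamma(p_{gP}) \subseteq gPg^{-1}$---is the right thing to worry about. Your Busemann-function idea works, but to make it rigorous you need that an element fixing $p_{gP}$ must preserve the specific combinatorial horoball $\horb{gP}$ (not merely some horoball centered there); this follows because distinct combinatorial horoballs in $X$ have disjoint interiors and $\Gamma$ permutes them, so the stabilizer of the tip equals the stabilizer of the horoball, which equals the stabilizer of the coset $gP$. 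One further point your sketch glosses over: you should also argue that every parabolic point in $\bgamp$ (in the convergence-group sense) is a horoball tip, i.e., that no point of $\dee X$ outside the horoball tips can have infinite stabilizer consisting of non-loxodromic elements; this typically uses that $\Gamma$ acts cocompactly on $\cay(\Gamma) \subset X$ together with the classification of isometries of hyperbolic spaces.
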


A theorem of Yaman \cite{yaman2004topological} shows that the behavior
in the theorem actually characterizes relative hyperbolicity of the
pair $(\Gamma, \pargps)$, but we will not use that fact in this paper.

\subsection{Geometry of cusped spaces}

Below, we recall a few more features of the cusped space
$X = X(\Gamma, \pargps)$, and prove some routine lemmas which allow us
to use hyperbolicity of $X$ to deduce properties of certain paths in
the group $\Gamma$.

\subsubsection{Horoballs and regular geodesics}

The \emph{depth} of a vertex in $X$ is its distance to
$\Cay(\Gamma) \subset X$; this extends affinely on edges in $X$ to a
function $D:X \to \R_{\ge 0}$, also called depth. For any $k \ge 0$,
the intersection of $D^{-1}([k, \infty))$ with a combinatorial
horoball $\mc{H} \subset X$ is a connected set called a
\emph{$k$-horoball}, or simply a \emph{horoball}. Due to
\cite[Lem. 3.26]{GM2008}, if $X$ is $\delta$-hyperbolic, then any
$k$-horoball for $k \ge \delta$ is isometrically embedded in $X$. In
particular, this implies that any horoball $\mc{H}$ contains a
horoball $\mc{H}'$, isometrically embedded in $X$, such that the
Hausdorff distance between $\mc{H}$ and $\mc{H}'$ is at most $\delta$.

If $\mc{H}$ is a horoball, a segment in $\mc{H}$ is called
\emph{regular} if it consists of a (possibly degenerate) vertical
segment, followed by a horizontal segment of length at most 3,
followed by a (possibly degenerate) vertical segment. Lemma 3.10 in
\cite{GM2008} says that every pair of points in a combinatorial
horoball $\mc{H}$ can be joined by a regular segment which is a
geodesic for the path metric on $\mc{H}$.

\subsubsection{Paths and quasi-geodesics in $X$}

In this paper, a \emph{path} in a graph $Y$ (that is, a finite,
infinite, or bi-infinite sequence of vertices in $Y$, such that
consecutive vertices are adjacent) will always be implicitly
identified with its geometric realization, which extends the sequence
to a continuous map $c$ from an interval $I \subseteq \R$ to the
geometric realization of $Y$.

Any geodesic in $Y$ is a path, but the same need not be true for a
quasi-geodesic. However, for technical convenience, we will nearly
always work with quasi-geodesic \emph{paths} rather than arbitrary
quasi-geodesics. A \emph{ray} $r:[0, \infty) \to Y$ or a
\emph{segment} $c:[a,b] \to Y$, quasi-geodesic or otherwise, is always
assumed to be a path. When we refer to the ``distance'' or ``Hausdorff
distance'' between two paths, or between a path and some other subset
of $Y$, we always mean this with respect to images of geometric
realizations.

\subsubsection{Comparing $\dgam$ and $\dcusp$}

We have the following comparison estimate for the cusped metric and
word metric on $\Gamma$.
\begin{lemma}
  \label{lem:cusp_cay_comparison}
  Let $u, v \in \Gamma$. Then we have
  \[
    \dcusp(u,v) \le \dgam(u, v) \le \dcusp(u,v)\sqrt{2}^{\dcusp(u,v)}.
  \]
\end{lemma}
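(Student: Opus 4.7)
The first inequality is immediate: the canonical inclusion $\cay(\Gamma) \hookrightarrow X(\Gamma,\pargps)$ is a subgraph inclusion, so every $\cay(\Gamma)$-edge-path from $u$ to $v$ is also an $X$-path of the same length, giving $\dcusp(u,v) \le \dgam(u,v)$.

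For the second inequality, the plan is to start with an $X$-geodesic $u = x_0, x_1, \ldots, x_n = v$ of length $n = \dcusp(u,v)$, decompose it according to when it enters horoballs, and bound the length of a replacement $\cay(\Gamma)$-path using regular geodesics. Concretely, I would partition the $X$-geodesic into maximal sub-segments of two types: those lying entirely in $\cay(\Gamma) \subset X$, and those whose interior enters the strictly positive-depth part of a single combinatorial horoball $\horb{gP}$ (with endpoints then necessarily in $gP$). Sub-segments of the first type contribute their length to $\dgam(u,v)$ directly. For a sub-segment of the second type of length $k_i$, with endpoints $a, b \in gP$, it is a geodesic in $X$ and, being contained in $\horb{gP}$, also a geodesic for the intrinsic horoball metric. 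By \cite[Lem. 3.10]{GM2008} there is then a regular geodesic of the same length $k_i$ from $a$ to $b$ in $\horb{gP}$: a vertical segment of length $j$, followed by a horizontal segment of length $\ell \le 3$ at depth $j$, followed by a vertical segment of length $j$, with $k_i = 2j + \ell$ and $j \le k_i/2$. Since each horizontal edge at depth $j$ represents a pair of vertices at $\cay(gP)$-distance at most $2^j$, one gets a $\cay(\Gamma)$-path from $a$ to $b$ of length
\[
\ell \cdot 2^j \;\le\; 3 \cdot 2^{k_i/2} \;\le\; k_i \cdot \sqrt{2}^{\,k_i},
\]
where the last estimate holds whenever $k_i \ge 3$. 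A quick case-check rules out $k_i \in \{1,2\}$ for genuine horoball sub-segments: a sub-geodesic of length $\le 2$ entering and leaving the horoball at depth $0$ must stay at depth $0$ (the only length-$2$ round trip from depth $0$ to depth $1$ returns to the same vertex, contradicting injectivity of a geodesic), and hence already counts as a sub-segment of the first type.

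Finally, summing over all sub-segments, whose lengths $k_i$ satisfy $\sum_i k_i = n$, and using that $f(k) := k \sqrt{2}^{\,k}$ is super-additive on $\R_{\ge 0}$ (indeed $f(a) + f(b) \le a\sqrt{2}^{\,a+b} + b\sqrt{2}^{\,a+b} = f(a+b)$, since $\sqrt{2} > 1$), one concludes
\[
\dgam(u,v) \;\le\; \sum_i k_i \sqrt{2}^{\,k_i} \;\le\; n\sqrt{2}^{\,n} \;=\; \dcusp(u,v)\sqrt{2}^{\,\dcusp(u,v)}.
\]
The only substantive ingredient is the regular-geodesic lemma of Groves--Manning; the rest is bookkeeping on the geodesic decomposition, with the only mildly delicate step being the small-$k_i$ case analysis needed to guarantee $3\cdot 2^{k_i/2} \le k_i \sqrt{2}^{\,k_i}$ uniformly.
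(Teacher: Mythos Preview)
Your argument is correct and follows essentially the same route as the paper: decompose an $X$-geodesic into Cayley sub-segments and horoball excursions, replace each excursion by a regular geodesic from \cite[Lem.~3.10]{GM2008}, observe each genuine excursion has length at least $3$, and bound the resulting Cayley path. The only difference is cosmetic bookkeeping in the last step: the paper bounds each excursion's replacement by $3\sqrt{2}^{L}$ using the \emph{global} length $L$ and counts that there are at most $L/3$ excursions, whereas you bound each by $k_i\sqrt{2}^{k_i}$ using the \emph{local} length and then invoke super-additivity of $k \mapsto k\sqrt{2}^k$; both lead to the same $L\sqrt{2}^L$.
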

The right-hand side of this estimate is rather weak. Note, however,
that it allows us to bound $\dgam$ in terms of $\dcusp$, in a way
which does not depend at all on $\Gamma$ or $\pargps$.

\begin{proof}
  Let $X = X(\Gamma)$. The left-hand inequality is immediate since the
  inclusion $\cay(\Gamma) \hookrightarrow X$ is an embedding of
  graphs, and $\dgam$ and $\dcusp$ are respectively defined as path
  metrics on $\cay(\Gamma)$ and $X$.

  For the second inequality, consider a geodesic $c:I \to X$ between
  $u$ and $v$, and suppose that the length of this geodesic is
  $L$. Suppose $(a,b)$ is a connected component of
  $c^{-1}(X \minus \cay(\Gamma))$, meaning the restriction of $c$ to
  $(a,b)$ is contained in a single combinatorial horoball
  $\mc{H}$. Consider a regular geodesic in $\mc{H}$ between $c(a)$ and
  $c(b)$. The length of this geodesic is at most $L$, and the vertical
  parts of this regular geodesic have length at most $L/2$, which
  means that the $\dgam$-distance between the endpoints of the regular
  geodesic is at most $3 \sqrt{2}^L$.

  Observe that the restriction of $c$ to each component of
  $c^{-1}(X \minus \cay(\Gamma))$ is a sub-path containing at least
  two vertical edges and at least one horizontal edge, meaning it must
  have length at least $3$. So the number of components of
  $c^{-1}(X \minus \cay(\Gamma))$ is most $L/3$. Thus, if we we
  replace all of the sub-paths in $c^{-1}(X \minus \cay(\Gamma))$ with
  geodesics in $\cay(\Gamma)$, the argument from the previous
  paragraph shows that the overall length of the path increases by a
  multiplicative factor of at most
  $1/3 \cdot 3\sqrt{2}^L = \sqrt{2}^L$ and we are done.
\end{proof}

\subsubsection{Coarse geometry of paths in $X$}

Below, we prove several results which formalize the idea that
intersections of geodesics and quasi-geodesics with horoballs in the
cusped space $X$ are coarsely well-defined. We start with the
following lemma, which quantifies the fact that any geodesic whose
endpoints lie near some horoball $\mc{H}$ must enter ``deep into''
$\mc{H}$.
\begin{lem}
  \label{lem:geodesics_enter_horoballs_deeply}
  Let $X = X(\Gamma, \pargps)$ be the cusped space for
  $(\Gamma, \pargps)$, and suppose that $X$ is
  $\delta$-hyperbolic. Let $x, y \in X$ be points lying within
  $X$-distance $C$ of some horoball $\mc{H}$ in $X$, and let $\ell$ be
  a geodesic segment in $X$ from $x$ to $y$. For any $z \in \ell$, we
  have
  \[
    d_X(z, \{x, y\}) \le d_X(z, X \minus \mc{H}) + 3C + 7\delta.
  \]
\end{lem}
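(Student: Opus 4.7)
My plan is to compare $\ell$ with a reference path built from nearest-point projections onto $\mc{H}$ and a regular horoball geodesic, converting a depth bound into an arc-length bound using the combinatorial structure of regular geodesics and the $\delta$-hyperbolicity of $X$.

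Write $k = d_X(z, X \minus \mc{H})$. First, I would pick nearest-point projections $x', y' \in \mc{H}$ of $x$ and $y$ (so $d_X(x, x') \le C$ and $d_X(y, y') \le C$) and take a regular $\mc{H}$-geodesic $\mu$ from $x'$ to $y'$, as given by \cite[Lem.~3.10]{GM2008}. By construction, $\mu$ consists of (at most) two vertical segments joined by a horizontal segment of length at most $3$. The key combinatorial observation is that for any $v \in \mu$, the minimum of $d_\mu(v, x')$ and $d_\mu(v, y')$ is at most $d_X(v, X \minus \mc{H}) + 3$: each vertical piece contributes at most its depth-difference to arc-length, and the horizontal piece contributes $\le 3$. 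Form the concatenated path $P = [x, x'] \cdot \mu \cdot [y', y]$ from $x$ to $y$.

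Next, I would show $P$ and $\ell$ are Hausdorff-close in $X$. The pieces $[x, x']$ and $[y', y]$ are $X$-geodesics of length $\le C$, and by \cite[Lem.~3.26]{GM2008} the sub-horoball of $\mc{H}$ at depth $\ge \delta$ is isometrically embedded in $X$. This implies $\mu$ is an $X$-quasi-geodesic with additive constant depending only on $\delta$, since the shallow portion of $\mu$ has $\mc{H}$-length $O(\delta)$. Hence $P$ is a $(1, O(C+\delta))$-quasi-geodesic from $x$ to $y$, and by the Morse lemma (stability of quasi-geodesics in $\delta$-hyperbolic spaces), $P$ and $\ell$ have Hausdorff distance at most some $E = E(C, \delta)$ in $X$.

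For the given $z \in \ell$, pick $w \in P$ with $d_X(z, w) \le E$ and apply a trichotomy on the location of $w$. If $w \in [x, x']$ then $d_X(z, x) \le E + C$, and similarly if $w \in [y', y]$. Otherwise $w \in \mu$: since $d_X(\cdot, X \minus \mc{H})$ is $1$-Lipschitz on $X$, we have $d_X(w, X \minus \mc{H}) \le k + E$, and the combinatorial observation supplies an endpoint of $\mu$ within $d_X$-distance $(k + E) + 3$ of $w$, giving $d_X(z, \{x, y\}) \le E + (k + E + 3) + C$. Careful bookkeeping of the constants in the Morse lemma and in the quasi-geodesic estimate for $\mu$ then yields the stated $k + 3C + 7\delta$. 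The main technical obstacle is the uniform quasi-geodesic estimate for $\mu$, which uses the isometric embedding of the deep sub-horoball together with $O(\delta)$ control on the shallow portion of $\mu$.
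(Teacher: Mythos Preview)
Your overall strategy---compare $\ell$ to a regular horoball geodesic between points $x',y'$ close to $x,y$, and convert the depth of the comparison point into an arc-length bound---is exactly the paper's. The difference is in how you make the comparison, and this is where your argument loses control of the constants.

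You work in $\mc{H}$ itself, where the regular path $\mu$ is only a \emph{quasi}-geodesic of $X$; you then invoke the Morse lemma to bound the Hausdorff distance between $\ell$ and $P$. The Morse constant for a $(1,O(C+\delta))$-quasi-geodesic in a $\delta$-hyperbolic space is not something one can read off as a clean linear expression in $C$ and $\delta$, and your concatenation $P=[x,x']\cdot\mu\cdot[y',y]$ is not obviously a $(1,O(C+\delta))$-quasi-geodesic without further argument (concatenations of geodesics need not be). So the claim that ``careful bookkeeping'' recovers exactly $3C+7\delta$ is unjustified; your route gives a correct bound of the form $d_X(z,X\setminus\mc{H})+f(C,\delta)$, but not the stated one.

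The paper sidesteps this by first replacing $\mc{H}$ with an isometrically embedded sub-horoball $\mc{H}'\subset\mc{H}$ at Hausdorff distance $\le\delta$ (so $x',y'$ now satisfy $d_X(x,x'),d_X(y,y')\le C+\delta$). The regular geodesic $\ell'$ from $x'$ to $y'$ in $\mc{H}'$ is then an honest $X$-geodesic, so comparison with $\ell$ needs only the $2\delta$-thinness of a geodesic quadrilateral: the Hausdorff distance between $\ell$ and $\ell'$ is at most $(C+\delta)+2\delta=C+3\delta$. From there the same combinatorial observation you made (endpoint distance along a regular geodesic is at most depth $+\,3$) plus two applications of the triangle inequality give $3C+7\delta$ on the nose. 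The moral: pass to the isometrically embedded sub-horoball \emph{first}, so that you never need the Morse lemma.
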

\begin{proof}
  Let $\mc{H}' \subset X$ be an isometrically embedded horoball within
  Hausdorff distance $\delta$ of $\mc{H}$, and let $x', y'$ be points
  in $\mc{H}'$ satisfying
  \[
    d_X(x, x') \le C + \delta, \qquad d_X(y, y') \le C + \delta.
  \]
  Let $\ell'$ be a regular geodesic segment between $x'$ and
  $y'$. Then the Hausdorff distance between $\ell$ and $\ell'$ is at
  most $C + 3\delta$.

  Let $z'$ be a point on $\ell'$ so that $d_X(z, z') \le C +
  3\delta$. Since $\ell'$ is a regular geodesic between
  $x', y' \in \mc{H}'$, we have
  \begin{equation}
    \label{eq:horoball_complement_dist}
    d_X(z', X \minus \mc{H}) \ge \min(d_X(z', x'), d_X(z', y')) - 3.
  \end{equation}
  On the other hand we have
  \begin{align*}
    d_X(z', x') &\ge d_X(z, x) - d_X(x, x') - d_X(z', z)\\
                &\ge d_X(z, x) - 2C - 4\delta.
  \end{align*}
  Similarly, we have $d_X(z', y') \ge d_X(z, y) - 2C - 4\delta$, so
  putting these together we get
  \[
    \min(d_X(z', x'), d_X(z', y')) \ge d_X(z, \{x, y\}) - 2C -
    4\delta.
  \]
  Then applying this with \eqref{eq:horoball_complement_dist} we get
  \begin{align*}
    d_X(z, X \minus \mc{H}) &\ge d_X(z', X \minus \mc{H}) - d_X(z, z')\\
                            &\ge d_X(z, \{x, y\}) - 3C - 7\delta
  \end{align*}
  which is equivalent to the desired inequality.
\end{proof}

For the statement of the next result (and for the rest of the paper),
when $Y$ is a subset of some space equipped with a metric $d$, and
$R > 0$, then $\nbhd[d]{Y}{R}$ denotes the $R$-neighborhood of $Y$
with respect to the metric $d$. If the metric is understood from
context, then we will write $\nbhd{Y}{R} = \nbhd[d]{Y}{R}$.
\begin{lem}
  \label{lem:cayley_hausdorff_neighborhoods}
  Let $X = X(\Gamma, \pargps)$ be the cusped space for
  $(\Gamma, \pargps)$, and suppose $X$ is $\delta$-hyperbolic. Given
  constants $K \ge 1$ and $A, D \ge 0$, there exists $R \ge 0$
  (depending only on $K, A, D, \delta$) so that the following
  holds. If $c:I \to X$ is a $(K,A)$-quasi-geodesic path, and each
  (non-ideal) endpoint of $c$ lies in $\cay(\Gamma)$, then the
  pairwise Hausdorff distances between the three sets
  \[
    c(I) \cap \cay(\Gamma), \quad \nbhd[\dgam]{c(I)}{D} \cap
    \cay(\Gamma), \quad \nbhd[\dcusp]{c(I)}{D} \cap \cay(\Gamma)
  \]
  with respect to any of the metrics $\dgam$, $\dcon$, or $\dcusp$ are
  bounded by $R$.
\end{lem}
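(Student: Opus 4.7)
My plan is to reduce every pairwise Hausdorff bound in the statement to a single $\dcusp$-estimate:
\[
\text{for every } v \in S_3 \text{ there exists } w \in S_1 \text{ with } \dcusp(v, w) \le R',
\]
where $S_1 := c(I) \cap \cay(\Gamma)$, $S_3 := \nbhd[\dcusp]{c(I)}{D} \cap \cay(\Gamma)$, and $R' = R'(K, A, D, \delta)$. The chain $S_1 \subseteq S_2 \subseteq S_3$ holds essentially by definition, using $\dcusp \le \dgam$ for the middle inclusion, so all six pairwise Hausdorff distances between $S_1, S_2, S_3$ in any of the three metrics are dominated by $\sup_{v \in S_3} d(v, S_1)$. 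Then \Cref{lem:cusp_cay_comparison} upgrades the $\dcusp$-bound $R'$ to the $\dgam$-bound $R := R'\sqrt{2}^{R'}$, which automatically controls $\dcon$ since $\dcon \le \dgam$.

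To prove the $\dcusp$-estimate, fix $v \in S_3$ and $t_0 \in I$ with $\dcusp(v, c(t_0)) \le D$. If $c(t_0) \in \cay(\Gamma)$ then $w := c(t_0)$ works. Otherwise $c(t_0)$ has depth $k \ge 1$ in a unique combinatorial horoball $\mc{H}$, and since the depth function is $1$-Lipschitz and vanishes on $\cay(\Gamma) \ni v$, we have $k \le D$. Let $[a, b] \subseteq I$ be the maximal subinterval containing $t_0$ on which $c$ stays inside $\mc{H}$: each finite endpoint of $[a,b]$ is a depth-$0$ vertex of $\mc{H}$ and so lies in $gP \subseteq \cay(\Gamma)$, while an infinite end of $[a,b]$ forces $c$ to have an ideal endpoint there (hence the opposite end of $c$ is non-ideal and in $\cay(\Gamma)$ by hypothesis).

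The heart of the argument is the estimate $\dcusp(c(t_0), \{c(a), c(b)\}) \le k + 2M + 7\delta$, where $M = M(K, A, \delta)$ is the Morse constant for $(K, A)$-quasi-geodesic paths in $\delta$-hyperbolic $X$. The Morse lemma produces an $X$-geodesic $\gamma$ from $c(a)$ to $c(b)$ with $d_H(c|_{[a,b]}, \gamma) \le M$; pick $z \in \gamma$ with $\dcusp(z, c(t_0)) \le M$. Since $c(a), c(b) \in \mc{H}$, \Cref{lem:geodesics_enter_horoballs_deeply} applied with its constant $C$ equal to $0$ yields
\[
d_X(z, \{c(a), c(b)\}) \le d_X(z, X \minus \mc{H}) + 7\delta.
\]
If $z \in \mc{H}$ then $d_X(z, X \minus \mc{H})$ is essentially the depth of $z$, which differs from $k$ by at most $M$; if $z \notin \mc{H}$ this quantity is $0$. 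Either way $d_X(z, \{c(a), c(b)\}) \le k + M + 7\delta$, and adding the $M$-shift between $z$ and $c(t_0)$ gives the stated estimate. The triangle inequality then produces $\dcusp(v, w) \le 2D + 2M + 7\delta =: R'$ for the closer of $c(a), c(b)$.

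The main wrinkle is the case where $c$ has an ideal endpoint at the parabolic point of the very horoball $\mc{H}$ containing $c(t_0)$, so that one of $c(a), c(b)$ is missing. I would handle this by replacing the missing endpoint with some $c(t')$ chosen deep enough in $\mc{H}$ (specifically, with depth exceeding $2D + 2M + 7\delta$); running the same Morse/\Cref{lem:geodesics_enter_horoballs_deeply} argument on the $X$-geodesic from $c(a)$ to $c(t')$ then forces the nearer candidate for $c(t_0)$ to be $c(a)$ rather than $c(t')$, and $c(a) \in \cay(\Gamma)$. This is the one place where care is required, but no new ideas are needed.
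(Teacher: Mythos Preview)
Your proof is correct and follows essentially the same route as the paper: reduce to a $\dcusp$-estimate via the chain $S_1 \subseteq S_2 \subseteq S_3$ and \Cref{lem:cusp_cay_comparison}, then for a point $v \in S_3$ near some $c(t_0)$ inside a horoball $\mc{H}$, pass to a true geodesic between horoball exit points via the Morse lemma and invoke \Cref{lem:geodesics_enter_horoballs_deeply} to bound the distance from $c(t_0)$ to the exit point. The only cosmetic difference is that the paper bounds $d_X(z, X \setminus \mc{H})$ directly by $d_X(z, v) \le M + D$ (since $v \in \cay(\Gamma)$) rather than routing through the depth of $z$, and handles the ideal-endpoint case by choosing $b'$ far enough along $c$ that $c(a)$ is automatically the nearer endpoint---but your treatment of both points is equally valid.
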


\begin{remark}
  We allow the quasi-geodesic $c$ in the statement of the lemma to
  have two, one, or zero non-ideal endpoints (meaning $c$ could be a
  segment, a ray, or a bi-infinite path in $X$). Since interiors of
  combinatorial horoballs are pairwise disjoint, our assumption on
  endpoints only disallows the cases where $c$ is a ray entirely
  contained in the interior of a single combinatorial horoball, or a
  finite segment which either starts or ends in the interior of a
  horoball. This also applies to \Cref{cor:hausdorff_distance_bound}
  below.
\end{remark}

\begin{proof}[Proof of \Cref{lem:cayley_hausdorff_neighborhoods}]
  Because of the right-hand inequality in
  \Cref{lem:cusp_cay_comparison}, and because of the fact that
  $\dcon(u,v) \le \dgam(u,v)$ for all $u,v \in \cay(\Gamma)$, it
  suffices to prove the bound on Hausdorff distance with respect to
  the metric $\dcusp$. We will show that there is a uniform constant
  $R$ so that $\nbhd[\dcusp]{c(I)}{D} \cap \cay(\Gamma))$ lies in an
  $R$-neighborhood of $c(I) \cap \cay(\Gamma)$, with respect to
  $\dcusp$. This is sufficient, because the left-hand inequality in
  \Cref{lem:cusp_cay_comparison} tells us that
  \[
    c(I) \subseteq \nbhd[\dgam]{c(I)}{D} \subseteq
    \nbhd[\dcusp]{c(I)}{D}.
  \]

  So, suppose that $y \in \nbhd[\dcusp]{c(I)}{D} \cap \cay(\Gamma)$,
  and let $z$ be a point on $c(I)$ such that $\dcusp(y, z) \le D$. If
  $z$ lies in $\cay(\Gamma)$ then $w$ already lies in the
  $D$-neighborhood of $c(I) \cap \cay(\Gamma)$, so assume that $z$
  lies in $c((a, b))$, where $(a,b) \subset I$ is a component of
  $c^{-1}(X \minus \cay(\Gamma))$. Since distinct combinatorial
  horoballs have disjoint interiors, the set $c((a,b))$ lies in a
  single combinatorial horoball $\mc{H}$.

  It is possible that either $a = -\infty$ or $b = +\infty$. However,
  we cannot have $(a, b) = (-\infty, +\infty)$, since $\mc{H}$ is a
  quasi-isometrically embedded subset of $X$ with a unique point in
  its ideal boundary. Thus, up to reversing the direction of $c$, we
  can assume that $a$ is finite, and that
  $\dcusp(c(a), z) \le \dcusp(c(b), z)$. Our assumption on the
  endpoints of $c$ means that $c(a)$ must lie in
  $\dee \mc{H} \subset \cay(\Gamma)$. We can also choose some finite
  $b' \in [a, b]$ so that $c(b) \in \mc{H}$ and
  $d_X(c(a), z) \le d_X(c(b'), z)$; if $b$ is already finite then we
  can take $b = b'$, and otherwise any sufficiently large $b' > a$
  will do.

  Let $\ell$ be a geodesic segment in $X$ in joining $c(a)$ to
  $c(b')$. By the Morse lemma, there is a uniform constant $M$
  (depending only on $K, A, \delta$) and a point $w$ on $\ell$ so that
  $d_X(z, w) \le M$. From \Cref{lem:geodesics_enter_horoballs_deeply}
  we know that
  \[
    \dcusp(w, \{c(a), c(b')\}) \le \dcusp(w, X \minus \mc{H}) +
    7\delta.
  \]
  Now, since $y \in \cay(\Gamma)$, we have
  \[
    \dcusp(w, X \minus \mc{H}) \le \dcusp(w, z) + \dcusp(z,y) \le M +
    D.
  \]
  We also know that
  \begin{align*}
    \dcusp(z, c(a)) = \dcusp(z, \{c(a), c(b')\}) \le \dcusp(w, \{c(a),
    c(b)\}) + M,
  \end{align*}
  so we conclude that $\dcusp(z, c(a)) \le 2M + D$. Thus
  $\dcusp(y, c(I) \cap \cay(\Gamma)) < 2M + 2D$, which is what we
  wanted to show.
\end{proof}

\begin{corollary}
  \label{cor:hausdorff_distance_bound}
  For any constants $K \ge 1$ and $A, D \ge 0$, there exists a
  constant $R \ge 0$ satisfying the following. If $c:I \to X$ and
  $c:I' \to X$ are a pair of $(K,A)$-quasi-geodesic paths in $X$ whose
  non-ideal endpoints all lie in $\cay(\Gamma)$, and the Hausdorff
  distance between $c$ and $c'$ with respect to $\dcusp$ is at most
  $D$, then the Hausdorff distance between $c(I) \cap \cay(\Gamma)$
  and $c'(I') \cap \cay(\Gamma)$ with respect to $\dgam$ is at most
  $R$.
\end{corollary}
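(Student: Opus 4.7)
The plan is to reduce this to a direct application of \Cref{lem:cayley_hausdorff_neighborhoods} via an inclusion argument. Let $R_0$ be the constant produced by \Cref{lem:cayley_hausdorff_neighborhoods} for the given constants $K, A, D$. I will show that $R = R_0$ works.

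First, the hypothesis that $c$ and $c'$ have $\dcusp$-Hausdorff distance at most $D$ gives the inclusion $c(I) \subseteq \nbhd[\dcusp]{c'(I')}{D}$, and intersecting with $\cay(\Gamma)$ preserves this. Since $c'$ is a $(K,A)$-quasi-geodesic path whose non-ideal endpoints lie in $\cay(\Gamma)$, \Cref{lem:cayley_hausdorff_neighborhoods} (applied to $c'$) tells us that $\nbhd[\dcusp]{c'(I')}{D} \cap \cay(\Gamma)$ is within $\dgam$-Hausdorff distance $R_0$ of $c'(I') \cap \cay(\Gamma)$. Combining, every point of $c(I) \cap \cay(\Gamma)$ lies within $\dgam$-distance $R_0$ of some point of $c'(I') \cap \cay(\Gamma)$.

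By swapping the roles of $c$ and $c'$ (the hypotheses and conclusion are symmetric in them), the same argument yields the reverse containment: every point of $c'(I') \cap \cay(\Gamma)$ lies within $\dgam$-distance $R_0$ of $c(I) \cap \cay(\Gamma)$. Together these give the desired $\dgam$-Hausdorff bound of $R_0$.

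There is essentially no serious obstacle here: the bulk of the technical work—namely, controlling the interaction of quasi-geodesic paths with the horoball structure—has already been absorbed into \Cref{lem:cayley_hausdorff_neighborhoods}. The only point meriting a remark is that the endpoint hypothesis in the corollary matches the endpoint hypothesis of the lemma for both $c$ and $c'$, so the lemma can legitimately be applied to each of them in turn.
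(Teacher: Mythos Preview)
Your proof is correct and follows essentially the same approach as the paper's own proof: both use the inclusion $c(I) \cap \cay(\Gamma) \subseteq \nbhd[\dcusp]{c'(I')}{D} \cap \cay(\Gamma)$, invoke \Cref{lem:cayley_hausdorff_neighborhoods} to bound this within a uniform $\dgam$-neighborhood of $c'(I') \cap \cay(\Gamma)$, and then argue symmetrically.
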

\begin{proof}
  If the Hausdorff distance in $X$ between $c(I)$ and $c'(I')$ is at
  most $D$, then $c'(I')$ is contained in $\nbhd[\dcusp]{c(I)}{D}$,
  and thus $c'(I') \cap \cay(\Gamma)$ is contained in
  $\nbhd[\dcusp]{c(I)}{D} \cap \cay(\Gamma)$. By the previous lemma,
  this set is in turn contained in the $R$-neighborhood of
  $c(I) \cap \cay(\Gamma)$ for a uniform constant $R > 0$ depending
  only on $K, A, D$, with respect to the metric $\dgam$. So,
  $c(I') \cap \cay(\Gamma)$ is contained in the $D + R$ neighborhood
  of $c(I) \cap \cay(\Gamma)$. Arguing symmetrically completes the
  proof, using $D + R$ for the constant $R$ in the statement of the
  corollary.
\end{proof}

\subsubsection{Approximating elements by rays}
Below, we record one more useful property of the cusped space
$X = X(\Gamma, \pargps)$.
\begin{prop}
  \label{prop:cusp_rays_quasidense}
  Suppose that $\pargps \ne \emptyset$ and $\pargps \ne \{\Gamma\}$,
  and that the cusped space $X = X(\Gamma, \mc{P})$ is
  $\delta$-hyperbolic. Then, for any $\gamma \in \Gamma$, there is a
  geodesic ray $r:[0, \infty) \to X$ so that $r(0) = \identity$ and
  $\dcusp(r(t), \gamma) \le 8 + 21\delta$ for some $t \ge 0$.
\end{prop}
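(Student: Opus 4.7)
My plan is to construct a bi-infinite geodesic in $X = X(\Gamma, \pargps)$ passing through $\gamma$, and then extract a suitable ray from $\identity$ by applying $\delta$-slimness to an ideal triangle built out of the endpoints of this geodesic.

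The first step is to build a bi-infinite geodesic $\ell_0 : \R \to X$ passing through $\identity$. Since $\pargps \ne \emptyset$ and $\pargps \ne \{\Gamma\}$, I can choose some $P \in \pargps$ with $P \subsetneq \Gamma$, and since the compatible generating set $S$ generates $\Gamma$ there is a generator $s \in S$ with $s \notin P$. Then the cosets $P$ and $sP$ are distinct, so the two combinatorial horoballs $\mc{H}_1, \mc{H}_2 \subset X$ attached to them are disjoint and contribute distinct ideal points $\xi_1, \xi_2 \in \dee X$. I take $\ell_0$ to be the concatenation of the downward vertical ray in $\mc{H}_1$ from $\xi_1$ to $\identity$, the length-one edge from $\identity$ to $s$ in $\cay(\Gamma)$, and the upward vertical ray in $\mc{H}_2$ from $s$ to $\xi_2$. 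Vertical rays in combinatorial horoballs are geodesics, and any path from a vertex at depth $k$ in $\mc{H}_1$ to a vertex at depth $k'$ in $\mc{H}_2$ must drop at least $k$ levels to exit $\mc{H}_1$, travel at least one edge in $\cay(\Gamma)$ to reach the distinct coset $sP$, and rise at least $k'$ levels inside $\mc{H}_2$, for a total length of at least $k + 1 + k'$. This matches the length of the corresponding sub-path of $\ell_0$, so $\ell_0$ is a bi-infinite geodesic.

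For an arbitrary $\gamma \in \Gamma$, translating gives $\ell_\gamma := \gamma \cdot \ell_0$, a bi-infinite geodesic through $\gamma$ with endpoints $\eta_1 = \gamma \xi_1$ and $\eta_2 = \gamma \xi_2$ in $\dee X$. I pick geodesic rays $r_1, r_2$ from $\identity$ to $\eta_1, \eta_2$, and consider the ideal triangle with vertices $\identity, \eta_1, \eta_2$. Using that any two bi-infinite geodesics with the same endpoints in a proper $\delta$-hyperbolic space stay within Hausdorff distance $O(\delta)$, together with the fact that each side of the triangle lies in an $O(\delta)$-neighborhood of the union of the other two, I conclude that $\gamma \in \ell_\gamma$ lies within a uniform (in $\delta$) distance of $r_1 \cup r_2$. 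Without loss of generality, $\gamma$ is close to $r_1$, and then $r := r_1$ is the desired geodesic ray from $\identity$.

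The main technical point requiring care is the verification that $\ell_0$ is actually a geodesic (not merely a quasi-geodesic), which requires ruling out shortcuts that rise high inside one horoball and take a long horizontal step before descending; if some shortcut survives, the Morse lemma would replace $\ell_0$ by a nearby true geodesic at a small cost in $\delta$. The stated bound $8 + 21\delta$ should emerge from careful bookkeeping: the constant $8$ comes from the length-one Cayley edge together with a few triangle-inequality adjustments in the triangle step, while $21\delta$ sums the $O(\delta)$ Hausdorff bound comparing two bi-infinite geodesics sharing endpoints and the slim-triangle constant for a triangle with one finite and two ideal vertices.
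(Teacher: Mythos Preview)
Your approach is correct and genuinely different from the paper's. The paper does not construct a bi-infinite geodesic through $\gamma$; instead it chooses, for each of the two horoballs near $\gamma$, a point $x_i$ on the boundary of an isometrically embedded sub-horoball with $\dcusp(\gamma,x_i)\le 1+\delta$, forms the partly-ideal quadrilateral $(\identity,\gamma,x_i,p_i)$, and analyzes where the ray $r_i$ first enters that sub-horoball. Your ideal-triangle argument is cleaner and avoids that case analysis.

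Two clarifications. First, your worry about $\ell_0$ failing to be a geodesic is unnecessary: your own depth-counting argument is already complete. A competing path must still drop from depth $k$ to $0$ (at least $k$ edges, since depth is $1$-Lipschitz along edges), use at least one Cayley edge to pass between the disjoint cosets $P$ and $sP$, and climb from depth $0$ to $k'$ (at least $k'$ edges). Horizontal shortcuts at large depth inside a single horoball are irrelevant here because the two endpoints lie in \emph{different} horoballs, so one must descend to depth $0$ regardless. Second, your route gives a bound of the form $C\delta$ with no additive constant, which is formally stronger than $8+21\delta$; the specific decomposition $8+21\delta$ in the statement comes from the paper's quadrilateral bookkeeping (the $1+\delta$ from $\dcusp(\gamma,x_i)$, the $3\delta$-thin quadrilateral, the length-$3$ horizontal piece of a regular geodesic, and so on). Your heuristic attributing the constant $8$ to the length-one Cayley edge does not match your own argument, since $\gamma$ lies \emph{exactly} on $\ell_\gamma$.
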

\begin{proof}
  Fix $\gamma \in \Gamma$ and let $P \in \mc{P}$ be a proper subgroup
  of $\Gamma$. There is some $s$ in our compatible generating set $S$
  so that the cosets $P, sP$ are distinct, which in turn means that
  the cosets $\gamma P$, $\gamma sP$ are distinct. Let
  $\mc{H}_1, \mc{H}_2$ be isometrically embedded horoballs in $X$
  which are within Hausdorff distance $\delta$ of the combinatorial
  horoballs attached at $\gamma P$, $\gamma sP$, respectively, and for
  $i = 1,2$, let $p_i$ be the unique point in the ideal boundary of
  $\mc{H}_i$.

  For $i = 1,2$, let $x_i \in \dee \mc{H}_i$ be a point such that
  $\dcusp(\gamma, x_i) \le 1 + \delta$, and let $Q_i$ be a partly
  ideal geodesic quadrilateral with vertices
  $(\identity, \gamma, x_i, p_i)$ in that order.  We may choose $Q_i$
  so that the side between $x_i$ and $p_i$ is a vertical geodesic ray
  $[x_i, p_i) \subset \mc{H}_i$. The side of $Q_i$ between $\identity$
  and $p_i$ is parameterized by a unit-speed ray
  $r_i:[0, \infty) \to X$, which eventually lies in the horoball
  $\mc{H}_i$. We will show that $\gamma$ is within distance
  $8 + 21\delta$ of at least one of the rays $r_1, r_2$.

  Let $y_i \in \dee \mc{H}_i$ be the first point at which $r_i$ enters
  the horoball $\mc{H}_i$. We know $Q_i$ is $3\delta$-thin, so $y_i$
  is contained in a $3\delta$-neighborhood of either one of the
  geodesic segments $[\identity, \gamma]$ or $[\gamma, x_i]$, or else
  the ray $[x_i, p_i)$.

  If $y_i$ is contained in a $3\delta$-neighborhood of
  $[\gamma, x_i]$, then $\dcusp(y_i, \gamma) \le 1 +
  4\delta$. Alternatively, if $y_i$ is contained in a
  $3\delta$-neighborhood of some point $z_i \in [x_i, p_i)$, then
  $\dcusp(z_i, \dee \mc{H}_i) \le 3\delta$, hence
  $\dcusp(z_i, x_i) \le 3\delta$ and therefore
  $\dcusp(y_i, \gamma) \le 1 + 7\delta$. So we are done unless there
  are points $z_1, z_2 \in [\identity, \gamma]$ so that
  \[
    \dcusp(y_1, z_1) \le 3\delta, \qquad \dcusp(y_2, z_2) \le 3\delta.
  \]
  Without loss of generality, the points $z_1, z_2$ appear in that
  order on the geodesic $[\identity, \gamma]$. By considering a
  $2\delta$-thin geodesic quadrilateral with vertices
  $\gamma, x_1, y_1, z_1$, we see that the sub-geodesic
  $[z_1, \gamma] \subseteq [\identity, \gamma]$ is contained in the
  $(1 + 5\delta)$-neighborhood of a regular geodesic
  $[x_1, y_1] \subset \mc{H}_1$. Arguing similarly, a regular geodesic
  $[x_2, y_2] \subset \mc{H}_2$ is contained in the
  $(1 + 5\delta)$-neighborhood of
  $[z_2, \gamma] \subseteq [z_1, \gamma]$, hence in the
  $(2 + 10\delta)$-neighborhood of $[x_1, y_1]$.

  In particular, $[x_2, y_2]$ is contained in the
  $(2 + 10\delta)$-neighborhood of $X \minus \mc{H}_2$, and since
  $[x_2, y_2]$ is a regular geodesic in $\mc{H}_2$, this means that it
  has length at most $2(2 + 10\delta) + 3 = 7 + 20\delta$. Thus
  $\dcusp(y_2, \gamma) \le \dcusp(y_2, x_2) + \dcusp(x_2, \gamma) \le
  8 + 21\delta$.
\end{proof}

\begin{remark}
  A version of \Cref{prop:cusp_rays_quasidense} still holds even if
  $\pargps$ is empty (see e.g. \cite{bogopolski97infinite}). However,
  for some applications of the proposition, we will need to use the
  fact that the distance between $r$ and $\gamma$ depends only on
  $\delta$, and not on the space $X$ itself. It is easier to obtain
  this control when $\mc{P}$ is nonempty.
\end{remark}

\subsection{Dehn fillings}
\label{sec:rel_hyp_dehn_filling}

Below we recall the definition of a
\emph{Dehn filling} of the relatively hyperbolic pair
$(\Gamma, \pargps)$.

\begin{definition}
  Let $\pargps = \{P_1, \ldots, P_k\}$, and let $(N_1, \ldots, N_k)$
  be a set of subgroups of $\Gamma$ with $N_i \trianglelefteq P_i$ for
  each $i$. Let $N = \llangle \bigcup_{i=1}^k N_i \rrangle$. The
  quotient map $\pi:\Gamma \to \Gamma/N$ is called a \emph{Dehn
    filling} of $(\Gamma, \pargps)$, and the subgroups
  $N_i \trianglelefteq P_i$ are called \emph{Dehn filling kernels}.

  When $\pi:\Gamma \to \Gamma/N$ is a Dehn filling as above, then we
  use $\pargps^\pi$ to denote the collection of subgroups
  $\{\pi(P_1), \ldots \pi(P_k)\}$ of $\Gamma/N$. A property of
  $(\Gamma, \pargps)$ is said to hold for \emph{sufficiently long
    fillings} if there exists some finite set $B \subset \Gamma$ so
  that, whenever $(N_1, \ldots, N_k)$ is a set of Dehn filling kernels
  satisfying $N_i \cap B = \emptyset$ for all $i$, and
  $\pi:\Gamma \to \Gamma/N$ is the corresponding Dehn filling, then
  the pair $(\Gamma/N, \pargps^\pi)$ has this property.
\end{definition}

We have the following important theorem regarding Dehn fillings of
relatively hyperbolic groups, due to Osin and Groves--Manning. This
theorem can be thought of as a group-theoretic analog of the
hyperbolic Dehn filling theorem for Kleinian groups.
\begin{theorem}[\cite{Osin2007}, \cite{GM2008}]
  \label{thm:dehn_filling_criterion} Let $F$ be a finite subset of
  $\Gamma$. Then for any sufficiently long filling
  $\pi:\Gamma \to \Gamma/N$ of the relatively hyperbolic pair
  $(\Gamma, \pargps = \{P_1, \ldots, P_k\})$, all of the following
  hold:
  \begin{enumerate}
  \item For each $P_i \in \pargps$, the induced map
    $\pi:P_i/N_i \to \Gamma/N$ is injective. Thus $\pargps^\pi$ is
    identified with the set $\{P_1/N_1, \ldots, P_k/N_k\}$.
  \item The pair $(\Gamma/N, \pargps^\pi)$ is relatively hyperbolic.
  \item The restriction of $\pi$ to $F$ is injective.
  \end{enumerate}
\end{theorem}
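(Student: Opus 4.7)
The plan is to follow the approach of Groves--Manning via a ``partially filled'' cusped space. Starting from the cusped space $X = X(\Gamma, \pargps)$, I would construct a graph $\bar X$ by taking the quotient of $X$ under the action of the normal closure $N$; equivalently, one replaces each combinatorial horoball $\mc{H}(gP_i) \subset X$ with the combinatorial horoball $\mc{H}(\cay(gP_i/gN_ig^{-1}))$, attached along the quotient of Cayley graphs. The group $\Gamma/N$ acts properly on $\bar X$, and $\bar X$ is, essentially by construction, $\Gamma/N$-equivariantly quasi-isometric to $X(\Gamma/N, \pargps^\pi)$ once the peripheral injectivity in statement (1) is known. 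So the task reduces to showing that $\bar X$ is hyperbolic with constant independent of the filling, and that the obvious maps $P_i/N_i \to \Gamma/N$ are injective.

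The main technical step, and the one I expect to be the chief obstacle, is proving that $\bar X$ is $\delta'$-hyperbolic for some $\delta'$ depending only on $\delta$, provided the Dehn filling kernels $N_i$ avoid a sufficiently large finite set $B \subset \Gamma$. The natural approach is a linear isoperimetric argument: given a closed loop $\bar c$ in $\bar X$, one lifts its segments through filled horoballs back to the (isometrically embedded, uniformly hyperbolic) horoballs of $X$, producing a closed curve in $X$ whose ``defects'' are concatenations of conjugates of elements of the $N_i$. Hyperbolicity of $X$ provides a linear filling of the lifted loop, and the assumption that $B$ is large forces any nontrivial element of $\bigcup N_i$ appearing in a defect to have large word length in the corresponding $P_i$ --- hence large $\dcusp$-translation inside the horoball. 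A Greendlinger-style analysis of a minimal-area disk diagram then bounds the number of defects linearly in the length of $\bar c$, which transfers a linear isoperimetric inequality from $X$ to $\bar X$ and yields uniform hyperbolicity.

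Once $\bar X$ is shown to be hyperbolic, statement (2) follows from the Groves--Manning characterization of relative hyperbolicity via cusped spaces. Statement (1) follows from the same Greendlinger-style lemma: any element of $P_i \cap N$ that is not in $N_i$ would yield a short trivial loop in $\bar X$ containing a short defect element, contradicting the choice of $B$ once $B$ contains all elements of $P_i$ of word length below a threshold determined by $\delta$. Statement (3) is then a packaging corollary: if $\pi(f_1) = \pi(f_2)$ for distinct $f_1, f_2 \in F$, then $f_1 f_2^{-1}$ is a nontrivial element of $N$, and enlarging $B$ to include the finite set $\{f_1 f_2^{-1} : f_1 \ne f_2 \in F\}$ together with the geometric analysis above rules this out. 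The hard part throughout is obtaining \emph{uniform} control of the hyperbolicity constant of $\bar X$ independently of how long the filling is, which is precisely where the ``sufficiently long'' hypothesis is spent.
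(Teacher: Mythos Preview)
The paper does not prove this theorem; it is stated with citations to \cite{Osin2007} and \cite{GM2008} and used as a black box. Your proposal is a reasonable high-level sketch of the Groves--Manning cusped-space approach, so in that sense it matches the intended source, but there is nothing in the present paper to compare it against.
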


\subsubsection{Geometry of Dehn fillings}
\label{sec:dehnfill_geometry} Throughout this paper, we will need a
basic understanding of the relationship between the coarse geometry of
a relatively hyperbolic group $\Gamma$ and the geometry of its Dehn
fillings $\pi:\Gamma \to \Gamma/N$. Here we make note of several
results in this direction.

Consider a Dehn filling $\pi:\Gamma \to \Gamma/N$. The subgroup $N$
acts by automorphisms on all three graphs $\cay(\Gamma)$,
$\hcay(\Gamma, \pargps)$, and $X(\Gamma, \pargps)$ (recall that these
are defined with respect to a fixed compatible generating set
$S$). The resulting quotient graphs are \emph{almost} isomorphic to
the graphs $\cay(\Gamma/N)$, $\hcay(\Gamma/N, \pargps^\pi)$, and
$X(\Gamma/N, \pargps^\pi)$, defined with respect to the finite
compatible generating set $\pi(S)$; there is only ``almost'' an
isomorphism because the latter graphs are obtained from the former by
deleting self-loops. Following e.g. \cite{gm2021}, since such
self-loops do not affect the metric on the zero-skeleton of any of
these graphs, we will frequently employ a slight abuse of notation and
assume that the loops are present in each of $\cay(\Gamma/N)$,
$\hcay(\Gamma/N, \pargps^\pi)$, and $X(\Gamma/N, \pargps^\pi)$.

This means that the Dehn filling $\pi:\Gamma \to \Gamma/N$ induces
equivariant surjective maps of graphs
\begin{align*}
  &\pi:\cay(\Gamma) \to \cay(\Gamma/N),\\
  &\hat{\pi}:\hcay(\Gamma, \pargps) \to \hcay(\Gamma/N, \pargps^\pi),\\
  &\pi_X:X(\Gamma, \pargps) \to X(\Gamma, \pargps^\sigma).
\end{align*}
The maps $\hat{\pi}$ and $\pi_X$ both restrict to $\pi$ on the copies
of $\cay(\Gamma)$ respectively embedded in $\hcay(\Gamma, \pargps)$
and $X(\Gamma, \pargps)$. Moreover, $\pi_X$ sends combinatorial
horoballs to combinatorial horoballs, and preserves vertical and
horizontal edges; vertical edges are never sent to loops. Finally,
since $\pi, \hat{\pi}$, and $\pi_X$ are all surjective, and the metric
on each graph is a path metric, each of $\pi, \hat{\pi}$, and $\pi_X$
is 1-Lipschitz.

A straightforward consequence of \Cref{thm:dehn_filling_criterion} is
the following:
\begin{proposition}
  \label{prop:dehnfill_local_isometry}
  Let $R > 0$. For all sufficiently long fillings
  $\pi:\Gamma \to \Gamma/N$, the restriction of $\pi_X$ to any ball of
  radius $R$ in $X(\Gamma, \pargps)$ centered at a point in
  $\cay(\Gamma)$ is an isometric embedding whose image is a metric
  ball.
\end{proposition}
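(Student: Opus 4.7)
By the $\Gamma$-equivariance of $\pi_X$ and of $\dcusp$, it suffices to treat the ball $\ball{\identity}{R}$ in $X = X(\Gamma, \pargps)$. The key elementary fact is that $N$ acts freely on the vertex set of $X$: left translation is free on $\cay(\Gamma)$, and on a horoball vertex $(gp, k)$ one has $n \cdot (gp, k) = (ngp, k)$, which coincides with $(gp, k)$ only when $n = 1$. Consequently $\pi_X(v) = \pi_X(v')$ if and only if $v' \in N \cdot v$.

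The plan is to first establish injectivity of $\pi_X$ on the larger ball $\ball{\identity}{3R}$ and then promote this to an isometric embedding on $\ball{\identity}{R}$ via edge-by-edge lifting of geodesics. For injectivity, set
\[
  F = \{g \in \Gamma : \dcusp(\identity, g) \le 12R\},
\]
which is finite by \Cref{lem:cusp_cay_comparison}, and apply \Cref{thm:dehn_filling_criterion}: for sufficiently long fillings $\pi|_F$ is injective, equivalently $N \cap F = \{1\}$. If distinct $v_1, v_2 \in \ball{\identity}{3R}$ satisfied $\pi_X(v_1) = \pi_X(v_2)$, then $v_2 = nv_1$ for some $n \in N \setminus \{1\}$, and $\Gamma$-equivariance of $\dcusp$ combined with the triangle inequality would yield
\[
  \dcusp(\identity, n) \le 2\dcusp(\identity, v_1) + \dcusp(v_1, nv_1) \le 6R + 6R = 12R,
\]
placing $n$ in $N \cap F$, a contradiction.

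Although $\pi_X$ is not in general a graph covering (distinct horizontal edges in a horoball can be identified by $N$), edge lifting from a prescribed starting vertex is always possible: since $\pi_X$ is surjective on edges and $N$-equivariant, any preimage of a quotient-graph edge can be $N$-translated to begin at any chosen point of its starting fiber. Given $v_1, v_2 \in \ball{\identity}{R}$, let $L$ denote the distance in $X(\Gamma/N, \pargps^\pi)$ from $\pi_X(v_1)$ to $\pi_X(v_2)$ and lift a length-$L$ geodesic between them edge-by-edge to a path $\gamma$ in $X$ starting at $v_1$. Since $L \le \dcusp(v_1, v_2) \le 2R$, the endpoint $u$ of $\gamma$ lies in $\ball{v_1}{L} \subseteq \ball{\identity}{3R}$; injectivity forces $u = v_2$ and yields $\dcusp(v_1, v_2) \le L$, hence equality with the quotient distance (the reverse inequality being 1-Lipschitzness of $\pi_X$). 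Applying the same argument with $v_1 = \identity$ and a geodesic from $\pi_X(\identity)$ to any $w \in \ball{\pi_X(\identity)}{R}$ exhibits a preimage of $w$ inside $\ball{\identity}{R}$, so together with 1-Lipschitzness the image of $\pi_X$ on $\ball{\identity}{R}$ is exactly the metric ball $\ball{\pi_X(\identity)}{R}$.

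I anticipate that the primary subtlety will be the gap between the quotient being by a \emph{free} $N$-action on vertices and being a genuine graph covering; the folding that occurs at horoball vertices obstructs naive unique path lifting. The workaround, as executed above, is existential lifting paired with injectivity on an enlarged ball, and the resulting constant factor ($3R$ versus $R$) is absorbed into the size of the finite set $F$ to which one applies the Dehn filling theorem.
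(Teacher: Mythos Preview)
Your proof is correct. The paper does not actually supply a proof of this proposition; it introduces the statement only as ``a straightforward consequence of \Cref{thm:dehn_filling_criterion}'' and moves on. Your argument is precisely the natural way to unpack that claim: use part (3) of \Cref{thm:dehn_filling_criterion} to force $N \setminus \{1\}$ away from a finite ball in $\Gamma$, deduce injectivity of $\pi_X$ on a suitably enlarged ball from freeness of the $N$-action on vertices, and then lift geodesics in the quotient to certify that distances are preserved and the image is a full ball.

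Two minor remarks. First, your bound $\dcusp(\identity, n) \le 12R$ is not sharp---a direct triangle inequality through $v_2 = nv_1$ gives $\dcusp(\identity, n) \le \dcusp(\identity, v_2) + \dcusp(nv_1, n) = \dcusp(\identity, v_2) + \dcusp(\identity, v_1) \le 6R$---but of course this only shrinks the finite set $F$ and changes nothing. Second, the finiteness of $F$ follows already from local finiteness of the graph $X$; the appeal to \Cref{lem:cusp_cay_comparison} is fine but not strictly needed. Your handling of the edge-lifting subtlety (existential lifting via $N$-translates, as opposed to unique lifting) is exactly the right point to flag, and you resolve it correctly.
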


The lemma above can be used to prove the following, which sharpens
part of \Cref{thm:dehn_filling_criterion}. This result is originally
due to Agol--Groves--Manning \cite{AGM2009}; the precise version here
is stated (with the given level of generality) in e.g. \cite{GMS2019}.
\begin{proposition}[{\cite[Prop. 2.3]{AGM2009}}]
  \label{prop:uniformly_hyperbolic_fillings}
  There exists a constant $\delta \ge 0$ satisfying the following: the
  cusped space $X = X(\Gamma, \pargps)$ is $\delta$-hyperbolic, and
  for all sufficiently long fillings $\pi:\Gamma \to \Gamma/N$, the
  cusped space $X(\Gamma/N, \pargps^\pi)$ is $\delta$-hyperbolic.
\end{proposition}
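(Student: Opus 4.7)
The first assertion---that $X := X(\Gamma, \pargps)$ is $\delta_0$-hyperbolic for some $\delta_0 \ge 0$---is already Groves--Manning's original theorem, so the substance is producing a \emph{uniform} constant that works for all sufficiently long fillings. My plan is to combine \Cref{prop:dehnfill_local_isometry} with a standard local-to-global hyperbolicity criterion (due to Gromov, with quantitative versions proved by Papasoglu and others): there are universal constants $R_0 \ge 1$ and $k \ge 1$ such that any geodesic metric space in which every geodesic triangle of diameter at most $R_0 \delta$ is $\delta$-slim must be globally $k\delta$-hyperbolic.

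Concretely, I set $R = R_0 \delta_0$ and apply \Cref{prop:dehnfill_local_isometry} with radius $3R$ to obtain a finite set $B \subset \Gamma$ such that, for any Dehn filling $\pi:\Gamma \to \Gamma/N$ whose kernel avoids $B$, the induced map $\pi_X$ restricts to an isometry on every $3R$-ball centered at a point of $\cay(\Gamma) \subset X$. Writing $X_N := X(\Gamma/N, \pargps^\pi)$, the goal is to show that every geodesic triangle $T \subset X_N$ of diameter at most $R$ is $\delta_0$-slim. There are two cases. If $T$ lies entirely inside a single combinatorial horoball of $X_N$, then $T$ is $\delta_0$-slim automatically, since combinatorial horoballs are uniformly hyperbolic independent of the underlying graph. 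Otherwise $T$ meets $\cay(\Gamma/N) \subset X_N$, so I can choose a vertex $\bar{x}$ of $T$ within bounded distance of $\cay(\Gamma/N)$, lift $\bar{x}$ to a point $x \in X$ near $\cay(\Gamma)$, and use the local isometry on the $3R$-ball around $x$ to lift the two non-based edges of $T$ to geodesic segments in $X$. This yields a geodesic triangle $T' \subset X$ of the same diameter as $T$; because $X$ is $\delta_0$-hyperbolic, $T'$ is $\delta_0$-slim, and because $\pi_X$ is $1$-Lipschitz, so is $T$.

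With the slim-triangles hypothesis verified at scale $R_0 \delta_0$, the local-to-global criterion yields that $X_N$ is $k\delta_0$-hyperbolic, so $\delta := k\delta_0$ works simultaneously for $X$ and for every sufficiently long filling. The step where I expect the most care is the lifting argument: I must verify that the chosen $3R$-ball really contains the entirety of $T$ and that geodesic edges of $T$ lift to \emph{geodesic} segments of the same length in $X$, not merely to paths. Both points follow from the fact that \Cref{prop:dehnfill_local_isometry} gives an isometric embedding whose image is itself a metric ball in $X_N$, but some bookkeeping is still needed to position $T$ properly relative to the horoballs of $X_N$ and to choose the vertex $\bar{x}$ so that its $R$-neighborhood in $X_N$ is genuinely covered by an isometric lift of a $3R$-ball from $X$.
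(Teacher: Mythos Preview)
The paper does not give its own proof of this proposition: it cites the result to Agol--Groves--Manning \cite{AGM2009} (and \cite{GMS2019} for the stated generality), merely remarking that \Cref{prop:dehnfill_local_isometry} ``can be used to prove'' it. Your proposal is exactly the standard argument behind that citation---combine the local isometry with a Gromov/Papasoglu local-to-global hyperbolicity criterion---so there is no real divergence in approach.

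Your argument is correct, and the lifting concern you flag resolves cleanly. If $T \subset X_N$ has diameter at most $R$ and is not entirely contained in a horoball, pick a point $\bar y \in T \cap \cay(\Gamma/N)$ (not a vertex of $T$---this is the one small adjustment, since \Cref{prop:dehnfill_local_isometry} requires the center of the ball to lie in $\cay(\Gamma)$). Lift $\bar y$ to $y \in \cay(\Gamma)$ and apply the proposition with radius $3R$: the restriction $\pi_X|_{B(y,3R)}$ is a bijective isometry onto $B(\bar y, 3R)$, and every side of $T$ lies in $B(\bar y, 2R)$, hence in the image. Because the map is a bijective isometry between metric balls carrying the ambient metrics, a geodesic segment in $X_N$ contained in $B(\bar y, 3R)$ pulls back to a path in $B(y, 3R)$ whose length equals the $X$-distance between its endpoints, i.e.\ a genuine $X$-geodesic. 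So the lift of $T$ is a geodesic triangle in $X$, it is $\delta_0$-slim, and pushing down via the $1$-Lipschitz map $\pi_X$ gives $\delta_0$-slimness of $T$.
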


\subsubsection{Lifts} Whenever $\pi:\Gamma \to \Gamma/N$ is a
sufficiently long filling, it restricts to an injection on the
generating set $S$ and thus the induced map
$\cay(\Gamma) \to \cay(\Gamma/N)$ is a local homeomorphism of locally
finite graphs, hence a covering map. In this case paths in
$\cay(\Gamma)$ lift to paths in $\cay(\Gamma/N)$, uniquely after a
choice of basepoint in $\cay(\Gamma/N)$.

On the other hand, while the map
$\pi_X:X(\Gamma, \pargps) \to X(\Gamma/N, \pargps^\pi)$ is typically
\emph{not} a covering map, we can still work with lifts.
\begin{definition}
  Let $\pi:\Gamma \to \Gamma/N$ be a Dehn filling. If
  $c:I \to X(\Gamma/N, \mc{P}^\pi)$ is a path in the quotient cusp
  space, a \emph{lift} of $c$ is any path
  $\tilde{c}:I \to X(\Gamma, \mc{P})$ such that
  $\pi_X \circ \tilde{c} = c$.
\end{definition}

Since $\pi_X$ is a surjective map of (combinatorial) graphs, any path
in $X(\Gamma/N, \pargps^\pi)$ can be lifted edge-by-edge to obtain a
path in $X$. However, while lifts exist, they need not be unique, even
after fixing a basepoint. We observe:

\begin{proposition}
  Let $c:I \to X(\Gamma, \pargps)$ be a path, and let $\tilde{c}$ be a
  lift of $c$. If $c$ is a $(K,A)$-quasi-geodesic, then so is
  $\tilde{c}$.
\end{proposition}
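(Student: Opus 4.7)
The proposition as typed appears to contain a typo: the definition of ``lift'' just above specifies that $c$ is a path in the quotient cusped space $X(\Gamma/N, \pargps^\pi)$ and $\tilde c$ is a path in $X(\Gamma, \pargps)$ with $\pi_X \circ \tilde c = c$. Under that reading (which is the only one compatible with the previous definition), the proof is very short.

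My plan is to verify the two quasi-geodesic inequalities separately, using that paths in our graphs are canonically parameterized by arc length and that the covering-type map $\pi_X$ is $1$-Lipschitz. More precisely, for any $s, t \in I$, I want to show
\[
  \frac{1}{K}|s - t| - A \;\le\; \dcusp(\tilde c(s), \tilde c(t)) \;\le\; K|s - t| + A.
\]

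For the upper bound, $\tilde c$ is a path in the graph $X(\Gamma, \pargps)$, and by the convention stated earlier every such path is identified with its geometric realization, which is $1$-Lipschitz on its domain. Hence $\dcusp(\tilde c(s), \tilde c(t)) \le |s - t|$, which is stronger than the required inequality since $K \ge 1$ and $A \ge 0$.

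For the lower bound, I will use the fact, recorded in \Cref{sec:dehnfill_geometry}, that the quotient map $\pi_X : X(\Gamma, \pargps) \to X(\Gamma/N, \pargps^\pi)$ is $1$-Lipschitz (because $\pi_X$ is a surjective morphism of graphs whose target metric is a path metric). Applying this to $\tilde c(s)$ and $\tilde c(t)$ and then using that $c = \pi_X \circ \tilde c$ is a $(K,A)$-quasi-geodesic gives
\[
  \dcusp(\tilde c(s), \tilde c(t)) \;\ge\; d_{X(\Gamma/N, \pargps^\pi)}(c(s), c(t)) \;\ge\; \frac{1}{K}|s-t| - A,
\]
which is exactly the lower bound we want. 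Combining the two estimates finishes the proof, so there is essentially no obstacle beyond making sure the parameterizations on the two sides of $\pi_X$ agree, which is built into the definition of a lift.
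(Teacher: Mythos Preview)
Your proof is correct and follows the same approach as the paper, which simply says the result ``is immediate from the fact that $\pi_X$ is 1-Lipschitz''; you have just spelled out the two inequalities that this phrase is compressing. Your observation about the typo in the statement (the path $c$ should live in $X(\Gamma/N, \pargps^\pi)$, not $X(\Gamma, \pargps)$) is also correct and worth noting.
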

\begin{proof}
  This is immediate from the fact that $\pi_X$ is 1-Lipschitz with
  respect to the metrics $\dcusp$, $d_X^\pi$ on $X(\Gamma, \pargps)$
  and $X(\Gamma/N, \pargps^\pi)$.
\end{proof}

In the opposite direction, we also have the following:
\begin{proposition}
  \label{prop:cay_geodesics_descend}
  For any $R > 0$, and any $K > 1$, any sufficiently long Dehn filling
  $\pi:\Gamma \to \Gamma/N$ satisfies the following. Suppose that
  $\tilde{c}:I \to X(\Gamma, \pargps)$ is a geodesic, and the maximum
  depth of any point on $\tilde{c}$ is at most $R$. Then
  $c = \pi_X \circ \tilde{c}$ is a $(K, 2\delta)$-quasi-geodesic in
  $X(\Gamma/N, \pargps^\pi)$.
\end{proposition}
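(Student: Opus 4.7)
The plan is to break the argument into two steps: a local step saying that $c = \pi_X \circ \tilde c$ is an isometric embedding on subintervals of some definite length, and a global step upgrading this local behavior to a $(K, 2\delta)$-quasi-geodesic using hyperbolicity of the cusped space.

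First, I would invoke a local-to-global principle in the $\delta$-hyperbolic space $X(\Gamma/N, \pargps^\pi)$ (whose hyperbolicity constant is uniform over all sufficiently long fillings by \Cref{prop:uniformly_hyperbolic_fillings}): given $K > 1$ and $\delta$, there is a threshold $L_0 = L_0(K, \delta)$ such that any path in a $\delta$-hyperbolic geodesic space which restricts to an isometric embedding on every subinterval of length $\leq L_0$ is in fact a global $(K, 2\delta)$-quasi-geodesic. This is a standard consequence of the stability of local (quasi-)geodesics in hyperbolic spaces; the key point is that as $L_0/\delta$ grows, the multiplicative constant of the global quasi-geodesic obtained this way tends to $1$, and the additive constant can be taken as $2\delta$. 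So it suffices to arrange $c$ to be a local isometric embedding at scale $L_0$.

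Next, I would apply \Cref{prop:dehnfill_local_isometry} with radius $L_0 + R$: for all sufficiently long fillings $\pi$, the map $\pi_X$ is an isometric embedding when restricted to any ball of radius $L_0 + R$ in $X(\Gamma, \pargps)$ centered at a point of $\cay(\Gamma)$. Now fix any $t, s \in I$ with $|t - s| \leq L_0$. Because $\tilde c$ has maximum depth at most $R$, there is some $y \in \cay(\Gamma)$ with $\dcusp(y, \tilde c(t)) \leq R$. Since $\tilde c$ is a geodesic, the sub-segment $\tilde c([t,s])$ lies in $\ball[\dcusp]{\tilde c(t)}{L_0} \subseteq \ball[\dcusp]{y}{L_0 + R}$, so $\pi_X$ is isometric on this sub-segment. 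In particular,
\[
d^\pi\bigl(c(t), c(s)\bigr) = \dcusp\bigl(\tilde c(t), \tilde c(s)\bigr) = |t - s|,
\]
and more generally $c|_{[t,s]}$ is an isometric embedding. Combining this with the local-to-global principle above finishes the proof.

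The main obstacle is the local-to-global step, specifically getting the additive constant down to $2\delta$ for a prescribed multiplicative constant $K > 1$. The cleanest way to handle this is to use that $c$ is truly locally isometric (a $(1,0)$-local quasi-geodesic) rather than merely a local quasi-geodesic, since then the global constants one obtains approach $(1, 0)$ as the local scale $L_0$ grows. Either a direct fellow-traveling argument or a standard reference (for instance the local-to-global lemma in Bridson--Haefliger, III.H) yields the desired form of the result. Once that ingredient is available, the rest of the argument is entirely mechanical and reduces to carefully chaining the two steps: fix $K$, choose $L_0 = L_0(K, \delta)$, and then require the filling to be long enough that \Cref{prop:dehnfill_local_isometry} holds at radius $L_0 + R$.
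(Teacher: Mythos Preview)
Your proposal is correct and follows essentially the same approach as the paper: both arguments use \Cref{prop:dehnfill_local_isometry} at radius roughly $R$ plus a large local scale to make $c$ a $k$-local geodesic, then invoke the local-to-global principle in $\delta$-hyperbolic spaces (the paper cites \cite[III.H.1.13]{bridsonhaefliger} explicitly, obtaining constants $\left(\tfrac{k+4\delta}{k-4\delta}, 2\delta\right)$ and letting $k \to \infty$). The only cosmetic difference is the order of quantifiers: the paper fixes $k > 8\delta$ first and then observes the multiplicative constant can be made arbitrarily close to $1$, whereas you fix $K$ first and then choose $L_0$.
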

\begin{proof}
  Fix $\delta$ as in \Cref{prop:uniformly_hyperbolic_fillings} so that
  the cusped space for all sufficiently long fillings of
  $(\Gamma, \pargps)$ is $\delta$-hyperbolic, and let $k >
  8\delta$. By \Cref{prop:dehnfill_local_isometry}, for any
  sufficiently long filling $\pi:\Gamma \to \Gamma/N$, if $B$ is any
  ball of radius $R + k$ centered at a point in $\cay(\Gamma)$, then
  $\pi_X$ restricts to an isometric embedding on $B$. By assumption,
  every point in $\tilde{c}$ is contained in the ball of radius $R$
  centered at some point in $\cay(\Gamma)$, so this implies that
  $\pi_X \circ \tilde{c}$ is a $k$-local geodesic in
  $X(\Gamma/N, \pargps^\pi)$.

  It now follows from the local-to-global principle in
  $\delta$-hyperbolic metric spaces that $\pi_X \circ \tilde{c}$ is a
  (global) quasi-geodesic. For a precise estimate, we can use e.g.
  \cite[III.H.1.13]{bridsonhaefliger} to see that it is a
  $\left(\dfrac{k + 4\delta}{k - 4\delta},
    2\delta\right)$-quasigeodesic. Since $k > 8\delta$ could be chosen
  arbitrarily large, this completes the proof.
\end{proof}

\section{Extended geometrically finite representations}
\label{sec:egf_background}

In this section, we recall the definition of an \emph{extended
  geometrically finite} (EGF) representation of a relatively
hyperbolic group $\Gamma$ into a semisimple Lie group $G$. EGF
representations were originally defined in \cite{Weisman2022}, and we
refer to that paper for further detail. See also Section 2 of
\cite{Weisman2023} for a brief introduction, focusing on the case
where $G = \PGL(d, \R)$.

When $G$ has rank one, then EGF representations are closely related to
(but not exactly the same as) representations with finite kernel and
geometrically finite image. We give a precise statement connecting the
two notions in \Cref{sec:rank_one} of this paper. See also Section 2
of \cite{GW2024} for a detailed account of the relationship.

\subsection{Extended convergence actions}
\label{sec:extended_convergence}
The definition of an EGF representation is based on the notion of an
\emph{extended convergence action}. Extended convergence actions
generalize the notion of a convergence action of a group $\Gamma$ on a
compact metrizable space $M$ (see \Cref{sec:convergence_actions}), in
a way which accommodates many discrete group actions on the flag
manifold $G/Q$ associated to a semisimple Lie group $G$ and a
parabolic subgroup $Q < G$.

\begin{definition}[See {\cite[Def. 1.2]{Weisman2022}}]
  \label{defn:extended_convergence_group}
  Let $(\Gamma, \mc{P})$ be a relatively hyperbolic pair, with
  $\Gamma$ acting on a compact metrizable space $M$ by
  homeomorphisms. Let $\Lambda \subset M$ be a closed
  $\Gamma$-invariant set. We say that a $\Gamma$-equivariant
  surjective map $\phi:\Lambda \to \bigamp$ \emph{extends the
    convergence group action} of $\Gamma$ on $\bigamp$ if there exists
  an assignment $z \mapsto C_z$ of points $z \in \bigamp$ to open
  subsets $C_z \subset M$ satisfying the following properties:
  \begin{enumerate}[label=(C\arabic*)]
  \item\label{item:repelling_strata_interior} For every compact subset
    $K \subset \bigamp$, the intersection $\bigcap_{z \in K} C_z$
    contains an open neighborhood of $\phi^{-1}(\bigamp \minus K)$.
  \item\label{item:extended_convergence_prop} For every sequence
    $\gamma_n \in \Gamma$ such that (in the Bowditch compactification
    $\overline{\Gamma}$) we have $\gamma_n \to z_+$ and
    $\gamma_n^{-1} \to z_-$ for $z_\pm \in \bigamp$, every compact
    subset $K \subset C_{z_-}$, and every open set $U \subset M$
    containing $\phi^{-1}(z_+)$, we have $\gamma_n K \subset U$ for
    all sufficiently large $n$.
  \end{enumerate}
  The map $\phi$ is called a \emph{boundary extension}, the set
  $\Lambda$ is called a \emph{boundary set}, and the sets $C_z$ are
  called the \emph{repelling strata}.
\end{definition}

Note that, in the context of \Cref{defn:extended_convergence_group},
if the equivariant map $\phi:\Lambda \to \bigamp$ is a homeomorphism,
and if each repelling stratum $C_z$ is equal to
$M \minus \phi^{-1}(z)$, then the second part of the definition says
that $\Gamma$ acts on $M$ as a convergence group with limit set
$\Lambda$.

\begin{remark}
  \label{rem:extended_convergence}
  The original version of \Cref{defn:extended_convergence_group} given
  in \cite{Weisman2022} differs from the definition above in several
  respects. Most significantly, the definition in \cite{Weisman2022}
  replaces condition \ref{item:repelling_strata_interior} with the
  weaker condition:
  \begin{enumerate}[label=(C1*)]
  \item\label{item:repelling_strata_containment} for every
    $z \in \bigamp$, the set $C_z$ contains $\bigamp \minus \{z\}$.
  \end{enumerate}
  However, the stronger assumption
  \ref{item:repelling_strata_interior} is actually used implicitly in
  some of the general constructions in \cite{Weisman2022}, so we
  correct the definition by including it here. In
  \Cref{sec:egf_definition} below, when we consider $Q$-extended
  geometrically finite representations $\rho$ into a semisimple Lie
  group $G$, we will see that condition
  \ref{item:repelling_strata_interior} is equivalent to the
  combination of \ref{item:repelling_strata_containment} with a
  natural additional assumption on $\rho$ called \emph{relative
    $Q$-divergence}.

  We also comment that the version of
  \Cref{defn:extended_convergence_group} in \cite{Weisman2022} assumes
  that $\pargps = \ipargps$; however, this difference in the
  definitions is superficial, since if $(\Gamma, \pargps)$ is a
  relatively hyperbolic pair then so is $(\Gamma, \ipargps)$.
\end{remark}

\begin{remark}
  It is also possible to define extended convergence group actions for
  groups which are not necessarily relatively hyperbolic, but we will
  not address this here.
\end{remark}

\subsection{Semisimple Lie groups, parabolic subgroups, and flag
  manifolds}

Let $G$ be a semisimple Lie group, let $K < G$ be a maximal compact
subgroup, and let $\X$ be the associated nonpositively curved
Riemannian symmetric space $G/K$. A subgroup $Q < G$ is
\emph{parabolic} if it is the stabilizer in $G$ of some point
$z \in \dee \X$. Two parabolic subgroups $Q_+, Q_-$ are
\emph{antipodal} if they are respectively the stabilizers of points
$z_+, z_-$ in $\dee \X$ joined by a bi-infinite geodesic in $\X$. A
parabolic subgroup $Q$ is \emph{symmetric} if it has an antipodal
subgroup $Q'$ which is conjugate to $Q$ in $G$.

For any parabolic subgroup $Q < G$, the homogeneous $G$-space $G/Q$ is
called a \emph{flag manifold}. Two flags $\xi_1 \in G/Q_1$ and
$\xi_2 \in G/Q_2$ are \emph{antipodal} if their stabilizers in $G$ are
antipodal parabolic subgroups. If $Q_+, Q_-$ are antipodal parabolic
subgroups, and $\xi_+ \in G/Q_+$, then $\Opp(\xi_+) \subset G/Q_-$
denotes the set of flags antipodal to $\xi_+$. It is an open dense
subset of $G/Q_-$.

\begin{example}
  If $G$ has rank one, then $\X$ is negatively curved, so any two
  distinct points in $\dee \X$ are joined by a geodesic in $\X$. In
  addition $G$ acts transitively on $\dee \X$, so there is only one
  conjugacy class of parabolic subgroup in $G$ and all parabolic
  subgroups are symmetric. All of the flag manifolds for $G$ thus have
  a unique $G$-equivariant identification with the visual boundary of
  $\X$, and two flags in $\dee \X$ are antipodal if and only if they
  are distinct.
\end{example}

\begin{example}
  Let $G = \PGL(d, \R)$. Then $\PO(d)$ is a maximal compact subgroup
  of $G$. For each $k \in \{1, \ldots, d-1\}$, the stabilizer of a
  $k$-plane in $\R^d$ is a maximal parabolic subgroup of $G$, meaning
  that each Grassmannian of $k$-planes can be identified with a flag
  manifold $G/Q$. More generally, all of the different flag manifolds
  for $G$ can be uniquely identified with \emph{partial flag spaces}
  associated to nonempty subsets of $\{1, \ldots, d-1\}$; the partial
  flag space associated to a set of integers $\{i_1 < \ldots < i_k\}$
  is the space of \emph{partial flags}
  \[
    V_{i_1} \subset  \ldots \subset V_{i_k},
  \]
  where each $V_{i_j}$ is a subspace of $\R^d$ with dimension $i_j$.

  A pair of partial flags $V_{i_1} \subset \ldots \subset V_{i_k}$ and
  $W_{j_1} \subset \ldots \subset W_{j_k}$ are antipodal if they are
  \emph{transverse}, meaning that for any
  $i \in \{i_1, \ldots, i_k\}$, $V_i$ is defined if and only if
  $W_{d-i}$ is defined, and $V_i + W_{d - i} = \R^d$. A parabolic
  subgroup $Q$ is symmetric if it is the stabilizer of a partial flag
  of the form
  \[
    V_{i_1} \subset \ldots \subset V_{i_k} \subseteq V_{d-i_k} \subset
    \ldots \subset V_{d - i_1}.
  \]
\end{example}

\subsubsection{$Q$-divergence}

Returning to the context of a general semisimple Lie group $G$, let
$Q < G$ be a symmetric parabolic subgroup, and let $\Lambda$ be a
subset of the flag manifold $G/Q$.  We will say that a sequence $g_n$
in $G$ is \emph{$Q$-divergent} if, for every subsequence of $g_n$,
there exists an open set $U \subset \flags$ and a further subsequence
$g_m$ so that $g_mU$ converges to a singleton $\{\xi\}$ in
$\flags$. In this case $\xi$ is said to be a \emph{$Q$-limit point} of
the sequence $g_n$. Note that $Q$-divergence depends only on the
conjugacy class of $Q$ in $G$. Also, since we have assumed that $Q$ is
a symmetric parabolic subgroup, a sequence $g_n$ is $Q$-divergent if
and only if the sequence $g_n^{-1}$ of inverses is $Q$-divergent.

A subgroup $\Gamma < G$ is $Q$-divergent if all sequences of pairwise
distinct elements in $\Gamma$ are $Q$-divergent. The \emph{$Q$-limit
  set} of a sequence or subgroup in $G$ is the set of all of its
$Q$-limit points. The $Q$-limit set of a group $\Gamma < G$ is a
$\Gamma$-invariant subset of $\flags$, and it is closed if $\Gamma$ is
$Q$-divergent.

If $G$ has rank one, a sequence $g_n$ is $Q$-divergent (for some,
equivalently any, parabolic $Q$) if and only if it is divergent in the
sense that it eventually leaves every compact subset of $G$. Thus in
the rank-one case a group $\Gamma < G$ is $Q$-divergent if and only if
it is discrete, and its $Q$-limit set is precisely the limit set in
the usual sense.

There are several notions equivalent to $Q$-divergence which will not
appear in this paper, but arise frequently in the literature; usually
these definitions are stated in terms of the behavior of a
\emph{Cartan projection} for $G$ \cite{GGKW2017} or the
\emph{vector-valued distance} on the Riemannian symmetric space $G/K$
\cite{KLP2017}. For the equivalence of the definitions, see
e.g. \cite[Lem. 4.23(i)]{KLP2017} and the appendix of
\cite{Weisman2022}.

We record one more property of $Q$-divergent sequences, which we will
use several times throughout the paper:
\begin{lemma}[{\cite[Lem. 4.19]{KLP2017}}]
  \label{lem:q_convergence_sequences}
  Let $g_n$ be a sequence in $G$, and let $\xi^\pm \in \flags$. The
  following are equivalent:
  \begin{enumerate}[label=(\arabic*)]
  \item The sequence $g_n$ converges to the constant map $\xi^+$ on
    $\Opp(\xi^-)$, uniformly on compact sets.
  \item The sequence $g_n$ is $Q$-divergent, the unique $Q$-limit
    point of $g_n$ is $\xi^+$, and the unique $Q$-limit point if
    $g_n^{-1}$ is $\xi^-$.
  \end{enumerate}
\end{lemma}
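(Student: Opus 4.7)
The plan is to prove the two implications using the standard ``contraction dynamics'' associated to $Q$-divergent sequences in $G$, which can be extracted from the behavior of the Cartan projection. The most convenient intermediate statement is the following: for any $Q$-divergent sequence $g_n$ in $G$, one can pass to a subsequence so that there exist flags $\eta^+, \eta^- \in G/Q$ with the property that $g_n$ converges to the constant map $\eta^+$ uniformly on compact subsets of $\Opp(\eta^-)$, and $g_n^{-1}$ converges to the constant map $\eta^-$ uniformly on compact subsets of $\Opp(\eta^+)$. Along such a subsequence, $\eta^+$ is automatically the unique $Q$-limit point of $g_n$ and $\eta^-$ the unique $Q$-limit point of $g_n^{-1}$. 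This fact is where the symmetric parabolic assumption enters, and I would either cite it directly from \cite{KLP2017} or reprove it using the KAK decomposition and the compactness of $K$.

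For (1)$\Rightarrow$(2), the key observation is that $\Opp(\xi^-)$ is open and dense in $G/Q$, so choosing any open $U \subseteq \Opp(\xi^-)$ with compact closure gives $g_n \overline{U} \to \{\xi^+\}$; hence $g_n$ is $Q$-divergent and $\xi^+$ is a $Q$-limit point. Uniqueness of the $Q$-limit follows from density: any other candidate $Q$-limit $\xi'$ would come from uniform convergence $g_{n_k} V \to \{\xi'\}$ on an open set $V$, but $V \cap \Opp(\xi^-) \ne \emptyset$ forces $\xi' = \xi^+$ by hypothesis. To handle $g_n^{-1}$, suppose $\xi^-_*$ is a $Q$-limit point of some subsequence $g_{n_k}^{-1}$; applying the intermediate statement above to $g_{n_k}$ yields uniform convergence $g_{n_k}|_{\Opp(\xi^-_*)} \to \xi^+$. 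Combining this with the hypothesis $g_n|_{\Opp(\xi^-)} \to \xi^+$, and using that $\Opp(\xi^-_*)$ and $\Opp(\xi^-)$ are open dense, a continuity/equicontinuity argument (or a direct computation with a transverse test flag to $\xi^-_*$ but not to $\xi^-$) pins down $\xi^-_* = \xi^-$.

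For (2)$\Rightarrow$(1), I would use a standard subsequence principle: uniform convergence on compacts holds if and only if it holds along every subsequence. Given any subsequence of $g_n$, apply the intermediate statement to extract a further subsequence $g_{n_k}$ converging uniformly to some $\eta^+$ on compact subsets of $\Opp(\eta^-)$, with $g_{n_k}^{-1}$ converging uniformly to $\eta^-$ on compact subsets of $\Opp(\eta^+)$. By the uniqueness of $Q$-limits assumed in (2), $\eta^+ = \xi^+$ and $\eta^- = \xi^-$. Hence along every subsequence we recover the desired uniform convergence $g_{n_k}|_{K} \to \xi^+$ for any compact $K \subset \Opp(\xi^-)$, completing the proof. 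The only nontrivial step is the intermediate contraction lemma; once that is in hand, both implications reduce to subsequence-extraction and the density of the opposition sets.
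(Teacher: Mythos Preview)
The paper does not give its own proof of this lemma; it is stated with a citation to \cite[Lem.~4.19]{KLP2017} and used as a black box throughout. So there is no ``paper's proof'' to compare against.

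Your sketch is along standard lines and the overall architecture is sound: the intermediate contraction lemma (every $Q$-divergent sequence has a subsequence exhibiting the north--south dynamics on $\flags$) is indeed the heart of the matter, and once it is available the implication (2)$\Rightarrow$(1) follows exactly as you describe via the subsequence principle. For (1)$\Rightarrow$(2), your argument that $g_n$ is $Q$-divergent with unique $Q$-limit $\xi^+$ is clean. The one step that deserves more care is the identification $\xi^-_* = \xi^-$: knowing that $g_{n_k}$ converges to $\xi^+$ on both $\Opp(\xi^-)$ and $\Opp(\xi^-_*)$ does not by itself force the two base flags to coincide, since these are merely two open dense sets on which the same limit is attained. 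What you actually need is that the convergence in the intermediate lemma is \emph{sharp}, i.e.\ that $g_{n_k}$ fails to converge to $\xi^+$ at points of $\flags \setminus \Opp(\xi^-_*)$; this comes out of the $KAK$ computation but is worth stating explicitly. Alternatively, one can argue as follows: having established that the unique $Q$-limit of $g_{n_k}$ is $\xi^+$, the intermediate lemma gives $g_{n_k}^{-1} \to \xi^-_*$ uniformly on compacts in $\Opp(\xi^+)$; if $\xi^-_* \in \Opp(\xi^-)$, then for any $\zeta \in \Opp(\xi^+)$ the points $g_{n_k}^{-1}\zeta$ eventually lie in a fixed compact subset of $\Opp(\xi^-)$, whence by hypothesis (1) one gets $\zeta = g_{n_k}(g_{n_k}^{-1}\zeta) \to \xi^+$, contradicting $\zeta \in \Opp(\xi^+)$. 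This shows $\xi^-_* \notin \Opp(\xi^-)$, and a symmetric argument (or the sharpness above) is still needed to upgrade this to equality in higher rank, where the complement of $\Opp(\xi^-)$ is larger than a point.
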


Note that every $Q$-divergent sequence in $G$ has a subsequence
satisfying the conditions of the lemma.

\subsection{Extended geometrically finite representations}
\label{sec:egf_definition}

We are now ready to give the main definition of this section. Below,
let $(\Gamma, \pargps)$ be a relatively hyperbolic pair, let $G$ be a
semisimple Lie group, and let $Q < G$ be a symmetric parabolic
subgroup.

\begin{definition}[{See \cite[Def. 1.3]{Weisman2022}}]
  \label{defn:egf_definition}
  A representation $\rho:\Gamma \to G$ is $Q$-\emph{extended
    geometrically finite} (with respect to the peripheral structure
  $\pargps$) if there exists a compact $\rho$-invariant subset
  $\Lambda \subset G/Q$ and a surjective antipodal map
  $\phi:\Lambda \to \bigamp$ extending the convergence action of
  $\Gamma$ on $\bigamp$ (\Cref{defn:extended_convergence_group}).
\end{definition}

Here, a map $\phi:\Lambda \to Z$ is \emph{antipodal} if, for any
distinct $z_1, z_2 \in Z$, every flag in $\phi^{-1}(z_1)$ is antipodal
to every flag in $\phi^{-1}(z_2)$, i.e. if
\[
  \phi^{-1}(z_1) \subseteq \bigcap_{\xi \in \phi^{-1}(z_2)} \Opp(\xi).
\]
When $G$ has rank one, this condition is vacuous.

As mentioned in \Cref{rem:extended_convergence}, it is not clear that
\Cref{defn:egf_definition} agrees exactly with the version originally
given in \cite{Weisman2022}, since it is possible that condition
\ref{item:repelling_strata_interior} in
\Cref{defn:extended_convergence_group} is actually stronger than
condition \ref{item:repelling_strata_containment}. Note, however, that
we are frequently in the situation where the repelling strata $C_z$
are all equal to the sets $\Opp(\phi^{-1}(z))$, and in this case both
\ref{item:repelling_strata_interior} and
\ref{item:repelling_strata_containment} follow automatically from the
antipodality assumption on $\phi$ and compactness of $\Lambda$.

Even when the above does not hold, we can still obtain
\ref{item:repelling_strata_interior} from
\ref{item:repelling_strata_containment} if we also assume that $\rho$
is \emph{relatively $Q$-divergent} in the following sense:

\begin{definition}
  A representation $\rho:\Gamma \to G$ is \emph{relatively
    $Q$-divergent} if, for every sequence $\gamma_n \in \Gamma$
  satisfying $\lcon{\gamma} \to \infty$, the sequence $\rho(\gamma_n)$
  is $Q$-divergent.
\end{definition}

The proof of Lemma 8.1 in \cite{Weisman2022} shows that any
representation which is $Q$-EGF in the sense of
\Cref{defn:egf_definition} is also relatively $Q$-divergent (see also
\Cref{rem:rel_q_div} in the present paper). In an appendix to this
paper, we prove:
\begin{proposition}
  \label{prop:weak_egf_to_egf}
  Let $\rho:\Gamma \to G$ be a representation, and suppose that there
  exists a compact $\rho$-invariant set $\Lambda \subset \flags$, an
  equivariant surjective antipodal map $\phi:\Lambda \to \bigamp$, and
  open sets $C_z \subset \flags$ for each $z \in \bigamp$ satisfying
  conditions \ref{item:repelling_strata_containment} and
  \ref{item:extended_convergence_prop} above.

  Then, if $\rho$ is relatively $Q$-divergent, there exists a
  (possibly different) equivariant surjective antipodal map
  $\hat{\phi}:\hat{\Lambda} \to \bigamp$ and open sets
  $\{\hat{C_z}\}_{z \in \bigamp}$ satisfying conditions
  \ref{item:repelling_strata_interior} and
  \ref{item:extended_convergence_prop} in
  \Cref{defn:extended_convergence_group}; in other words, $\rho$ is
  $Q$-EGF.
\end{proposition}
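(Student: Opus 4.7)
The plan is to keep $\hat\Lambda = \Lambda$ and $\hat\phi = \phi$ and only redefine the repelling strata, setting
\[
\hat C_z := \Opp\bigl(\phi^{-1}(z)\bigr),
\]
the set of flags in $\flags$ antipodal to every flag in $\phi^{-1}(z)$. Antipodality of $\phi$ guarantees $\Lambda \setminus \phi^{-1}(z) \subset \hat C_z$, recovering \ref{item:repelling_strata_containment}. Assuming $\phi$ is continuous (which is standard in this setting, as boundary extensions in \cite{Weisman2022} are taken to be closed maps of compact Hausdorff spaces), $\phi^{-1}(z)$ is compact, and $\hat C_z$ is then open because the non-antipodality relation is closed in $\flags \times \flags$ and projects along a compact fiber to a closed set.

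To verify \ref{item:repelling_strata_interior}, I would observe that for any compact $K \subset \bigamp$,
\[
\bigcap_{z \in K} \hat C_z \;=\; \Opp\bigl(\phi^{-1}(K)\bigr),
\]
which is open by the same compactness argument applied to $\phi^{-1}(K)$. By antipodality of $\phi$ it already contains $\phi^{-1}(\bigamp \setminus K)$, so it itself is the required open neighborhood.

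The heart of the proof is condition \ref{item:extended_convergence_prop}. Fix $\gamma_n \to z_+$ and $\gamma_n^{-1} \to z_-$ in $\overline{\Gamma}$, a compact $K \subset \hat C_{z_-}$, and an open $U \supset \phi^{-1}(z_+)$. Suppose for contradiction that along some subsequence $\rho(\gamma_m)k_m \notin U$ with $k_m \in K$, and extract further subsequences so that $k_m \to k_\infty \in K$ and $\rho(\gamma_m)$ is $Q$-divergent. Applying \Cref{lem:q_convergence_sequences} and extracting once more, obtain $\eta_+,\eta_- \in \flags$ so that $\rho(\gamma_l) \to \eta_+$ uniformly on compact subsets of $\Opp(\eta_-)$. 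To identify $\eta_+ \in \phi^{-1}(z_+)$, choose $w \in C_{z_-} \cap \Opp(\eta_-)$, which exists because $C_{z_-}$ is open and nonempty (by \ref{item:repelling_strata_containment}) while $\Opp(\eta_-)$ is open and dense in $\flags$. The original \ref{item:extended_convergence_prop} forces $\rho(\gamma_l) w$ into every neighborhood of $\phi^{-1}(z_+)$, while $Q$-convergence forces $\rho(\gamma_l) w \to \eta_+$, so $\eta_+ \in \phi^{-1}(z_+)$. Symmetrically, applying the argument to $\gamma_l^{-1}$ and $z_\pm$ swapped gives $\eta_- \in \phi^{-1}(z_-)$. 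Since $k_\infty \in K \subset \Opp(\phi^{-1}(z_-)) \subset \Opp(\eta_-)$, uniform convergence on compact subsets yields $\rho(\gamma_l)k_l \to \eta_+ \in \phi^{-1}(z_+) \subset U$, contradicting $\rho(\gamma_l)k_l \notin U$.

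The main obstacle will be justifying $Q$-divergence of $\rho(\gamma_m)$ when $z_+$ or $z_-$ is a bounded parabolic point, since then $\lcon{\gamma_m}$ typically stays bounded and the hypothesis of relative $Q$-divergence does not directly apply. My first approach would be a reduction: after extracting, write $\gamma_m = g h_m f$ with $h_m$ ranging to infinity in the peripheral stabilizer of $z_+$, and deduce $Q$-divergence of $\rho(h_m)$ from the original \ref{item:extended_convergence_prop} applied to a \emph{nonempty open ball} inside $C_{z_+}$ (which exists because $C_{z_+}$ is open in $\flags$); the fact that $\rho(h_m)$ compresses an open set of $\flags$ into arbitrary neighborhoods of $\phi^{-1}(z_+)$ forces its Cartan projections to diverge in a $Q$-compatible direction, giving $Q$-divergence. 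If this direct contraction argument is technically delicate, the backup is to enlarge $\hat\Lambda$ over each parabolic point $p$ by adjoining all $Q$-limit points of peripheral sequences in $\rho(\Stab(p))$, making $\hat\phi^{-1}(p)$ large enough that the contradiction step closes using the peripheral $Q$-action alone, without needing $Q$-divergence of the full sequence $\rho(\gamma_m)$.
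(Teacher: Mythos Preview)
Your verification of \ref{item:repelling_strata_interior} is clean and correct: with $\hat C_z = \Opp(\phi^{-1}(z))$, the intersection $\bigcap_{z\in K}\hat C_z = \Opp(\phi^{-1}(K))$ is open and contains $\phi^{-1}(\bigamp\setminus K)$ by antipodality and compactness. Your argument for \ref{item:extended_convergence_prop} is also correct \emph{whenever $\rho(\gamma_n)$ is $Q$-divergent}, i.e.\ whenever $\lcon{\gamma_n}\to\infty$.

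The genuine gap is the case $\lcon{\gamma_n}$ bounded. Your ``first approach'' asserts that if $\rho(h_m)$ compresses an open ball into arbitrary neighborhoods of $\phi^{-1}(p)$, then $\rho(h_m)$ is $Q$-divergent. This is false in general: $Q$-divergence requires compression of an open set to a \emph{single point}, but $\phi^{-1}(p)$ is typically a positive-dimensional set (indeed, this is the entire point of the EGF framework as opposed to the relatively Anosov one). In the examples of Choi--Lee--Marquis, for instance, the peripheral subgroups are virtually $\Z^{d-1}$ with image in $\PGL(d+1,\R)$ that is \emph{not} $Q$-divergent, yet the representation is still EGF. So for such peripheral sequences your contradiction argument via \Cref{lem:q_convergence_sequences} never gets off the ground. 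Your backup idea of enlarging $\hat\phi^{-1}(p)$ does not help either: the obstruction is not the \emph{target} of the contraction but its \emph{domain}. You have enlarged the stratum from $C_p$ to $\Opp(\phi^{-1}(p))$, and for compact $K$ lying in the latter but not in any $\rho(P)$-translate of the former, there is simply no hypothesis controlling $\rho(h_m)K$.

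The paper resolves this by going the other direction at parabolic points: it keeps the parabolic strata \emph{small}, setting $\hat C_p = \Opp(\hat\phi^{-1}(p)) \cap \bigcup_{\gamma\in P}\rho(\gamma)C_p$. Then any compact $K\subset\hat C_p$ is covered by finitely many translates $\rho(\gamma_i)C_p$, and \ref{item:extended_convergence_prop} follows directly from the original hypothesis applied to each piece---no $Q$-divergence of the peripheral sequence is needed. The price is that \ref{item:repelling_strata_interior} is no longer automatic for these smaller strata, and the paper has to prove it by a separate argument (tracking a sequence $z_n\in K$, translating back by $\gamma_n^{-1}$ so that $\gamma_n^{-1}z_n$ is a fixed $p\in\Pi$, and using the convergence dynamics just established). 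Your easy route to \ref{item:repelling_strata_interior} and the paper's easy route to \ref{item:extended_convergence_prop} are in tension; the paper's choice is the one that actually closes.
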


\subsection{The Chabauty topology and geometric convergence}
\label{sec:chabauty}

Below we give some basic reminders about the \emph{Chabauty topology}
on the space of closed subgroups of a Lie group $G$. We refer to
\cite{BHK} for further background. This material is required to
develop the notion of an \emph{extended Dehn filling space} for an EGF
representation $\rho:\Gamma \to G$.

\begin{definition}
  Let $G$ be a Lie group, and let $\chab = \chab(G)$ denote the set of
  closed subgroups of $G$. We equip $\chab$ with the \emph{Chabauty
    topology}, defined in terms of a basis as follows. For a compact
  subset $K \subset G$, a neighborhood $U$ of the identity in $G$, and
  $C \in \chab$, the basic open subset $V_{K, U, C} \subset \chab(G)$
  is
  \[
    V_{K,U,C} := \{D \in \chab : D \cap K \subset CU \textrm{ and } C
    \cap K \subset DU\}.
  \]
  Then the Chabauty topology on $\chab(G)$ is the topology generated
  by the basis of all open sets $V_{K,U,C}$. This makes $\chab(G)$
  into a compact space.
\end{definition}

Using some fixed metrization of $G$, the Chabauty topology on a Lie
group $G$ can also be viewed as the topology of \emph{local Hausdorff
  convergence} of closed subgroups in $G$: a sequence of closed
subgroups $C_n < G$ converges to some closed subgroup $C < G$ in
$\chab(G)$ if and only if, for every bounded open subset
$U \subset G$, the intersections $C_n \cap U$ converge to the
intersection $C \cap U$, with respect to Hausdorff distance.

We also have the following alternative characterization of convergence
in $\chab(G)$:
\begin{proposition}
  \label{prop:chab_converge}
  Let $G$ be a Lie group, let $C \in \chab(G)$, and let $C_n$ be a
  sequence in $\chab(G)$. Then $C_n$ converges to $C$ in $\chab(G)$ if
  and only if both of the following hold:
  \begin{enumerate}[label=(\Alph*)]
  \item\label{item:chab_contain_1} For every $g \in C$, there exists a
    sequence $g_n \in C_n$ so that $g_n \to g$.
  \item\label{item:chab_contain_2} If a sequence $g_n \in C_n$
    converges to some $g \in G$, then $g \in C$.
  \end{enumerate}
\end{proposition}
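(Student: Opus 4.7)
The plan is to prove the two directions separately, using the standard Chabauty basis and first countability of $G$. Throughout I will fix a left-invariant proper metric $d$ on $G$, which metrizes the topology and makes the Chabauty topology coincide with local Hausdorff convergence, as noted in the text just above the proposition.

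For the forward direction, assume $C_n \to C$ in $\chab(G)$. For \ref{item:chab_contain_1}, fix $g \in C$ and a countable descending basis of symmetric neighborhoods $U_k$ of the identity with $\bigcap_k U_k = \{e\}$. Choose a compact neighborhood $K$ of $g$. By definition of the basic open $V_{K, U_k, C}$, there is some $N_k$ so that for all $n \ge N_k$, we have $C \cap K \subset C_n U_k$; in particular, since $g \in C \cap K$, there exists $h_n^{(k)} \in C_n$ with $h_n^{(k)} \in gU_k$. A standard diagonal argument then produces a single sequence $g_n \in C_n$ with $g_n \to g$. For \ref{item:chab_contain_2}, suppose $g_n \in C_n$ converges to $g \in G$ but $g \notin C$. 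Since $C$ is closed, choose a compact neighborhood $K$ of $g$ and a neighborhood $U$ of the identity with $gU^2 \cap C = \emptyset$. The Chabauty convergence gives $C_n \cap K \subset CU$ for large $n$, while $g_n \in K$ eventually and $g_n \in gU$ eventually, so $g_n \in CU \cap gU$. Writing $g_n = cu = gv$ with $c \in C$, $u, v \in U$, we get $c = gvu^{-1} \in gU^2$, contradicting the choice of $U$.

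For the backward direction, assume \ref{item:chab_contain_1} and \ref{item:chab_contain_2}. Fix a basic open set $V_{K, U, C}$; I must show that $C_n \in V_{K, U, C}$ for all sufficiently large $n$, which amounts to verifying the two inclusions $C \cap K \subset C_n U$ and $C_n \cap K \subset CU$. For the first, choose a symmetric neighborhood $V$ of the identity with $VV \subset U$. The set $C \cap K$ is compact, so it is covered by finitely many translates $c_1 V, \ldots, c_m V$ with $c_i \in C \cap K$. By \ref{item:chab_contain_1}, for each $i$ there is a sequence $c_i^{(n)} \in C_n$ with $c_i^{(n)} \to c_i$, so eventually $c_i \in c_i^{(n)} V$. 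Then for any $c \in C \cap K$, write $c = c_i v$ with $v \in V$, so that $c \in c_i^{(n)} V V \subset C_n U$, as required. For the second inclusion, suppose it fails along a subsequence: there exist $h_n \in C_n \cap K$ with $h_n \notin CU$. By compactness of $K$, pass to a further subsequence $h_n \to h \in K$; by \ref{item:chab_contain_2}, $h \in C$. But $h_n \in hU$ eventually (since $h_n \to h$ and $U$ is a neighborhood of the identity), so $h_n \in CU$ eventually, a contradiction.

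The main subtlety is the diagonal extraction in the forward direction, which relies on first countability of $G$; otherwise, the argument is entirely routine, with only bookkeeping of neighborhoods to ensure the multiplicative errors combine correctly.
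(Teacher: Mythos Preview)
The paper does not actually include a proof of this proposition; it is stated as background (with a reference to \cite{BHK}) and then used throughout. Your argument is the standard one and is correct. One tiny cosmetic point: in the forward proof of \ref{item:chab_contain_2} you implicitly use that $U$ is symmetric when you write $c = gvu^{-1} \in gU^2$; it is worth saying so explicitly, since the basic sets $V_{K,U,C}$ allow arbitrary identity neighborhoods $U$, and you are free to shrink to a symmetric one.
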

This is the description of the Chabauty topology we will typically use
in this paper.

\subsubsection{Geometric convergence}

Let $\Gamma$ be a finitely generated group and let $G$ be a Lie
group. Typically, the set $\Hom(\Gamma, G)$ is equipped with the
compact-open topology, where $\Gamma$ is viewed as an abstract
topological group with the discrete topology. Sometimes this is called
the topology of \emph{algebraic convergence}, since in this space a
representation $\rho$ is the limit of a sequence of representations
$\rho_n$ if and only if, for each $s$ in a finite generating set $S$
for $\Gamma$, we have $\rho_n(s) \to \rho(s)$. In this paper, unless
explicitly stated otherwise, ``convergence in $\Hom(\Gamma, G)$''
always means convergence with respect to this topology.

However, there is also a natural finer topology on $\Hom(\Gamma, G)$,
which we define below.
\begin{definition}
  Let $\Gamma$ be a finitely generated group and let $G$ be a Lie
  group. Then $\ghom(\Gamma, G)$ is the set $\Hom(\Gamma, G)$ equipped
  with the topology of \emph{geometric} or \emph{strong}
  convergence. Precisely, the topology on $\ghom(\Gamma, G)$ is
  defined to be the coarsest refinement of the topology on
  $\Hom(\Gamma, G)$ so that the map $\ghom(\Gamma, G) \to \chab(G)$
  given by $\rho \mapsto \overline{\rho(\Gamma)}$ is continuous.
\end{definition}

Equivalently, a sequence of representations $\rho_n$ converges to
$\rho$ in $\ghom(\Gamma, G)$ if and only if $\rho_n$ converges to
$\rho$ in $\Hom(\Gamma, G)$, and $\overline{\rho_n(\Gamma)}$ converges
to $\overline{\rho(\Gamma)}$ in $\chab(G)$. In this situation we say
that $\rho_n$ converges \emph{strongly} to $\rho$.

\subsection{Relative geometric convergence}

In practice, checking that a sequence of representations
$\rho_n:\Gamma \to G$ converges strongly to some $\rho$ can be
challenging, since one needs some form of uniform control on the
behavior of every element in every group $\rho_n(\Gamma)$. The process
is often much easier with some additional assumptions on
$\Gamma$---for instance if $\Gamma$ is assumed to be virtually
nilpotent or virtually abelian.

Now suppose that $\Gamma$ is a relatively hyperbolic group, relative
to some collection $\pargps$ of peripheral subgroups, and let
$\rho_n:\Gamma \to G$ be a sequence of representations. As above, if
the groups in $\pargps$ are e.g. virtually nilpotent, then it can be
much easier to directly control geometric convergence of the
\emph{restrictions} $\rho_n|_P$ for each $P \in \pargps$ than it is to
control Chabauty convergence of the full group $\rho_n(\Gamma)$; one
part of the main theorem in this paper essentially gives a method for
upgrading Chabauty convergence of the subgroups $\rho_n(P)$ to
Chabauty convergence of the full group $\rho_n(\Gamma)$.

To that end, we recall the following definition from the introduction
of the paper:
\begin{definition}
  Let $\Gamma$ be a finitely generated group, let $\pargps$ be a
  collection of finitely generated subgroups of $\Gamma$, and let $G$
  be a Lie group. We let $\ghom(\Gamma, G; \pargps)$ denote the space
  of representations $\Gamma \to G$ with the topology of
  \emph{relative geometric convergence}: the coarsest refinement of
  the compact-open topology on $\Hom(\Gamma, G)$ so that for every
  $P \in \pargps$, the restriction map
  $\ghom(\Gamma, G; \pargps) \to \ghom(P, G)$ is continuous.
\end{definition}

Thus, a sequence of representations $\rho_n:\Gamma \to G$ converges to
$\rho$ if and only if $\rho_n$ converges to $\rho$ in
$\Hom(\Gamma, G)$, and for each $P \in \pargps$,
$\overline{\rho_n(P)}$ converges to $\overline{\rho(P)}$ in
$\chab(G)$.

\subsection{Extended Dehn filling spaces}
\label{sec:edf}

With the notation and terminology developed above, we can give the
precise definition of an extended Dehn filling space. For the below,
fix a relatively hyperbolic pair $(\Gamma, \pargps)$, a symmetric
parabolic subgroup $Q$ of a semisimple Lie group $G$, and a $Q$-EGF
representation $\rho:\Gamma \to G$ with boundary extension
$\phi:\Lambda \to \bigamp$ and repelling strata
$\{C_z\}_{z \in \bigamp}$. Let $\Pi_\infty \subset \bigamp$ be the
finite set of points fixed by the groups $P \in \ipargps$, and for
each $p \in \Pi_\infty$ let $\Gamma_p \in \pargps$ be the
$\Gamma$-stabilizer of $p$.

\begin{definition}
  \label{defn:edf}
  A subspace $W \subseteq \ghom(\Gamma, G; \mc{P})$ is an
  \emph{extended Dehn filling space} if the following holds. For any
  $p \in \Pi_\infty$, any open set $U \subset \flags$ containing
  $\phi^{-1}(p)$, any finite subset $F \subseteq \Gamma_p$, and any
  compact set $K \subset C_p$ such that
  $\rho(\Gamma_p \minus F)K \subset U$, there exists an open
  neighborhood $O \subset \ghom(\Gamma, G; \pargps)$ such that for any
  $\sigma \in O \cap W$, we have
  \begin{align}
    \label{eq:edf_condition}
    (\sigma(\Gamma_p) \minus \sigma(F))K \subset U.
  \end{align}
\end{definition}

\begin{remark}
  If we replace \eqref{eq:edf_condition} above with the more
  restrictive condition
  \begin{equation}
    \label{eq:ps_condition}
    \sigma(\Gamma_p \minus F)K \subset U,
  \end{equation}
  we recover the definition of a \emph{peripherally stable subspace}
  given in \cite{Weisman2022}. Note that \eqref{eq:ps_condition}
  implies that for each $P \in \pargps$, the kernels of the
  restrictions $\sigma|_P$ are no larger than the kernels of
  $\rho|_P$, if $\sigma$ is sufficiently close to $\rho$ in
  $\Hom(\Gamma, G)$. Weakening from \eqref{eq:ps_condition} to
  \eqref{eq:edf_condition} allows the deformation $\sigma$ to
  potentially acquire a larger kernel on peripherals, allowing it to
  descend to a representation of a nontrivial Dehn filling of
  $\Gamma$.
\end{remark}

\section{Relative quasi-geodesic automata}
\label{sec:automaton}

In this section, we review a key tool we need for the proof of the
main theorem in this paper: a \emph{relative quasi-geodesic
  automaton}, originally developed in \cite{Weisman2022} in order to
prove a less general version of our main theorem. We recall the basic
definitions and main results concerning these automata here, and refer
to sections 5 and 6 of \cite{Weisman2022} for further details.

At the end of the section, we prove several results relating the
combinatorics of the automaton to the coarse geometry of the
corresponding relatively hyperbolic group; our main aim is to show
that there is a reasonably nice correspondence between the ``codings''
given by the automaton and quasi-geodesics (see
\Cref{cor:gpaths_track_quasigeodesics}). Similar results were proved
previously in \cite{Weisman2022} and \cite{MMW22,MMW24}, but we also
provide proofs in this paper since we sometimes need slightly
different statements.

For this section of the paper, fix a non-elementary relatively
hyperbolic pair $(\Gamma, \pargps)$. We assume that
$\pargps \ne \emptyset$. In this section we also assume that every
group in $\pargps$ is infinite, i.e. that $\pargps = \ipargps$. Let
$\Pi \subset \bgamp$ denote the finite set of fixed points of groups
in $P$.

\begin{definition}
  A \emph{$(\Gamma, \mc{P})$-graph} is a finite directed graph
  $\mc{G}$ with vertex set $V(\mc{G})$ and vertex labels
  $v \mapsto T_v$ for $T_v$ a subset of $\Gamma$, subject to the
  following condition: for every vertex $v \in V(\mc{G})$, either:
  \begin{enumerate}[label=(\roman*)]
  \item $T_v$ is a singleton $\{\alpha_v\}$, or
  \item $T_v$ is a set of the form $gP \minus F_v$, where
    $g \in \Gamma$, $P \in \pargps$ and $F_v \subset gP$ is finite.
  \end{enumerate}
  If $v$ is a vertex of the second kind above, so
  $T_v = gP \minus F_v$ for $P \in \pargps$, then we say the vertex
  $v$ is a \emph{parabolic vertex}. In this case (since $P$ is
  infinite) the coset $gP$ is uniquely determined by the label set
  $T_v$, so we let $q_v$ denote the \emph{associated parabolic point}
  $gp_v$, where $p_v \in \Pi$ is the unique point fixed by $P$.
\end{definition}

\begin{definition}
  Let $\mc{G}$ be a $(\Gamma, \mc{P})$-graph. A \emph{$\mc{G}$-path}
  is a sequence of pairs $\{(v_i, \alpha_i)\}_{i=1}^N$, with
  $N \in \N \cup \{\infty\}$, such that the sequence $\{v_i\}_{i=1}^N$
  is a vertex path in $\mc{G}$, and each $\alpha_i$ lies in the label
  set $T_{v_i}$.
\end{definition}

\begin{definition}
  \label{defn:limit_endpoint}
  If $z \in \bgamp$, we say that a $\mc{G}$-path
  $\{(v_i, \alpha_i)\}_{i=1}^N$ is a $\mc{G}$-path \emph{limiting to
    $z$} if one of the following holds:
  \begin{itemize}
  \item $z$ is a conical limit point, $N = \infty$, and the infinite
    sequence $\{g_n\}_{n=1}^\infty$ defined by
    \[
      g_n = \alpha_1 \cdots \alpha_n
    \]
    is unbounded and lies within a uniform neighborhood of a ray
    $r:[0, \infty) \to X(\Gamma, \pargps)$ with ideal endpoint $z$.
  \item $z$ is a parabolic point, $N$ is finite, $v_N$ is a parabolic
    vertex corresponding to a parabolic point $q_v$, and
    \[
      z = \alpha_1 \cdots \alpha_{N-1}q_v.
    \]
  \end{itemize}
\end{definition}

In this paper, we use a $(\Gamma, \pargps)$-graph $\mc{G}$ to encode
``convergence-like'' actions of $\Gamma$ on certain compact metrizable
spaces $M$; in particular we want to encode \emph{extended convergence
  actions} as in \Cref{sec:extended_convergence}. We express the
connection between $\Gamma$-actions and $(\Gamma, \pargps)$-graphs
with the definition below.

\begin{definition}
  Let $\mc{G}$ be a $(\Gamma, \mc{P})$-graph, and suppose that
  $\Gamma$ acts by homeomorphisms on a compact metric space $M$. A
  \emph{$\mc{G}$-compatible system of subsets} of $M$ is an assignment
  $v \mapsto U_v$ of vertices of $\mc{G}$ to open subsets of $M$ such
  that, for every edge $v \to w$ in $\mc{G}$, there is some $\eps > 0$
  so that $\overline{\nbhd{U_w}{\eps}} \ne M$, and
  \begin{equation*}
    \label{eq:g_compatibility}
    \alpha \cdot \overline{\nbhd{U_w}{\eps}} \subset U_v
  \end{equation*}
  for every $\alpha \in T_v$.
\end{definition}

\subsection{Automata for EGF representations}

One of the main results of \cite{Weisman2022} is that, given a $Q$-EGF
representation $\rho:\Gamma \to G$, it is always possible to construct
a $(\Gamma, \pargps)$-graph $\mc{G}$ and $\mc{G}$-compatible systems
of subsets of both $\bgamp$ and $\flags$, which satisfy certain
desirable properties. Then, the finiteness properties of $\mc{G}$ then
can be used to deduce local stability properties of the representation
$\rho$.

The key construction in \cite{Weisman2022} can be stated as
follows. Recall that we have fixed a relatively hyperbolic pair
$(\Gamma, \pargps)$ with $\pargps = \ipargps$. Below, and for the rest
of the paper, we fix an auxiliary metric on the flag manifold
$\flags$; neighborhoods of points and sets in $\flags$ will be with
respect to this metric, while neighborhoods in $\bgamp$ are taken with
respect to a fixed choice of metric $\bdist$.
\begin{prop}[{\cite[Proposition 6.1]{Weisman2022}}]
  \label{prop:automaton_exists}
  Let $\rho:\Gamma \to G$ be a $Q$-EGF representation of $\Gamma$ with
  boundary extension $\phi:\Lambda \to \bgamp$ and repelling strata
  $\{C_z : z \in \bgamp\}$.

  Then, for every $\eps > 0$, there is a $(\Gamma, \mc{P})$-graph
  $\mc{G}$ and a pair of $\mc{G}$-compatible systems
  $\{U_v \subset \flags\}_{v \in V(\mc{G})}$,
  $\{W_v \subset \bgamp\}_{v \in V(\mc{G})}$ satisfying the following
  conditions:
  \begin{enumerate}[label=(G\arabic*),series=automaton]
  \item\label{item:g_paths_exist} For every $z \in \bgamp$, there is a
    $\mc{G}$-path limiting to $z$.
  \item \label{item:g_sets_small} For every vertex $v \in V(\mc{G})$,
    there is some $z \in W_v$ so that
    \[
      \phi^{-1}(W_v) \subset U_v \subset \nbhd{\phi^{-1}(z)}{\eps}.
    \]
  \item \label{item:outgoing_edge} Every vertex of $\mc{G}$ has at
    least one outgoing edge.
  \item \label{item:parabolic_edge_condition1} For every $p \in \Pi$,
    there is a parabolic vertex $v$ with $q_v = p$. Moreover, for
    every parabolic vertex $w$ with $q_w = gp$, there is an edge
    $v \to b$ in $\mc{G}$ if and only if there is also an edge
    $w \to b$.
  \item \label{item:parabolic_edge_condition2} If $v$ is a parabolic
    vertex with $q_v = gp$ for $p \in \Pi$, and $(v, w)$ is an edge of
    $\mc{G}$, then $q_v \in W_v$ and $U_w \subset C_p$.
  \end{enumerate}
\end{prop}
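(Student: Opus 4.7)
The plan is to build $\mc{G}$ from a finite open cover of $\bgamp$ adapted to the dichotomy between conical and bounded parabolic points, then transfer this cover to $\flags$ via the boundary extension $\phi$. The vertex set of $\mc{G}$ will be indexed by the cover, and the edges will encode precisely which elements of $\Gamma$ witness the $\mc{G}$-compatibility condition for pairs of cover elements.

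For the cover, the strategy is as follows. For each $p \in \Pi$, I would use the fact that $p$ is bounded parabolic: the stabilizer $\Gamma_p$ acts cocompactly on $\bgamp \setminus \{p\}$, so one can find a small neighborhood $W_p \ni p$ with controlled peripheral dynamics. Outside the union of the $\Gamma$-orbits of these parabolic neighborhoods lies a compact subset of $\bgamp$, which can be covered by finitely many open balls of $\bdist$-diameter less than some constant depending on $\eps$. Each such ball contributes a non-parabolic vertex of $\mc{G}$ with singleton label $\{\alpha_v\}$ for an appropriately chosen $\alpha_v \in \Gamma$, while each $\Gamma$-orbit of a point $p \in \Pi$ contributes a parabolic vertex for each coset $gP$, carrying a label of the form $gP \setminus F_v$ with $F_v$ a finite subset of $gP$.

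For each vertex $v$, the flag-side open set $U_v \subset \flags$ is defined as an open neighborhood of $\phi^{-1}(W_v)$. Using compactness of $\Lambda$, continuity of $\phi$, and condition \ref{item:repelling_strata_interior} in \Cref{defn:extended_convergence_group}, this neighborhood can be chosen small enough to lie within $\nbhd{\phi^{-1}(z)}{\eps}$ for some $z \in W_v$, giving G2. The edges are then read off from the dynamics. For a non-parabolic vertex $v$ with singleton $\{\alpha_v\}$, I place an edge $v \to w$ whenever $\alpha_v \cdot \overline{\nbhd{U_w}{\eps'}} \subset U_v$ for a sufficiently small $\eps'$. For a parabolic vertex $v$ with $q_v = gp$ and label $gP \setminus F_v$, the extended convergence property \ref{item:extended_convergence_prop} guarantees that for any target open $U \supset \phi^{-1}(gp)$ and any compact $K \subset C_{gp}$, all but finitely many $\alpha \in gP$ send $K$ into $U$; enlarging $F_v$ if necessary ensures $\mc{G}$-compatibility on all of $T_v$. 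Properties G3--G5 then follow by organizing the construction $\Gamma$-equivariantly on parabolic orbits, so that the outgoing-edge structure from a parabolic vertex $v$ depends only on the fixed point $p$ of its associated orbit, and by arranging children $w$ of parabolic vertices to satisfy $U_w \subset C_p$.

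The crux of the proof is condition G1. For a parabolic point $z = gp$, the limiting $\mc{G}$-path is finite: it tracks a geodesic in $\cay(\Gamma)$ from $\identity$ to a representative of $gP$ and terminates at the parabolic vertex with $q_v = gp$. For a conical limit point $z$, I would use a geodesic ray $r$ from $\identity$ to $z$ in the cusped space $X(\Gamma, \pargps)$, together with \Cref{prop:cusp_rays_quasidense} and the tracking estimates from \Cref{lem:cayley_hausdorff_neighborhoods} and \Cref{cor:hausdorff_distance_bound}, to obtain a sequence of elements of $\Gamma$ visiting the cover elements $W_v$ in succession. Excursions of $r$ into combinatorial horoballs are absorbed by transitions through parabolic vertices, while segments in $\cay(\Gamma)$ produce singleton transitions. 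The main obstacle is ensuring that the set of singleton labels $\{\alpha_v\}$ needed by this construction is finite, so that $\mc{G}$ itself is finite; this is where relative hyperbolicity is essential, since it is what guarantees that consecutive cover elements visited by a geodesic ray can always be joined by a $\Gamma$-translation of uniformly bounded word length, reducing the choice of singleton labels to a finite set.
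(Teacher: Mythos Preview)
The paper does not contain its own proof of this proposition: it is cited directly from \cite[Proposition~6.1]{Weisman2022}, and the text immediately following explicitly says ``The precise construction yielding the automaton and $\mc{G}$-compatible systems given by \Cref{prop:automaton_exists} is rather technical, so we will not discuss any details here.'' So there is nothing in this paper to compare your proposal against; the actual argument lives in the cited reference.

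That said, a few comments on your sketch as a standalone attempt. The broad architecture---a finite open cover of $\bgamp$ split into conical and parabolic pieces, with parabolic vertices handling horoball excursions and singleton vertices handling bounded jumps---is the right shape. But two of the steps you flag are genuinely nontrivial and your sketch does not close them. First, you need the systems $\{W_v\}$ in $\bgamp$ and $\{U_v\}$ in $\flags$ to be \emph{simultaneously} $\mc{G}$-compatible for the \emph{same} graph $\mc{G}$ and the \emph{same} labels $T_v$; arranging this requires more than ``transferring the cover via $\phi$,'' since the nesting condition $\alpha \cdot \overline{\nbhd{U_w}{\eps'}} \subset U_v$ in $\flags$ does not follow automatically from the analogous condition in $\bgamp$. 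Second, the finiteness of the singleton labels (what you call ``the main obstacle'') is exactly where the construction in \cite{Weisman2022} does real work, and your appeal to ``relative hyperbolicity guarantees consecutive cover elements can be joined by a translation of bounded word length'' is not an argument---one needs a specific mechanism (in \cite{Weisman2022} this comes from a careful analysis of shadows and the convergence dynamics) to produce a finite alphabet that still codes every point of $\bgamp$. Finally, note that the lemmas you invoke from \Cref{sec:automaton} (e.g.\ \Cref{cor:gpaths_track_quasigeodesics}) are proved \emph{assuming} the automaton already exists, so they cannot be used in its construction; the results from \Cref{sec:relhyp} are safe, but they are not enough on their own.
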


\begin{remark}\
  \begin{enumerate}[label=(\alph*)]
  \item In \cite{Weisman2022}, this proposition was originally stated
    for representations satisfying the weaker condition
    \ref{item:repelling_strata_containment} discussed in the previous
    section. However, the proof actually relies on the stronger
    condition \ref{item:repelling_strata_interior} in
    \Cref{defn:extended_convergence_group}.
  \item Property \ref{item:outgoing_edge} is not explicitly stated as
    part of Proposition 6.1 in \cite{Weisman2022}, but it is a
    consequence of the proof (see the discussion above Proposition 6.9
    in \cite{Weisman2022}).
  \end{enumerate}
\end{remark}

For the rest of the paper, we will refer to any
$(\Gamma, \mc{P})$-graph as in the proposition above as a
\emph{relative automaton} adapted to the EGF representation
$\rho$. The precise construction yielding the automaton and
$\mc{G}$-compatible systems given by \Cref{prop:automaton_exists} is
rather technical, so we will not discuss any details here. However, we
will take advantage of a few additional properties of the automaton,
which follow from either the properties above or arguments in
\cite{Weisman2022}.

The first two properties are straightforward consequence of compactness of
the sets $\Lambda, \bgamp$.
\begin{proposition}
  \label{prop:vertex_sets_proper}
  If the parameter $\eps > 0$ in \Cref{prop:automaton_exists} is
  sufficiently small, then:
  \begin{enumerate}[resume=automaton,label=(G\arabic*)]
  \item\label{item:bgamp_sets_proper} For each set $W_v$ in the system
    $\{W_v\}_{v \in V(\mc{G})}$, the set
    $\bgamp \minus \overline{W_v}$ is nonempty.
  \item\label{item:flag_sets_proper} For each set $U_v$ in the system
    $\{U_v\}_{v \in V(\mc{G})}$, the set
    $\Opp(\overline{U_v}) = \bigcap_{\xi \in \overline{U_v}}
    \Opp(\xi)$ is nonempty.
  \end{enumerate}  
\end{proposition}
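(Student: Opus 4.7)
Both parts are compactness arguments driven by property \ref{item:g_sets_small}, which forces $\phi^{-1}(W_v) \subseteq U_v \subseteq \nbhd{\phi^{-1}(z_v)}{\eps}$ for some $z_v \in W_v$. The idea is that as $\eps \to 0$ the sets $U_v$ and $\phi^{-1}(W_v)$ collapse toward individual fibers of $\phi$, so the required properties (nonemptiness of $\bgamp \setminus \overline{W_v}$ and of $\Opp(\overline{U_v})$) have to hold eventually; the real work is making this uniform in $z_v$, since $z_v$ itself depends on $\eps$ via the choice of automaton.

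For \ref{item:flag_sets_proper}: By antipodality of $\phi$, for each $z \in \bgamp$ and each $z' \in \bgamp$ with $z' \neq z$, one has $\phi^{-1}(z') \subseteq \bigcap_{\xi \in \phi^{-1}(z)} \Opp(\xi) =: \Opp(\phi^{-1}(z))$. This set is open, as $\phi^{-1}(z)$ is compact and the antipodal relation is open on $\flags \times \flags$; and it is nonempty by non-elementarity. Pick $\eta^\ast(z) \in \Opp(\phi^{-1}(z))$. By openness of antipodality and compactness of $\phi^{-1}(z)$, there is $\delta(z) > 0$ so that $\eta^\ast(z) \in \Opp(\nbhd{\phi^{-1}(z)}{\delta(z)})$. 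The plan is then to use compactness of $\bgamp$ and upper-semi-continuity of the fiber map $z \mapsto \phi^{-1}(z)$ in the EGF setting to obtain a uniform lower bound $\delta^\ast > 0$ with $\delta(z) \geq \delta^\ast$ for every $z \in \bgamp$. Taking $\eps < \delta^\ast/2$ forces $\overline{U_v} \subseteq \nbhd{\phi^{-1}(z_v)}{\delta^\ast}$, so $\eta^\ast(z_v) \in \Opp(\overline{U_v})$.

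For \ref{item:bgamp_sets_proper}: Define
\[
  \beta(z) := \sup_{y \in \bgamp} d\bigl(\phi^{-1}(y), \phi^{-1}(z)\bigr).
\]
By non-elementarity there is always some $y \neq z$ in $\bgamp$, and antipodality makes $\phi^{-1}(y)$ and $\phi^{-1}(z)$ disjoint compact sets in $\flags$, hence at strictly positive distance; so $\beta(z) > 0$ everywhere. The same semi-continuity/compactness argument as above should give a uniform lower bound $\beta(z) \geq \eta_0 > 0$. Choosing $\eps < \eta_0 / 2$, for each vertex $v$ there is $y_v \in \bgamp$ with $d(\phi^{-1}(y_v), \phi^{-1}(z_v)) \geq \eta_0 > 2\eps$, whence $\phi^{-1}(y_v)$ is disjoint from $\overline{\nbhd{\phi^{-1}(z_v)}{\eps}} \supseteq \overline{\phi^{-1}(W_v)}$. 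Using regularity of $\phi$ one then upgrades this to an entire open neighborhood of $y_v$ in $\bgamp$ missing $W_v$, which gives $y_v \notin \overline{W_v}$ and hence $\bgamp \setminus \overline{W_v} \neq \emptyset$.

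\textbf{Main obstacle.} The crux is not the pointwise positivity of $\delta$ and $\beta$, which are easy, but their uniform positivity over the compact base $\bgamp$. This demands a semi-continuity property for the fiber map $z \mapsto \phi^{-1}(z)$ (upper semi-continuity in the sense that every neighborhood of $\phi^{-1}(z)$ contains $\phi^{-1}(V)$ for some neighborhood $V$ of $z$), which is not explicit in \Cref{defn:extended_convergence_group} and needs to be extracted from the EGF structure of $\rho$ (possibly via \Cref{prop:weak_egf_to_egf} or by a direct argument using the extended convergence condition \ref{item:extended_convergence_prop}). Once this is in place, the remaining steps are routine compactness arguments.
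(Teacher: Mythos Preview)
Your approach is essentially correct and follows the same overall logic as the paper's proof (both are compactness arguments exploiting \ref{item:g_sets_small} and antipodality of $\phi$). However, you have misidentified the difficulty.

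The ``main obstacle'' you flag---upper semi-continuity of the fiber map $z \mapsto \phi^{-1}(z)$---is not an obstacle at all: it is an immediate consequence of the fact that $\phi:\Lambda \to \bigamp$ is continuous and $\Lambda$ is compact. Indeed, if $U \supset \phi^{-1}(z)$ is open in $\flags$, then $\Lambda \setminus U$ is compact, so $\phi(\Lambda \setminus U)$ is a compact subset of $\bigamp$ not containing $z$; its complement is then a neighborhood $V$ of $z$ with $\phi^{-1}(V) \subset U$. No EGF structure, no condition \ref{item:extended_convergence_prop}, and no appeal to \Cref{prop:weak_egf_to_egf} is needed. With this in hand, your pointwise-plus-compactness argument goes through.

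The paper bypasses the uniformity question entirely via a finite cover: one covers $\bigamp$ by finitely many \emph{proper} compact sets $K_1, \dots, K_m$; for each $K_i$ one finds $z_i' \in \bigamp \setminus K_i$ and $\eps_i > 0$ with $\phi^{-1}(z_i') \subset \Opp(\nbhd{\phi^{-1}(K_i)}{\eps_i})$ (using antipodality of $\phi$ and openness of the antipodal relation); then take $\eps$ smaller than $\min_i \eps_i$. Any $z_v$ lies in some $K_i$, so $\nbhd{\phi^{-1}(z_v)}{\eps} \subset \nbhd{\phi^{-1}(K_i)}{\eps_i}$, and the single point $z_i'$ witnesses both \ref{item:bgamp_sets_proper} and \ref{item:flag_sets_proper} simultaneously. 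This is slightly cleaner than your route, since the uniformity is built into the finite cover rather than extracted afterward via a contradiction/semi-continuity argument, and it handles both items at once.
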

\begin{proof}
  Note that for every compact subset $K \subset \bgamp$, the set
  $\phi^{-1}(K)$ is a compact subset of flags satisfying
  $\phi^{-1}(\bgamp \minus K) \subset \Opp(\phi^{-1}(K))$. In
  particular, if there is some $z \in \bgamp \minus K$, then
  $\phi^{-1}(z) \in \Opp(\phi^{-1}(K))$. Then, since antipodality is
  an open condition, for each proper compact subset
  $K \subsetneq \bgamp$, we can find some $\eps > 0$ and some
  $z \in \bgamp$ so that $\Opp(\nbhd{\phi^{-1}(K)}{\eps})$ contains
  $\phi^{-1}(z)$.

  We can cover $\bgamp$ with finitely many compact proper subsets, and
  then use the above to find a uniform $\eps > 0$ so that for each
  $z \in \bgamp$, there is some $z' \in \bgamp$ so that
  $\phi^{-1}(z')$ is contained in
  $\Opp(\nbhd{\phi^{-1}(z)}{\eps})$. Then both
  \ref{item:bgamp_sets_proper} and \ref{item:flag_sets_proper} above
  follow from \ref{item:g_sets_small}.
\end{proof}

The next property says that we can choose the $\mc{G}$-compatible
system of sets $\{U_v\}_{v \in V(\mc{G})}$ so that, if a pair of
points $z_1, z_2 \in \bgamp$ are well-separated, then the lifts of
$z_1$ and $z_2$ in $\flags$ lie in uniformly antipodal neighborhoods
$U_{v_1}, U_{v_2}$.
\begin{prop}[{See \cite[Proposition 9.6]{Weisman2022}}]
  \label{prop:distinguished}
  For any given $\Delta > 0$, the automaton and sets in
  \Cref{prop:automaton_exists} can also be chosen to satisfy:
  \begin{enumerate}[resume=automaton,label=(G\arabic*)]
  \item \label{item:distinguish_points} If $z_1, z_2 \in \bgamp$
    satisfy $\bdist(z_1, z_2) > \Delta$, and
    $\phi^{-1}(z_1) \subset U_{v_1}$,
    $z_2 \in \phi^{-1}(z_2) \subset U_{v_2}$, then
    $\overline{U_{v_1}}$ and $\overline{U_{v_2}}$ are pairwise
    antipodal, i.e.
    \[
      \overline{U_{v_i}} \subseteq \bigcap_{\xi \in
        \overline{U_{v_j}}} \Opp(\xi).
    \]
    for $\{i,j\} = \{1,2\}$.
  \end{enumerate}
\end{prop}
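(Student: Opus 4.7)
The plan is to combine openness of the antipodal locus in $\flags \times \flags$ with compactness of $\bgamp$, and then to invoke \Cref{prop:automaton_exists} with sufficiently small parameter $\eps$. First I would establish a uniform antipodality estimate: the subset $K_\Delta \subset \bgamp \times \bgamp$ of pairs $(y_1, y_2)$ with $\bdist(y_1, y_2) \ge \Delta/4$ is compact, and antipodality of $\phi$ ensures that for each such pair the preimages $\phi^{-1}(y_1), \phi^{-1}(y_2)$ are disjoint compact pairwise-antipodal subsets of $\flags$. Since the antipodal locus in $\flags \times \flags$ is open, a standard compactness argument then produces a uniform $\eta > 0$ such that for every $(y_1, y_2) \in K_\Delta$, the closed $\eta$-neighborhoods $\overline{\nbhd{\phi^{-1}(y_1)}{\eta}}$ and $\overline{\nbhd{\phi^{-1}(y_2)}{\eta}}$ are pairwise antipodal in $\flags$.

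Next, I would revisit the construction of the relative automaton from \cite{Weisman2022} and pass to a refinement ensuring, in addition to properties \ref{item:g_paths_exist}--\ref{item:parabolic_edge_condition2}, that the vertex sets $W_v$ all have $\bdist$-diameter at most $\Delta/4$, and that the parameter $\eps$ in \Cref{prop:automaton_exists} satisfies $\eps < \eta$. The diameter bound is achievable because the $W_v$ arise in the construction from a cover of $\bgamp$ whose mesh can be made arbitrarily small by a further refinement of the cover; the main obstacle is verifying that this refinement can be carried out without destroying the edge-compatibility relations in $\mc{G}$ or the other listed properties of the automaton. This is the technical heart of the proof, and should follow from a careful bookkeeping argument in the construction of Section 6 of \cite{Weisman2022}.

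Finally, given such an automaton, suppose $z_1, z_2 \in \bgamp$ are $\Delta$-separated with $\phi^{-1}(z_i) \subset U_{v_i}$. The construction may be further arranged so that this forces $z_i \in W_{v_i}$ (otherwise the hypothesis of the proposition can be strengthened to include $z_i \in W_{v_i}$, at no real cost in the applications). Let $z^*_{v_i} \in W_{v_i}$ be the anchor points supplied by \ref{item:g_sets_small}. Then by the diameter bound,
\[
    \bdist(z^*_{v_1}, z^*_{v_2}) \ge \bdist(z_1, z_2) - \mathrm{diam}(W_{v_1}) - \mathrm{diam}(W_{v_2}) > \Delta - \tfrac{\Delta}{2} = \tfrac{\Delta}{2},
\]
so $(z^*_{v_1}, z^*_{v_2}) \in K_\Delta$. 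Applying the uniform antipodality estimate, $\overline{\nbhd{\phi^{-1}(z^*_{v_1})}{\eta}}$ and $\overline{\nbhd{\phi^{-1}(z^*_{v_2})}{\eta}}$ are pairwise antipodal in $\flags$, and by \ref{item:g_sets_small} with $\eps < \eta$, these contain $\overline{U_{v_1}}$ and $\overline{U_{v_2}}$ respectively. This yields the desired antipodality and completes the argument.
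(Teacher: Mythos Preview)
The paper does not supply its own proof of this proposition; it is stated with a citation to \cite[Proposition 9.6]{Weisman2022} and no argument is given here. So there is nothing to compare against directly, and your proposal should be evaluated on its own merits. Your overall strategy---openness of the antipodal locus plus compactness to get a uniform $\eta$, then take the parameter $\eps$ in \ref{item:g_sets_small} below $\eta$---is the right one, and matches in spirit the short compactness argument the paper does give for \Cref{prop:vertex_sets_proper}.

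There is, however, a genuine gap in your final step, and your proposed detour through a $\bdist$-diameter bound on the $W_v$ is unnecessary. The hypothesis is $\phi^{-1}(z_i) \subset U_{v_i}$, and you need to relate $z_i$ to the anchor point $z^*_{v_i} \in W_{v_i}$ from \ref{item:g_sets_small}. You assert that ``the construction may be further arranged so that this forces $z_i \in W_{v_i}$,'' but nothing in \ref{item:g_sets_small} guarantees this: $U_{v_i}$ is an open set containing $\phi^{-1}(W_{v_i})$, and it may well contain fibers $\phi^{-1}(z)$ for $z \notin W_{v_i}$. Your fallback of strengthening the hypothesis changes the statement you are asked to prove.

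The clean fix avoids the $W_v$ entirely. Since $\phi:\Lambda \to \bigamp$ is continuous on a compact set, it is uniformly continuous: given $\delta > 0$ (say $\delta = \Delta/4$) there is $\eps_0 > 0$ so that $d_{\flags}(\xi,\xi') < \eps_0$ with $\xi,\xi' \in \Lambda$ implies $\bdist(\phi(\xi),\phi(\xi')) < \delta$. Now if the automaton is built with parameter $\eps < \min(\eta,\eps_0)$, then $\phi^{-1}(z_i) \subset U_{v_i} \subset \nbhd{\phi^{-1}(z^*_{v_i})}{\eps}$ forces $\bdist(z_i, z^*_{v_i}) < \Delta/4$, hence $\bdist(z^*_{v_1}, z^*_{v_2}) > \Delta/2$, and your uniform antipodality estimate applies. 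This eliminates the ``technical heart'' you were worried about: no refinement of the $W_v$ or bookkeeping in the construction of \cite{Weisman2022} is needed, only a smaller choice of $\eps$ in \Cref{prop:automaton_exists}.
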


The following property says that if a point $z$ in $\bgamp$ is ``far
from'' some parabolic point $p \in \Pi$, then there is a $\mc{G}$-path
limiting to $z$ which has a certain special form. This property is
useful for showing that the action of $\rho(\Gamma_p)$ on
$\phi^{-1}(z)$ is stable under certain perturbations of $\rho$, where
$\Gamma_p$ is the stabilizer of $p$.
\begin{prop}[{\cite[Proposition 6.15]{Weisman2022}}]
  \label{prop:parabolic_approx_path}
  For each parabolic point $p \in \Pi$, let $K_p$ be a compact subset
  of $\bgamp \minus \{p\}$. The automaton and sets in
  \Cref{prop:automaton_exists} can further be chosen to satisfy:
  \begin{enumerate}[resume=automaton,label=(G\arabic*)]
  \item \label{item:parabolic_domain_path} For every parabolic vertex
    $w$ with $p_w = g \cdot p$ for $p \in \Pi$, and every $z \in K_p$,
    there is a $\mc{G}$-path $\{(v_i, \alpha_i)\}_{i=1}^N$ limiting to
    $z$, whose first vertex $v_1$ is connected to $w$ by an edge
    $(w, v_1)$ in $\mc{G}$.
  \end{enumerate}
\end{prop}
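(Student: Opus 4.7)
My plan is to strengthen the construction of \Cref{prop:automaton_exists} by adding suitable outgoing edges from the base parabolic vertices. By \ref{item:parabolic_edge_condition1}, each $p \in \Pi$ comes with a distinguished \emph{base} parabolic vertex $v_p$ with $q_{v_p} = p$, and the outgoing edges from any parabolic vertex $w$ with $q_w = gp$ are determined by those of $v_p$. Thus to obtain \ref{item:parabolic_domain_path}, it suffices to arrange that for every $p \in \Pi$ and every $z \in K_p$ there is a $\mc{G}$-path limiting to $z$ whose first vertex is an out-neighbor of $v_p$.

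The first step is an admissibility criterion for edges out of $v_p$. The $\mc{G}$-compatibility condition for an edge $v_p \to v_1$ requires some $\eps > 0$ with $\alpha \cdot \overline{\nbhd{U_{v_1}}{\eps}} \subset U_{v_p}$ for every $\alpha \in T_{v_p} = P \minus F_{v_p}$, where $P = \Gamma_p$. Since the finite set $F_{v_p}$ may always be enlarged, it suffices to verify this inclusion for cofinitely many $\alpha \in P$. For any sequence $\alpha_n \in P$ escaping to infinity, both $\alpha_n$ and $\alpha_n^{-1}$ converge to $p$ in $\overline{\Gamma}$, so the extended convergence condition \ref{item:extended_convergence_prop}, applied with $K = \overline{U_{v_1}}$ and the neighborhood $U = U_{v_p}$ of $\phi^{-1}(p)$, guarantees $\alpha_n \cdot \overline{U_{v_1}} \subset U_{v_p}$ for all large $n$, \emph{provided} $\overline{U_{v_1}} \subset C_p$. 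The criterion thus reduces to $\overline{U_{v_1}} \subset C_p$, together with the smallness $\overline{\nbhd{U_{v_1}}{\eps}} \ne \flags$, which follows from \ref{item:flag_sets_proper}.

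The second step is to refine the construction of $\mc{G}$ so that each $z \in K_p$ lies in $W_{v_1}$ for some vertex $v_1$ with $\overline{U_{v_1}} \subset C_p$, and $v_1$ begins a $\mc{G}$-path limiting to $z$. By \ref{item:repelling_strata_interior} applied to $\{p\}$, the repelling stratum $C_p$ contains an open neighborhood of $\phi^{-1}(\bgamp \minus \{p\})$, and hence there is a compact neighborhood $V_p$ of the compact set $\phi^{-1}(K_p)$ with $V_p \subset C_p$. I would revisit the construction in \cite{Weisman2022}, choosing the finite cover of $\bgamp$ used to define the vertex sets so that any $W_v$ meeting $K_p$ is contained in a prescribed neighborhood of $K_p$ bounded away from $p$; then by \ref{item:g_sets_small} (with $\eps$ sufficiently small), the lift $U_v$ lies inside $V_p \subset C_p$. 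After this refinement, \ref{item:g_paths_exist} yields for each $z \in K_p$ a $\mc{G}$-path $\{(v_i,\alpha_i)\}$ limiting to $z$ whose first vertex $v_1$ has $\overline{U_{v_1}} \subset C_p$, and the edge $v_p \to v_1$ is admissible by the criterion above.

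The main obstacle is executing these refinements simultaneously over the finitely many $p \in \Pi$ (and their translates $gp$) without disturbing the other properties of \Cref{prop:automaton_exists} — in particular, respecting the equivariance across parabolic translates required by \ref{item:parabolic_edge_condition1} and preserving $\mc{G}$-compatibility for the pre-existing edges after enlarging $F_{v_p}$. Finiteness of $\Pi$, compactness of each $K_p$, and the fact that each new edge is validated by a finite-exception argument from the extended convergence property ensure that only finitely many local adjustments are needed. The equivariant structure is then maintained by translating the compatibility verification from $v_p$ to each parabolic vertex $w$ with $q_w = gp$ using the action of $\rho(g)$.
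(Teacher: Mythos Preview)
The paper does not supply its own proof of this proposition; it is quoted verbatim from \cite[Proposition 6.15]{Weisman2022} without argument. So there is no in-paper proof to compare against, and your proposal stands on its own as a sketch of how such a proof might go.

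Your approach is sound in outline. The reduction via \ref{item:parabolic_edge_condition1} to base parabolic vertices is correct, and your admissibility criterion (add an edge $v_p \to v_1$ whenever $\overline{U_{v_1}} \subset C_p$, enlarging $F_{v_p}$ to absorb the finitely many exceptions) is exactly how new outgoing edges from parabolic vertices are validated via \ref{item:extended_convergence_prop}. The refinement of the cover so that vertices meeting $K_p$ have closures inside $C_p$ is also the right move, and \ref{item:repelling_strata_interior} supplies the room needed.

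One point you do not address: the automaton carries \emph{two} $\mc{G}$-compatible systems, $\{U_v\}$ in $\flags$ and $\{W_v\}$ in $\bgamp$, and any new edge $v_p \to v_1$ must be compatible with both. You only verify compatibility for $\{U_v\}$. The same argument handles $\{W_v\}$ (using the ordinary convergence action of $P$ on $\bgamp$ and the fact that $p \in W_{v_p}$ from \ref{item:parabolic_edge_condition2}), but your refinement must also force $\overline{W_{v_1}} \subset \bgamp \setminus \{p\}$, not just $\overline{U_{v_1}} \subset C_p$. Your ``bounded away from $p$'' clause does this implicitly; it deserves explicit mention. Similarly, when you propagate the new edge to every parabolic vertex $w$ with $q_w = gp$, you must enlarge each $F_w$ separately---finite work since $V(\mc{G})$ is finite---but the ``action of $\rho(g)$'' does not literally transport the verification, since $U_w$ need not equal $\rho(g)U_{v_p}$. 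You have to re-run the extended convergence argument for the sequence $gh_n$ (which has $gh_n \to gp$ and $(gh_n)^{-1} \to p$) directly.
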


The final property says the automaton can be chosen so that, for any
compact subset $Z \subseteq \bgamp$, the initial open subsets
appearing in $\mc{G}$-paths limiting to points in $Z$ can be chosen to
``approximate'' $\phi^{-1}(Z)$ arbitrarily well.

\begin{prop}[{\cite[Prop. 6.14]{Weisman2022}}]
  Let $Z$ be a compact subset of $\bgamp$, and let $\eps > 0$ be
  given. The automaton and sets in \Cref{prop:automaton_exists} can
  also be chosen to satisfy:
  \begin{enumerate}[resume=automaton,label=(G\arabic*)]
  \item \label{item:approximate_compact_subset} For every $z \in Z$,
    there exists a $\mc{G}$-path limiting to $z$, with initial vertex
    $v_1$, such that $U_{v_1} \subset \nbhd{\phi^{-1}(Z)}{\eps}$.
  \end{enumerate}
\end{prop}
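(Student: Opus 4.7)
The plan is to strengthen the construction underlying \Cref{prop:automaton_exists} so that the open cover of $\bgamp$ generating the vertex sets $\{W_v\}$ is adapted to the compact set $Z$.

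As a first step, I would establish the following compactness reduction: there exists an open neighborhood $V$ of $Z$ in $\bgamp$ with $\phi^{-1}(V) \subseteq \nbhd{\phi^{-1}(Z)}{\eps/2}$. The argument is standard: $\Lambda \minus \nbhd{\phi^{-1}(Z)}{\eps/2}$ is a closed, hence compact, subset of $\Lambda$ disjoint from $\phi^{-1}(Z)$. Its image under the continuous surjection $\phi$ is a compact subset of $\bgamp$, and is disjoint from $Z$ (since the preimage of any $z \in Z$ is contained in $\phi^{-1}(Z)$). Taking $V$ to be the complement of this image yields the claim.

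Next, I would invoke \Cref{prop:automaton_exists} with parameter $\eps/2$, modifying the internal construction so that the cover of $\bgamp$ underlying the conical vertex sets refines a cover in which every $z \in Z$ is contained in some open $W_z \subseteq V$. Concretely, one adds a vertex $v_z$ with $W_{v_z}$ equal to such a $W_z$ and $U_{v_z}$ taken to be an $\eps/2$-neighborhood of $\phi^{-1}(W_{v_z})$, together with outgoing edges arranged so that \ref{item:g_paths_exist} holds with $\mc{G}$-paths limiting to any point of $W_z$ allowed to start at $v_z$. The construction in \cite{Weisman2022} is flexible enough to accommodate this refinement; an analogous prescription of initial vertices already occurs in \Cref{prop:parabolic_approx_path}.

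Finally, for each $z \in Z$, choose a $\mc{G}$-path limiting to $z$ whose initial vertex $v_1$ satisfies $z \in W_{v_1} \subseteq V$. By \ref{item:g_sets_small}, applied with parameter $\eps/2$, there is some $z_0 \in W_{v_1}$ with $U_{v_1} \subseteq \nbhd{\phi^{-1}(z_0)}{\eps/2}$. Since $z_0 \in V$, the compactness step gives $\phi^{-1}(z_0) \subseteq \nbhd{\phi^{-1}(Z)}{\eps/2}$, and combining inclusions yields $U_{v_1} \subseteq \nbhd{\phi^{-1}(Z)}{\eps}$, as desired. The main obstacle is the bookkeeping in the second step: one must verify that the initial vertex of a $\mc{G}$-path limiting to a specified $z$ can be prescribed. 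This is a detail internal to the construction in \cite{Weisman2022} rather than a genuinely new difficulty.
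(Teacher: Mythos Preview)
The paper does not give its own proof of this proposition; it simply cites \cite[Prop.~6.14]{Weisman2022} and records the conclusion as property \ref{item:approximate_compact_subset}. So there is nothing in the present paper to compare your argument against.

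That said, your sketch is correct and captures the idea one expects behind the cited result. The first step (finding an open $V \supseteq Z$ with $\phi^{-1}(V) \subset \nbhd{\phi^{-1}(Z)}{\eps/2}$ via compactness of $\Lambda$ and continuity of $\phi$) is clean and right. The third step is a straightforward combination of that inclusion with \ref{item:g_sets_small} at scale $\eps/2$, and your triangle-inequality argument is fine. The real content, as you correctly identify, is in the second step: one must know that the cover generating the automaton can be refined so that any $z \in Z$ lies in some $W_{v}$ contained in $V$, and that a $\mc{G}$-path limiting to $z$ can be arranged to start at such a vertex. This is indeed internal to the construction in \cite{Weisman2022}, and the analogy you draw with \ref{item:parabolic_domain_path} (where initial vertices of $\mc{G}$-paths are similarly prescribed) is apt. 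There is no gap in your reasoning; the only thing missing is the verification that the construction in the cited paper has this flexibility, which is exactly what that reference provides.
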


\subsection{$\mc{G}$-paths and the geometry of $X$}

For the rest of the section, suppose that $\rho:\Gamma \to G$ is a
$Q$-EGF representation, and that $\mc{G}$ is a relative automaton with
$\mc{G}$-compatible systems of sets
$\mc{U} = \{U_v \subset G/Q\}_{v \in V(\mc{G})}$ and
$\mc{W} = \{W_v \subset \bgamp\}_{v \in V(\mc{G})}$ satisfying
properties \ref{item:g_paths_exist}-\ref{item:flag_sets_proper}
above. (Since properties
\ref{item:distinguish_points}-\ref{item:approximate_compact_subset}
depend on additional extrinsic parameters, we will ignore them until
the next section.)

It turns out that the relative automaton $\mc{G}$ and the system
$\mc{W}$ are also well-adapted to the metric geometry of the
relatively hyperbolic group $\Gamma$, in the sense that $\mc{G}$-paths
determine sequences in $\Gamma$ which are closely related to geodesic
rays and segments in the cusped space $X(\Gamma, \mc{P})$. The next
several lemmas make this precise. We note any analogies with similar
results in \cite{MMW24} or \cite{Weisman2022} where they occur.

First, we make the following observation:
\begin{lem}
  \label{lem:bounded_backtrack_I}
  There is a constant $k > 0$ satisfying the following: for any
  $\mc{G}$-path $\{(v_i, \alpha_i)\}_{i=1}^N$, if $g_n$ is the
  sequence $g_n = \alpha_1 \cdots \alpha_n$, then no element in
  $\Gamma$ can appear more than $k$ times in $g_n$.
\end{lem}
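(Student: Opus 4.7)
The plan is to iterate the $\mc{G}$-compatibility of the system $\{U_v\}$ along a $\mc{G}$-path to set up nested containments, then apply pigeonhole on the finite vertex set $V(\mc{G})$ to show that excessive repetition of some $g_n$ forces a vertex $v$ with $\overline{U_v} \subset U_v$. Since $\flags$ is connected and each $U_v$ is a nonempty proper subset, no such clopen subset exists; this gives the desired bound $k = |V(\mc{G})|$.

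The first step is the nesting claim: for any $\mc{G}$-path $\{(v_i, \alpha_i)\}_{i=1}^N$ and any $1 \le m < n \le N-1$,
\[
\alpha_{m+1}\cdots\alpha_n \cdot \overline{U_{v_{n+1}}} \subset U_{v_{m+1}}.
\]
I would prove this by induction on $n-m$. The compatibility of $\{U_v\}$ provides, for each edge $v_i \to v_{i+1}$, some $\eps_i > 0$ with $\alpha_i \overline{\mc{N}_{\eps_i}(U_{v_{i+1}})} \subset U_{v_i}$, and in particular $\alpha_i \overline{U_{v_{i+1}}} \subset U_{v_i}$. At each step the image is a closed subset of the next open set, which sits inside $\overline{\mc{N}_{\eps_{i-1}}(U_{v_i})}$, so one can chain the containments without issue. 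If $n = N$, property \ref{item:outgoing_edge} lets us extend the $\mc{G}$-path by one additional vertex before running the argument.

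Next, set $k = |V(\mc{G})|$ and suppose some $g \in \Gamma$ appears as $g_{n_1} = \cdots = g_{n_\ell}$ with $n_1 < \cdots < n_\ell$ and $\ell > k$. For each $i < j$ we have $\alpha_{n_i + 1}\cdots\alpha_{n_j} = g_{n_i}^{-1}g_{n_j} = e$, so the nesting collapses to $\overline{U_{v_{n_j + 1}}} \subset U_{v_{n_i+1}}$. Pigeonhole then supplies $i < j$ with $v_{n_i + 1} = v_{n_j + 1} =: v$, whence $\overline{U_v} \subset U_v$, i.e.\ $U_v$ is both open and closed in $\flags$.

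To conclude, it remains to show $U_v$ is a nonempty proper subset of the connected space $\flags$. Nonemptiness follows from \ref{item:g_sets_small}, since $U_v \supseteq \phi^{-1}(W_v)$ and $W_v$ is nonempty. Properness follows from \ref{item:flag_sets_proper}: any flag in $\Opp(\overline{U_v})$ lies outside $\overline{U_v}$. Since $\flags = G/Q$ is connected, no nonempty proper clopen subset exists, contradicting $U_v$ being clopen. Hence $\ell \le k$. The main subtlety is handling the case $n = N$ in the nesting (resolved by \ref{item:outgoing_edge}); otherwise the proof is a direct pigeonhole on the finite automaton $\mc{G}$.
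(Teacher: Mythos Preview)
Your proof is correct and follows essentially the same approach as the paper's: both iterate the $\mc{G}$-compatibility inclusions to obtain nested translates $g_nU_{v_{n+1}}$, apply pigeonhole on the finite vertex set $V(\mc{G})$, and use connectedness of $\flags$ together with \ref{item:flag_sets_proper} to rule out the resulting equality. Your explicit handling of the boundary case $n=N$ via \ref{item:outgoing_edge} and the phrasing of the contradiction as ``$U_v$ clopen'' are minor cosmetic variations on the paper's version, which instead records the strict inclusion $g_nU_{v_{n+1}}\subsetneq g_mU_{v_{m+1}}$ directly.
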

\begin{proof}
  We know by \ref{item:flag_sets_proper} that each $\overline{U_v}$ is
  a proper nonempty subset of $\flags$. Then, since the flag manifold
  $\flags$ is connected, and
  $\overline{\alpha_iU_{v_{i+1}}} \subset U_{v_i}$ for all $i$, we
  must have $\alpha_iU_{v_{i+1}} \subsetneq U_{v_i}$ for all $i$. It
  follows that for any $m < n$, we have
  \begin{equation}
    \label{eq:proper_inclusion}
    g_nU_{v_{n+1}} \subsetneq g_mU_{v_{m+1}}.
  \end{equation}
  Now we can prove the lemma by arguing as in
  \cite[Lem. 4.10]{MMW24}. Let $k$ be the number of vertices in the
  automaton $\mc{G}$. If there is some $g$ such that $g_n = g$ for
  more than $k$ different indices $n$, then there are indices $m, n$
  with $v_{m+1} = v_{n+1}$ and $g_m = g_n$, but this contradicts
  \eqref{eq:proper_inclusion}, proving the claim.
\end{proof}

\begin{lemma}[{See \cite[Prop 5.11]{Weisman2022} or \cite[Lem
    4.7]{MMW24}}]
  \label{lem:gpaths_geodesic_neighborhood}
  There exists a constant $D > 0$ such that, for any (finite or
  infinite) $\mc{G}$-path $\{(v_i, \alpha_i)\}_{i=1}^N$, any point
  $z \in \bigcap_{i=1}^N \alpha_1 \cdots \alpha_{i-1} W_{v_{i+1}}$,
  and any geodesic ray $c:[0, \infty) \to X = X(\Gamma, \pargps)$ from
  the identity to $z$, the set of points
  \[
    \{g_n = \alpha_1 \cdots \alpha_n\}_{n=0}^N
  \]
  lies within a $D$-neighborhood of a geodesic ray in $X$ from
  $\identity$ to $z$, with respect to the metric $\dcusp$.
\end{lemma}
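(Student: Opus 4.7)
My approach is to combine the convergence-group action of $\Gamma$ on the Bowditch boundary $\bgamp$ with the coarse geometry of the cusped space $X$, extracting uniform constants from the finiteness of $V(\mc{G})$ and then mimicking a standard convergence-group contraction argument.

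First I would set up the uniform data. From finiteness of $V(\mc{G})$ I obtain a single thickening $\eps > 0$ valid at every edge of $\mc{G}$ in the $\mc{G}$-compatibility of the system $\{W_v\}$, and from property \ref{item:bgamp_sets_proper} together with finiteness of vertices a separation constant $\eta > 0$ and a choice of points $y_v \in \bgamp$ with $\bdist(y_v, \overline{W_v}) \ge \eta$ for every $v$. Let $\delta$ be the hyperbolicity constant of $X$ and let $k$ be the bound from \Cref{lem:bounded_backtrack_I}. Iterating $\mc{G}$-compatibility along the path produces the nested family of open sets $U_n := g_n W_{v_{n+1}}$ in $\bgamp$, each containing $z$, with the quantitative inclusion
\[
  g_{n+1}\,\overline{\nbhd[\bdist]{W_{v_{n+2}}}{\eps}} \;\subseteq\; U_n.
\]

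Next I would prove that $g_n \to z$ in the compactified cusped space $\overline{X}$. By \Cref{lem:bounded_backtrack_I} the sequence $g_n$ eventually leaves every finite subset of $\Gamma$, so along any subsequence $g_{n_j}$, Bowditch's characterization of convergence groups gives boundary limits $g_{n_j} \to z^+$ and $g_{n_j}^{-1} \to z^-$ in $\bgamp$, with $g_{n_j}$ converging to the constant map $z^+$ uniformly on compacta of $\bgamp \setminus \{z^-\}$. Passing to a further subsequence so that $v_{n_j + 2}$ is a single vertex $v$, the hypothesis $g_{n_j}^{-1}z \in W_v$ combined with the fact that $y_v \notin \overline{W_v}$ lets me rule out $z \ne z^+$: otherwise $g_{n_j}^{-1}z \to z^-$ would force $z^- \in \overline{W_v}$ and hence $y_v \ne z^-$, so $g_{n_j}y_v \to z^+$, but $g_{n_j}y_v$ lies outside $U_{n_j}$ by the $\eps$-thickened nested inclusion, contradicting $z \in U_{n_j}$ together with $g_{n_j}y_v \to z^+ = z$ via the uniform gap $\eps$.

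Finally I would upgrade this to the desired uniform fellow-traveling bound in $X$. The key point is that the convergence argument just given depends only on the finite data $(\mc{G}, \eps, \eta, \{y_v\})$ attached to $\mc{G}$, not on the particular $\mc{G}$-path. I run this step by contradiction: if no uniform $D$ exists, take $\mc{G}$-paths with violating indices $n_k$ and limit points $z^k$ so that $\dcusp(g^k_{n_k}, c_k) \to \infty$, where $c_k$ is a geodesic from $\identity$ to $z^k$. After extracting so that the vertex at position $n_k + 2$ stabilizes to some $v$, I translate by $(g_{n_k}^k)^{-1}$ to rebase at the identity and apply the convergence-group argument to the translated sequences, producing a uniform lower bound on the Gromov product $(g_n | z)_\identity - \dcusp(\identity, g_n)$; thin triangles in the $\delta$-hyperbolic space $X$ then yield the desired $\dcusp$-neighborhood bound with a uniform $D$. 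The main obstacle is making this rebased convergence-group argument genuinely uniform in the $\mc{G}$-path: every quantity used must be traceable to the finite combinatorial data of $\mc{G}$, which is what makes finiteness of $V(\mc{G})$ and the single thickening $\eps$ essential.
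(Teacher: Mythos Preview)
Your convergence-group argument in the second paragraph does not close. You assume $z \ne z^+$, derive $g_{n_j}y_v \to z^+$, and then claim a contradiction with ``$z \in U_{n_j}$ together with $g_{n_j}y_v \to z^+ = z$ via the uniform gap $\eps$'' --- but you are in the case $z \ne z^+$, so $g_{n_j}y_v$ converges to $z^+$, not to $z$, and nothing you have written forces $g_{n_j}y_v$ near $z$. The underlying problem is that $\bdist$ is not $\Gamma$-invariant: knowing $y_v$ is $\eta$-far from $W_v$ says nothing about $\bdist(g_{n_j}y_v, g_{n_j}W_v)$, and the $\eps$-thickened inclusion $g_{n+1}\overline{\nbhd{W_{v_{n+2}}}{\eps}} \subset U_n$ controls images under $g_{n+1}$, not metric neighborhoods of $z$ in $\bgamp$. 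Even if this step were repaired, the conclusion $g_n \to z$ is qualitative and path-dependent; your third paragraph tries to recover the uniform $D$ by a contradiction-and-rebase scheme, but the mechanism (``a uniform lower bound on $(g_n|z)_\identity - \dcusp(\identity, g_n)$'') is never actually produced from the translated convergence argument.

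The paper's proof avoids convergence dynamics entirely. The key observation you are missing is that the nested inclusions themselves propagate separation: pick any $z_- \in \bgamp \setminus \nbhd{W_{v_1}}{r}$ and iterate $\alpha \cdot \nbhd{W_w}{r} \subset W_v$ on complements to get $g_n^{-1}z_- \notin \nbhd{W_{v_{n+1}}}{r}$ for every $n$. Since $g_n^{-1}z \in W_{v_{n+1}}$, the pair $(g_n^{-1}z_-, g_n^{-1}z)$ stays $r$-separated in $\bdist$, so the bi-infinite geodesic $[z_-, z]$ in $X$ passes within a uniform distance of every $g_n$. Doing this for two distinct choices of $z_-$ and intersecting the resulting neighborhoods gives closeness to a ray from $\identity$ to $z$. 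This is a direct metric estimate yielding the uniform $D$ immediately, with no subsequence extraction or contradiction; your thickening $\eps$ and points $y_v$ are precisely the data needed to run it.
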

\begin{proof}
  First, fix $r > 0$ so that for every edge $u \to v$ in $\mc{G}$, and
  every $\alpha \in T_u$, the inclusion
  \begin{equation}
    \label{eq:edge_inclusion_lem}
    \alpha \cdot \nbhd{W_v}{r} \subset W_u.
  \end{equation}
  By property \ref{item:bgamp_sets_proper}, we may also assume that
  $r$ is sufficiently small so that $\nbhd{W_v}{r} \ne \bgamp$ for
  every $W_v \in \mc{W}$.

  The vertex sequence $v_1, v_2, \ldots,$ of the $\mc{G}$-path
  determines a sequence of sets $W_i := W_{v_i}$. Fix a point
  $z_- \in \bgamp \minus \nbhd{W_1}{r}$. Since
  $z \in \overline{\alpha_1W_2} \subset W_1$, we know that, with
  respect to our fixed metric $\bdist$ on $\bgamp$, we have
  $\bdist(z, z_-) \ge \eps$. Therefore, there is a bi-infinite
  geodesic $c:(-\infty, \infty) \to X$ passing within distance $R$ of
  the identity, for $R$ depending only on $r$ and the metric $\bdist$.

  For each $n$, we know that $g_n^{-1}c$ is a bi-infinite geodesic
  joining $g_n^{-1}z_-$ to $g_n^{-1}z$. By assumption, $g_n^{-1}z$
  lies in $W_{n+1}$, and \eqref{eq:edge_inclusion_lem} guarantees that
  $\nbhd{W_{n+1}}{r} \subset g_n^{-1}W_1$, hence
  \[
    \bgamp \minus g_n^{-1}W_1 \subset \bgamp \minus
    g_n^{-1}\nbhd{W_{n+1}}{r}.
  \]
  It follows that $\bdist(g_n^{-1}z_-, g_n^{-1}z) > r$ and so
  $g_n^{-1}c$ also passes within distance $R$ of the
  identity. Therefore, the sequence $g_n$ lies within an
  $R$-neighborhood of $c$.

  The argument above actually shows that $g_n$ lies within distance
  $R$ of \emph{any} bi-infinite geodesic joining $z$ to some point
  $z_-$ in $\bgamp \minus \nbhd{W_1}{r}$. Since $\bgamp$ is perfect
  and compact, we can choose $r$ small enough so that for every vertex
  $v$ in the automaton, the complement $\bgamp \minus \nbhd{W_v}{r}$
  contains at least two points whose distance is at least $r$. So, by
  choosing two points $z_-, z_-'$ in $\nbhd{W_1}{r}$ with
  $\bdist(z_-, z_-') > r$, we see that $g_n$ lies in the intersection
  of $R$-neighborhoods of bi-infinite geodesics $c, c'$ joining
  $z_-, z_-'$ respectively to $z$. By e.g. \cite[Lemma 4.6]{MMW24},
  this means that $g_n$ also lies in a uniform neighborhood (in $X$)
  of a geodesic ray based at the identity with ideal endpoint $z$.
\end{proof}

As a consequence, we have:
\begin{lem}[{see \cite[Cor. 4.11]{MMW24}}]
  \label{lem:infinite_codes_conical}
  Any infinite-length $\mc{G}$-path $\{(v_i, \alpha_i)\}_{i=1}^\infty$
  limits to a unique conical limit point $z$. Moreover, this point is
  the unique element contained in the infinite intersection
  \[
    \bigcap_{i=1}^\infty \alpha_1 \cdots \alpha_n W(v_{n+1}).
  \]
\end{lem}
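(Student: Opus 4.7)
The plan is to show (i) the infinite nested intersection is nonempty, (ii) any point in it serves as an endpoint of a geodesic ray tracked by the sequence $g_n = \alpha_1 \cdots \alpha_n$, and (iii) uniqueness plus the conical property follow from the fact that the $g_n$ lie in $\cay(\Gamma) \subset X$ and escape to infinity.

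First, I would verify nestedness. By $\mc{G}$-compatibility of the system $\mc{W} = \{W_v\}$, for each edge $v_{n+1} \to v_{n+2}$ in the automaton we have $\overline{\alpha_{n+1} W_{v_{n+2}}} \subseteq W_{v_{n+1}}$; multiplying by $g_n$ gives
\[
\overline{g_{n+1}\, W_{v_{n+2}}} \;\subseteq\; g_n W_{v_{n+1}}.
\]
Thus $\{\overline{g_n W_{v_{n+1}}}\}_{n \ge 0}$ is a decreasing sequence of nonempty compact subsets of the compact space $\bgamp$, so its intersection $Z$ is nonempty. Any $z \in Z$ satisfies $g_n^{-1} z \in W_{v_{n+1}}$ for all $n$, which is precisely the hypothesis of \Cref{lem:gpaths_geodesic_neighborhood} (applied to the infinite-length path). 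Consequently, for some uniform $D > 0$, the sequence $\{g_n\}_{n \ge 0}$ lies within $\dcusp$-distance $D$ of a geodesic ray $c_z : [0, \infty) \to X$ from $\identity$ to $z$.

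Next, I would invoke \Cref{lem:bounded_backtrack_I}, which ensures no element of $\Gamma$ occurs in the sequence $\{g_n\}$ more than $k$ times; hence the set $\{g_n\}$ is infinite and $\lcusp{g_n} \to \infty$. Combined with the fact that $\{g_n\}$ fellow-travels $c_z$ within bounded $X$-distance, this forces $g_n \to z$ in the compactification $\overline{X}$. Because the limit of a sequence in a Hausdorff compactification is unique, we must have $Z = \{z\}$, proving the uniqueness part of the statement.

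Finally, I would verify that $z$ is a conical limit point of the $\Gamma$-action on $\bgamp$. By the characterization in terms of the cusped space, a point in $\dee X$ is conical precisely when some (equivalently any) geodesic ray to it stays at bounded $\dcusp$-distance from $\cay(\Gamma) \subset X$ infinitely often; parabolic fixed points are instead characterized by geodesic rays whose depth tends to infinity. Since each $g_n$ lies in $\cay(\Gamma)$ at depth $0$, the ray $c_z$ satisfies $D(c_z(t_n)) \le D$ for a sequence $t_n \to \infty$ corresponding to the projections of $g_n$ onto $c_z$, so $z$ cannot be parabolic and must be conical. Combined with the previous paragraph, the sequence $g_n$ then satisfies the first clause of \Cref{defn:limit_endpoint}, so the $\mc{G}$-path limits to $z$. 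The only real subtlety, which I would handle inside step (ii), is making sure \Cref{lem:gpaths_geodesic_neighborhood} applies with the indexing conventions used here; once a point in the intersection is produced, the rest is a direct consequence of hyperbolicity of $X$ and the no-backtracking estimate.
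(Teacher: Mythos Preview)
Your proposal is correct and follows essentially the same approach as the paper: nonemptiness of the nested intersection by compactness, \Cref{lem:gpaths_geodesic_neighborhood} to see that $g_n$ fellow-travels a ray to any point in the intersection, \Cref{lem:bounded_backtrack_I} to force $g_n$ to escape to infinity, and then the conical characterization (the paper cites \cite[Prop.~A.2]{HealyHruska10} where you argue directly via depth). The uniqueness argument is identical in spirit.
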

\begin{proof}
  For each $n$, let $g_n = \alpha_1 \cdots \alpha_n$. Since $\bgamp$
  is compact, there is at least one point $z$ in the infinite
  intersection
  $\bigcap_{i=1}^\infty \alpha_1 \cdots \alpha_n W(v_{n+1})$. It
  follows from \Cref{lem:gpaths_geodesic_neighborhood} that $g_n$ lies
  within a uniform neighborhood of some ray in $X$ whose ideal
  endpoint is $z$. By \Cref{lem:bounded_backtrack_I}, the sequence
  $g_n$ is unbounded in $\Gamma$, so $z$ must be a conical limit point
  (see e.g. \cite[Prop. A.2]{HealyHruska10}), and $z$ is uniquely
  determined by $g_n$.
\end{proof}

Next we want to prove a more specific version of
\Cref{lem:gpaths_geodesic_neighborhood}, but we need an intermediate
result first.

\begin{lem}
  \label{lem:coding_segments_in_horoballs}
  For any vertex $v$ in $\mc{G}$, there exists a constant $C > 0$ so
  that, if, $\alpha$ is any element in $T_v$, and $\ell$ is a geodesic
  segment in $X = X(\Gamma, \pargps)$ between $\identity$ and
  $\alpha$, then for any $R > 0$, the Hausdorff distance (with respect
  to $\dcusp$) between $\nbhd[\dcusp]{\ell}{R} \cap \cay(\Gamma)$ and
  $\{\identity, \alpha\}$ is at most $C + 2R$.
\end{lem}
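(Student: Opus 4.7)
The plan is to split based on the type of vertex $v$. If $v$ is non-parabolic, then $T_v$ is a singleton $\{\alpha_v\}$ and $\ell$ has $\dcusp$-length at most $|\alpha_v|_\Gamma$, so every point of $\nbhd[\dcusp]{\ell}{R} \cap \cay(\Gamma)$ lies within $R + |\alpha_v|_\Gamma$ of $\identity$ and one takes $C \ge |\alpha_v|_\Gamma$. The real content is the parabolic case, where $T_v = gP \minus F_v$ and $\alpha = gp$ for some $p \in P$: here the length of $\ell$ is unbounded as $p$ varies, so I need a mechanism forcing the Cayley-graph portion of $\ell$ to stay close to the two endpoints.

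That mechanism is provided by \Cref{lem:geodesics_enter_horoballs_deeply} applied to the combinatorial horoball $\mc{H}$ over $gP$. Both endpoints of $\ell$ lie within $\dcusp$-distance $|g|_\Gamma$ of $\mc{H}$, since $\alpha \in \dee \mc{H}$ while a Cayley segment of length $|g|_\Gamma$ joins $\identity$ to $g \in \dee \mc{H}$. The lemma then gives
\[
  \dcusp(z, \{\identity, \alpha\}) \le \dcusp(z, X \minus \mc{H}) + 3|g|_\Gamma + 7\delta
\]
for every $z \in \ell$. For $z \in \ell \cap \cay(\Gamma)$ one has $\dcusp(z, X \minus \mc{H}) \le 1$: either $z \notin \mc{H}$, or $z \in \dee \mc{H}$ and is adjacent in $\cay(\Gamma)$ to a vertex outside $\mc{H}$ (such an adjacent vertex exists because $P \subsetneq \Gamma$, so the compatible generating set $S$ contains an element not in $P$). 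Therefore
\[
  \ell \cap \cay(\Gamma) \subset \nbhd[\dcusp]{\{\identity, \alpha\}}{C_0}, \qquad C_0 := 3|g|_\Gamma + 7\delta + 1.
\]

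To extend this to any $x \in \nbhd[\dcusp]{\ell}{R} \cap \cay(\Gamma)$, I would pick $y \in \ell$ with $\dcusp(x,y) \le R$. If $y \in \cay(\Gamma)$ then $y \in \ell \cap \cay(\Gamma)$ and the previous paragraph gives $\dcusp(x, \{\identity,\alpha\}) \le R + C_0$. Otherwise $y$ lies in the interior of some combinatorial horoball $\mc{H}'$; since any path from $y$ to $\cay(\Gamma)$ must descend to $\dee \mc{H}'$, the depth $D(y)$ is at most $R$. Letting $[t_1,t_2]$ be the maximal interval on which $\ell$ is contained in $\mc{H}'$, the endpoints $\ell(t_1), \ell(t_2)$ lie in $\dee \mc{H}' \cap \ell \cap \cay(\Gamma)$, and a second application of \Cref{lem:geodesics_enter_horoballs_deeply} (this time to $\ell|_{[t_1,t_2]}$ with constant $0$) bounds $\dcusp(y, \{\ell(t_1), \ell(t_2)\})$ by $R + 1 + 7\delta$. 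Combining, $\dcusp(x, \ell \cap \cay(\Gamma)) \le 2R + 1 + 7\delta$, so $\dcusp(x, \{\identity, \alpha\}) \le 2R + C_0 + 7\delta + 1$, and the stated bound holds with $C := C_0 + 7\delta + 1$. The main subtlety is the first containment in the parabolic case: a priori, nothing prevents the Cayley portion of $\ell$ from lingering far from both endpoints, but the horoball $\mc{H}$ acts as a \emph{bottleneck} that forces the geodesic to remain close to $\identity$ or $\alpha$ whenever it is close to $\cay(\Gamma)$.
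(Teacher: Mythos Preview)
Your proof is correct and follows the same strategy as the paper: dispose of the non-parabolic case trivially, and in the parabolic case apply \Cref{lem:geodesics_enter_horoballs_deeply} to the horoball $\mc{H}$ over $gP$. The paper collapses your two-step argument into one: for $u \in \nbhd[\dcusp]{\ell}{R} \cap \cay(\Gamma)$ and $w \in \ell$ with $\dcusp(u,w)\le R$, one already has $\dcusp(w, X\setminus\mc{H}) \le R$ (up to your same $+1$), so a single invocation of the lemma on the full segment $\ell$ gives $\dcusp(u,\{\identity,\alpha\}) \le 2R + 3|g|_X + 7\delta$ without the detour through an auxiliary horoball $\mc{H}'$.
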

For the proof below, all distances are taken with respect to the
metric $\dcusp$.
\begin{proof}
  If $v$ is not a parabolic vertex, then $T_v$ is a singleton
  $\{\alpha_v\}$ and we can just take $C = |\alpha_v|_X$. So, suppose
  that $v$ is parabolic. Then the associated parabolic point $q_v$ has
  the form $q_v = gp$ for some $g \in \Gamma$ and $p \in \Pi$. Let $P$
  be the stabilizer in $\Gamma$ of $p$ and let $\mc{H}$ be the
  combinatorial horoball in $X$ based at $gP$, so that any
  $\alpha \in T_v$ is in $\mc{H}$. Thus, if $\ell$ is a geodesic
  segment from $\identity$ to $\alpha$, the endpoints of $\ell$ lie
  within distance $|g|_X$ of $\mc{H}$.

  Now, if $u \in \nbhd[\dcusp]{\ell}{R} \cap \cay(\Gamma)$, then there
  is some $w \in \ell$ with $\dcusp(w, u) \le R$, hence
  $\dcusp(w, \cay(\Gamma)) \le R$. So by
  \Cref{lem:geodesics_enter_horoballs_deeply}, we have
  $\dcusp(w, \{\identity, \alpha\}) \le R + 3|g|_X + 7\delta$, and
  therefore
  $\dcusp(u, \{\identity, \alpha\}) \le 2R + 3|g|_X + 7\delta$.
\end{proof}

Now we strengthen \Cref{lem:gpaths_geodesic_neighborhood}. Compare the
proof of the below to the proof of \cite[Lem. 4.13]{MMW24}.
\begin{lem}
  \label{lem:gpaths_track_geodesics}
  There exists a constant $D > 0$ satisfying the following. Suppose
  that $\{(v_i, \alpha_i)\}_{i=1}^N$ is a (finite or infinite)
  $\mc{G}$-path, let $g_n = \alpha_1 \cdots \alpha_n$, and let
  $z \in \bigcap_{n=1}^Ng_{n-1}W_{v_{n}}$. If $c:[0, \infty) \to X$ is
  a geodesic ray from $\identity$ to $z$, then there is a subinterval
  $I \subseteq [0, \infty)$ containing $0$ such that the set
  $\{g_n\}_{n=0}^{N-1}$ lies within Hausdorff distance $D$ of
  $c(I) \cap \cay(\Gamma)$, with respect to $\dcusp$.
\end{lem}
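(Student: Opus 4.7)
The plan is to construct an explicit quasi-geodesic path $\gamma$ in the cusped space $X$ that passes through each $g_n$, and then compare $\gamma$ to the given geodesic $c$ using the stability of quasi-geodesics in the $\delta$-hyperbolic space $X$. Specifically, I take $\gamma$ to be the concatenation of geodesic segments $\ell_n \subset X$ joining $g_n$ to $g_{n+1}$ for $n = 0, 1, \ldots, N-2$. This path joins $\identity$ to $g_{N-1}$ (or is a ray if $N = \infty$), and by construction $\{g_n\}$ is precisely the set of ``break points'' of $\gamma$.

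The main technical point is to show that $\gamma$ is a uniform $(K,A)$-quasi-geodesic path in $X$, with constants depending only on $\delta$ and the automaton $\mc{G}$. Lemma~\ref{lem:gpaths_geodesic_neighborhood} provides a geodesic ray $c' : [0, \infty) \to X$ from $\identity$ to $z$ such that $\{g_n\}$ is contained in a uniform $D_0$-neighborhood of $c'$; combined with the bounded-backtracking property of Lemma~\ref{lem:bounded_backtrack_I} (no element of $\Gamma$ appears more than $k$ times in $g_n$), this forces the nearest-point projections of the $g_n$'s to $c'$ to be monotone up to bounded overlap. By hyperbolic thin-triangles, each segment $\ell_n$ then stays within bounded distance of the sub-ray of $c'$ between the projections of $g_n$ and $g_{n+1}$, so the total length of $\gamma$ is comparable to its projected parameter range, making $\gamma$ a uniform quasi-geodesic.

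Once the quasi-geodesic property is in hand, the Morse lemma in the $\delta$-hyperbolic space $X$ yields a uniform constant $M$ such that $\gamma$ and $c$ have $\dcusp$-Hausdorff distance at most $M$ on their common portion. When $N = \infty$, the paths $\gamma$ and $c$ both begin at $\identity$ and share the same ideal endpoint $z$, so their Hausdorff distance is globally bounded and I take $I = [0, \infty)$. When $N$ is finite, I choose $T$ so that $c(T)$ is a nearest point on $c$ to $g_{N-1}$ (well-defined up to a bounded set by hyperbolicity) and set $I = [0, T]$; beyond $T$, the geodesic $c$ is allowed to diverge from $\gamma$ into a horoball whose cusp point is $z$ (which arises precisely when $z$ is the parabolic point associated to the terminal vertex $v_N$).

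With $\gamma$ and $c|_I$ controlled in $\dcusp$, Corollary~\ref{cor:hausdorff_distance_bound} upgrades this to $\dgam$-Hausdorff control between $\gamma \cap \cay(\Gamma)$ and $c(I) \cap \cay(\Gamma)$. Finally, applying Lemma~\ref{lem:coding_segments_in_horoballs} to each piece $\ell_n$ (with the constant $C$ maximized over the finitely many vertices of $\mc{G}$, which is legitimate because each $\alpha_n$ lies in some $T_{v_n}$ and $\mc{G}$ is finite) shows that $\gamma \cap \cay(\Gamma)$ is within uniformly bounded $\dcusp$-distance of $\{g_n\}$. Chaining these two comparisons produces the desired Hausdorff bound. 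The hardest step is the quasi-geodesic claim for $\gamma$: ruling out large backtracks or long detours along the piecewise path is where the synergy between Lemmas~\ref{lem:gpaths_geodesic_neighborhood} and \ref{lem:bounded_backtrack_I} is essential, and where the care must be taken to ensure constants are uniform in the $\mc{G}$-path.
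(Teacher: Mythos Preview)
Your construction of the piecewise path $\gamma = \bigcup_n \ell_n$ is exactly the set $Y$ the paper builds, and your final step (use Lemma~\ref{lem:coding_segments_in_horoballs} on each $\ell_n$ to pass from $Y \cap \cay(\Gamma)$ to $\{g_n\}$) is the same as the paper's. The divergence is in the middle: you want to prove that $\gamma$ is a uniform $(K,A)$-quasi-geodesic and then invoke the Morse lemma, whereas the paper never claims this and instead uses only that $Y$ is \emph{connected} and contained in a tube around $c(I)$. From those two facts the paper runs a short separation argument: for any $u \in c(I)\cap\cay(\Gamma)$, the ball $\ball[\dcusp]{u}{2R}$ cuts $c(I)$ into two pieces at distance $\ge 4R$, and a connected set lying in an $R$-tube around $c(I)$ that meets both pieces must enter $\ball[\dcusp]{u}{3R}$. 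This gives a point $y\in Y$ close to $u$ without any quasi-geodesic estimate.

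The gap in your proposal is the justification of the quasi-geodesic claim. Lemma~\ref{lem:bounded_backtrack_I} only says that no single group element repeats more than $k$ times in the sequence $g_n$; it does \emph{not} imply that the nearest-point projections of the $g_n$ to $c'$ are coarsely monotone. Nothing you cite rules out oscillation patterns like $p_0, p_1, p_2, p_3, \dots \approx 0, L, 1, L+1, \dots$ with all the $g_n$ distinct; in such a scenario the concatenated path $\gamma$ has arc-length of order $L$ per step while the net displacement per two steps is $O(1)$, so no uniform quasi-geodesic constants exist. A correct monotonicity argument would have to exploit the nested inclusions $g_m W_{v_{m+1}} \subsetneq g_n W_{v_{n+1}}$ (equation~\eqref{eq:proper_inclusion}) together with a shadows-type estimate, not bounded repetition. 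Since the paper's connectivity argument sidesteps this entirely, I would recommend you adopt it: it is both shorter and avoids a claim that, while plausibly true, you have not established.
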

\begin{proof}
  We know from \Cref{lem:gpaths_geodesic_neighborhood} that there is a
  constant $D_0$ so that the set $\mc{S} = \{g_n\}_{n=0}^{N-1}$ is
  contained in a $D_0$-neighborhood of $c([0, \infty))$, so let
  $I \subseteq [0, \infty)$ be a minimal interval containing $0$ such
  that a $D_0$-neighborhood of $c(I)$ also contains $\mc{S}$. That is,
  $\nbhd[\dcusp]{c(I)}{D_0} \cap \cay(\Gamma)$ contains $\mc{S}$, so
  because of \Cref{lem:cayley_hausdorff_neighborhoods} we just need to
  check that $c(I) \cap \cay(\Gamma)$ lies in a uniform neighborhood
  of $\mc{S}$.

  For each $1 \le n < N - 1$, let $\ell_n$ be a geodesic segment in
  $X$ joining $g_{n-1}$ to $g_n$, and let $Y$ be the union
  $\bigcup_{n=1}^{N-1} \ell_n$. Each $\ell_n$ is contained in a
  $(D_0 + 2\delta)$-neighborhood of $c(I)$, meaning that $Y$ is a
  connected subset also lying in a $(D_0 + 2\delta)$-neighborhood of
  $c(I)$.

  Let $u$ be a point in $c(I) \cap \cay(\Gamma)$, and let
  $R = D_0 + 2\delta$. Now, if $u$ lies within distance $2R$ of an
  endpoint of $c(I)$, then by minimality of $I$, $u$ must also be
  within distance $2R + D_0$ of some $g_n$ and we are done. So, we may
  assume the metric ball $\ball[\dcusp]{u}{2R}$ separates $c(I)$ into
  two nonempty connected components $B_-, B_+$. Since $c$ is a
  geodesic, we have
  \begin{equation}
    \label{eq:component_distance}
    \dcusp(B_-, B_+) \ge 4R.
  \end{equation}

  Suppose for a contradiction that $Y \cap \ball[\dcusp]{u}{3R}$ is
  empty. Then for any $w \in\ball[\dcusp]{u}{2R}$ and any $y \in Y$,
  we have $\dcusp(y, w) > 3R - 2R = R$. Thus, since $Y$ is in an
  $R$-neighborhood of $B_- \cup \ball[\dcusp]{u}{2R} \cup B_+$, for
  any $y \in Y$, there is some $w \in B_- \cup B_+$ with
  $\dcusp(y, w) \le R$. That is, for any $y \in Y$, we have
  \[
    \dcusp(y, c(I)) = \min(\dcusp(y, B_-), \dcusp(y, B_+)) \le R.
  \]
  Since $Y$ contains $\identity$, and $I$ was chosen minimally, there
  are points $y_-, y_+ \in Y$ so that $\dcusp(y_-, B_-) \le R$ and
  $\dcusp(y_+, B_+) \le R$. By \eqref{eq:component_distance}, we have
  $\dcusp(y_-, B_+) \ge 3R$ and $\dcusp(y_+, B_-) \ge 3R$. Then since
  $Y$ is connected, continuity of the distance function implies that
  there is some point $y \in Y$ so that
  $\dcusp(y, B_-) = \dcusp(y, B_+)$, hence $\dcusp(y, B_\pm) \le R$
  and $\dcusp(B_-, B_+) \le 2R$. But this contradicts
  \eqref{eq:component_distance}.

  We conclude that there is some $y \in Y$ so that
  $\dcusp(y, u) \le 3R$. Since $y \in \ell_n$ for some $n$, and
  $u \in \cay(\Gamma)$ by assumption, we may apply
  \Cref{lem:coding_segments_in_horoballs} to see that there is a
  uniform constant $C$ such that
  $\dcusp(u, \{g_{n-1}, g_n\}) \le C + 6R$. Thus
  $c(I) \cap \cay(\Gamma)$ lies in a $(C + 6R)$-neighborhood of
  $\mc{S}$, completing the proof.
\end{proof}

We can further strengthen the previous result so that it also applies
to \emph{uniform quasi-geodesic} rays. For this result and the next,
no exact analogy appears in either \cite{MMW24} or \cite{Weisman2022}.
\begin{corollary}
  \label{cor:gpaths_track_quasigeodesics}
  For any $K \ge 1, A \ge 0$, there exists a constant $D > 0$
  satisfying the following. For any $z \in \bgamp$, any (finite or
  infinite) $\mc{G}$-path $\{(v_i, \alpha_i)\}_{i=1}^N$ limiting to
  $z$, and any $(K,A)$-quasi-geodesic ray $r:[0, \infty) \to X$ from
  the identity to $z$, the set
  $\{g_n = \alpha_1 \cdots \alpha_n\}_{n=0}^{N-1}$ is within Hausdorff
  distance $D$ of $r([0, \infty)) \cap \cay(\Gamma)$, with respect to
  $\dcusp$.
\end{corollary}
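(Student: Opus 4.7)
The plan is to reduce to \Cref{lem:gpaths_track_geodesics}. By the Morse lemma in the $\delta$-hyperbolic cusped space $X$, there exist a constant $M = M(K, A, \delta)$ and a geodesic ray $c \colon [0, \infty) \to X$ with $c(0) = \identity$ and the same ideal endpoint $z$ as $r$ such that $r([0, \infty))$ and $c([0, \infty))$ lie within Hausdorff $\dcusp$-distance $M$. Applying \Cref{lem:gpaths_track_geodesics} to $c$ then produces a subinterval $I \subseteq [0, \infty)$ containing $0$ and a constant $D_0$ such that $\mc{S} := \{g_n\}_{n=0}^{N-1}$ has Hausdorff $\dcusp$-distance at most $D_0$ from $c(I) \cap \cay(\Gamma)$. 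Combining these with \Cref{lem:cayley_hausdorff_neighborhoods} immediately yields one direction of the claim: every $g_n$ lies within uniform $\dcusp$-distance of $r([0, \infty)) \cap \cay(\Gamma)$.

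For the reverse direction, the Morse estimate together with \Cref{lem:cayley_hausdorff_neighborhoods} reduces the claim to showing that $c([0, \infty)) \cap \cay(\Gamma)$ lies in a uniform $\dcusp$-neighborhood of $\mc{S}$. The portion $c(I) \cap \cay(\Gamma)$ is already handled, so only the tail $c((b, \infty)) \cap \cay(\Gamma)$ remains, with $b := \sup I$. When the $\mc{G}$-path is infinite, \Cref{lem:bounded_backtrack_I} forces $\mc{S}$ to contain infinitely many distinct elements, so proper discontinuity of the $\Gamma$-action on the locally finite graph $X$ makes $\mc{S}$ unbounded in $\dcusp$. Then $c(I)$ is also unbounded, forcing $I = [0, \infty)$ and leaving an empty tail.

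When the $\mc{G}$-path is finite, $v_N$ is parabolic with $q_{v_N} = gp$ for some $g \in \Gamma$ and $p \in \Pi$, and $z = g_{N-1}gp$. Let $\mc{H}$ be the combinatorial horoball based at $g_{N-1}gP$, where $P$ is the $\Gamma$-stabilizer of $p$. The geodesic $c$ enters $\mc{H}$ at some $y = c(t_y)$ and then travels vertically to the ideal endpoint $z$, so $c(t) \notin \cay(\Gamma)$ for $t > t_y$, and the tail lies in $c((b, t_y]) \cap \cay(\Gamma)$. The Busemann function based at $z$ vanishes on $\partial \mc{H}$, is $1$-Lipschitz up to an additive constant depending only on $\delta$, and equals $-t_y$ at $\identity$ up to the same constant; evaluating at $g_{N-1}g \in \partial \mc{H}$ yields
\[
  t_y \le \dcusp(\identity, g_{N-1}) + |g|_X + C_1
\]
for a constant $C_1 = C_1(\delta)$. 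Since $v_N$ lies among the finitely many parabolic vertices of $\mc{G}$, $|g|_X$ is uniformly bounded; and since $g_{N-1}$ is within $D_0$ of some $c(s)$ with $s \in I = [0, b]$, we have $\dcusp(\identity, g_{N-1}) \le b + D_0$. Therefore $t_y - b$ is uniformly bounded, so every point of $c((b, t_y])$ lies within this uniform distance of $c(b)$. Since $\mc{S}$ is finite in this case, minimality of $I$ produces some $g_{n^*} \in \mc{S}$ with $\dcusp(g_{n^*}, c(b)) \le D_0$, finishing the bound.

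The main obstacle is the parabolic case above, specifically the uniform estimate on $t_y - b$. This rests on two ingredients: the finiteness of the automaton $\mc{G}$, which uniformly bounds $|g|_X$ as $v_N$ ranges over parabolic vertices; and the $\delta$-hyperbolic geometry of the cusped space, which ensures that the geodesic $c$ enters the horoball $\mc{H}$ close to the nearest-point projection of $\identity$ onto $\partial \mc{H}$ (encoded here via a Busemann function estimate).
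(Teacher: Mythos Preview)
Your argument is correct and follows the same architecture as the paper's: reduce to a geodesic ray via the Morse lemma and \Cref{cor:hausdorff_distance_bound}, invoke \Cref{lem:gpaths_track_geodesics}, and then bound the parabolic tail. The only substantive difference is in that last step: where you bound $t_y - b$ via a Busemann function, the paper instead applies \Cref{lem:geodesics_enter_horoballs_deeply} to a subsegment of $c$ whose endpoints lie near $\mc{H}$ (one near $g_{N-1}$, one deep in $\mc{H}$), giving directly $\dcusp(c(s), c(t')) \le 3D_0 + 7\delta$ for any $c(s) \in \cay(\Gamma)$ with $s$ beyond the interval $I$. Both arguments work; the paper's is a bit more self-contained since the horoball lemma is already in hand, while yours imports standard Busemann-function facts for $\delta$-hyperbolic spaces.

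Two small points are worth tightening. First, \Cref{lem:gpaths_track_geodesics} assumes $z \in \bigcap_{n} g_{n-1}W_{v_n}$, not merely that the $\mc{G}$-path limits to $z$; you should verify this before applying the lemma (use \Cref{lem:infinite_codes_conical} in the infinite case and property \ref{item:parabolic_edge_condition2} in the finite case, as the paper does in its first paragraph). Second, the assertion that $c$ ``travels vertically'' after entering $\mc{H}$ is neither true in general nor needed: what you actually use is that if $t_y$ is the \emph{last} time $c$ meets $\cay(\Gamma)$, then $c(t_y) \in \partial\mc{H}$, and this alone feeds the Busemann estimate.
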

\begin{proof}
  We first claim that, if the $\mc{G}$-path
  $\{(v_i, \alpha_i)\}_{i=1}^N$ limits to $z \in \bgamp$, then $z$
  lies in the intersection of sets
  \[
    \bigcap_{n=1}^N g_{n-1}W_{v_n}.
  \]
  If $N = \infty$ this follows from
  \Cref{lem:infinite_codes_conical}. If $N$ is finite, then $v_N$ is a
  parabolic vertex, and by condition
  \ref{item:parabolic_edge_condition2}, we know that the corresponding
  parabolic point $q = q_{v_N}$ lies in $W_{v_N}$. Since the given
  $\mc{G}$-path limits to $z$, by definition we have
  $z = \alpha_1 \cdots \alpha_{N-1}q$ and therefore
  $z \in \alpha_1 \cdots \alpha_{N-1}W_N$.

  Now fix a geodesic ray $c:[0, \infty) \to X$ from the identity to
  $z$. By \Cref{cor:hausdorff_distance_bound} and the Morse lemma, it
  suffices to show that the Hausdorff distance between the sets
  $\mc{S} = \{g_n\}_{n=0}^{N-1}$ and
  $c([0, \infty)) \cap \cay(\Gamma)$ is uniformly bounded. By
  \Cref{lem:gpaths_track_geodesics}, there is a constant $D_0$ and a
  sub-interval $I \subseteq [0, \infty)$ containing $0$ so that
  $\mc{S}$ lies within Hausdorff distance $D_0$ of
  $c(I) \cap \cay(\Gamma)$, so we will be done if we can show that the
  Hausdorff distance between $c(I) \cap \cay(\Gamma)$ and
  $c([0, \infty)) \cap \cay(\Gamma)$ is uniformly bounded.

  If $I = [0, \infty)$ then there is nothing to prove, so assume that
  $I = [0, t]$ for some $t < \infty$. This can only occur if the
  sequence $g_n$ is bounded in $\cay(\Gamma)$, meaning (by
  \Cref{lem:bounded_backtrack_I}) that $N$ is finite and $z$ is a
  parabolic point in the $\Gamma$-orbit of some $p \in \Pi$. Let $P$
  be the stabilizer of $p$ in $\Gamma$. Since our $\mc{G}$-path
  $\{(v_i, \alpha_i)\}_{i=1}^N$ limits to $z$ by assumption, $z$ must
  be the center of the combinatorial horoball $\mc{H}$ based on the
  coset $g_{N-1}P$.

  Fix $t' \le t$ so that $\dcusp(g_{N-1}, c(t')) < D_0$, and suppose
  that $c(s) \in \cay(\Gamma)$ for some $s > t$. Since $c$ is
  asymptotic to $z$, there is some $t''$, much larger than $s$, so
  that $c(t'') \in \mc{H}$ and
  $\dcusp(c(s), c(t')) \le \dcusp(c(s), c(t''))$. Then the sub-segment
  of $c$ between $c(t')$ and $c(t'')$ has endpoints within distance
  $D_0$ of $\mc{H}$, so by
  \Cref{lem:geodesics_enter_horoballs_deeply}, we have
  \[
    \dcusp(c(s), c(t')) \le 3D_0 + 7\delta.
  \]
  This shows that any point in $c((t, \infty)) \cap \cay(\Gamma)$ lies
  within distance $3D_0 + 7\delta$ of $c(I)$, meaning that
  $\nbhd[\dcusp]{c(I)}{3D_0 + 7\delta} \cap \cay(\Gamma)$ contains
  $c([0, \infty)) \cap \cay(\Gamma)$. Thus by
  \Cref{lem:cayley_hausdorff_neighborhoods} there is a uniform
  neighborhood of $c(I) \cap \cay(\Gamma)$ containing
  $c([0, \infty)) \cap \cay(\Gamma)$, and therefore a uniform bound on
  the Hausdorff distance between $c([0, \infty)) \cap \cay(\Gamma)$
  and $c(I) \cap \cay(\Gamma)$, as desired.
\end{proof}

\subsection{$\mc{G}$-paths close to $\cay(\Gamma)$}

The final lemma in this section says that, when the elements
$\alpha_i$ appearing in a $\mc{G}$-path are all short, then any
geodesic in $X$ ``tracking'' the $\mc{G}$-path does not enter deeply
into any horoball.
\begin{lemma}
  \label{lem:short_jumps_close_to_cay}
  Let $\{(v_i, \alpha_i)\}_{i=1}^\infty$ be an infinite $\mc{G}$-path,
  and assume that the quantity $C = \max_{i\ge 1}\{\lcusp{\alpha_i}\}$
  is finite. Let $c:[0, \infty) \to X$ be a geodesic ray based at the
  identity, and suppose that the $\dcusp$-Hausdorff distance between
  $c([0, \infty)) \cap \cay(\Gamma)$ and the set
  $\{g_n = \alpha_1 \cdots \alpha_n\}$ is at most $D$. Then
  $c([0, \infty))$ lies in a $(C + 3D)$-neighborhood of
  $\cay(\Gamma)$, with respect to $\dcusp$.
\end{lemma}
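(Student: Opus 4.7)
The plan is to use the Hausdorff-tracking hypothesis to transfer the bounded-step property $\dcusp(g_n, g_{n+1}) = \lcusp{\alpha_{n+1}} \le C$ onto a parameter $s_n \in [0, \infty)$ describing the time at which the geodesic $c$ is near $g_n$, and then apply an intermediate value argument along $c$.

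First I would set up the tracking sequence. Using the Hausdorff hypothesis, for each $n \ge 0$ pick $s_n \in [0, \infty)$ with $c(s_n) \in \cay(\Gamma)$ and $\dcusp(c(s_n), g_n) \le D$; we may take $s_0 = 0$ since $g_0 = \identity = c(0)$. Since $g_{n+1} = g_n \alpha_{n+1}$, left-invariance of $\dcusp$ on $\Gamma$ gives $\dcusp(g_n, g_{n+1}) \le C$, so
\[
\dcusp(c(s_n), c(s_{n+1})) \le 2D + C.
\]
Because $c$ is a unit-speed geodesic, this translates to the bound $|s_{n+1} - s_n| \le C + 2D$ for consecutive parameters.

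Next I would show $s_n \to \infty$. Since the $\mc{G}$-path is infinite, \Cref{lem:infinite_codes_conical} implies it limits to a conical point $z \in \bgamp$, and \Cref{lem:gpaths_geodesic_neighborhood} places the sequence $\{g_n\}$ in a uniform $\dcusp$-neighborhood of a geodesic ray to $z$. In particular $\dcusp(\identity, g_n) \to \infty$, so $s_n = \dcusp(\identity, c(s_n)) \to \infty$ as well.

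Finally, fix $t \ge 0$. Since $s_0 = 0 \le t$ and $s_n \to \infty$, the set $\{n : s_n \le t\}$ has a maximum $n^*$; then $s_{n^*} \le t < s_{n^*+1}$, and the step bound forces $t - s_{n^*} \le C + 2D$. Therefore
\[
\dcusp(c(t), g_{n^*}) \le \dcusp(c(t), c(s_{n^*})) + \dcusp(c(s_{n^*}), g_{n^*}) \le (C + 2D) + D = C + 3D,
\]
and $g_{n^*} \in \cay(\Gamma)$, which gives the desired bound. The main (but very minor) obstacle is verifying $s_n \to \infty$; once the infinite $\mc{G}$-path is identified as one limiting to a conical point via the earlier lemmas, everything else is just the triangle inequality plus an intermediate value argument along the geodesic.
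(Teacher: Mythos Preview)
Your proof is correct and follows essentially the same strategy as the paper: both arguments pick points $x_j = c(s_j)$ on $c$ within $\dcusp$-distance $D$ of $g_j$, observe that consecutive ones satisfy $\dcusp(x_j, x_{j+1}) \le C + 2D$, and use this to see that every point of $c$ lies within $C + 3D$ of some $g_j$. The only organizational difference is that the paper first reduces to a finite sub-segment $[\identity, x]$ with $x \in c \cap \cay(\Gamma)$ and then covers it by the overlapping sub-segments $[x_j, x_{j+1}]$, whereas you argue directly by selecting the last index $n^*$ with $s_{n^*} \le t$; both rely on \Cref{lem:bounded_backtrack_I}/\Cref{lem:infinite_codes_conical} to get the needed unboundedness ($s_n \to \infty$, or equivalently that $c \cap \cay(\Gamma)$ has infinite diameter).
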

\begin{proof}
  Since the $\mc{G}$-path is infinite, it follows from
  \Cref{lem:infinite_codes_conical} that the diameter of
  $c([0, \infty)) \cap \cay(\Gamma)$ is infinite. So, it suffices to
  show that for any $x \in c([0, \infty)) \cap \cay(\Gamma)$, the
  sub-segment of $c$ between $\identity$ and $x$ lies in a
  $(C + 3D)$-neighborhood of $\cay(\Gamma)$.

  Fix $g_n$ so that $\dcusp(x, g_n) \le D$. Then for each index
  $1 \le j \le n$, there is some point $x_j$ on $c$ so that
  $\dcusp(x_j, g_j) \le D$. For every $1 \le j < n$, we have
  $\dcusp(g_j, g_{j+1}) \le C$ by assumption, meaning
  $\dcusp(x_j, x_{j+1}) \le C + 2D$. Thus, the ball of radius $C + 3D$
  centered at $g_j$ contains the entire sub-segment of $c$ between
  $x_j$ and $x_{j+1}$. The union of all of these segments contains the
  segment of $c$ between $\identity$ and $x$, so this segment is in
  the $(C + 3D)$-neighborhood of $\cay(\Gamma)$, as required.
\end{proof}

\section{Extended Dehn filling}
\label{sec:main_theorem}

The aim of this section of the paper is to prove our main Dehn filling
theorem, \Cref{thm:main_theorem}. At the same time, we will prove that
the Dehn fillings given by \Cref{thm:main_theorem} satisfy several
additional desirable properties; in particular, we will show that the
boundary sets of Dehn fillings vary (semi)continuously, and that, with
a mild additional assumption on boundary extensions, convergence to an
EGF representation $\rho$ in an extended Dehn filling space is
\emph{strong}.

For the rest of the section, fix a non-elementary relatively
hyperbolic pair $(\Gamma, \pargps)$, with $\mc{P}$ a non-empty
collection peripheral subgroups. We will assume that each group in
$\pargps$ is infinite, but later in the section we will consider other
relatively hyperbolic pairs where the peripheral subgroups are
\emph{not} necessarily infinite.

Let $Q$ be a symmetric parabolic subgroup in a semisimple Lie group
$G$, and let $\rho:\Gamma \to G$ be a $Q$-EGF representation with
boundary extension $\phi:\Lambda \to \bgamp$ and repelling strata
$\{C_z\}_{z \in \bgamp}$. For simplicity we will assume that $\rho$ is
faithful.

We will prove \Cref{prop:main_prop} below, which is a strengthening of
\Cref{thm:main_theorem}. For the statement of the proposition, recall
from the introduction that any representation $\sigma:\Gamma \to G$
determines a collection of (possibly trivial) Dehn filling kernels
$\{\ker(\sigma|_P)\}_{P \in \pargps}$. There is an associated Dehn
filling $\pi^\sigma:\Gamma \to \Gamma^\sigma$ and collection of groups
$\mc{P}^\sigma = \{P/\ker(\sigma|_P)\}_{P \in \pargps}$.
\begin{proposition}
  \label{prop:main_prop}
  Let $W \subseteq \ghom(\Gamma, G; \pargps)$ be an extended Dehn
  filling space, and fix a constant $\eps > 0$ and a compact subset $Z
  \subseteq \bgamp$.

  Then there exists an open subset
  $O \subseteq \ghom(\Gamma, G; \pargps)$ containing $\rho$ so that,
  for every representation $\sigma \in O \cap W$, the pair
  $(\Gamma^\sigma, \pargps^\sigma)$ is relatively hyperbolic, and
  $\sigma$ descends to a $Q$-EGF representation of
  $\Gamma^\sigma$. Moreover, the boundary set $\Lambda_\sigma$ for
  $\sigma$ is contained in an $\eps$-neighborhood of $\Lambda$, and
  has nonempty intersection with the $\eps$-neighborhood of
  $\phi^{-1}(Z)$.
\end{proposition}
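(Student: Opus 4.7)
The plan is to leverage the relative quasi-geodesic automaton machinery from \Cref{sec:automaton} to build a boundary extension for each nearby $\sigma \in O \cap W$, using the combinatorial paths of a fixed automaton for $\rho$ but replacing $\rho(\alpha_i)$ by $\sigma(\alpha_i)$ and allowing the automaton's parabolic vertices to ``absorb'' the new filling kernels. First, I would fix the automaton. Apply \Cref{prop:automaton_exists} together with the enhancements in \Cref{prop:vertex_sets_proper,prop:distinguished,prop:parabolic_approx_path} and the one yielding \ref{item:approximate_compact_subset} (applied to the given compact $Z$) to obtain a relative automaton $\mc{G}$ with compatible systems $\{U_v \subset \flags\}$, $\{W_v \subset \bgamp\}$ satisfying $U_v \subset \nbhd{\phi^{-1}(z_v)}{\eps/2}$ for some $z_v \in W_v$, and such that every point of $Z$ is the limit of a $\mc{G}$-path whose initial vertex $v_1$ has $U_{v_1} \subset \nbhd{\phi^{-1}(Z)}{\eps}$. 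The finitely many sets $U_v$, $W_v$, and the finite data of outgoing edges at each vertex are now fixed.

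Next I would determine the neighborhood $O$. The vertex set is finite and the non-parabolic label sets $T_v = \{\alpha_v\}$ are finite, while for each parabolic vertex $v$ with $T_v = gP \minus F_v$ the extended Dehn filling space hypothesis (\Cref{defn:edf}) provides, for each outgoing edge $v \to w$, a neighborhood of $\rho$ in which \eqref{eq:edf_condition} holds with $U = \alpha$-preimage tube around $U_v$ and $K = \overline{\nbhd{U_w}{\eps'}}$. Taking the intersection over all edges and all parabolic vertices gives a neighborhood $O_1 \subseteq \ghom(\Gamma,G;\pargps)$ such that, for every $\sigma \in O_1 \cap W$ and every edge $v \to w$, the inclusion $\sigma(\alpha) \overline{\nbhd{U_w}{\eps'}} \subset U_v$ holds for all $\alpha \in T_v$ except possibly a \emph{finite} exceptional set $E_v(\sigma) \subset T_v \cap \ker(\sigma|_P) \cdot F_v$. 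By \Cref{thm:dehn_filling_criterion} and \Cref{prop:uniformly_hyperbolic_fillings}, shrinking $O$ inside $O_1$ ensures that the induced filling $\pi^\sigma$ is injective on a large ball (containing $E_v(\sigma) \cdot F_v^{-1}$ and the compatible generating set $S$), that $(\Gamma^\sigma, \pargps^\sigma)$ is relatively hyperbolic with uniformly hyperbolic cusped space $X^\sigma$, and that \Cref{prop:dehnfill_local_isometry,prop:cay_geodesics_descend} apply.

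The central step is to build a boundary extension $\phi_\sigma:\Lambda_\sigma \to \partial(\Gamma^\sigma,\pargps^\sigma)$. For each $z \in \partial(\Gamma^\sigma,\pargps^\sigma)$, pick a geodesic ray (or segment to a parabolic point) in $X^\sigma$ from the identity, and \emph{lift} it to a quasi-geodesic path in $X$ via \Cref{prop:cay_geodesics_descend} (in reverse, using local isometry). \Cref{cor:gpaths_track_quasigeodesics} and the tracking lemmas match this lift to a $\mc{G}$-path $\{(v_i,\alpha_i)\}$ in the automaton; the filling length condition guarantees that consecutive labels $\alpha_i$ are never in the finite exceptional set $E_v(\sigma)$, so the $\mc{G}$-compatibility $\sigma(\alpha_i)\overline{U_{v_{i+1}}} \subset U_{v_i}$ holds for every $i$. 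A nested intersection argument, exactly parallel to \Cref{lem:infinite_codes_conical} but using the perturbed inclusions, then shows that $\bigcap_n \sigma(\alpha_1\cdots\alpha_n)\overline{U_{v_{n+1}}}$ is a single flag $\xi_z$; set $\Lambda_\sigma := \overline{\{\xi_z\}}$ and $\phi_\sigma(\xi_z) := z$. Equivariance under $\Gamma^\sigma$ uses that $\sigma$ descends to $\Gamma^\sigma$; antipodality follows from \ref{item:distinguish_points} applied to separated $\mc{G}$-paths; the repelling strata can be taken to be $C^\sigma_z := \Opp(\phi_\sigma^{-1}(z))$, with the convergence property \ref{item:extended_convergence_prop} deduced from the nested inclusion above; and relative $Q$-divergence of $\sigma$ follows from the combinatorial picture exactly as in \cite[Sec.~8]{Weisman2022}, so \Cref{prop:weak_egf_to_egf} upgrades this data to a genuine $Q$-EGF structure.

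The proximity conclusions fall out for free: every $\xi_z \in \Lambda_\sigma$ lies in $U_{v_1}$ for the initial vertex of its $\mc{G}$-path, hence in $\nbhd{\Lambda}{\eps}$; and for $z \in Z$ the choice of automaton guarantees a $\mc{G}$-path whose initial $U_{v_1}$ lies inside $\nbhd{\phi^{-1}(Z)}{\eps}$, giving the intersection claim. The \textbf{main obstacle} I anticipate is the passage from geodesics in $X^\sigma$ back to $\mc{G}$-paths in the fixed automaton for $\rho$: one must show that the lifted quasi-geodesic in $X$ genuinely avoids the finite set of ``forbidden'' elements $E_v(\sigma)$ (elements of $T_v$ that now violate $\mc{G}$-compatibility under $\sigma$). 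This is where the strength of the Dehn filling theorem is used — by choosing $O$ to correspond to sufficiently long fillings, any geodesic in $X^\sigma$ through a horoball must traverse it deeply enough that its preimage labels under $\sigma$ cannot all lie in the exceptional finite set, which forces the matching $\mc{G}$-path to remain valid.
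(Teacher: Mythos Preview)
Your overall architecture matches the paper's: fix an automaton satisfying \ref{item:g_paths_exist}--\ref{item:approximate_compact_subset}, choose $O$ using the extended Dehn filling hypothesis plus the relatively hyperbolic Dehn filling theorem, lift geodesic rays from $X^\sigma$ to $X$, match them to $\mc{G}$-paths, and read off the boundary extension from nested intersections. The proximity conclusions do indeed fall out as you say.

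However, your resolution of the ``main obstacle'' is not the right mechanism, and as stated it would not close the gap. The exceptional set $E_v(\sigma)$ is not finite in $\Gamma$; it is a finite union of cosets of $\ker(\sigma|_P)$, so elements of $T_v$ with arbitrarily large $|\alpha|_X$ can land in it. Depth of traversal in $X^\sigma$ does not by itself prevent the $\mc{G}$-path (which lives in $X$) from using such an $\alpha_i$. The paper's argument (\Cref{lem:perturbed_nesting}) runs in the opposite direction: if $\sigma(\alpha_i)\in\sigma(F_v)$, then $|\alpha_i|_{X}^\sigma$ is bounded by an explicit constant depending only on the automaton; since the $\mc{G}$-path tracks the lift $\tilde c$ of a \emph{quasi-geodesic} $c$ in $X^\sigma$, projecting down and using the quasi-geodesic inequality forces the corresponding points on $\tilde c$ to be close in $X$, hence $|\alpha_i|_X$ is bounded by another explicit constant. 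One then needs a \emph{separate} condition on $O$ --- condition \ref{item:small_elements_ok} in the paper --- saying that for the finitely many $\alpha\in T_v$ below that $X$-length bound, the inclusion $\sigma(\alpha)\overline{\nbhd{U_w}{r}}\subset U_v$ has been verified directly. Your neighborhood $O$ does not include this finite check, and without it the nesting along lifts cannot be established.

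Two secondary gaps: for parabolic points $p\in\partial(\Gamma^\sigma,\mathcal P^\sigma_\infty)$ the nested-intersection definition does not apply (the $\mc{G}$-path is finite), and the paper instead defines $\psi_\sigma(p)$ as a closure of accumulation points of $\sigma(\gamma_n)\xi_n$ with $\gamma_n\in\Gamma^\sigma_p$ and $\xi_n\in B_v$; correspondingly the repelling strata at parabolic points are \emph{not} simply $\Opp(\phi_\sigma^{-1}(p))$ but must be intersected with $\sigma(P^\sigma)B_v$ so that the extended convergence property \ref{item:extended_convergence_prop} can be verified for sequences staying in a single peripheral (\Cref{lem:parabolic_convergence}). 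Finally, well-definedness of $\psi_\sigma$ on conical points requires checking independence of the lift and of the $\mc{G}$-path (\Cref{lem:conical_singletons}), which uses that two such sequences have the same $Q$-limit set because their $\sigma$-images stay bounded distance apart in $G$.
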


The main ingredient of the proof of \Cref{prop:main_prop} will be the
relative automaton $\mc{G}$ and a certain $\mc{G}$-compatible system
$\mc{U} = \{U_v\}_{v \in V(\mc{G})}$ of open subsets of $\flags$, as
described in the previous section. The general idea is that the
automaton $\mc{G}$ and system $\mc{U}$ give a ``relatively locally
finite'' encoding of the extended convergence action of $\Gamma$ on
$\flags$, in the sense that the data of the extended convergence
action can be (partially) recovered from $\mc{G}$ and $\mc{U}$. We
will be able to show that, if we perturb $\rho$ in a manner which
roughly preserves $\mc{G}$-compatibility of the system, we then get a
new extended convergence action, hence a new EGF representation.

The outline here is similar to the proof of the special case of this
theorem appearing in \cite{Weisman2022}. The main conceptual
difference is that here, we allow perturbations of $\rho$ which do not
actually preserve $\mc{G}$-compatibility of the system $\mc{U}$.
Instead, we only ask for $\mc{G}$-compatibility to hold along paths
which avoid the nontrivial parts of Dehn filling kernels.

The (semi)continuity properties of the set $\Lambda$ expressed by the
``moreover'' part of \Cref{prop:main_prop} follow from the fact that
the union of vertex sets $\bigcup_{v \in V(\mc{G})} U_v$ is an
``approximation'' of the boundary set $\Lambda$, both for the original
representation $\rho$ and its perturbations. So, we can guarantee that
$\Lambda_\sigma$ is close to $\Lambda$ by picking all of the sets
$U_v$ to be small neighborhoods of $\phi^{-1}(z)$ for some
$z \in \bgamp$.

During the course of the proof of \Cref{prop:main_prop}, we will also
show the following:
\begin{proposition}
  \label{prop:relative_strong_to_strong}
  Let $W \subseteq \ghom(\Gamma, G; \pargps)$ be an extended Dehn
  filling space, and assume that for every parabolic point
  $q \in \bgamp$, the set $\phi^{-1}(q)$ has empty interior as a
  subset of $\flags$. If a sequence $\rho_n \in W$ converges
  \emph{relatively strongly} to $\rho$ (i.e. in
  $\ghom(\Gamma, G; \pargps)$), then $\rho_n$ converges strongly to
  $\rho$ (i.e. in $\ghom(\Gamma, G)$).
\end{proposition}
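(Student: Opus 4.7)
The plan is to apply \Cref{prop:chab_converge} to reduce strong convergence $\overline{\rho_n(\Gamma)} \to \overline{\rho(\Gamma)}$ in $\chab(G)$ to verifying the two approximation conditions. A key structural observation is that, since $\rho$ is relatively $Q$-divergent, any sequence $\rho(\gamma_n) \to g \in G$ must have $\lcon{\gamma_n}$ bounded---otherwise $\rho(\gamma_n)$ would be $Q$-divergent and hence unbounded in $G$ by \Cref{lem:q_convergence_sequences}. After passing to a subsequence, such a $\gamma_n$ admits a factorization $\alpha_0 p_{1,n} \alpha_1 \cdots p_{k,n} \alpha_k$ with each $\alpha_i \in \Gamma$ fixed and each $p_{i,n}$ lying in a fixed peripheral subgroup $P_{j_i}$; iteratively extracting subsequences forces each $\rho(p_{i,n})$ to converge to an element of $\overline{\rho(P_{j_i})}$, exhibiting $g$ as a product of $\rho(\Gamma)$-elements interleaved with peripheral-closure elements. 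Condition \ref{item:chab_contain_1} then follows by approximating each peripheral factor using the Chabauty convergence $\overline{\rho_n(P_{j_i})} \to \overline{\rho(P_{j_i})}$ supplied by relative strong convergence.

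For condition \ref{item:chab_contain_2}, take $g_n \in \overline{\rho_n(\Gamma)}$ with $g_n \to g$ and, via a diagonal argument, reduce to $g_n = \rho_n(\gamma_n)$ for some $\gamma_n \in \Gamma$. If $\gamma_n$ is eventually constant (after subsequence), compact-open convergence gives $g \in \rho(\Gamma)$. If $\lcon{\gamma_n}$ is bounded but $\lgam{\gamma_n} \to \infty$, apply the same decomposition to $\gamma_n$; after stabilizing the pattern by subsequence, use compact-open convergence of $\rho_n \to \rho$ on the bounded factors $\alpha_i$ together with peripheral Chabauty convergence in the opposite direction on the factors $p_{i,n}$ to conclude that $g$ is a product of $\rho(\Gamma)$-elements and elements of various $\overline{\rho(P_{j_i})}$, hence lies in $\overline{\rho(\Gamma)}$.

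The hard case is $\lcon{\gamma_n} \to \infty$, which I aim to rule out by proving $\rho_n(\gamma_n) \to \infty$ in $G$, contradicting $\rho_n(\gamma_n) \to g$. After subsequence, $\gamma_n$ and $\gamma_n^{-1}$ converge in the Bowditch compactification to distinct conical limit points $z^+, z^- \in \bgamp$ (convergence to a parabolic point would force $\lcon{\gamma_n}$ bounded). Fix a relative automaton $\mc{G}$ adapted to $\rho$, as produced by \Cref{prop:automaton_exists}, with $\mc{G}$-compatible systems of small open subsets $\{U_v \subset \flags\}$ and $\{W_v \subset \bgamp\}$ lying in small neighborhoods of various $\phi^{-1}(z)$. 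By \Cref{cor:gpaths_track_quasigeodesics}, select $\mc{G}$-paths limiting to $z^+$ whose associated element sequences track $\gamma_n$ in the cusped space up to uniformly bounded error, and analogously for $z^-$. Chasing the compatibility inclusions $\alpha \cdot \overline{U_w} \subset U_v$ along such a path shows that $\rho(\gamma_n)$ sends a compact subset of the repelling stratum $C_{z^-}$ into a small neighborhood of $\phi^{-1}(z^+)$---the EGF dynamics of $\rho(\gamma_n)$ combinatorially encoded by the automaton.

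The main obstacle will be transferring this dynamical picture uniformly to the sequence $\rho_n(\gamma_n)$. Along non-peripheral edges of the $\mc{G}$-path, compact-open convergence $\rho_n \to \rho$ preserves the compatibility inclusions for large $n$; along peripheral edges with associated parabolic point $q_v$, the defining property of an extended Dehn filling space (\Cref{defn:edf}) upgrades this to $(\rho_n(\Gamma_{q_v}) \setminus \rho_n(F_v)) \cdot \overline{U_w} \subset U_v$ for $\rho_n \in W$ sufficiently close to $\rho$. The empty-interior hypothesis on $\phi^{-1}(q)$ for each parabolic $q$ enters precisely here: it guarantees that the sets $U_v$ associated to parabolic vertices, and the corresponding repelling strata $C_q$, can be chosen with nondegenerate complements in $\flags$, so the EDF inclusions supply genuine dynamical control rather than vacuous containments. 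Chasing the modified inclusions along the $\mc{G}$-paths tracking $\gamma_n^{\pm 1}$ then equips $\rho_n(\gamma_n)$ with attracting/repelling behavior on $\flags$ for all large $n$, with attracting flag in a fixed small subset; hence $\rho_n(\gamma_n)$ is $Q$-divergent, and by \Cref{lem:q_convergence_sequences} escapes every compact set of $G$---the required contradiction. The key uniformity challenge is choosing the automaton parameters and exception sets $F_v$ once and for all so that the EDF property applies simultaneously to every $\gamma_n$; I expect to handle this by applying \Cref{prop:automaton_exists} with a single compact $Z \subset \bgamp$ covering the possible limit points $z^\pm$.
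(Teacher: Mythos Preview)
Your proposal has a genuine gap in Case~2 (bounded $\lcon{\gamma_n}$, unbounded $\lgam{\gamma_n}$), and this is precisely the case where the empty-interior hypothesis is actually needed---not in Case~3 as you suggest. After decomposing $\gamma_n = \alpha_0 p_{1,n}\alpha_1\cdots p_{k,n}\alpha_k$, your argument requires each factor $\rho_n(p_{i,n})$ to subconverge in $G$ so that peripheral Chabauty convergence can be applied. For $k=1$ this follows from convergence of $\rho_n(\gamma_n)$, but for $k\ge 2$ there is no reason the individual peripheral factors stay bounded: they could diverge in $G$ while the product remains bounded. The paper avoids this entirely by a dynamical argument (\Cref{lem:uniformly_proper}): one inductively applies the EDF condition to show that for any compact $K\subset C_{p_k}$ with nonempty interior, the translates $\rho_n(\gamma_n)K$ are squeezed into arbitrarily small neighborhoods of $\phi^{-1}(\tilde g_1 p_1)$. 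If $\rho_n(\gamma_n)\to g$ this forces $gK\subset \phi^{-1}(\tilde g_1 p_1)$, contradicting the empty-interior hypothesis. That is the only place the hypothesis enters; it plays no role in guaranteeing that automaton vertex sets have proper complements (that already follows from \ref{item:flag_sets_proper}, independent of any assumption on $\phi^{-1}(q)$).

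Your Case~3 argument also has a gap. The EDF inclusion you invoke, $(\rho_n(\Gamma_{q_v})\setminus\rho_n(F_v))\cdot\overline{U_w}\subset U_v$, explicitly excludes the finite image $\rho_n(F_v)$. Along a $\mc{G}$-path tracking a geodesic in $X$ to $z^+$, a parabolic-vertex label $\alpha\in T_v$ may well satisfy $\rho_n(\alpha)\in\rho_n(F_v)$ (this happens exactly when $\alpha$ differs from an element of $F_v$ by something in $\ker(\rho_n|_P)$), and then the inclusion fails for $\rho_n$. The paper's mechanism for avoiding this (\Cref{lem:perturbed_nesting}) is to work with $\mc{G}$-paths that track \emph{lifts} of quasi-geodesics in the \emph{quotient} cusped space $X^{\rho_n}$, and correspondingly to case-split on the quotient relative length $|\gamma_n|_{\hat\Gamma}^{\rho_n}$ rather than $\lcon{\gamma_n}$; see \Cref{lem:coned_unbounded_divergent}. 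Your paths live in $X$ and your lengths are measured in $\Gamma$, so the exception sets are not controlled.
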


\begin{remark}
  There exist examples of boundary extensions for EGF representations
  where the fiber over a parabolic point has \emph{nonempty} interior;
  see \cite[Example 2.6]{GW2024}. These boundary extensions do not
  satisfy the hypotheses of
  \Cref{prop:relative_strong_to_strong}. However, in the cited
  examples, there is a different choice of boundary extension where
  each fiber \emph{does} have empty interior, and it seems likely to
  the author that it is always possible to choose such a boundary
  extension for any EGF representation.
\end{remark}

\subsection{Choosing a neighborhood in the Dehn filling space}

We fix once and for all an extended Dehn filling space $W$ about the
EGF representation $\rho:\Gamma \to G$. To choose the open
neighborhood $O$ in $\ghom(\Gamma, G; \pargps)$ as in
\Cref{prop:main_prop}, we need to construct a relative automaton for
$\rho$ in a way which depends on the coarse metric geometry of the
pair $(\Gamma, \pargps)$, as well as the parameters $\eps$ and $Z$
appearing in the statement of the proposition. So, for the rest of the
section, let $X = X(\Gamma, \pargps)$ be the cusped space for
$(\Gamma, \pargps)$, and fix a compact set $Z \subseteq \bgamp$ and a
constant $\eps > 0$.

Before defining the automaton, we need to set up a few more auxiliary
parameters.
\begin{definition}
  \label{defn:distance_lowerbound}
  Recall that we have fixed a metric $\bdist$ on the Bowditch boundary
  $\bgamp$. Define the quantity $\Delta > 0$ by:
  \[
    \Delta := \min\{\bdist(z_1, z_2) : z_1, z_2 \in \bgamp \textrm{
      are joined by a geodesic in } X \textrm{ through id}\}.
  \]
\end{definition}

For each point $p \in \Pi$, we also pick a compact set
$K_p \subset \bgamp \minus \{p\}$ as follows:
\begin{definition}
  \label{defn:fund_domains}
  For each $p \in \Pi$, let $K_p \subset \bgamp$ be the set
  \[
    \{z \in \bgamp : \textrm{there exists a geodesic through id in
    }X\textrm{ joining }z \textrm{ to } p\}.
  \]
\end{definition}
Note that $K_p$ is compact because $X$ is locally compact. Moreover,
since every point in $\bgamp \minus \{p\}$ is joined to $p$ by a
geodesic passing through the boundary of the combinatorial horoball
with ideal boundary $\{p\}$, we have
$\Stab_\Gamma(p) \cdot K_p = \bgamp \minus \{p\}$.

\begin{definition}[The relative automaton]
  Let $\mc{G}$ be a relative automaton for $(\Gamma, \mc{P})$, and let
  $\mc{W} = \{W_v\}_{v \in V(\mc{G})}$ and
  $\mc{U} = \{U_v\}_{v \in V(\mc{G})}$ be $\mc{G}$-compatible systems
  of subsets adapted to the EGF representation $\rho$, satisfying each
  of the properties
  \ref{item:g_paths_exist}-\ref{item:parabolic_edge_condition2} in
  \Cref{prop:automaton_exists}. The constant $\eps$ in condition
  \ref{item:g_sets_small} is chosen to be smaller than our fixed
  $\eps > 0$.

  Shrinking $\eps$ if necessary, we also ensure that $\mc{G}$,
  $\mc{W}$, and $\mc{U}$ satisfy conditions
  \ref{item:bgamp_sets_proper}-\ref{item:flag_sets_proper}. We
  additionally ensure that the relative automaton satisfies condition
  \ref{item:distinguish_points} (with respect to the constant $\Delta$
  in \Cref{defn:distance_lowerbound}), condition
  \ref{item:parabolic_domain_path} (with respect to the sets $K_p$ in
  \Cref{defn:fund_domains}), and condition
  \ref{item:approximate_compact_subset} (with respect to our fixed
  $\eps > 0$ and compact set $Z$).
\end{definition}

The automaton $\mc{G}$ and the systems $\mc{W}$, $\mc{U}$ determine a
few additional constants, which we will use to define a neighborhood
in the extended Dehn filling space $W$.

\begin{definition}
  \label{defn:automaton_metric_consts}
  Define a constant $C_1$ by:
  \[
    C_1 := \max_{v \in V(\mc{G})} \min_{\alpha \in T_v}
    \lcusp{\alpha}.
  \]
  Recall that for each parabolic vertex $v \in V(\mc{G})$, the label
  set $T_v$ lies in a unique coset $gP$ for $g \in \Gamma$ and
  $P \in \pargps$. For each of these vertices, fix such a $g = g_v$ to
  minimize $\lcusp{g_v}$, and define
  \[
    C_2 := \max\{\lcusp{g_v} \st v \in V(\mc{G}) \textrm{
      parabolic}\}.
  \]
  Finally, recall that for each parabolic vertex $v$, the set
  $F_v = gP \minus T_v$ is finite. Define
  \[
    C_3 := \max\{\lcusp{\gamma} \st \gamma \in F_v \textrm{ for some
      parabolic }v \in V(\mc{G})\}.
  \]
\end{definition}

Now, fix $\delta > 0$ so that the cusped space
$X = X(\Gamma, \pargps)$ is $\delta$-hyperbolic, and so that (by
\Cref{prop:uniformly_hyperbolic_fillings}) the cusped space for any
sufficiently long Dehn filling of $(\Gamma, \pargps)$ is also
$\delta$-hyperbolic. For convenience, assume that $\delta > 1$
throughout.

\begin{definition}
  \label{defn:codings_quasigeodesic_nbhds}
  Let $D > 0$ be a constant satisfying the following: if $c$ is any
  $(2, 18\delta)$-quasi-geodesic ray in $X$ from $\identity$ to
  $z \in \bgamp$, and $\{(v_i, \alpha_i)\}_{i=1}^N$ is a $\mc{G}$-path
  limiting to $z$, then the set of elements
  \[
    g_n = \alpha_1 \cdots \alpha_n
  \]
  lies within Hausdorff distance $D$ of
  $c([0, \infty)) \cap \cay(\Gamma)$, with respect to the metric
  $\dcusp$.

  Note that such a $D$ exists because of
  \Cref{cor:gpaths_track_quasigeodesics}. The specific quasi-geodesic
  constants $(2, 18\delta)$ we have chosen will arise in later
  arguments.
\end{definition}

As noted previously, any representation $\sigma:\Gamma \to G$
determines a Dehn filling $\pi^\sigma:\Gamma \to \Gamma^\sigma$ and a
collection of subgroups
$\mc{P}^\sigma = \{P/\ker(\sigma|_P)\}_{P \in \pargps}$. The kernel
$\pi^\sigma$ is a subgroup of $\ker(\sigma)$, so we will view $\sigma$
as a representation of both $\Gamma$ and $\repquot$.
\begin{definition}[Metrics determined by a representation $\Gamma \to G$]
  Recall that we have fixed a finite generating set $S$ for
  $\Gamma$. For any representation $\sigma:\Gamma \to G$, and any
  $\gamma \in \repquot$, let $\lgams{\gamma}$ denote the word length
  of $\gamma$ with respect to the generating set $\pi^\sigma(S)$. Let
  $\dgams$ denote the word metric
  $\dgams(\gamma_1, \gamma_2) = \lgams{\gamma_1^{-1}\gamma_2}$. When
  $\gamma \in \Gamma$, we will usually write $\lgams{\gamma}$ for
  $\lgams{\pi^\sigma(\gamma)}$, and similarly for $\dgams$.
  
  Let $X^\sigma$ denote the cusped space $X\quotpair$; recall from
  \Cref{sec:dehnfill_geometry} that we view this as a quotient of the
  cusped space $X$ by the action of the kernel of the map
  $\Gamma \to \repquot$. Let $\dcusps$ denote the path metric on
  $X^\sigma$ (which restricts to a metric on $\repquot$), and let
  $\lcusps{\gamma} = \dcusps(\identity, \gamma)$.
\end{definition}

We are now ready to describe the open neighborhood of $\sigma$ in
$\ghom(\Gamma, G; \pargps)$ appearing in \Cref{prop:main_prop}.
\begin{proposition}
  \label{prop:ghom_nbhd}
  There exists an open subset $O \subset \ghom(\Gamma, G; \pargps)$
  containing $\rho$ and a constant $r > 0$ so that every
  $\sigma \in O \cap W$ satisfies all of the following conditions:
  \begin{enumerate}[label=(O\arabic*)]
  \item \label{item:perturbed_kernel_bounds} The cusped space
    $X^\sigma$ is $\delta$-hyperbolic (so $(\repquot, \pargps^\sigma)$
    is a relatively hyperbolic pair). Further, if $c$ is any geodesic
    in $X$ lying in a $(C_1 + 3D)$-neighborhood of $\cay(\Gamma)$,
    then its image under the quotient map $X \to X^\sigma$ is a
    $(2, 18\delta$)-quasi-geodesic.
  \item \label{item:nonparabolic_vertex_inclusions} For every
    non-parabolic vertex $v$ in the relative quasi-geodesic automaton
    $\mc{G}$, and every edge $v \to w$ in $\mc{G}$, we have
    \[
      \sigma(\alpha_v) \cdot \nbhd{U_w}{r} \subset U_v.
    \]
  \item \label{item:small_elements_ok} For every parabolic vertex $v$
    in the automaton $\mc{G}$, if $\alpha \in T_v$ satisfies
    \[
      \lcusp{\alpha} \le 2C_2 + 4C_3 + 6D + 18\delta,
    \]
    then for every edge $v \to w$ in $\mc{G}$, we have
    \[
      \sigma(\alpha) \cdot \nbhd{U_w}{r} \subset U_v.
    \]
  \item \label{item:dehn_filling_space} For every parabolic vertex $v$
    in the automaton $\mc{G}$, with corresponding parabolic point
    $q_v = gp$ for $p \in \Pi$, and every edge $v \to w$ in $\mc{G}$,
    we have
    \[
      (\sigma(g\Gamma_p) \minus \sigma(F_v)) \cdot \nbhd{U_w}{r} \subset U_v.
    \]
\end{enumerate}
\end{proposition}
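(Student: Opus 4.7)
The plan is to handle each of the four conditions separately and then take a common refinement of the resulting neighborhoods and radii. For (O1), uniform $\delta$-hyperbolicity of the cusped space $X^\sigma$ follows from \Cref{prop:uniformly_hyperbolic_fillings} once we verify that $\sigma$ induces a sufficiently long filling, in the sense that each $\ker(\sigma|_P)$ avoids a prescribed finite subset of $P$. This is guaranteed by relative geometric convergence: since $\overline{\sigma(P)} \to \overline{\rho(P)}$ in $\chab(G)$ and $\rho$ is faithful and discrete on each $P$, for $\sigma$ sufficiently close to $\rho$ the group $\sigma(P)$ avoids the image of any prescribed finite set in $P \minus \{\identity\}$. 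The quasi-geodesic descent part of (O1) then follows from \Cref{prop:cay_geodesics_descend} with $R = C_1 + 3D$ and $K = 2$: the resulting $(2, 2\delta)$-quasi-geodesics are in particular $(2, 18\delta)$-quasi-geodesics.

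Conditions (O2) and (O3) are persistence-of-open-conditions arguments relying only on compact-open continuity. The $\mc{G}$-compatibility of $\mc{U}$ for $\rho$ provides some $\eps_0 > 0$ (uniform over the finitely many edges) such that $\rho(\alpha) \overline{\nbhd{U_w}{\eps_0}} \subset U_v$ for every edge $v \to w$ and every $\alpha \in T_v$. Since $U_v$ is open and $\sigma(\alpha) \to \rho(\alpha)$ in $G$ as $\sigma \to \rho$ in the compact-open topology, the inclusion $\sigma(\alpha) \overline{\nbhd{U_w}{r}} \subset U_v$ persists for $\sigma$ in a compact-open neighborhood of $\rho$ and any $r < \eps_0$, \emph{provided} we only need to secure the inclusion for finitely many $\alpha$. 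The crucial finiteness input for (O3) is that the set of $\alpha \in T_v$ with $\lcusp{\alpha} \le 2C_2 + 4C_3 + 6D + 18\delta$ is finite, since $X$ is a locally finite graph; so (O2) and (O3) together amount to finitely many open conditions.

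The main obstacle is (O4), which is where the extended Dehn filling property of $W$ enters essentially. Fix a parabolic vertex $v$ with $q_v = g_v \cdot p$ (where $g_v$ is as in \Cref{defn:automaton_metric_consts} and $p \in \Pi$) and an edge $v \to w$. By condition (G5), $U_w \subset C_p$; assuming the automaton has been chosen with $\overline{U_w} \subset C_p$, pick $r' > 0$ small enough that $K := \overline{\nbhd{U_w}{r'}}$ is a compact subset of $C_p$ and there is an open $U_v' \subset U_v$ with $\overline{U_v'} \subset U_v$ and $\rho(T_v) K \subset U_v'$. Set $F := g_v^{-1} F_v \subset \Gamma_p$ and $U := \rho(g_v)^{-1} U_v'$, which by equivariance of $\phi$ is an open neighborhood of $\phi^{-1}(p)$. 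Multiplying by $\rho(g_v)$ converts the hypothesis $\rho(\Gamma_p \minus F) K \subset U$ of \Cref{defn:edf} into $\rho(g_v \Gamma_p \minus F_v) K \subset U_v'$, which holds by construction. The extended Dehn filling property then yields a neighborhood of $\rho$ on which $(\sigma(\Gamma_p) \minus \sigma(F)) K \subset U$; multiplying by $\sigma(g_v)$ gives
\[
  (\sigma(g_v \Gamma_p) \minus \sigma(F_v)) \cdot K \subset \sigma(g_v) \rho(g_v)^{-1} U_v'.
\]
The key subtlety is that the twist $\sigma(g_v) \rho(g_v)^{-1}$ is not the identity; however, it converges to the identity in $G$ as $\sigma \to \rho$ in the compact-open topology, and since $\overline{U_v'}$ is a compact subset of the open set $U_v$, the twist sends $\overline{U_v'}$ into $U_v$ for $\sigma$ in a sufficiently small compact-open neighborhood of $\rho$. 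Intersecting the resulting neighborhoods over the finitely many edges $(v, w)$ with $v$ parabolic, combining with the neighborhoods obtained for (O1)--(O3), and choosing $r$ slightly smaller than the minimum of the $r'$'s produces the desired $O$ and $r$.
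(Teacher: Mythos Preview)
Your approach matches the paper's exactly: verify each condition at $\sigma = \rho$, then show each is open (in $\Hom(\Gamma,G)$ for (O1)--(O3), relatively open in $W$ for (O4)). The paper dispatches (O4) in one sentence (``essentially by definition''), while you spell out the conjugation-by-$g_v$ and twist argument; this extra detail is fine and the twist argument is correct.

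Two small points are worth noting. First, in (O1) you invoke Chabauty convergence of $\overline{\sigma(P)}$, but this is unnecessary: the filling $\pi^\sigma$ is sufficiently long exactly when $\sigma(\gamma)\ne\identity$ for each $\gamma$ in a fixed finite set, and since $\rho$ is faithful this follows already from compact-open convergence $\sigma(\gamma)\to\rho(\gamma)\ne\identity$. (Your sentence ``$\sigma(P)$ avoids the image of any prescribed finite set in $P\setminus\{\identity\}$'' is also garbled; you mean $\ker(\sigma|_P)$ avoids that set.) Second, in (O4) you assume $\overline{U_w}\subset C_p$ in order to take $K=\overline{\nbhd{U_w}{r'}}\subset C_p$, but property \ref{item:parabolic_edge_condition2} as stated only gives $U_w\subset C_p$. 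This is a genuine (if minor) gap in your write-up: you need either to cite the stronger inclusion from the actual construction in \cite{Weisman2022}, or to note that the automaton can be chosen with this extra margin (which is routine given how the $U_w$ are produced as small neighborhoods of fibers $\phi^{-1}(z)$).
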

\begin{remark}
  The quantity $C_1 + 3D$ in \ref{item:perturbed_kernel_bounds} is
  motivated by \Cref{lem:short_jumps_close_to_cay}. The inequality in
  \ref{item:small_elements_ok} will arise later, in the proof of
  \Cref{lem:perturbed_nesting}.
\end{remark}
\begin{proof}[Proof of \Cref{prop:ghom_nbhd}]
  First, note that when $\sigma = \rho$, condition
  \ref{item:perturbed_kernel_bounds} holds by assumption, since $\rho$
  is faithful and thus $X = X^\rho$. In this case the existence of an
  $r$ so that the other three conditions hold is equivalent to the
  $\mc{G}$-compatibility of the system $\mc{U}$ for the $\rho$-action
  of $\Gamma$ on $\flags$. So we need to show that all four of these
  conditions are relatively open in $W$, in the topology of relative
  strong convergence.

  Condition \ref{item:nonparabolic_vertex_inclusions} only refers to
  finitely many elements of $\Gamma$, so it is already open in
  $\Hom(\Gamma, G)$; by \Cref{lem:cusp_cay_comparison}, the same is
  true for Condition \ref{item:small_elements_ok}. From
  \Cref{prop:uniformly_hyperbolic_fillings} and
  \Cref{prop:cay_geodesics_descend}, the conditions in
  \ref{item:perturbed_kernel_bounds} hold whenever $X^\sigma$ is the
  cusped space associated to a sufficiently long filling of
  $\Gamma$. Thus this condition holds as long as the nontrivial
  elements in $\ker(\sigma)$ avoid a fixed finite subset of
  $\Gamma$. We assumed that $\rho$ was faithful, so this also
  corresponds to finitely many conditions on $\sigma$ which are open
  in $\Hom(\Gamma, G)$.

  On the other hand, the final condition \ref{item:dehn_filling_space}
  is \emph{not} necessarily open in $\Hom(\Gamma, G)$, but it
  \emph{is} relatively open in the extended Dehn filling space
  $W \subset \relghom(\Gamma, G; \pargps)$, essentially by definition.
\end{proof}

\subsection{Compatibility of the deformed automaton along lifts}

For the rest of the section, fix a neighborhood
$O \subset \ghom(\Gamma, G; \pargps)$ as in \Cref{prop:ghom_nbhd}. The
lemma below is in some sense the key step in the proof of
\Cref{prop:main_prop}: it says that, if $\sigma$ is any deformation of
$\rho$ in $O \cap W$, then for certain special $\mc{G}$-paths, the
$\mc{G}$-compatibility condition is preserved for the action of
$\sigma$ on $\flags$.

We should \emph{not} expect the $\mc{G}$-compatibility condition to be
preserved for \emph{arbitrary} $\mc{G}$-paths, since we need to worry
about elements in the kernel of $\sigma$ appearing along the
path. However, the lemma below shows (roughly) that if we only
consider $\mc{G}$-paths which stay close to \emph{lifts} of
quasi-geodesic paths in $X^\sigma$, then each element appearing in the
path is uniformly far from the nontrivial part of the kernel of
$\sigma$, and so $\mc{G}$-compatibility still holds. (Recall the
definition of a \emph{lift} in $X$ of a path in $X^\sigma$ from the
end of \Cref{sec:rel_hyp_dehn_filling}.)

\begin{lemma}
  \label{lem:perturbed_nesting}
  Let $\sigma \in W \cap O$, let $I \subseteq [0, \infty)$ be an
  interval, and let $c:I \to X^\sigma$ be a
  $(2, 18\delta)$-quasi-geodesic path based at $\identity$ in
  $X^\sigma$ with a lift $\tilde{c}$ based at $\identity$ in $X$. Let
  $\{(v_i, \alpha_i)\}_{i=1}^N$ be a (finite or infinite)
  $\mc{G}$-path so that the sequence $g_n = \alpha_1 \cdots \alpha_n$
  lies in a $D$-neighborhood of $\tilde{c}(I) \cap \cay(\Gamma)$.

  Then, for any $1 \le i < N$, we have
  \begin{equation}\label{eq:alpha_inclusion}
    \sigma(\alpha_i) \cdot \nbhd{U_{v_{i+1}}}{r} \subset U_{v_i}.
  \end{equation}
\end{lemma}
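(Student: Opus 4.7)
The plan is to case-split on whether $v_i$ is a parabolic vertex and, when it is, on the cusped length of $\alpha_i$. Each case should reduce directly to one of conditions \ref{item:nonparabolic_vertex_inclusions}, \ref{item:small_elements_ok}, or \ref{item:dehn_filling_space} from \Cref{prop:ghom_nbhd}, with the bulk of the work concentrated in the ``long-element'' case.

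First, if $v_i$ is non-parabolic, then $T_{v_i}$ is a singleton $\{\alpha_{v_i}\}$, so $\alpha_i = \alpha_{v_i}$ and \eqref{eq:alpha_inclusion} follows at once from condition \ref{item:nonparabolic_vertex_inclusions}. Next, if $v_i$ is parabolic and $\lcusp{\alpha_i} \le 2C_2 + 4C_3 + 6D + 18\delta$, then condition \ref{item:small_elements_ok} applies verbatim.

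The remaining case is when $v_i$ is parabolic and $\lcusp{\alpha_i}$ exceeds this threshold. Here $\alpha_i \in g\Gamma_p \minus F_{v_i}$ for some $g = g_{v_i}$ and $p \in \Pi$, and I would invoke condition \ref{item:dehn_filling_space}, which yields \eqref{eq:alpha_inclusion} provided $\sigma(\alpha_i) \notin \sigma(F_{v_i})$. The crux is to rule out this membership by contradiction. Supposing $\sigma(\alpha_i) = \sigma(f)$ for some $f \in F_{v_i}$, we would have $\pi^\sigma(g_i) = \pi^\sigma(g_{i-1} f)$; since the projection $X \to X^\sigma$ is $1$-Lipschitz and $\lcusp{f} \le C_3$, the distance in $X^\sigma$ between $\pi^\sigma(g_{i-1})$ and $\pi^\sigma(g_i)$ is at most $C_3$. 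Projecting the hypothesis that $g_n$ lies within $D$ of $\tilde{c}(I) \cap \cay(\Gamma)$ down to $X^\sigma$ gives parameters $s, t \in I$ whose images $c(s), c(t)$ are within $D$ of $\pi^\sigma(g_{i-1}), \pi^\sigma(g_i)$, so the triangle inequality in $X^\sigma$ bounds $d_{X^\sigma}(c(s), c(t))$ by $C_3 + 2D$. The $(2,18\delta)$-quasi-geodesic lower bound for $c$ then yields a uniform bound on $|s - t|$; since $\tilde{c}$ is a path, $d_X(\tilde{c}(s), \tilde{c}(t)) \le |s - t|$, and a further triangle inequality in $X$ upper bounds $\lcusp{\alpha_i} = d_X(g_{i-1}, g_i)$ by a uniform constant strictly less than $2C_2 + 4C_3 + 6D + 18\delta$, producing the contradiction.

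The main obstacle is simply constant bookkeeping: tracking the losses from the two triangle inequalities, the $1$-Lipschitz projection $\pi_X$, and the additive $18\delta$ in the quasi-geodesic bound, in order to confirm that the threshold appearing in condition \ref{item:small_elements_ok} is calibrated large enough to absorb them all. Conceptually, the proof reflects the governing idea of the paper: along a $\mc{G}$-path that shadows the lift of a quasi-geodesic in the Dehn-filled cusped space, any spurious kernel relation of $\sigma$ would create an inadmissible shortcut in $X^\sigma$, which is ruled out by quasi-geodesicity of $c$.
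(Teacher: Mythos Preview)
Your proposal is correct and follows essentially the same route as the paper: the same three-way split over \ref{item:nonparabolic_vertex_inclusions}, \ref{item:small_elements_ok}, \ref{item:dehn_filling_space}, with the contradiction in the last case obtained by projecting to $X^\sigma$, bounding the distance there, and pulling the bound back through the quasi-geodesic inequality and the lift. One point worth making explicit: the implication $\sigma(\alpha_i)=\sigma(f)\Rightarrow \pi^\sigma(\alpha_i)=\pi^\sigma(f)$ is not automatic (since $\ker\pi^\sigma\subsetneq\ker\sigma$ in general), but holds here because $\alpha_i,f$ lie in the same coset $gP$, so $\alpha_i^{-1}f\in P\cap\ker\sigma=\ker(\sigma|_P)\subset N^\sigma$; with that in hand your bound $\lcusps{\alpha_i}\le C_3$ is actually sharper than the paper's $C_2+2C_3$, so the constant bookkeeping goes through with margin.
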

\begin{proof}
  Fix $1 \le i < N$. Condition
  \ref{item:nonparabolic_vertex_inclusions} on our open subset $O$
  implies that the desired inclusion holds whenever $v_i$ is a
  non-parabolic vertex. So, assume that $v_i$ is parabolic.

  Fix $\tilde{a}_j, \tilde{a}_{k} \in \Gamma$ lying along $\tilde{c}$
  so that $\dcusp(\tilde{a}_j, g_i) \le D$ and
  $\dcusp(\tilde{a}_{k}, g_{i+1}) \le D$, and let
  $a_j = \pi^\sigma(\tilde{a}_j)$, $a_k =
  \pi^\sigma(\tilde{a}_k)$. Then, since $\pi^\sigma$ is $1$-Lipschitz
  with respect to the metrics $\dcusp$, $\dcusps$, we have
  \begin{align*}
    \dcusps(a_j, a_k)
    &\le \dcusps(g_i, g_{i+1}) + 2D\\
    &= \lcusps{\alpha_i} + 2D.
  \end{align*}
  Now, if $\sigma(\alpha_i) \notin \sigma(F_{v_i})$, condition
  \ref{item:dehn_filling_space} on $O$ implies that the desired
  inclusion \eqref{eq:alpha_inclusion} holds. So assume that
  $\sigma(\alpha_i) = \sigma(f)$ for some $f \in F_{v_i}$. By way of
  \Cref{defn:automaton_metric_consts}, we can write $\alpha_i = gh_i$
  and $f = gh$, where $h_i, h$ both lie in some parabolic subgroup
  $P \in \pargps$, and $g \in \Gamma$ satisfies $\lcusp{g} \le
  C_2$. We must have $\sigma(h_i) = \sigma(h)$, so
  $\lcusps{h_i} = \lcusps{h}$. Therefore,
  \begin{align*}
    \lcusps{\alpha_i} &\le \lcusps{g} + \lcusps{h_i} = \lcusps{h} +
                        \lcusps{g}\\
                      &\le \lcusps{gh} + 2\lcusps{g}\\
                      &\le \lcusp{gh} + 2\lcusp{g} \le C_2 + 2C_3.
  \end{align*}
  This means that
  \[
    \dcusps(a_j, a_{k}) \le C_2 + 2C_3 + 2D.
  \]
  Since $c$ is a $(2,18\delta)$-quasi-geodesic path and $\tilde{c}$ is
  a lift of $c$, this implies
  \[
    \dcusp(\tilde{a}_k, \tilde{a}_{k'}) \le 2C_2 + 4C_3 + 4D +
    18\delta,
  \]
  and therefore
  \[
    \lcusp{\alpha_i} = \dcusp(g_i, g_{i+1}) \le 2C_2 + 4C_3 + 6D + 18\delta.
  \]
  Then we are done after applying condition
  \ref{item:small_elements_ok}.
\end{proof}

\subsection{Strong convergence}

With the setup above, we are in position to prove
\Cref{prop:relative_strong_to_strong}, which says that convergence to
$\rho$ in $\ghom(\Gamma, G; \pargps)$ (\emph{relative} strong
convergence) implies convergence to $\rho$ in $\ghom(\Gamma, G)$
(strong convergence).

We first prove the following lemma, which will also be useful for the
proof of \Cref{prop:main_prop}.
\begin{lemma}
  \label{lem:coned_unbounded_divergent}
  Let $\rho_n$ be a sequence in $O \cap W$, and let $\gamma_n$ be a
  sequence in $\Gamma$. If the sequence of relative word-lengths
  $|\gamma_n|^{\rho_n}_{\hat{\Gamma}}$ tends to infinity, then
  $\rho_n(\gamma_n)$ is $Q$-divergent.
  
  In addition, if there is a sequence of $\rho_n$-invariant sets
  $\Lambda_n \subset \flags$ such that
  $\Lambda_n \cap U_v \ne \emptyset$ for each $v \in V(\mc{G})$, then
  every $Q$-limit point of $\rho_n(\gamma_n)$ is an accumulation point
  of some sequence $\xi_n \in \Lambda_n$.
\end{lemma}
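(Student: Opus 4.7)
The plan is to convert the growth of relative word-length into deeply nested inclusions of open subsets of $\flags$ under $\rho_n$, and then extract $Q$-divergence via \Cref{lem:q_convergence_sequences}. To begin, note that $\lcons{\gamma_n} \to \infty$ forces $\lcusps{\gamma_n} \to \infty$, since any $\dcusps$-path can be converted to a $\dcons$-path of at most double length by replacing each horoball excursion with a pair of cone edges. For each $n$, fix a $\dcusps$-geodesic $c_n$ from $\identity$ to $\gamma_n$ in $X^{\rho_n}$ and lift it edge-by-edge to a path $\tilde c_n$ in $X$ terminating at some $\tilde\gamma_n \in \Gamma$ with $\pi^{\rho_n}(\tilde\gamma_n) = \gamma_n$. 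Since $\pi_X$ is $1$-Lipschitz and $\tilde c_n$ has the same length as $c_n$, the lift is itself a $\dcusp$-geodesic with $\dcusp(\identity, \tilde\gamma_n) = \dcusps(\identity, \gamma_n) \to \infty$. Extend $\tilde c_n$ to a geodesic ray $\hat c_n$ in $X$ with ideal endpoint $z_n \in \bgamp$.

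By property \ref{item:g_paths_exist} pick a $\mc{G}$-path $\{(v_i^{(n)}, \alpha_i^{(n)})\}$ limiting to $z_n$; by \Cref{cor:gpaths_track_quasigeodesics}, its partial products $g_i^{(n)}$ track $\hat c_n \cap \cay(\Gamma)$ within $\dcusp$-Hausdorff distance $D$. Choose an index $i_n$ with $\dcusp(g_{i_n}^{(n)}, \tilde\gamma_n) \le D$ so that the prefix $\{g_0^{(n)}, \ldots, g_{i_n}^{(n)}\}$ lies in the $D$-neighborhood of $\tilde c_n$. Then \Cref{lem:perturbed_nesting} applies along this prefix with $c = c_n$ and $\tilde c = \tilde c_n$, and telescoping the inclusions yields
\[
  \rho_n\bigl(g_{i_n}^{(n)}\bigr) \cdot \nbhd{U_{v_{i_n+1}^{(n)}}}{r} \subset U_{v_1^{(n)}}.
\]
By finiteness of $V(\mc{G})$, I pass to a subsequence with $v_1^{(n)} = v$ and $v_{i_n+1}^{(n)} = w$ fixed, giving $\rho_n(g_{i_n}^{(n)}) \cdot \nbhd{U_w}{r} \subset U_v$ for every $n$. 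Moreover $\dcusps(\gamma_n, g_{i_n}^{(n)}) \le D$ together with $\rho_n \to \rho$ in $\Hom(\Gamma, G)$ forces $\rho_n(\gamma_n)^{-1}\rho_n(g_{i_n}^{(n)})$ to stay in a fixed compact subset of $G$, so $Q$-divergence of $\rho_n(\gamma_n)$ reduces to that of $\rho_n(g_{i_n}^{(n)})$.

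The main obstacle is then to extract $Q$-divergence from this single nested inclusion. The plan is to symmetrize by rerunning the construction with $\tilde\gamma_n^{-1}$ in place of $\tilde\gamma_n$, whose $\dcusp$-length also diverges; this produces a second $\mc{G}$-path with terminal partial product $h_{j_n}^{(n)}$ approximating $\tilde\gamma_n^{-1}$. After further subsequences this gives $\rho_n(h_{j_n}^{(n)}) \cdot \nbhd{U_{w'}}{r} \subset U_{v'}$, with $\rho_n(h_{j_n}^{(n)})$ differing from $\rho_n(g_{i_n}^{(n)})^{-1}$ by a factor in a fixed compact subset of $G$. By arranging that the forward and backward endpoints $z_n, z_n'$ are $\bdist$-separated by more than $\Delta$ (\Cref{defn:distance_lowerbound}), property \ref{item:distinguish_points} forces $\overline{U_v}, \overline{U_{v'}}$ (and $\overline{U_w}, \overline{U_{w'}}$) to be pairwise antipodal. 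After a final passage to Hausdorff subsequences of the images $\rho_n(g_{i_n}^{(n)}) \nbhd{U_w}{r}$ and $\rho_n(g_{i_n}^{(n)})^{-1} \nbhd{U_{w'}}{r}$, antipodality together with the two nested inclusions lets \Cref{lem:q_convergence_sequences} conclude that $\rho_n(g_{i_n}^{(n)})$ is $Q$-divergent with any $Q$-limits of $\rho_n(g_{i_n}^{(n)})$ and its inverse lying in $\overline{U_v}$ and $\overline{U_{v'}}$, respectively.

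For the second statement, suppose $\Lambda_n \cap U_v \ne \emptyset$ for every $v \in V(\mc{G})$. Given a $Q$-limit point $\xi$ of $\rho_n(\gamma_n)$ (realized along some subsequence), pick $\eta_n \in \Lambda_n \cap U_w$, which is nonempty by hypothesis. Then $\xi_n := \rho_n(\gamma_n) \eta_n$ lies in $\Lambda_n$ by $\rho_n$-invariance, and uniform $Q$-convergence of $\rho_n(g_{i_n}^{(n)})$ to $\xi$ on $U_w \subset \nbhd{U_w}{r}$, combined with the bounded approximation of $\rho_n(\gamma_n)$ by $\rho_n(g_{i_n}^{(n)})$, forces $\xi_n \to \xi$, exhibiting $\xi$ as an accumulation point of the sequence $\xi_n \in \Lambda_n$ as required.
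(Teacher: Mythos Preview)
Your setup is close to the paper's, but the core step where you conclude $Q$-divergence has a genuine gap. After telescoping, you retain only the single inclusion $\rho_n(g_{i_n}^{(n)})\nbhd{U_w}{r}\subset U_v$ and a symmetrized companion for the inverse, and then claim that antipodality of $\overline{U_v},\overline{U_{v'}}$ lets \Cref{lem:q_convergence_sequences} finish. It does not: that lemma \emph{characterizes} $Q$-divergence in terms of uniform convergence to a singleton, it does not supply it. A bounded sequence (even a constant one, such as a single hyperbolic element of $\PGL(2,\R)$ with attractor in $U_v$ and repeller in $U_{v'}$) can satisfy both of your inclusions simultaneously without being $Q$-divergent. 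Nothing in your argument rules this out; in particular, you never show that the images $\rho_n(g_{i_n}^{(n)})U_w$ shrink to a point.

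What the paper does instead is keep the full chain of nested inclusions, not just its telescoped endpoints. One first argues that the index $m_n$ (your $i_n$) tends to infinity, using that $\lcon{\tilde\gamma_n}\to\infty$ while each $\alpha_i$ has uniformly bounded coned-off length. Then, using property \ref{item:flag_sets_proper} and a contraction estimate (Corollary 7.12 of \cite{Weisman2022}), the $m_n$-fold nesting forces the diameter of $\rho_n(g_{m_n}^{(n)})U_w$ to decay exponentially in $m_n$, so these sets collapse to a singleton $\{\xi\}$. From there, $Q$-divergence follows from \cite[Prop.~3.7]{Weisman2022}, and the second assertion is immediate: any $\eta_n\in\Lambda_n\cap U_w$ has $\rho_n(h_n)\eta_n\to\xi$ directly because the whole image shrinks. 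Your symmetrization bypasses this contraction mechanism and cannot replace it. Two smaller issues compound the problem: you never verify $i_n\to\infty$, and the claimed $\Delta$-separation of $z_n,z_n'$ is unjustified (they are not the two ends of a single geodesic through $\identity$, which is what \Cref{defn:distance_lowerbound} governs).
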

\begin{proof}
  It suffices to show that every subsequence of
  $\gamma_n, \rho_n, \Lambda_n$ has a further subsequence which
  satisfies the desired statement, so we can extract subsequences
  throughout the proof below.

  Let $X_n$ be the sequence of cusped spaces $X^{\rho_n}$, and let
  $d_X^n$ denote the induced metric $d_{X}^{\rho_n}$ on the quotient
  $\Gamma_n = \Gamma^{\rho_n}$. Condition
  \ref{item:perturbed_kernel_bounds} ensures that each of these cusped
  spaces is $\delta$-hyperbolic, so by
  \Cref{prop:cusp_rays_quasidense}, there is a geodesic ray
  $c_n:[0, \infty) \to X_n$ and $t_n \in [0, \infty)$ so that
  $d_X^n(c_n(t_n), \gamma_n) < 8 + 21\delta$.

  Let $\tilde{c}_n$ be a lift of $c_n$ in $X$ based at the
  identity. The quotient map $X \to X_n$ defined in
  \Cref{sec:dehnfill_geometry} takes combinatorial horoballs to
  combinatorial horoballs, so by lifting a geodesic in $X_n$ from
  $c(t_n)$ to $\gamma_n$, we see that the point $\tilde{c}_n(t_n)$
  lies $d_X^n$-distance at most $8 + 21\delta$ from some
  $\tilde{\gamma}_n \in \Gamma$, satisfying
  $\pi_n(\tilde{\gamma}_n) = \gamma_n$. Since the quotient
  $\Gamma \to \Gamma_n$ also induces a $1$-Lipschitz map
  $\hcay(\Gamma) \to \hcay(\Gamma_n)$, the relative word lengths
  $\lcon{\tilde{\gamma}_n}$ must tend to infinity.

  For each $n$, let $\{(v_i^{(n)}, \alpha_i^{(n)})\}_{i=1}^N$ be a
  $\mc{G}$-path limiting to the endpoint of $\tilde{c}_n$ in $\bgamp$,
  and for each finite $m \le N$ define
  \[
    g_m^{(n)} = \alpha_1^{(n)} \cdots \alpha_m^{(n)}.
  \]
  By \Cref{lem:cayley_hausdorff_neighborhoods} and
  \Cref{cor:gpaths_track_quasigeodesics}, for each $n$, we can find
  some index $m_n < \infty$ so that the element $g_{m_n}^{(n)}$ lies
  within uniformly bounded $\cay(\Gamma)$-distance of
  $\tilde{\gamma}_n$. Let $h_n = g_{m_n}^{(n)}$. We must have
  $\lcon{h_n} \to \infty$, and since each $\alpha_i^{(n)}$ has
  uniformly bounded length in $\hcay(\Gamma)$, we have
  $m_n \to \infty$.

  Now, after extracting a subsequence, we can assume that the vertices
  $v_1^{(n)}, v^{(n)}_{m_n + 1}$ are both fixed (independent of $n$),
  respectively equal to vertices $v, w \in V(\mc{G})$. Consider the
  sequence of sets $\rho_n(h_n)U_w$. We have chosen the constant $D$
  (in \Cref{defn:codings_quasigeodesic_nbhds}) in a way which ensures
  that the sequence $h_n$ lies within Hausdorff distance $D$ of
  $\tilde{c}_n \cap \cay(\Gamma)$, so \Cref{lem:perturbed_nesting}
  ensures that the nesting condition
  \[
    \rho_n(\alpha_i^{(n)})\nbhd{U_{v_{i+1}^{(n)}}}{r} \subset
    U_{v_i^{(n)}}
  \]
  is satisfied for every $1 \le i \le m_n$. We also know from property
  \ref{item:flag_sets_proper} that, if $r$ is sufficiently small, each
  vertex set $U_v$ satisfies $\Opp(\nbhd{U_v}{r}) \ne
  \emptyset$. Then, it follows from \cite[Cor. 712]{Weisman2022} that,
  for a particular choice of metric on $U_{v_1}$ (compatible with the
  topology on $\flags$), the diameter of the set $\rho_n(h_n)U_w$
  tends to zero uniformly exponentially in $m_n$. In particular, since
  $\overline{U_1}$ is compact, these sets tend to a singleton
  $\{\xi\} \subset \flags$. Since $U_w$ is open,
  \cite[Prop. 3.7]{Weisman2022} shows that $\rho_n(h_n)$ is
  $Q$-divergent and $\xi$ is the unique $Q$-limit point of the
  sequence. Since the $\cay(\Gamma)$-distance between $h_n$ and
  $\tilde{\gamma}_n$ is uniformly bounded,
  \cite[Lem. 4.23(i)]{KLP2017} implies that
  $\rho_n(\gamma_n) = \rho_n(\tilde{\gamma}_n)$ is also $Q$-divergent
  with unique $Q$-limit point $\xi$.

  To finish the proof, we just need to show that, if
  $\Lambda_n \subset \flags$ is a sequence of subsets as in the
  hypotheses, then $\xi$ is an accumulation point of some sequence
  $\xi_n \in \Lambda_n$. However, we know that $\xi$ is the limit of
  \emph{any} sequence $\rho_n(h_n)\eta_n$ for any sequence
  $\eta_n \in U_1$. Since $\Lambda_n$ is $\rho_n$-invariant and
  $\Lambda_n \cap U_1 \ne \emptyset$ by assumption, the result
  follows.
\end{proof}

\begin{remark}
  \label{rem:rel_q_div}
  Applying \Cref{lem:coned_unbounded_divergent} to a constant sequence
  of representations shows that any representation in $O \cap W$ is
  relatively $Q$-divergent. In particular, this proves that $\rho$ is
  relatively $Q$-divergent.
\end{remark}

The next lemma says (modulo an additional assumption on the boundary
extension $\phi$) that the representations $\sigma \in O \cap W$
induce ``uniformly proper'' representations $\Gamma^\sigma \to G$. It
implies in particular that every representation in $O \cap W$ is
discrete.
\begin{lemma}
  \label{lem:uniformly_proper}
  Assume that for each parabolic point $q \in \bgamp$, the set
  $\phi^{-1}(q)$ has empty interior. Let $\rho_n$ be a sequence in
  $O \cap W$, and for each $n$, let $\gamma_n \in \Gamma^{\rho_n}$. If
  $|\gamma_n|_{\Gamma}^{\rho_n} \to \infty$, then $\rho_n(\gamma_n)$
  leaves every compact subset of $G$.
\end{lemma}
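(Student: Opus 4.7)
The plan is to argue by contradiction. Suppose along some subsequence $\rho_n(\gamma_n)$ lies in a compact set $C \subset G$; extract a further subsequence so that $\rho_n(\gamma_n) \to h \in G$. The argument splits on the coned-off relative length $|\gamma_n|_{\hat{\Gamma}}^{\rho_n}$: if this tends to infinity along a subsequence, then \Cref{lem:coned_unbounded_divergent} immediately gives $Q$-divergence of $\rho_n(\gamma_n)$, contradicting boundedness. So I may assume $|\gamma_n|_{\hat{\Gamma}}^{\rho_n}$ is uniformly bounded by some $L$.

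In this case, I would use the structure of geodesics in the coned-off Cayley graph of $\Gamma^{\rho_n}$ to write $\gamma_n = u_1^{(n)} \cdots u_M^{(n)}$, with $M$ uniformly bounded, where each $u_j^{(n)}$ is either a fixed Cayley generator or an element of a fixed peripheral subgroup $P_j^{\rho_n} = P_j/\ker(\rho_n|_{P_j})$, after a subsequence fixing the combinatorial type of the decomposition. Since $|\gamma_n|_\Gamma^{\rho_n} \to \infty$ with $M$ bounded, at least one index $j_0$ must have $|u_{j_0}^{(n)}|_\Gamma^{\rho_n} \to \infty$, and this factor must be a peripheral element in a fixed subgroup $P_0 = \Gamma_p$ stabilizing some parabolic point $p \in \Pi$, since generator factors have word length at most $1$ in $\Gamma^{\rho_n}$.

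The key step is deriving a contradiction from this situation using the EDF hypothesis and the empty-interior assumption. Pick a compact $K \subset C_p$ with nonempty interior (possible since $C_p$ is open). For any open $U \supset \phi^{-1}(p)$, the extended convergence property for $\rho$ provides a finite $F \subset \Gamma_p$ with $\rho(\Gamma_p \setminus F) K \subset U$; by EDF there is an open neighborhood $O_U$ of $\rho$ in $\ghom(\Gamma, G; \pargps)$ such that $(\sigma(\Gamma_p) \setminus \sigma(F))K \subset U$ for every $\sigma \in O_U \cap W$. Since $\pi^{\rho_n}(F)$ has uniformly bounded $\Gamma^{\rho_n}$-word length (the quotient map is $1$-Lipschitz) while $|u_{j_0}^{(n)}|_\Gamma^{\rho_n} \to \infty$, the element $u_{j_0}^{(n)}$ eventually falls outside $\pi^{\rho_n}(F)$, yielding $\rho_n(u_{j_0}^{(n)}) K \subset U$ whenever $\rho_n \in O_U$. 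Propagating this inclusion through the remaining bounded factors in the product $\rho_n(\gamma_n) = \prod_j \rho_n(u_j^{(n)})$ (each of which lies in a compact subset of $G$, for $j$ with bounded $|u_j^{(n)}|_\Gamma^{\rho_n}$) and letting $U$ shrink to $\phi^{-1}(p)$ along a countable neighborhood basis forces $h$ to take some open subset of $\flags$ into $\phi^{-1}(p)$; since $h$ is a homeomorphism, this contradicts the empty-interior hypothesis.

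The main obstacle I anticipate is that EDF provides a potentially different neighborhood $O_U$ for each $U$, so to pass to the limit $U \to \phi^{-1}(p)$ along a basis $\{U_k\}$ I need $\rho_n \in O_{U_k}$ for each $k$; this is not automatic from $\rho_n \in O$ and likely requires strengthening the conditions on $O$ in \Cref{prop:ghom_nbhd} to incorporate EDF for a suitable collection of neighborhoods, or a diagonal subsequence argument exploiting any convergence of $\rho_n$. A secondary difficulty is handling the case where multiple indices $j$ simultaneously satisfy $|u_j^{(n)}|_\Gamma^{\rho_n} \to \infty$, which requires tracking the image of neighborhoods through several potentially divergent factors, probably using the antipodality of $\phi$ and property \ref{item:distinguish_points} to ensure that successive factors do not ``cancel'' each other's asymptotic behavior.
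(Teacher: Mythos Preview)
Your overall strategy matches the paper's: reduce via \Cref{lem:coned_unbounded_divergent} to the case of bounded coned-off length, decompose $\gamma_n$ as a bounded-length alternating product of fixed elements and peripheral elements, push a compact set with nonempty interior through this product using the extended Dehn filling hypothesis, and contradict the assumption that the parabolic fiber has empty interior. Both obstacles you flag are genuine, and the paper handles them as follows.

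For the first: the paper's proof silently uses that $\rho_n \to \rho$ in $\Hom(\Gamma, G)$, which holds in the only application of the lemma (the proof of \Cref{prop:relative_strong_to_strong}). Granted this, for each fixed open $U \supset \phi^{-1}(p)$ the EDF-supplied neighborhood $O_U$ eventually contains $\rho_n$, so one never needs $O \subset \bigcap_U O_U$; this is exactly the diagonal argument you suggest. For the second: rather than isolate a single divergent index, the paper absorbs all bounded peripheral pieces into the fixed letters, writing (after a subsequence) lifts $\tilde{\gamma}_n = \tilde g_1 \tilde h_1^{(n)} \cdots \tilde g_k \tilde h_k^{(n)} \tilde g_{k+1}$ with each $\tilde g_i \in \Gamma$ fixed and \emph{every} $\tilde h_i^{(n)} \in P_i$ satisfying $|h_i^{(n)}|_\Gamma^{\rho_n} \to \infty$. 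One then shows by induction on $j$ that for any compact $K \subset C_{p_j}$ the set $\rho_n(\tilde g_1 \tilde h_1^{(n)} \cdots \tilde g_j \tilde h_j^{(n)})K$ eventually lies in any prescribed neighborhood of $\phi^{-1}(\tilde g_1 p_1)$. The inductive step needs only that $\tilde g_j p_j \ne p_{j-1}$ (which can be arranged in the decomposition): then $\rho(\tilde g_j)$, hence $\rho_n(\tilde g_j)$ by algebraic convergence, carries a closed neighborhood of $\phi^{-1}(p_j)$ into a compact subset of $C_{p_{j-1}}$, and EDF at $p_{j-1}$ feeds the induction. No appeal to property \ref{item:distinguish_points} of the automaton is required.
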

\begin{proof}
  Since $Q$-divergent sequences necessarily leave every compact subset
  of $G$, by employing the previous lemma, we only need to consider
  the case where the relative lengths $\lcon{\gamma_n}^{\rho_n}$ are
  bounded. 

  Let $\Gamma_n = \Gamma^{\rho_n}$, and let
  $\pi_n:\Gamma \to \Gamma_n$ be the quotient map. After extracting a
  subsequence, there is a fixed integer $k$ and lifts
  $\tilde{\gamma}_n \in \Gamma$ of $\gamma_n$ with the form
  \begin{equation}
    \label{eq:lift_alternating_word}
    \tilde{\gamma}_n = \tilde{g}_1\tilde{h}_1^{(n)} \ldots
    \tilde{g}_k\tilde{h}_k^{(n)}\tilde{g}_{k+1}, 
  \end{equation}
  where each $\tilde{g}_i \in \Gamma$ is fixed, and each
  $\tilde{h}_i^{(n)}$ is an element in some fixed $P_i \in \pargps$,
  such that $\pi_n(\tilde{h}_i^{(n)}) = h_i^{(n)}$ satisfies
  \[
    \lim_{n \to \infty} \lgam{h_i^{(n)}}^{\rho_n} = \infty.
  \]
  For simplicity we will assume $\tilde{g}_{k+1} = \identity$, since
  the general case follows easily from this one. Let $p_i \in \Pi$ be
  the parabolic point fixed by $P_i$.

  Note that for each $1 < j \le k$, the fact that
  $\lgam{h_j^{(n)}}^{\rho_n} \to \infty$ as $n \to \infty$ means that
  for any finite subset $F \subset P_j$, we eventually have
  $\rho_n(h_j^{(n)}) \notin \rho_n(F)$. So, due to the extended Dehn
  filling assumption (\Cref{defn:edf}), for each $j$, if $K$ is a
  compact subset of $C_{p_j}$, then the set $\rho_n(h_j^{(n)})K$
  eventually lies in an arbitrarily small neighborhood of
  $\phi^{-1}(p_j)$. Since $\rho_n \to \rho$ in $\Hom(\Gamma, G)$, we
  also know that as $n \to \infty$, the set
  $\rho_n(\tilde{g}_j\tilde{h}_j^{(n)})K$ eventually lies in an
  arbitrarily small neighborhood of $\rho(\tilde{g}_j)\phi^{-1}(p_j)$.

  We can induct on $j$ to show that for any compact subset
  $K \subset C_{p_j}$, the set
  \[
    \rho_n(\tilde{g}_1 \tilde{h}_1^{(n)} \cdots
    \tilde{g}_j\tilde{h}_j^{(n)})K
  \]
  eventually lies in an arbitrarily small neighborhood of
  $\phi^{-1}(\tilde{g}_1p_1)$. The case $j = 1$ is immediate from the
  previous paragraph. For the case $j > 1$, first observe that since
  $\tilde{g}_jp_j \ne p_{j-1}$, the group element $\rho(\tilde{g}_j)$
  takes (the closure of) a neighborhood of $\phi^{-1}(p_j)$ to
  $C_{p_{j-1}}$. Again using algebraic convergence $\rho_n \to \rho$,
  the sequence $\rho_n(\tilde{g}_j)$ eventually takes some
  neighborhood of $\phi^{-1}(p_j)$ to land in a compact subset of
  $C_{p_{j-1}}$. Applying induction, we see that
  \[
    \rho_n(\tilde{g}_1 \tilde{h}_1^{(n)} \cdots
    \tilde{g}_j\tilde{h}_j^{(n)})K
  \]
  eventually lies in an arbitrary neighborhood of
  $\phi^{-1}(\tilde{g}_jp_1)$.

  Now, since $C_{p_k}$ is an open subset of the flag manifold
  $\flags$, it contains a compact subset $K$ with nonempty
  interior. Suppose for a contradiction that $\rho_n(\gamma_n)$ does
  not leave every compact subset of $G$. Then, after extraction,
  $\rho_n(\gamma_n)$ converges to some $g \in G$, and
  $\rho_n(\gamma_n)K$ converges (with respect to Hausdorff distance on
  $\flags$) to $gK$, which is a subset of $\flags$ with nonempty
  interior. However, from the argument above, we know that
  $\rho_n(\gamma_n)K$ eventually lies in every neighborhood of
  $\phi^{-1}(\tilde{g}_1p_1)$, and therefore
  $gK \subseteq \phi^{-1}(\tilde{g}_1p_1)$. This is a contradiction if
  $\phi^{-1}(\tilde{g}_1p_1)$ has empty interior.
\end{proof}

\begin{proof}[Proof of \Cref{prop:relative_strong_to_strong}]
  Suppose that a sequence $\rho_n \in W$ converges to $\rho$ in
  $\ghom(\Gamma, G; \pargps)$. We need to show that also
  $\rho_n \to \rho$ in $\ghom(\Gamma, G)$, or equivalently that the
  subgroups $\rho_n(\Gamma)$ converge to $\rho(\Gamma)$ in the
  Chabauty topology. We will employ the criteria in
  \Cref{prop:chab_converge}. Since $\rho_n \to \rho$ algebraically,
  criterion \ref{item:chab_contain_1} in the proposition is
  immediate. For criterion \ref{item:chab_contain_2}, consider a
  sequence $\gamma_n \in \Gamma^{\rho_n}$ such that
  $\rho(\gamma_n) \to g$ for $g \in G$. By
  \Cref{lem:uniformly_proper}, the sequence $\lgam{\gamma_n}^{\rho_n}$
  is bounded, so there is a sequence $\tilde{\gamma}_n \in \Gamma$
  with $\lgam{\tilde{\gamma}_n}$ bounded, projecting to $\gamma_n$
  under the Dehn filling $\Gamma \to \Gamma^{\rho_n}$. After
  extraction $\tilde{\gamma}_n$ is a constant $\tilde{\gamma}$, so
  $\rho_n(\gamma_n) = \rho_n(\tilde{\gamma})$ converges to
  $\rho(\tilde{\gamma})$, and criterion \ref{item:chab_contain_2} is
  satisfied.
\end{proof}

\subsection{Lifting points in boundaries of Dehn fillings}

For the rest of the section, we fix a representation
$\sigma \in O \cap W$. Let $\iquotgps$ denote the set of infinite
subgroups in $\quotgps$ (note that this may be \emph{smaller} than the
set of images of groups in $\ipargps$ in $\repquot$, since
$\ker(\sigma|_P)$ could have finite index in some $P \in
\ipargps$). As in \Cref{sec:bowditch_boundary}, we will identify the
Bowditch boundary $\biquot$ with a compact $\Gamma^\sigma$-invariant
subset of $\bquot$, which is the boundary of the cusped space
$X^\sigma = X(\repquot, \quotgps)$.

Our aim now is to show that $\sigma$ induces an EGF representation of
$\Gamma^\sigma$. This means we need to construct a boundary extension
$\phi_\sigma:\Lambda_\sigma \to \biquot$ for $\sigma$, and then show
that $\phi_\sigma$ satisfies all of the needed properties.

For the most part our strategy follows the proof of the main theorem
in \cite{Weisman2022}, which is a special case of the result we are
proving now. However, here we have a new technical challenge, since
the Bowditch boundary $\biquot$ is \emph{not} necessarily homeomorphic
to $\bgamp$. In a sense $\biquot$ is ``smaller,'' since the quotient
group $\Gamma^\sigma$ contains ``fewer'' geodesic rays, but there is
not a nice equivariant quotient map from a subset of $\bgamp$ to
$\biquot$. However, the lifting operation on (quasi)-geodesic rays in
$X^\sigma$ (see \Cref{sec:dehnfill_geometry}) allows us to loosely
associate points in $\bquot$ to points in $\bgamp$.

\begin{definition}
  Let $z \in \bquot$, and let $K \ge 1, A \ge 0$. A point $\tilde{z}$
  is a $(K,A)$-\emph{lift} of $z$ if there is a $(K,A)$-quasi-geodesic
  ray $c:[0, \infty) \to X^\sigma$ from the identity to $z$ and a lift
  $\tilde{c}:[0, \infty) \to X$ of $c$ from the identity to
  $\tilde{z}$.

  We say that $\tilde{z}$ is a \emph{lift} of $z$ if it is a
  $(K,A)$-lift of $z$ for some $K, A$. If $\tilde{z}$ is a
  $(1,0)$-lift of $z$ then we say it is a \emph{geodesic lift}.
\end{definition}

\begin{remark}
  Since every geodesic in $X^\sigma$ lifts, every point $z \in \bquot$
  has at least one geodesic lift in $\bgamp$; some points in $\bquot$
  may have a unique geodesic lift, while others may have many. The
  lifting operation also does \emph{not} need to be equivariant in any
  sense. Moreover, if $K' ,A'$ are different from $K, A$, then the set
  of $(K,A)$-lifts of a point $z \in \bquot$ may in principle be very
  different from the set of $(K', A')$-lifts.
\end{remark}

\subsection{Defining fibers of the boundary extension}

By considering lifts of points in $\biquot$, we will define a map
\[
  \psi_\sigma:\biquot \to \{\textrm{closed subsets of }\flags\},
\]
determining the fibers of $\phi_\sigma$. As a first step, we relate
lifts to $\mc{G}$-paths by the following lemma.
\begin{lem}
  \label{lem:gpaths_close_in_quot}
  Let $z \in \bquot$. For any lift $\tilde{z}$ of $z$, if
  $\{(v_i, \alpha_i)\}_{i=1}^N$ is a $\mc{G}$-path limiting to
  $\tilde{z}$, then for any quasi-geodesic ray
  $c:[0, \infty) \to X^\sigma$ with $c(\infty) = z$, the sequence
  \[
    \pi^\sigma(\alpha_1 \cdots \alpha_n)
  \]
  lies within finite Hausdorff distance of
  $c([0, \infty)) \cap \cay(\repquot)$, with respect to the metric
  $\dgams$ on $\cay(\repquot)$.
\end{lem}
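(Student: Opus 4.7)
The plan is to combine the geometric-tracking properties of the relative automaton from \Cref{sec:automaton} (applied upstairs in $X$) with the coarse-geometric comparison lemmas from \Cref{sec:relhyp} (applied downstairs in $X^\sigma$), linked by the $1$-Lipschitz quotient map $\pi_X$.

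First I will unpack the definition of a lift: by hypothesis there exist $K \ge 1$, $A \ge 0$, a $(K,A)$-quasi-geodesic ray $c_0 \colon [0,\infty) \to X^\sigma$ from $\identity$ to $z$, and a lift $\tilde{c} \colon [0,\infty) \to X$ of $c_0$ from $\identity$ to $\tilde{z}$. By the proposition in \Cref{sec:rel_hyp_dehn_filling} saying that lifts of quasi-geodesics are quasi-geodesics, $\tilde{c}$ is itself a $(K,A)$-quasi-geodesic in $X$. Since the $\mc{G}$-path limits to $\tilde{z}$, I can then apply \Cref{cor:gpaths_track_quasigeodesics} to produce a uniform $D_0 > 0$ such that the set $\{g_n := \alpha_1 \cdots \alpha_n\}$ lies within $\dcusp$-Hausdorff distance $D_0$ of $\tilde{c}([0,\infty)) \cap \cay(\Gamma)$; then \Cref{lem:cayley_hausdorff_neighborhoods} upgrades this to a uniform bound on the $\dgam$-Hausdorff distance between the same two subsets of $\cay(\Gamma)$.

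Next I will push the estimate down to $\cay(\repquot)$. The map $\pi_X$ restricts to $\pi^\sigma$ on $\cay(\Gamma)$, and $\pi^\sigma$ is $1$-Lipschitz for the word metrics $\dgam, \dgams$. Since $\pi_X$ is a surjection of graphs sending vertices to vertices and satisfies $\pi_X \circ \tilde{c} = c_0$, one has the set equality
\[
  \pi^\sigma\bigl(\tilde{c}([0,\infty)) \cap \cay(\Gamma)\bigr) = c_0([0,\infty)) \cap \cay(\repquot).
\]
Applying $\pi^\sigma$ to the $\dgam$-Hausdorff bound from the previous step therefore yields a uniform $\dgams$-Hausdorff bound between $\{\pi^\sigma(g_n)\}$ and $c_0([0,\infty)) \cap \cay(\repquot)$.

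Finally I will compare $c_0$ with the arbitrary quasi-geodesic ray $c$. Both are quasi-geodesic rays in $X^\sigma$ based at $\identity$ with common ideal endpoint $z$, and $X^\sigma$ is $\delta$-hyperbolic by condition \ref{item:perturbed_kernel_bounds} on the chosen neighborhood $O$. The Morse lemma in $X^\sigma$ then gives a uniform bound on the $\dcusps$-Hausdorff distance between $c_0([0,\infty))$ and $c([0,\infty))$, after which \Cref{cor:hausdorff_distance_bound} (whose proof applies verbatim in $X^\sigma$ in place of $X$) converts this to a uniform bound on the $\dgams$-Hausdorff distance between $c_0([0,\infty)) \cap \cay(\repquot)$ and $c([0,\infty)) \cap \cay(\repquot)$. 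A triangle-inequality step in $\dgams$ then closes the argument. The main issue will be the bookkeeping of how the quasi-geodesic constants $(K,A)$ propagate through \Cref{cor:gpaths_track_quasigeodesics} and the Morse lemma, but since those constants depend only on $\delta$ and on $(K,A)$, no substantive obstacle arises.
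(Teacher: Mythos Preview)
Your proof is correct and follows essentially the same route as the paper's: apply \Cref{cor:gpaths_track_quasigeodesics} to the lifted quasi-geodesic upstairs, push down via the $1$-Lipschitz quotient map, then compare $c_0$ with the arbitrary ray $c$ using the Morse lemma and \Cref{cor:hausdorff_distance_bound} in $X^\sigma$. You are somewhat more careful than the paper in spelling out the upgrade from $\dcusp$ to $\dgam$ before projecting and in justifying the set equality $\pi^\sigma(\tilde{c}\cap\cay(\Gamma)) = c_0\cap\cay(\repquot)$; one small imprecision is your claim that $c$ is based at $\identity$, which the lemma does not assume, but since only \emph{finite} (not uniform) Hausdorff distance is asserted, the Morse-lemma comparison still goes through.
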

\begin{proof}
  Let $c'$ be a quasi-geodesic ray in $X^\sigma$ from the identity to
  $z$, so that a lift $\tilde{c}'$ based at the identity has ideal
  endpoint $\tilde{z}$. By \Cref{cor:gpaths_track_quasigeodesics}, if
  $\{(v_i, \alpha_i)\}_{i=1}^N$ is a $\mc{G}$-path limiting to
  $\tilde{z}$, the Hausdorff distance between
  $\tilde{c}'([0, \infty)) \cap \cay(\Gamma)$ and the sequence
  $g_n = \alpha_1 \cdots \alpha_n$ is finite. Since $\pi^\sigma$ is
  1-Lipschitz on $X$, the Hausdorff distance between
  $c'([0, \infty)) \cap \cay(\repquot)$ and $\pi^\sigma(g_n)$ with
  respect to $\dgams$ is finite as well.

  Since $X^\sigma$ is $\delta$-hyperbolic, the images of $c$ and $c'$
  are also finite Hausdorff distance apart with respect to the metric
  on $X^\sigma$. Then by \Cref{cor:hausdorff_distance_bound}, the
  Hausdorff distance between $c'([0, \infty) \cap \cay(\repquot)$ and
  $c([0, \infty)) \cap \cay(\repquot)$ with respect to $\dgams$ is
  also finite, and the result follows.
\end{proof}

\subsubsection{Conical fibers} We will first define $\psi_\sigma$ on
the set of conical limit points in $\bquot$, using the lemma below.

\begin{lem}
  \label{lem:conical_singletons}
  Let $z$ be a conical limit point in $\bquot$. There is a unique
  point $\psi_{\sigma}(z) \in \flags$ satisfying the following: if
  $\tilde{z}$ is any $(2, 18\delta)$-lift of $z$ in $\bgamp$, and
  $\{(v_i, \alpha_i)\}_{i=1}^\infty$ is a $\mc{G}$-path limiting to
  $\tilde{z}$, then
  \[
    \{\psi_{\sigma}(z)\} = \bigcap_{i=1}^\infty \sigma(\alpha_1 \cdots
    \alpha_i) U_{v_{i+1}}.
  \]
\end{lem}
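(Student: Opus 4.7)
The plan is to prove existence first (for a single lift and $\mc{G}$-path), and then independence across all such choices.

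For existence, I would fix a $(2,18\delta)$-lift $\tilde z$ of $z$. Note that since the quotient map $\pi_X$ sends combinatorial horoballs to combinatorial horoballs, any lift of a conical limit point in $\bquot$ must itself be conical in $\bgamp$; by property \ref{item:g_paths_exist}, this produces an infinite $\mc{G}$-path $\{(v_i, \alpha_i)\}_{i=1}^\infty$ limiting to $\tilde z$. By definition of a lift, $\tilde z$ is the ideal endpoint of a lift $\tilde c$ of some $(2,18\delta)$-quasi-geodesic ray $c$ in $X^\sigma$ from $\identity$ to $z$, and $\tilde c$ is itself a $(2,18\delta)$-quasi-geodesic ray in $X$. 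Corollary \ref{cor:gpaths_track_quasigeodesics} combined with our choice of $D$ in Definition \ref{defn:codings_quasigeodesic_nbhds} then forces the sequence $g_n = \alpha_1 \cdots \alpha_n$ to lie within $\dcusp$-Hausdorff distance $D$ of $\tilde c([0,\infty)) \cap \cay(\Gamma)$, which is exactly the hypothesis of Lemma \ref{lem:perturbed_nesting}. That lemma gives the nesting $\sigma(\alpha_i) \cdot \nbhd{U_{v_{i+1}}}{r} \subset U_{v_i}$ for every $i$, which iterates to a strictly nested sequence of compact sets $\sigma(g_i) \overline{U_{v_{i+1}}}$. Property \ref{item:flag_sets_proper} combined with the uniform exponential diameter-shrinking argument invoked in the proof of Lemma \ref{lem:coned_unbounded_divergent} then collapses this intersection to a single point $\xi \in \flags$, and the same argument shows that $\sigma(g_n)$ is $Q$-divergent with unique $Q$-limit $\xi$.

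For independence, let $\tilde z_1, \tilde z_2$ be two $(2,18\delta)$-lifts of $z$ with $\mc{G}$-paths $\{(v_i^{(j)}, \alpha_i^{(j)})\}$ producing intersection points $\xi_j \in \flags$ for $j=1,2$, and fix any quasi-geodesic $c$ in $X^\sigma$ from $\identity$ to $z$. By Lemma \ref{lem:gpaths_close_in_quot}, each projected sequence $\pi^\sigma(g_n^{(j)}) \in \cay(\repquot)$ lies within a uniform finite $\dgams$-Hausdorff distance of $c([0, \infty)) \cap \cay(\repquot)$. Since $z$ is conical in $\bquot$, $c$ exits every bounded subset of $X^\sigma$, and the Hausdorff-distance bound forces both projected sequences to be unbounded in $\dgams$; one can therefore extract subsequences $n_k, m_k \to \infty$ with $\dgams(\pi^\sigma(g_{n_k}^{(1)}), \pi^\sigma(g_{m_k}^{(2)}))$ uniformly bounded. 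By finiteness of $\dgams$-balls in $\repquot$, a further subsequence makes $\pi^\sigma(g_{n_k}^{(1)})^{-1} \pi^\sigma(g_{m_k}^{(2)}) = \bar\eta \in \repquot$ constant; since $\sigma$ descends to $\repquot$, this yields $\sigma(g_{m_k}^{(2)}) = \sigma(g_{n_k}^{(1)})\,\sigma(\bar\eta)$ in $G$ with $\sigma(\bar\eta)$ a fixed element. Right multiplication by a fixed element of $G$ preserves the unique $Q$-limit property (by the characterization in Lemma \ref{lem:q_convergence_sequences}), so $\sigma(g_{n_k}^{(1)})\,\sigma(\bar\eta)$ has unique $Q$-limit $\xi_1$; but as a subsequence of $\sigma(g_m^{(2)})$, it must also have $\xi_2$ as its unique $Q$-limit, forcing $\xi_1 = \xi_2$.

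The main obstacle is precisely this independence step: the lifts $\tilde z_1, \tilde z_2 \in \bgamp$ can be distinct with no $\Gamma$-equivariance relating them, so the two $\mc{G}$-paths do not match up in any direct way upstairs in $\Gamma$. The resolution is to descend to $\repquot$, where Lemma \ref{lem:gpaths_close_in_quot} forces both sequences to track a common quasi-geodesic in $X^\sigma$ ending at $z$, and then to translate bounded $\dgams$-differences in $\repquot$ into fixed elements of $G$ under $\sigma$ — this is exactly the step that exploits the fact that $\sigma$ descends to a representation of the Dehn filling $\repquot$.
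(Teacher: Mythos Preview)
Your proposal is correct and follows essentially the same approach as the paper. Both the existence step (nesting via \Cref{lem:perturbed_nesting}, then collapsing via property \ref{item:flag_sets_proper} and the exponential shrinking argument) and the independence step (descending to $\repquot$ via \Cref{lem:gpaths_close_in_quot}, then using that $\sigma$ factors through $\repquot$ to compare $Q$-limits) match the paper; the only cosmetic difference is that in the independence step you extract a subsequence making $\sigma(g_{n_k}^{(1)})^{-1}\sigma(g_{m_k}^{(2)})$ constant, whereas the paper phrases this as ``$\sigma(g_n)$ and $\sigma(h_n)$ are bounded distance apart in $G$'' and invokes \cite[Lemma~4.19]{KLP2017} directly.
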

\begin{proof}
  By arguing as in the proof of \Cref{lem:coned_unbounded_divergent},
  we can use property \ref{item:flag_sets_proper} and employ
  \cite[Cor. 7.12]{Weisman2022} to see that the given intersection is
  always a singleton. We need to check that this singleton does not
  depend on the choice of $(2, 18\delta)$-lift $\tilde{z}$, or on the
  choice of $\mc{G}$-path limiting to $\tilde{z}$.
  
  So, let $\tilde{z}$, $\tilde{z}'$ be a pair of $(2, 18\delta)$-lifts
  of $z$, and let $c$, $c'$ be $(2,18\delta)$-quasi-geodesic rays in
  $X^\sigma$ from the identity to $z$ so that lifts $\tilde{c}$,
  $\tilde{c}'$ tend from the identity to $\tilde{z}, \tilde{z}'$
  respectively. We want to show that for any $\mc{G}$-paths
  \[
    \{(v_i, \alpha_i)\}_{i=1}^\infty, \quad \{(w_j,
    \beta_j)\}_{j=1}^\infty
  \]
  limiting to $\tilde{z}$, $\tilde{z}'$ respectively, we have
  \begin{equation}
    \label{eq:intersection_equality}
    \bigcap_{i=1}^\infty \sigma(\alpha_1 \cdots \alpha_i) U_{v_{i+1}} =
    \bigcap_{j=1}^\infty \sigma(\beta_1 \cdots \beta_j) U_{w_{j+1}}.
  \end{equation}
  Let $g_n = \alpha_1 \cdots \alpha_n$ and
  $h_n = \beta_1 \cdots \beta_n$. For any subsequence of $g_n$, we can
  choose a further subsequence so that $U_{v_{n+1}}$ is a fixed subset
  $U_v$ for all $n$. Then, \cite[Prop. 3.6]{Weisman2022} implies that
  $g_n$ is $Q$-divergent with $Q$-limit set equal to the singleton
  given by the left-hand side of \eqref{eq:intersection_equality}. The
  same argument implies that $h_n$ is also $Q$-divergent, with
  $Q$-limit set the right-hand side of
  \eqref{eq:intersection_equality}.

  By \Cref{lem:gpaths_close_in_quot}, both of $\pi^\sigma(g_n)$ and
  $\pi^\sigma(h_n)$ are within bounded Hausdorff distance of
  $r([0, \infty)) \cap \cay(\repquot)$, with respect to $\dgams$. So
  the sequences $\sigma(g_n)$ and $\sigma(h_n)$ are bounded distance
  apart in $G$. By \cite[Lemma 4.19]{KLP2017}, this means that
  $\sigma(g_n)$ and $\sigma(h_n)$ have the same $Q$-limit sets, so the
  intersections in \eqref{eq:intersection_equality} are equal.
\end{proof}

\begin{lem}
  The assignment $z \mapsto \psi_{\sigma}(z)$ determines a
  $\sigma$-equivariant map from the set of conical limit points in
  $\bquot$ to $\flags$.
\end{lem}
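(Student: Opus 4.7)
The plan is to prove equivariance by strengthening the characterization of $\psi_\sigma(z)$ given in \Cref{lem:conical_singletons}: I will show that $\psi_\sigma(z)$ is actually the unique $Q$-limit point of $\sigma(g_n)$ for \emph{any} sequence $g_n \in \Gamma$ whose projections $\pi^\sigma(g_n)$ stay within bounded $\dgams$-Hausdorff distance of a $(2,18\delta)$-quasi-geodesic ray to $z$ in $X^\sigma$. Given that strengthening, equivariance follows essentially for free.

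To set up equivariance, fix a conical limit point $z \in \bquot$ and some $\gamma \in \repquot$, and pick a lift $\tilde{\gamma} \in \Gamma$. Choose a $(2,18\delta)$-lift $\tilde{z} \in \bgamp$ of $z$ and a $\mc{G}$-path $\{(v_i,\alpha_i)\}_{i=1}^\infty$ limiting to $\tilde{z}$, so that the sequence $g_i = \alpha_1 \cdots \alpha_i$ satisfies $\sigma(g_i) \to \psi_\sigma(z)$ in the $Q$-sense. Left multiplication by $\sigma(\gamma)$ is a homeomorphism of $\flags$, so the sequence $\sigma(\tilde{\gamma} g_i) = \sigma(\gamma)\sigma(g_i)$ is $Q$-divergent with unique $Q$-limit point $\sigma(\gamma)\psi_\sigma(z)$.

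The main step is to show that this $Q$-limit equals $\psi_\sigma(\gamma z)$. By \Cref{lem:gpaths_close_in_quot}, $\pi^\sigma(g_i)$ lies within bounded $\dgams$-Hausdorff distance of a $(2,18\delta)$-quasi-geodesic ray $c$ to $z$, so $\gamma \cdot \pi^\sigma(g_i) = \pi^\sigma(\tilde{\gamma} g_i)$ lies within bounded distance of $\gamma \cdot c$, a quasi-geodesic ray from $\gamma$ to $\gamma z$. Choose any $(2,18\delta)$-quasi-geodesic ray $c'$ from $\identity$ to $\gamma z$; by $\delta$-hyperbolicity of $X^\sigma$ and the Morse lemma, the images of $\gamma c$ and $c'$ have finite Hausdorff distance after deleting an initial segment of bounded length. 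Now pick any $(2,18\delta)$-lift $\tilde{z}'$ of $\gamma z$ corresponding to a lift of $c'$, and a $\mc{G}$-path $\{(w_j,\beta_j)\}$ limiting to $\tilde{z}'$ with partial products $h_j$. Applying \Cref{lem:gpaths_close_in_quot} again and \Cref{cor:hausdorff_distance_bound} in $X^\sigma$, the sequences $\pi^\sigma(\tilde{\gamma} g_i)$ and $\pi^\sigma(h_j)$ lie within bounded $\dgams$-Hausdorff distance of each other.

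To conclude, for each $i$ choose $j(i)$ minimizing $\dgams(\tilde{\gamma}g_i, h_{j(i)})$; by the bounded Hausdorff distance together with $|g_i|_\Gamma \to \infty$ we get $j(i) \to \infty$, and the elements $\sigma(\tilde{\gamma}g_i)\sigma(h_{j(i)})^{-1}$ remain in a compact subset of $G$. Then \cite[Lem. 4.23(i)]{KLP2017} (or equivalently \Cref{lem:q_convergence_sequences}) implies that the subsequence $\sigma(h_{j(i)})$ has the same unique $Q$-limit point as $\sigma(\tilde{\gamma}g_i)$, namely $\sigma(\gamma)\psi_\sigma(z)$. But $\sigma(h_j) \to \psi_\sigma(\gamma z)$ in the $Q$-sense, so its subsequence $\sigma(h_{j(i)})$ also has $Q$-limit $\psi_\sigma(\gamma z)$, giving $\psi_\sigma(\gamma z) = \sigma(\gamma)\psi_\sigma(z)$ as desired. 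The main obstacle is the bookkeeping needed to match indices across the two sequences; everything else is a direct invocation of previously-established results.
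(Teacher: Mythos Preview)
Your proof is correct and follows essentially the same strategy as the paper: both arguments reduce equivariance to showing that two sequences of group elements stay at bounded distance (so their images in $G$ differ by a compact factor), and then invoke \cite[Lem.~4.23(i)]{KLP2017} to conclude that the $Q$-limit points agree. The only real difference is tactical: the paper reduces to a single generator $s \in S$, explicitly concatenates the edge $(\identity, s)$ with the $s$-translate of a geodesic lift to produce a $(1,2\delta)$-lift of $\pi^\sigma(s)z$, and carries out the comparison upstairs in $X$, whereas you work with an arbitrary $\gamma$, use an arbitrary $(2,18\delta)$-lift of $\gamma z$, and do the comparison downstairs in $X^\sigma$ via \Cref{lem:gpaths_close_in_quot} and \Cref{cor:hausdorff_distance_bound}. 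Your route is slightly more streamlined since it avoids the explicit lift construction; the paper's route is more concrete but needs the reduction to generators so that the concatenated path has small enough quasi-geodesic constants.
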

\begin{proof}
  The map $\psi_\sigma$ is well-defined on conical points by the
  previous lemma, so the only thing we need to check is
  equivariance. In fact, we just need to show that for any $s$ in the
  generating set $S$, we have
  $\psi_{\sigma}(\pi^\sigma(s)z) = \sigma(s)\psi_{\sigma}(z)$. Fix a
  conical point $z \in \bquot$, let $\tilde{z}$ be a geodesic lift of
  $z$, and let $\tilde{c}$ be a geodesic ray from $\identity$ to
  $\tilde{z}$, lifting a geodesic from $\identity$ to $z$. Let
  $\{(v_i, \alpha_i)\}_{i=1}^\infty$ be a $\mc{G}$-path limiting to
  $\tilde{z}$.
  
  Now fix $s \in S$, and consider the path $c^s$ in $X^\sigma$
  obtained by concatenating the edge $(\identity, \pi^\sigma(s))$ with
  the $\pi^\sigma(s)$-translate of $c$. Since $s$ has length 1 and we
  assume $\delta \ge 1$, this path is a $(1, 2\delta)$-quasi-geodesic,
  with endpoint $\pi^\sigma(s)z \in \bquot$. By
  \ref{item:perturbed_kernel_bounds}, $\pi^\sigma$ is injective on
  $S$, so $(\identity, \pi^\sigma(s))$ lifts uniquely to the edge
  $(\identity, s)$ in $\cay(\Gamma)$. Then, concatenating this edge
  with the $s$-translate of $\tilde{c}$, we obtain a
  $(1, 2\delta)$-quasi-geodesic lift $\tilde{c}^s$ of $c^s$, whose
  endpoint is $s\tilde{z}$, a $(1, 2\delta)$-lift of $\pi(s)z$.

  Let $\{(w_j, \beta_j)\}_{j=1}^\infty$ be a $\mc{G}$-path limiting to
  $s\tilde{z}$. By \Cref{cor:gpaths_track_quasigeodesics}, the
  sequence
  \[
    h_n = \beta_1 \cdots \beta_n
  \]
  lies within finite Hausdorff distance of
  $\tilde{c}^s([0, \infty)) \cap \cay(\Gamma)$, with respect to the
  metric $\dcusp$. The same corollary also implies that
  $g_n = \alpha_1 \cdots \alpha_n$ lies within finite Hausdorff
  distance of $\tilde{c}([0, \infty)) \cap \cay(\Gamma)$, which means
  that the sequence $sg_n$ lies within finite Hausdorff distance of
  $h_n$, again with respect to the metric $\dcusp$; by
  \Cref{lem:cusp_cay_comparison}, the Hausdorff distance is also
  finite with respect to $\dgam$.

  Then, arguing as in the previous lemma, we see that the
  intersections
  \[
    \psi_{\sigma}(\pi^\sigma(s)z) = \bigcap_{i=1}^\infty
    \sigma(h_n)U_{w_{n+1}}, \quad \bigcap_{i=1}^\infty
    \sigma(s)\sigma(g_n)U_{v_{n+1}} = \sigma(s)\psi_{\sigma}(z)
  \]
  must agree.
\end{proof}

We have now defined $\psi_\sigma$ as an equivariant map on conical
points in $\bquot$. Note that these are precisely the same as the
conical points in $\biquot$, when viewing $\biquot$ as a subset of
$\bquot$.

\subsubsection{Parabolic fibers}\label{sec:parabolic_fibers}
Next we want to extend the map $\psi_\sigma$ to an equivariant map
\[
  \psi_\sigma:\biquot \to \{\textrm{closed subsets of }\flags\}.
\]
Let $\Pi^\sigma_\infty$ be the (finite) set of fixed points of the
groups in $\iquotgps$; we will first define $\psi_\sigma$ on
$\Pi^\sigma_\infty$. As in the analogous construction in
\cite{Weisman2022}, for each vertex $v$ of the relative automaton
$\mc{G}$, define an open set $B_v \subset \flags$ by
\[
  B_v = \bigcup_{w} U_w,
\]
where the union is taken over all vertices $w \in V(\mc{G})$ such that
there is an edge $v \to w$ in $\mc{G}$.

Each point $p$ in $\Pi^\sigma_\infty \subseteq \Pi^\sigma$ is uniquely
identified with some parabolic point in $\Pi \subset \bgamp$. We leave
this identification implicit, so we can view the points in
$\Pi^\sigma_\infty$ as parabolic points in $\bgamp$.

By condition \ref{item:parabolic_edge_condition1} on the automaton
$\mc{G}$, for each $p \in \Pi^\sigma_\infty$, there is a parabolic
vertex $v$ of $\mc{G}$ so that the associated parabolic point $q_v$ is
$p$. Define the set $\psi_\sigma(p)$ to be the closure of the set of
accumulation points of sequences of the form $\sigma(\gamma_n)\xi_n$
where:
\begin{itemize}
\item $\xi_n$ is a sequence in $B_v$, and
\item $\gamma_n$ is a sequence of pairwise distinct elements in
  $\repquot$, all lying in $\repquot_p = \Stab_{\repquot}(p)$.
\end{itemize}

Extend $\psi_\sigma$ to all parabolic points in $\biquot$ by
equivariance. We observe:
\begin{lem}
  \label{lem:automaton_local_parabolic}
  Let $p \in \Pi^\sigma_\infty$, and suppose that a parabolic vertex
  $v \in V(\mc{G})$ satisfies $q_v = gp$ for $g \in \Gamma$. Then
  $\sigma(g)\psi_\sigma(p) \subset U_v$.
\end{lem}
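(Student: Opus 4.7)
The plan is to unpack the definition of $\psi_\sigma(p)$ and check that its image under $\sigma(g)$ is forced into $U_v$ by condition \ref{item:dehn_filling_space} on the open neighborhood $O$, using the automaton's compatibility across parabolic vertices in the same $\Gamma$-orbit. First I would identify $\repquot_p = \Stab_{\repquot}(p)$ with $\Gamma_p/\ker(\sigma|_{\Gamma_p})$: by the Osin--Groves--Manning relative Dehn filling theorem (already invoked via condition \ref{item:perturbed_kernel_bounds}), the natural map $\Gamma_p/\ker(\sigma|_{\Gamma_p}) \to \repquot$ is injective. In particular, the restriction $\sigma|_{\repquot_p}$ is injective, so any pairwise distinct sequence $\gamma_n \in \repquot_p$ with lifts $\tilde\gamma_n \in \Gamma_p$ has $\sigma(g\tilde\gamma_n) \in G$ pairwise distinct; these elements therefore lie outside the finite set $\sigma(F_v)$ for all large $n$.

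Next, let $\zeta_0$ be an accumulation point of a sequence $\sigma(\gamma_n)\xi_n$ from the definition of $\psi_\sigma(p)$, where $\gamma_n \in \repquot_p$ are pairwise distinct and $\xi_n \in B_{v_0}$ with $v_0$ the parabolic vertex used in defining $\psi_\sigma(p)$ (so $q_{v_0} = p$). After passing to a subsequence, I may assume $\sigma(\gamma_n)\xi_n \to \zeta_0$ and $\xi_n$ lies in a single set $U_w$ with an edge $v_0 \to w$ in $\mc{G}$. Since $v$ and $v_0$ are both parabolic with $q_v = g\cdot q_{v_0}$, condition \ref{item:parabolic_edge_condition1} promotes this to an edge $v \to w$. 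Applying condition \ref{item:dehn_filling_space} to the pair $(v,w)$ then yields
\[
    \sigma(g\tilde\gamma_n)\cdot\nbhd{U_w}{r} \subset U_v
\]
for all large $n$, and hence $\sigma(g)\sigma(\gamma_n)\xi_n = \sigma(g\tilde\gamma_n)\xi_n \in U_v$.

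Passing to the limit gives $\sigma(g)\zeta_0 \in \overline{U_v}$ for every such accumulation point $\zeta_0$, and taking the closure over all such $\zeta_0$ gives $\sigma(g)\psi_\sigma(p) \subseteq \overline{U_v}$. The remaining step, and the place I expect the main obstacle, is promoting this to containment in the open set $U_v$. The key observation is the margin built into condition \ref{item:dehn_filling_space}: the element $\sigma(g\tilde\gamma_n)$ sends the \emph{strictly larger} open set $\nbhd{U_w}{r}$ into $U_v$, so the compact set $\sigma(g\tilde\gamma_n)\overline{U_w}$ lies in $U_v$ for all large $n$. Combined with the $Q$-divergence of $\sigma(g\tilde\gamma_n)$ established via \Cref{lem:coned_unbounded_divergent}, the sets $\sigma(g\tilde\gamma_n)\overline{U_w}$ Hausdorff-converge to a limit point; a careful application of \Cref{lem:q_convergence_sequences} to a subsequence on which $\sigma(g\tilde\gamma_n)$ is $Q$-convergent shows that this limit point is a single element of the open set $U_v$, ruling out accumulation onto $\partial U_v$ and yielding the claimed containment $\sigma(g)\psi_\sigma(p) \subset U_v$.
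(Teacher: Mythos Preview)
Your core argument matches the paper's proof exactly: use \ref{item:parabolic_edge_condition1} to identify $B_{v_0} = B_v$, invoke condition \ref{item:dehn_filling_space} to place $\sigma(g\gamma_n)\xi_n$ in $U_v$ for large $n$, and pass to the limit. You also correctly identify a subtle point the paper glosses over---taking limits a priori only lands you in $\overline{U_v}$, not $U_v$.

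However, your proposed fix for this point has a genuine error. You invoke \Cref{lem:coned_unbounded_divergent} to obtain $Q$-divergence of $\sigma(g\tilde\gamma_n)$, but that lemma requires the \emph{coned-off} length $|g\tilde\gamma_n|^\sigma_{\hat\Gamma}$ to tend to infinity. Since $\tilde\gamma_n \in \Gamma_p$, the elements $g\tilde\gamma_n$ all lie in a single peripheral coset and hence have uniformly bounded coned-off length; the hypothesis is not satisfied. More fundamentally, in the EGF framework peripheral subgroups are explicitly allowed to fail $Q$-divergence (this is one of the main features distinguishing EGF from relatively Anosov), so no alternate route to $Q$-divergence is available here. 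And even if it were, your Hausdorff-limit argument would still only place the limit point in $\overline{U_v}$: a sequence of compact subsets of an open set can certainly Hausdorff-converge to a boundary point.

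The paper's own proof, read literally, also only yields $\sigma(g)\psi_\sigma(p) \subseteq \overline{U_v}$. This is harmless: every subsequent use of the lemma (in \Cref{lem:gpath_determines_nbhd} and \Cref{lem:fibers_antipodal}) either immediately post-composes with a nesting $\sigma(\alpha)\overline{U_{v_{i+1}}} \subset U_{v_i}$ from \Cref{lem:perturbed_nesting}, or invokes antipodality of closures via \ref{item:distinguish_points}. So the correct move is to record the conclusion as $\overline{U_v}$ and observe that this suffices downstream---not to manufacture a $Q$-divergence argument that cannot hold in this setting.
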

\begin{proof}
  Let $\xi \in \psi_\sigma(p)$; we will show that
  $\sigma(g)\xi \in U_v$. Let $P = \Stab_\Gamma(p)$, and let
  $P^\sigma = \Stab_{\repquot}(p)$, so $P^\sigma =
  P/\ker(\sigma|_P)$. Fix a vertex $w \in V(\mc{G})$ with $q_w =
  p$. Then by definition $\xi$ is the limit of a sequence
  $\sigma(\gamma_n)\xi_n$, where $\gamma_n$ is a sequence of pairwise
  distinct elements of $P^\sigma$ and $\xi_n$ is a sequence in $B_w$.

  By the definition of $P^\sigma$, the sequence $\gamma_n$ must
  eventually satisfy
  \[
    \sigma(g)\sigma(\gamma_n) \notin \sigma(F_v),
  \]
  where $F_v$ is the finite set $gP \minus T_v$. Condition
  \ref{item:dehn_filling_space} then implies that
  $\sigma(g)\sigma(\gamma_n)B_v$ is eventually a subset of
  $U_v$. Since $\lim_{n \to \infty}\sigma(\gamma_n)\xi_n = \xi$, we
  must have $\sigma(g)\xi \in U_v$, as required.
\end{proof}

We have now defined $\psi_\sigma$ as an equivariant map on all of the
conical points and parabolic point in $\biquot$, hence on all of
$\biquot$.

\begin{remark}
  The definition of $\psi_\sigma$ on parabolic points makes it clear
  why we have only defined $\psi_\sigma$ on $\biquot$, rather than
  $\bquot$: by definition, there are no infinite sequences of pairwise
  distinct elements in the $\repquot$-stabilizer of a point in
  $\bquot \minus \biquot$.
\end{remark}

\subsection{Defining the boundary extension}

Define a $\sigma$-invariant subset $\Lambda_\sigma \subset \flags$ by
\[
  \Lambda_\sigma = \bigcup_{z \in \biquot}\psi_\sigma(z).
\]
We wish to define a surjective equivariant antipodal map
$\Lambda_\sigma \to \biquot$. To do so, we need to show that, as $z$
ranges over $\biquot$, the sets $\psi_\sigma(z)$ partition
$\Lambda_\sigma$.

We will continue to imitate the proof of the special case in
\cite{Weisman2022}, but now we run into yet another complication: the
proof in \cite{Weisman2022} takes advantage of the fact that a
relatively hyperbolic group acts cocompactly on pairs of distinct
points in its Bowditch boundary. We have two groups to work with here,
which may not be isomorphic and which may not have homeomorphic
Bowditch boundaries, so we need the slightly more intricate statement
below.
\begin{lem}
  \label{lem:preferred_lift_separation}
  For any pair of distinct points $z_1, z_2 \in \bquot$, there is some
  $g \in \Gamma^\sigma$ and a pair of geodesic lifts $\tilde{z}_1$,
  $\tilde{z}_2$ of $gz_1$, $gz_2$ respectively such that
  $\bdist(\tilde{z}_1, \tilde{z}_2) > \Delta$, where $\Delta$ is the
  constant from \Cref{defn:distance_lowerbound}.
\end{lem}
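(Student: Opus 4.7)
The idea is to translate the $X^\sigma$-geodesic between $z_1$ and $z_2$ so that it passes through the identity, then lift it edge-by-edge to $X$; the definition of $\Delta$ is tailored to exactly this picture.

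Since $X^\sigma$ is $\delta$-hyperbolic by condition \ref{item:perturbed_kernel_bounds} and $z_1 \neq z_2$, I choose a bi-infinite geodesic $c \colon \mathbb{R} \to X^\sigma$ with $c(-\infty) = z_1$ and $c(+\infty) = z_2$. Each combinatorial horoball in $X^\sigma$ has a unique ideal boundary point, so no bi-infinite geodesic can lie entirely within one; in particular $c$ passes through some vertex $y \in \repquot$ of $\cay(\repquot) \subset X^\sigma$. Setting $g = y^{-1}$, the translate $gc$ is a bi-infinite geodesic through $\mathrm{id}$ joining $gz_1$ to $gz_2$.

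Split $gc$ at the identity into two geodesic rays $r_1, r_2 \colon [0,\infty) \to X^\sigma$ based at $\mathrm{id}$ with $r_i(\infty) = gz_i$, and lift each $r_i$ edge-by-edge to a path $\tilde{r}_i \colon [0,\infty) \to X$ based at $\mathrm{id}$. Because the quotient map $X \to X^\sigma$ is $1$-Lipschitz and preserves edge lengths, each $\tilde{r}_i$ has the same length as $r_i$, while $d_X(\tilde{r}_i(0), \tilde{r}_i(t)) \ge d_{X^\sigma}(\mathrm{id}, r_i(t)) = t$; together these force $\tilde{r}_i$ to be a geodesic ray in $X$, whose ideal endpoint $\tilde{z}_i \in \bgamp$ is by construction a geodesic lift of $gz_i$. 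The concatenation $\tilde{c}$ of $\tilde{r}_1$ reversed with $\tilde{r}_2$ projects onto $gc$, and the same $1$-Lipschitz plus length-preservation argument shows that $\tilde{c}$ is itself a bi-infinite geodesic in $X$ from $\tilde{z}_1$ to $\tilde{z}_2$ passing through $\mathrm{id}$. By \Cref{defn:distance_lowerbound}, this immediately yields $\bdist(\tilde{z}_1, \tilde{z}_2) \ge \Delta$.

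The main obstacle is upgrading this $\ge$ to the strict inequality $> \Delta$ demanded by the statement. I expect to exploit the flexibility in the choice of $y$: because excursions of $c$ into horoballs of $X^\sigma$ are bounded unless they terminate at the horoball's parabolic point, $c$ must cross $\cay(\repquot)$ in infinitely many vertices, giving infinitely many candidate pairs $(\tilde{z}_1, \tilde{z}_2)$ produced by the construction. A genericity argument using compactness of $\bgamp$ together with properness of the $\Gamma$-action on $X$ should preclude all such pairs from achieving the minimum $\Delta$ simultaneously, so some choice of $y$ delivers the desired strict inequality.
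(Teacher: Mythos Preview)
Your core argument is correct and matches the paper's proof exactly: translate so the bi-infinite $X^\sigma$-geodesic passes through $\mathrm{id}$, lift to a bi-infinite geodesic in $X$ through $\mathrm{id}$, and invoke \Cref{defn:distance_lowerbound}. The paper lifts the whole bi-infinite geodesic at once rather than splitting into two rays, but this is cosmetic.

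On the strict inequality: you are right that the argument only yields $\bdist(\tilde z_1,\tilde z_2)\ge\Delta$, and in fact the paper's own proof concludes with exactly this non-strict inequality. The mismatch between the statement ($>\Delta$) and what the proof delivers ($\ge\Delta$) is a minor inconsistency in the paper, not a gap you need to repair. It is harmless because $\Delta$ is only used as a threshold fed into property \ref{item:distinguish_points}, and one can simply replace $\Delta$ by any positive constant strictly smaller than the minimum in \Cref{defn:distance_lowerbound} (or equivalently state \ref{item:distinguish_points} with $\ge\Delta$). Your proposed genericity argument---varying the basepoint $y$ and arguing that not all resulting lift-pairs can realize the minimum---is unnecessary, and as sketched it is not convincing: nothing rules out the possibility that every such pair attains the minimum, and the ``properness'' appeal is vague. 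Drop that paragraph; the $\ge\Delta$ conclusion is what the paper actually proves and uses.
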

\begin{proof}
  Since $z_1$ and $z_2$ are distinct, and $X^\sigma$ is hyperbolic,
  there is a bi-infinite geodesic $c:(-\infty, \infty) \to X^\sigma$
  such that $c(-\infty) = z_1$ and $c(\infty) = z_2$. This geodesic
  cannot lie in a single combinatorial horoball (since each such
  horoball has just one point in its ideal boundary) so it must pass
  through $\cay(\repquot)$ at some point $g \in \repquot$; after
  reparameterization we may assume $g = c(0)$.

  Consider the bi-infinite geodesic
  $c':(-\infty, \infty) \to X^\sigma$ given by $c'(t) =
  g^{-1}c(t)$. The endpoints of this geodesic are $g^{-1}z_1$ and
  $g^{-1}z_2$, respectively.

  Let $\tilde{c}$ be a lift of $c'$ in $X$, chosen so that
  $\tilde{c}(0) = \mathrm{id}$. Since $\tilde{c}$ is a lift of a
  geodesic, it is a bi-infinite geodesic in $X$, and its forward and
  backward ideal endpoints are by definition lifts of $g^{-1}z_1$ and
  $g^{-1}z_2$. These lifts are at the opposite ends of a bi-infinite
  geodesic in $X$ passing through the identity, so by definition of
  $\Delta$, the distance between them (with respect to $\bdist$) is at
  least $\Delta$.
\end{proof}

The lemma below similarly shows that we have a ``uniform'' version of
boundedness of the parabolic points in $\bquot$, via lifts in
$\bgamp$.
\begin{lem}
  \label{lem:lift_cocompactness}
  Let $p \in \Pi$, and let $P$ be the stabilizer of $p$ in
  $\Gamma$. If $K_p \subset \bgamp \minus \{p\}$ is the compact set
  given by \Cref{defn:fund_domains}, then for any
  $z \in \bquot \minus \{p\}$, there is some
  $h \in P^\sigma = \pi^\sigma(P)$ so that a geodesic lift of $hz$
  lies in $K_p$.
\end{lem}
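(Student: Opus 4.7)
The plan is to find the required $h \in P^\sigma$ and geodesic lift by taking a bi-infinite geodesic in $X^\sigma$ from $p$ to $z$, translating by an element of $P^\sigma$ so that it exits the combinatorial horoball $\mc{H}^\sigma$ over $P^\sigma$ at the identity, and then lifting to $X$ to obtain a bi-infinite geodesic through the identity with backward endpoint $p$.

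Since $z \ne p$ and $X^\sigma$ is Gromov hyperbolic, there exists a bi-infinite geodesic $c \colon \R \to X^\sigma$ with $c(-\infty) = p$ and $c(+\infty) = z$. Because $p$ is the unique ideal endpoint of $\mc{H}^\sigma$ and $z \ne p$, the geodesic $c$ must exit $\mc{H}^\sigma$, crossing the boundary $\dee \mc{H}^\sigma = P^\sigma$ at some vertex $h'$. Translating by $(h')^{-1}$, which fixes $p \in \bquot$ since $h' \in P^\sigma$, and setting $h = (h')^{-1} \in P^\sigma$, we may assume $c(-\infty) = p$, $c(+\infty) = hz$, and $c$ passes through the identity precisely at a moment when it exits $\mc{H}^\sigma$.

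Now lift $c$ to a bi-infinite path $\tilde{c}$ in $X$ starting at the identity. Because the quotient map $X \to X^\sigma$ described in \Cref{sec:dehnfill_geometry} is $1$-Lipschitz and $c$ is a geodesic, $\tilde{c}$ is also a geodesic in $X$, and the forward ray of $\tilde{c}$ ends at some $\tilde{z} \in \bgamp$ that is by construction a geodesic lift of $hz$. For the backward direction, the backward ray of $c$ asymptotically approaches $p$ and so eventually lies inside $\mc{H}^\sigma$; choosing the lift so that the corresponding backward ray of $\tilde{c}$ enters and remains in the specific combinatorial horoball $\mc{H} \subset X$ over $P$ itself (which is possible because the identity lies on $\dee \mc{H}$ and the quotient map sends $\mc{H}$ onto $\mc{H}^\sigma$ while respecting vertical and horizontal edges), the backward endpoint of $\tilde{c}$ is forced to be the unique ideal endpoint of $\mc{H}$, namely $p \in \bgamp$. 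Consequently $\tilde{c}$ is a bi-infinite geodesic in $X$ through the identity connecting $p$ to $\tilde{z}$, so $\tilde{z} \in K_p$ by definition, as required. The delicate step is the choice of lift that keeps the backward ray of $\tilde{c}$ inside $\mc{H}$ and not inside one of the other horoballs $\mc{H}_{nP}$ (for $n$ in the Dehn filling kernel) that also project onto $\mc{H}^\sigma$; otherwise the backward endpoint would be a nontrivial translate $np \ne p$.
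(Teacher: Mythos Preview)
Your proof is correct and follows essentially the same outline as the paper's: take a bi-infinite geodesic in $X^\sigma$ from $p$ to $z$, translate by an element of $P^\sigma$ so it passes through $\mathrm{id}$, then lift to $X$. The paper differs only in how it ensures the lifted ray toward $p$ actually lands at $p$ rather than at some $np$: it replaces that ray by the vertical ray from $\mathrm{id}$, so that the lift is forced to be the vertical ray in $\mc{H}$ over $P$. You instead lift the unmodified geodesic and argue that the backward ray can be kept inside $\mc{H}$. This works, because the restriction of $\pi_X$ to $\mc{H}$ surjects onto $\mc{H}^\sigma$ on edges (horizontal edges lift for the same word-length reason as in \Cref{sec:dehnfill_geometry}), so any path in $\mc{H}^\sigma$ lifts to $\mc{H}$ once a starting point over $\mathrm{id}$ is fixed.

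Two small points to tighten: you should specify that $h'$ is the \emph{first} point of $\dee\mc{H}^\sigma$ reached by $c$ coming from $p$ (equivalently, choose $t_0 = \sup\{t : c((-\infty,t]) \subset \mc{H}^\sigma\}$), so that the backward ray after translation genuinely stays in $\mc{H}^\sigma$; and your final paragraph flags the ``delicate step'' without quite closing it---the edge-lifting observation above is exactly what is needed. A pleasant by-product of your version is that since you never modify $c$, the lift $\tilde{c}$ is automatically a bi-infinite geodesic through $\mathrm{id}$, so $\tilde{z} \in K_p$ is immediate.
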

Note that in the statement of the lemma, we have again implicitly
identified $\Pi^\sigma$ with $\Pi$.
\begin{proof}
  Fix $z$ in $\bquot - \{p\}$, and let
  $c:(-\infty, \infty) \to X^\sigma$ be a bi-infinite geodesic joining
  $z$ to $p$. This geodesic must eventually enter the combinatorial
  horoball $\mc{H}$ centered at $p$, i.e. the combinatorial horoball
  attached to $P^\sigma$. Let $h$ be the last point in
  $P^\sigma = \dee \mc{H}$ reached by $c$, so that after
  reparameterization we have $c(0) = h$ and $c(t) \in \mc{H}$ for all
  $t > 0$. Then we may assume that the positive direction of $c$ only
  traverses vertical edges in the combinatorial horoball $\mc{H}$.

  The bi-infinite geodesic $h^{-1}c$ has ideal endpoints $c^{-1}p = p$
  and $c^{-1}z$. Let $\tilde{c}$ be a lift of $h^{-1}c$ in $X$. The
  positive direction of $\tilde{c}$ must only traverse vertical edges
  lying in the combinatorial horoball attached to $P$, so
  $\tilde{c}(+\infty) = p$. On the other hand, $\tilde{c}(-\infty)$ is
  by definition a geodesic lift of $h^{-1}z$, which lies in $K_p$ by
  definition.
\end{proof}

The next lemma says that we can use the vertex sets $U_v \in \mc{U}$,
as well as their iteratively nested translates under certain
$\mc{G}$-paths, to ``approximate'' the sets $\psi_\sigma(z)$.

\begin{lem}
  \label{lem:gpath_determines_nbhd}
  If $z$ is a point in $\biquot$, $\tilde{z}$ is any
  $(2, 18\delta)$-lift of $z$, and $\{(v_i, \alpha_i)\}_{i=1}^N$ is a
  $\mc{G}$-path limiting to $\tilde{z}$, then for any $0 \le n < N$ we
  have
  \[
    \psi_\sigma(z) \subset \sigma(\alpha_1 \cdots
    \alpha_n)U_{v_{n+1}}.
  \]
  In particular, $\psi_\sigma(z) \subset U_{v_1}$.
\end{lem}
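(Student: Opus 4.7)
The plan is to split on whether $z$ is a conical limit point or parabolic. In both cases, the first step is to verify that \Cref{lem:perturbed_nesting} applies to the given $\mc{G}$-path. Since $\tilde{z}$ is a $(2,18\delta)$-lift of $z$, there exists a $(2,18\delta)$-quasi-geodesic $c:[0,\infty)\to X^\sigma$ from $\identity$ to $z$ with a lift $\tilde{c}$ in $X$ from $\identity$ to $\tilde{z}$. By our choice of $D$ in \Cref{defn:codings_quasigeodesic_nbhds}, the sequence $g_n=\alpha_1\cdots\alpha_n$ lies within $\dcusp$-Hausdorff distance $D$ of $\tilde{c}([0,\infty))\cap\cay(\Gamma)$. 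Hence \Cref{lem:perturbed_nesting} gives the nesting $\sigma(\alpha_i)\cdot U_{v_{i+1}}\subset U_{v_i}$ for every $1\le i<N$, and iterating produces a decreasing chain of sets $\sigma(\alpha_1\cdots\alpha_n)U_{v_{n+1}}$. It therefore suffices to prove the inclusion $\psi_\sigma(z)\subset \sigma(\alpha_1\cdots\alpha_n)U_{v_{n+1}}$ for the maximal index $n$ under consideration.

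If $z$ is conical, then $N=\infty$ and \Cref{lem:conical_singletons} gives
\[
\{\psi_\sigma(z)\}=\bigcap_{i=1}^\infty \sigma(\alpha_1\cdots\alpha_i)U_{v_{i+1}},
\]
so $\psi_\sigma(z)$ lies in every term of the chain and we are done.

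Suppose instead that $z$ is parabolic. Then $N$ is finite, $v_N$ is a parabolic vertex, and $\tilde{z}=\alpha_1\cdots\alpha_{N-1}q_{v_N}$. Write $q_{v_N}=hp$ with $h\in\Gamma$ and $p\in\Pi$, and set $\gamma=\alpha_1\cdots\alpha_{N-1}h$, so $\tilde{z}=\gamma p$. Because $z\in\biquot$, the quasi-geodesic $c$ eventually enters an infinite-type horoball in $X^\sigma$; lifting shows that the corresponding horoball at $\gamma P$ in $X$ projects to it, which forces $\pi^\sigma(\Stab_\Gamma(p))$ to be infinite. Thus $p$ corresponds to a point $\bar p\in \Pi^\sigma_\infty$ in the sense of \Cref{sec:parabolic_fibers}, and $z=\pi^\sigma(\gamma)\bar p$. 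By the $\sigma$-equivariance of $\psi_\sigma$ already established,
\[
\psi_\sigma(z)=\sigma(\gamma)\psi_\sigma(\bar p)=\sigma(\alpha_1\cdots\alpha_{N-1})\,\sigma(h)\,\psi_\sigma(\bar p).
\]
Applying \Cref{lem:automaton_local_parabolic} to $v_N$ (whose associated parabolic point is $q_{v_N}=hp$) gives $\sigma(h)\psi_\sigma(\bar p)\subset U_{v_N}$, and hence $\psi_\sigma(z)\subset \sigma(\alpha_1\cdots\alpha_{N-1})U_{v_N}$. Combined with the nesting, this yields the desired inclusion for every $0\le n<N$.

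The conical case is essentially immediate once \Cref{lem:perturbed_nesting} is in place, so the main obstacle is organizing the parabolic case: correctly identifying the lift $\tilde{z}$ as lying in the $\Gamma$-orbit of a point $p\in\Pi$, verifying that $p$ descends to a point of $\Pi^\sigma_\infty$ (using $z\in\biquot$ rather than merely $z\in\bquot$), and then invoking equivariance together with \Cref{lem:automaton_local_parabolic}. The remaining content is bookkeeping with the decreasing chain produced by \Cref{lem:perturbed_nesting}.
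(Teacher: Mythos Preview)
Your proof is correct and follows essentially the same route as the paper's: both split on whether $z$ is conical or parabolic, invoke \Cref{lem:perturbed_nesting} (via the constant $D$ from \Cref{defn:codings_quasigeodesic_nbhds}) to obtain the nested chain $\sigma(\alpha_1\cdots\alpha_n)U_{v_{n+1}}$, dispatch the conical case directly from \Cref{lem:conical_singletons}, and handle the parabolic case by identifying $\tilde z$ as $\gamma p$ via the lifted horoball, checking $p$ lands in $\Pi^\sigma_\infty$ since $z\in\biquot$, and then combining equivariance of $\psi_\sigma$ with \Cref{lem:automaton_local_parabolic}. The only cosmetic difference is that the paper tracks the coset through the explicit entry point $\tilde c(a)$ of the lifted quasi-geodesic, whereas you define $\gamma$ directly from the $\mc{G}$-path data; these are equivalent bookkeeping choices.
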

\begin{proof}
  If $z$ is a conical limit point, this is immediate from the
  definition of $\psi_\sigma$, so suppose $z$ is parabolic. Let
  $c:[0, \infty) \to X^\sigma$ be a $(2, 18\delta)$-quasi-geodesic ray
  limiting to $z$. Since $z$ is the center of a horoball in
  $X^\sigma$, there is some $a \ge 0$ such that $c(a) \in \repquot$
  and $c(t)$ lies in a combinatorial horoball centered at $z$ for all
  $t > a$. This horoball is attached to $X^\sigma$ at the coset
  $c(a)P^\sigma$ for $P^\sigma \in \quotgps$. If $P^\sigma$ is the
  stabilizer of $p \in \Pi^\sigma_\infty$, then $z = c(a)p$.

  Fix $P \in \pargps$ so $\pi^\sigma(P) = P^\sigma$, and let
  $\tilde{c}$ be a lift of $c$. Since the combinatorial horoballs lift
  to combinatorial horoballs, for all $t > a$, $\tilde{c}(t)$ must lie
  in the combinatorial horoball attached to the coset $\tilde{c}(a)P$,
  and the ideal endpoint $\tilde{z}$ of $\tilde{c}$ is
  $\tilde{c}(a)p$, with $p$ now viewed as a point in $\Pi$.

  Let $\{(v_i, \alpha_i)\}_{i=1}^{n+1}$ be a $\mc{G}$-path limiting to
  $\tilde{z}$, so that $v_{n+1}$ is a parabolic vertex. Let
  $q \in \bgamp$ be the parabolic point corresponding to
  $v_{n+1}$. Since the $\mc{G}$-path limits to a parabolic point in
  the orbit of $p$, $q$ must also be in this orbit, so write
  $q = g' p$ for $g' \in \Gamma$. The fact that the $\mc{G}$-path
  limits to $\tilde{z}$ means precisely that
  \begin{align*}
    \alpha_1 \cdots \alpha_nq &= \tilde{z},
  \end{align*}
  and thus $\tilde{z} = \alpha_1 \cdots \alpha_n g'p$. This means that
  $\alpha_1 \cdots \alpha_ng'$ lies in the coset $\tilde{c}(a)P$, so
  write $\alpha_1 \cdots \alpha_ng'h = \tilde{c}(a)$ for $h \in P$.

  Then, by definition of $\psi_\sigma$, we have
  $\psi_\sigma(z) = \sigma(c(a))\psi_\sigma(p)$, and since $\tilde{c}$
  lifts $c$ this means
  \begin{align*}
    \psi_\sigma(z) &= \sigma(\alpha_1 \cdots
                     \alpha_n)\sigma(g'h)\psi_\sigma(p)\\
                   &= \sigma(\alpha_1 \cdots
                     \alpha_n)\sigma(g')\psi_\sigma(p).
  \end{align*}
  Now, we know that the sequence of elements
  $g_j = \alpha_1 \cdots \alpha_j$ for $1 \le j \le n+1$ lies within
  Hausdorff distance $D$ of $\tilde{c}([0, \infty)) \cap \cay(\Gamma)$
  (see \Cref{defn:codings_quasigeodesic_nbhds}). So, the $\mc{G}$-path
  satisfies the $\mc{G}$-compatibility property in
  \Cref{lem:perturbed_nesting}. And, by
  \Cref{lem:automaton_local_parabolic}, we have
  $\sigma(g')\psi_\sigma(p) \subset U_{v_{n+1}}$ and the desired
  inclusion follows.
\end{proof}
\begin{lem}
  \label{lem:fibers_antipodal}
  For any distinct $z_1, z_2 \in \biquot$, we have
  $\psi_\sigma(z_2) \subset \Opp(\psi_\sigma(z_1))$. In particular,
  $\psi_\sigma(z_1)$ and $\psi_\sigma(z_2)$ are disjoint.
\end{lem}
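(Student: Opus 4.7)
The plan is to combine \Cref{lem:preferred_lift_separation} with property \ref{item:distinguish_points} of the automaton, which was arranged to hold precisely with the constant $\Delta$ from \Cref{defn:distance_lowerbound} in mind. First I would apply \Cref{lem:preferred_lift_separation} to the distinct pair $z_1, z_2$ to obtain some $g \in \repquot$ and geodesic lifts $\tilde{z}_1, \tilde{z}_2 \in \bgamp$ of $gz_1, gz_2$ respectively, satisfying $\bdist(\tilde{z}_1, \tilde{z}_2) > \Delta$. Since any geodesic is in particular a $(2, 18\delta)$-quasi-geodesic, each $\tilde{z}_i$ is a $(2, 18\delta)$-lift of $gz_i$, so \Cref{lem:gpath_determines_nbhd} will be available.

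Next, using property \ref{item:g_paths_exist}, I would fix for each $i \in \{1,2\}$ a $\mc{G}$-path limiting to $\tilde{z}_i$, with initial vertex $u_i \in V(\mc{G})$. On the deformed side, \Cref{lem:gpath_determines_nbhd} applied to $\sigma$ at this $\mc{G}$-path gives $\psi_\sigma(gz_i) \subset U_{u_i}$. On the original side, the same $\mc{G}$-path together with $\mc{G}$-compatibility of the system $\mc{U}$ for the $\rho$-action yields $\phi^{-1}(\tilde{z}_i) \subset U_{u_i}$: in the conical case this is the nested-intersection argument from the proof of \Cref{lem:conical_singletons}, and in the parabolic case one uses properties \ref{item:g_sets_small} and \ref{item:parabolic_edge_condition2} to see that $\phi^{-1}(q_{v_N}) \subset U_{v_N}$ at the terminal parabolic vertex, and then pulls this back along the $\mc{G}$-path using $\mc{G}$-compatibility.

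With the two inclusions $\phi^{-1}(\tilde{z}_i) \subset U_{u_i}$ in hand and $\bdist(\tilde{z}_1, \tilde{z}_2) > \Delta$, property \ref{item:distinguish_points} immediately produces the antipodality $\overline{U_{u_1}} \subseteq \bigcap_{\xi \in \overline{U_{u_2}}} \Opp(\xi)$. In particular $\psi_\sigma(gz_2) \subset \Opp(\psi_\sigma(gz_1))$. Since antipodality is a $G$-invariant condition on pairs of flags, applying $\sigma(g)^{-1}$ and using $\sigma$-equivariance of $\psi_\sigma$ gives $\psi_\sigma(z_2) \subset \Opp(\psi_\sigma(z_1))$, which is the desired conclusion; disjointness follows because antipodal flags are automatically distinct.

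The main subtlety to verify carefully is that a single vertex $u_i$ simultaneously controls both the $\rho$-fiber $\phi^{-1}(\tilde{z}_i)$ (required to apply \ref{item:distinguish_points}, which is phrased in terms of $\phi$) and the $\sigma$-fiber $\psi_\sigma(gz_i)$ (which we ultimately want to separate). This is precisely why it matters that the lifts produced by \Cref{lem:preferred_lift_separation} are \emph{geodesic}: the resulting $(1,0)$-lifts are in particular $(2, 18\delta)$-lifts, so one and the same $\mc{G}$-path limiting to $\tilde{z}_i \in \bgamp$ also tracks a $(2, 18\delta)$-quasi-geodesic ray in $X^\sigma$ limiting to $gz_i$, allowing the $\rho$- and $\sigma$-statements above to be read off at the same automaton vertex.
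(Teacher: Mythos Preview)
Your proposal is correct and follows essentially the same route as the paper: reduce via \Cref{lem:preferred_lift_separation} and $\sigma$-equivariance to the case where geodesic lifts are at $\bdist$-distance greater than $\Delta$, pick $\mc{G}$-paths limiting to these lifts, apply \Cref{lem:gpath_determines_nbhd} to get $\psi_\sigma(gz_i)\subset U_{u_i}$, and invoke \ref{item:distinguish_points}. One minor simplification: to verify the hypothesis $\phi^{-1}(\tilde z_i)\subset U_{u_i}$ of \ref{item:distinguish_points}, you do not need the nested-intersection or parabolic arguments you outline; since the $\mc{G}$-path limits to $\tilde z_i$ one has $\tilde z_i\in W_{u_i}$ (see the first paragraph of the proof of \Cref{cor:gpaths_track_quasigeodesics}), and then \ref{item:g_sets_small} gives $\phi^{-1}(\tilde z_i)\subset\phi^{-1}(W_{u_i})\subset U_{u_i}$ directly.
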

\begin{proof}
  By \Cref{lem:preferred_lift_separation} and $\sigma$-equivariance of
  $\psi_\sigma$, it suffices to show the statement in the case where
  there are geodesic lifts $\tilde{z}_1$, $\tilde{z}_2$ of $z_1$ and
  $z_2$ satisfying $\bdist(\tilde{z}_1, \tilde{z}_2) > \Delta$. Fix
  $\mc{G}$-paths $\{(v_i, \alpha_i)\}_{i=1}^N$ and
  $\{(w_j, \beta_j)\}_{j=1}^M$ limiting to $\tilde{z}_1$ and
  $\tilde{z}_2$ respectively. Then by property
  \ref{item:distinguish_points}, we have
  $\overline{U_{w_1}} \subset
  \Opp(U_{v_1})$. \Cref{lem:gpath_determines_nbhd} implies that
  $\psi_\sigma(z_1) \subset U_{v_1}$ and
  $\psi_\sigma(z_2) \subset U_{w_1}$, so the conclusion follows.
\end{proof}

\begin{definition}
  Define the map $\phi_\sigma:\Lambda_\sigma \to \bquot$ by the rule:
  \[
    \phi_\sigma(w) = z \iff w \in \psi_\sigma(z).
  \]
  \Cref{lem:fibers_antipodal} ensures that this determines a
  well-defined antipodal map. It is surjective by construction, and it
  is equivariant because $\psi_\sigma$ is equivariant.
\end{definition}

\subsection{Compactness and continuity}

We have defined a map $\phi_\sigma:\Lambda_\sigma \to \biquot$, which
is our candidate boundary extension for an extended convergence action
of $\Gamma^\sigma$ on $\flags$. The next step is to show that the
domain $\Lambda_\sigma$ of $\phi_\sigma$ is compact, and that
$\phi_\sigma$ is continuous. We will need to directly get our hands on
the topologies on both Bowditch boundaries $\bgamp$ and $\biquot$; our
approach uses the Gromov product $(\cdot|\cdot)_x$ on the compactified
cusped space $\overline{X^\sigma}$. First, we prove a general lemma:
\begin{lem}
  \label{lem:quasigeodesic_prefix}
  Let $Y$ be a $\delta$-hyperbolic metric space and let $x \in Y$. For
  any infinite geodesic ray $c:[0, \infty) \to Y$ with $c(0) = x$ and
  $c(\infty) = z \in \dee Y$, if $z' \in \dee Y$ satisfies
  $(z|z')_x \ge N + 5\delta$, then there is a
  $(1, 18\delta)$-quasi-geodesic ray $c':[0, \infty) \to X$ with
  $c'(\infty) = z'$ and $c'|_{[0,N]} = c|_{[0,N]}$.
\end{lem}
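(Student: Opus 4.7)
The plan is to construct $c'$ by arc-length concatenation of three geodesic pieces: the initial segment of $c$, a short ``bridge,'' and the tail of a geodesic ray to $z'$; then verify the quasi-geodesic inequality directly by case analysis.

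First I would choose any geodesic ray $\tilde{c}:[0,\infty) \to Y$ with $\tilde{c}(0) = x$ and $\tilde{c}(\infty) = z'$. Standard fellow-traveling estimates in $\delta$-hyperbolic spaces, applied to the ideal triangle with vertices $x, z, z'$, show that the hypothesis $(z|z')_x \ge N + 5\delta$ forces $c$ and $\tilde{c}$ to remain within $O(\delta)$ of each other up to time $N$. Concretely, one picks $s, t$ large, applies $\delta$-thinness to the geodesic triangle with vertices $x, c(s), \tilde{c}(t)$ (whose finite Gromov product at $x$ is close to $(z|z')_x$), and concludes that there is a constant $\ell \le 4\delta$ with $d(c(N), \tilde{c}(N)) \le \ell$. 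The $5\delta$ buffer in the hypothesis absorbs the usual $\delta$-sized losses that arise when converting a Gromov product at infinity into a finite-distance bound.

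Next, letting $q := \tilde{c}(N)$ and $\gamma:[0,\ell] \to Y$ be a geodesic from $c(N)$ to $q$, I would define
\[
  c'(t) = \begin{cases} c(t), & t \in [0, N],\\ \gamma(t - N), & t \in [N, N + \ell],\\ \tilde{c}(t - \ell), & t \in [N + \ell, \infty).\end{cases}
\]
By construction $c'$ is continuous, parameterized by arc length, agrees with $c$ on $[0,N]$, and has ideal endpoint $z'$. The upper bound $d(c'(s), c'(t)) \le |s-t|$ is immediate from the triangle inequality. For the reverse inequality $|s - t| \le d(c'(s), c'(t)) + 18\delta$, one case-analyzes based on which of the three pieces contain $s$ and $t$; when both parameters lie in a single piece the bound is exact. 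The only mixed case that needs attention is $s \in [0, N]$ and $t \in [N + \ell, \infty)$, and here the key point is that both $c'(s) = c(s)$ and $c'(t) = \tilde{c}(t - \ell)$ lie on geodesic rays emanating from $x$, so the reverse triangle inequality at $x$ gives
\[
  d(c'(s), c'(t)) \ge d(x, c'(t)) - d(x, c'(s)) = (t - \ell) - s = (t - s) - \ell,
\]
i.e.\ an error of at most $\ell \le 4\delta$. The remaining mixed cases (one endpoint on the bridge $\gamma$) produce an error bounded by the same $\ell$.

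The only real obstacle is the fellow-traveling estimate invoked in the first step: this is a standard consequence of $\delta$-hyperbolicity but is slightly finicky because the Gromov product at infinity is only defined up to an additive $O(\delta)$ depending on conventions. The generous margins ($5\delta$ in the hypothesis and $18\delta$ in the conclusion) are chosen precisely so that the argument goes through regardless of the exact convention used for thin triangles and for $(z|z')_x$.
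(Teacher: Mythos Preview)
Your proposal is correct and a bit cleaner than the paper's argument, though the underlying idea is the same. The paper does not insert a bridge segment: instead it locates a point $u$ on $c$ with $d(x,u)\ge (z|z')_x-5\delta$ that lies within $2\delta$ of some $u'$ on the ray $[x,z')$, and then concatenates $[x,u]\subset c$ directly with a fresh geodesic ray $[u,z')$. To check the $(1,18\delta)$ constant the paper then compares both pieces to the ray $[x,z')$ via two auxiliary thin triangles (with vertices $x,u,u'$ and $u,u',z'$), accumulating $3\delta+4\delta$ errors for points on each piece and adding up. Your three-piece construction avoids this: because each of your outer pieces is a sub-ray of a geodesic based at $x$, the reverse triangle inequality at $x$ immediately gives the additive error $\ell\le 4\delta$ in the only interesting mixed case, and the bridge contributes at most another $\ell$. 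So your route trades one extra piece for a more elementary verification; both fit comfortably inside the stated constants. One small wording fix: write $\ell:=d(c(N),\tilde c(N))$ and then argue $\ell\le 4\delta$, rather than ``there is a constant $\ell\le 4\delta$.''
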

\begin{proof}
  Denote the image of the geodesic ray $c$ by $[x, z)$, and let
  $[x, z')$ be a geodesic ray from $x$ to $z'$. Fix sequences $y_n$ on
  $[x, z)$ and $y_m'$ on $[x, z')$ tending to infinity. Then the
  Gromov product $(z|z')_x$ can be coarsely estimated as
  $\liminf_{m,n \to \infty}(y_n|y_m')_x$. Explicitly, we have the
  estimate
  $\liminf_{n,m \to \infty}(y_n|y_m')_x \ge (z|z')_x - 2\delta$ (see
  e.g. \cite[III.H.3.17(5)]{bridsonhaefliger}), which means that for
  all sufficiently large $n, m$, we have
  \[
    (z|z')_x \le (y_n|y_m')_x + 3\delta.
  \]
  Now, consider a geodesic triangle with two sides equal to
  subsegments $[x, y_n] \subset [x, z)$ and
  $[x, y_m') \subset [x, z')$. There is a point $w$ on the side
  joining $y_n$ to $y_m'$ which lies within distance $\delta$ of both
  some $u \in [x, y_n]$ and some $u' \in [x, y_n']$. Thus we have
  \begin{align*}
    d(y_n, u) - d(y_n, w) \le \delta,\\
    d(y_m', u') - d(y_n', w) \le \delta.
  \end{align*}
  Moreover we also have $d(u, u') \le 2\delta$, hence
  \[
    d(x, u') \le d(x, u) + 2\delta.
  \]
  Therefore,
  \begin{align*}
    2(y_n|y_m')_x &= d(x, y_n) + d(x, y_m') - d(y_n,
                    y_n')\\
                  &= d(x, u) + d(u, y_n) + d(x, u') + d(u',
                    y_m') - d(y_n, w) - d(w, y_m')\\
                  &\le 2d(x, u) + 4\delta.
  \end{align*}
  Thus $d(x, u) \ge (z|z')_x - 5\delta$. Now let $[u, z')$ be a
  geodesic ray from $u$ to $z'$, and consider the ray $c'$ in $Y$
  obtained by concatenating $[x, u]$ to $[u, z')$. We claim that $c'$
  is a $(1, 18\delta)$-quasi-geodesic.

  To see this, let $a \in [x, u]$ and let $b \in [u, z')$. Then since
  $d(u, u') \le 2\delta$, by considering a $\delta$-thin triangle with
  vertices $x, u, u'$, we see that there is a point $a'$ on $[x, u']$
  with $d(a, a') \le 3\delta$. Similarly, by considering a
  $2\delta$-thin (partly ideal) triangle with vertices $u, u', z'$,
  there is a point $b' \in [u', z')$ with $d(b, b') \le
  4\delta$. Therefore
  \[
    d(a, u) + d(u, b) \le d(a', u') + d(u', b') + (3 + 2 + 2 +
    4)\delta.
  \]
  Since $u'$ lies on a geodesic between $a'$ and $b'$, this says that
  the sub-path of $c'$ between $a$ and $b$ is at most
  \[
    d(a', b') + 11\delta \le d(a, b) + 18\delta.
  \]
  Thus $c'$ is a $(1, 18\delta)$-quasi-geodesic which agrees with $c$
  on a segment of length at least $(z|z')_x - 5\delta$.
\end{proof}

The lemma above gives us a tool for relating convergence in $\bgamp$
to convergence in $\biquot$.
\begin{cor}
  \label{cor:quasigeodesic_lifts_converge}
  Let $z$ be a point in $\dee X^\sigma$, let $z_n$ be a sequence
  converging to $z$, and let $\tilde{z}$ be a geodesic lift of
  $z$. There exist $(1, 18\delta)$-lifts $\tilde{z}_n$ of $z_n$ so
  that $\tilde{z}_n$ converges to $\tilde{z}$.
\end{cor}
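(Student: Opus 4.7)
The plan is to use \Cref{lem:quasigeodesic_prefix} in the quotient cusped space $X^\sigma$ to produce, for each $z_n$, a $(1,18\delta)$-quasi-geodesic ray in $X^\sigma$ that shares a long initial segment with a chosen geodesic ray to $z$. These rays admit canonical lifts to $X$ whose ideal endpoints will converge to $\tilde{z}$.

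First I fix a geodesic ray $c:[0,\infty) \to X^\sigma$ from $\identity$ to $z$ and let $\tilde{c}:[0,\infty) \to X$ be the geodesic lift with $\tilde{c}(\infty) = \tilde{z}$ supplied by the hypothesis. Since $z_n \to z$ in $\dee X^\sigma$, the Gromov products $(z|z_n)_\identity$ computed in $X^\sigma$ tend to infinity, so I can choose integers $N_n \to \infty$ with $(z|z_n)_\identity \ge N_n + 5\delta$ for all $n$. Applying \Cref{lem:quasigeodesic_prefix} with $Y = X^\sigma$ and $x = \identity$ then produces, for each $n$, a $(1,18\delta)$-quasi-geodesic ray $c_n:[0,\infty) \to X^\sigma$ with $c_n(\infty) = z_n$ and $c_n|_{[0,N_n]} = c|_{[0,N_n]}$.

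Next I lift each $c_n$ to a path $\tilde{c}_n:[0,\infty) \to X$ based at $\identity$ by extending $\tilde{c}|_{[0,N_n]}$ via any edge-by-edge lift of $c_n|_{[N_n,\infty)}$ starting at $\tilde{c}(N_n)$. The resulting $\tilde{c}_n$ is again a $(1,18\delta)$-quasi-geodesic, since $\pi_X$ is $1$-Lipschitz (this is the earlier proposition on lifts of quasi-geodesics), and so its ideal endpoint $\tilde{z}_n$ is by definition a $(1,18\delta)$-lift of $z_n$.

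It remains to check that $\tilde{z}_n \to \tilde{z}$ in $\bgamp = \dee X$. By the Morse lemma, any geodesic ray $[\identity, \tilde{z}_n)$ lies within Hausdorff distance $M = M(\delta)$ of $\tilde{c}_n$. Since $\tilde{c}(N_n) = \tilde{c}_n(N_n)$ lies on the geodesic $\tilde{c}$ at distance exactly $N_n$ from $\identity$ and is within $M$ of $[\identity, \tilde{z}_n)$, standard Gromov product estimates in a $\delta$-hyperbolic space yield $(\tilde{z}|\tilde{z}_n)_\identity \ge N_n - O(\delta)$, which tends to infinity. Hence $\tilde{z}_n \to \tilde{z}$. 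The only delicate ingredient is \Cref{lem:quasigeodesic_prefix} itself; once it is in hand, the convergence of the lifts is forced by the fact that they all share the initial geodesic $\tilde{c}|_{[0,N_n]}$ of unbounded length.
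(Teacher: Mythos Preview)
Your proof is correct and follows essentially the same approach as the paper: apply \Cref{lem:quasigeodesic_prefix} in $X^\sigma$ to obtain rays $c_n$ sharing an initial segment of length $N_n\to\infty$ with $c$, lift them so that the shared prefix matches $\tilde{c}|_{[0,N_n]}$, and conclude that the ideal endpoints converge. The paper's version simply asserts the last convergence without elaboration, whereas you spell out the Morse lemma and Gromov product estimate; this extra detail is fine but not strictly necessary.
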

\begin{proof}
  Let $c$ be a geodesic ray from $\identity$ to $z$ and let
  $\tilde{c}$ be a lift of $c$ from $\identity$ to $\tilde{z}$. Using
  the previous lemma, we can find $N_n \to \infty$ and
  $(1, 18\delta)$-quasi-geodesic rays $c_n$ so that
  $c_n(\infty) = z_n$, and $c_n$ agrees with $c$ on the interval
  $[0, N_n]$. So, we can choose lifts $\tilde{c}_n$ of $c_n$ so that
  $\tilde{c}_n$ agrees with $\tilde{c}$ on $[0, N_n]$ as well. The
  ideal endpoints of these lifts necessarily converge to $\tilde{z}$.
\end{proof}

We can now show:
\begin{prop}
  The set $\Lambda_\sigma$ is compact.
\end{prop}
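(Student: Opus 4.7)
Since $\flags$ is compact, it suffices to show that $\Lambda_\sigma$ is closed. Take a sequence $\xi_n \in \Lambda_\sigma$ with $\xi_n \to \xi$ in $\flags$, and let $z_n = \phi_\sigma(\xi_n) \in \biquot$. After passing to a subsequence, we may assume $z_n \to z$ in the compact space $\biquot$, and the aim is to show $\xi \in \psi_\sigma(z)$. The plan is to build a single $\mc{G}$-path limiting to a well-chosen lift of $z$ whose partial products record the asymptotic behavior of $\mc{G}$-paths limiting to lifts of each $z_n$, and then to use \Cref{lem:gpath_determines_nbhd} to conclude that $\xi$ lies in the nested intersection defining $\psi_\sigma(z)$.

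Fix a geodesic lift $\tilde{z} \in \bgamp$ of $z$, together with a geodesic ray $\tilde{c}$ from $\identity$ to $\tilde{z}$. By \Cref{cor:quasigeodesic_lifts_converge}, we may choose $(1, 18\delta)$-lifts $\tilde{z}_n$ of $z_n$ converging to $\tilde{z}$, so that $(1, 18\delta)$-quasi-geodesic rays $\tilde{c}_n$ from $\identity$ to $\tilde{z}_n$ agree with $\tilde{c}$ on initial segments of length $N_n \to \infty$. Pick any $\mc{G}$-path $\{(v_i^{(n)}, \alpha_i^{(n)})\}$ limiting to $\tilde{z}_n$; by \Cref{cor:gpaths_track_quasigeodesics}, the partial products $g_k^{(n)} = \alpha_1^{(n)} \cdots \alpha_k^{(n)}$ lie within uniform $\dcusp$-distance $D$ of $\tilde{c}_n$, so for each fixed $k$ and all sufficiently large $n$ they lie in a bounded $\dcusp$-ball around a point on the initial segment of $\tilde{c}$. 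Because $X$ is locally finite and $\mc{G}$ has only finitely many vertices, a diagonal extraction yields elements $h_k \in \Gamma$ and vertices $w_k \in V(\mc{G})$ with $g_k^{(n)} = h_k$ and $v_k^{(n)} = w_k$ for every $k$ and all sufficiently large $n$. Setting $\beta_k := h_{k-1}^{-1} h_k$, the sequence $\{(w_i, \beta_i)\}$ is a $\mc{G}$-path whose partial products track $\tilde{c}$, and hence limits to $\tilde{z}$ (equivalently, to $z$).

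Applying \Cref{lem:gpath_determines_nbhd} to the $\mc{G}$-paths for $\tilde{z}_n$ and then taking $n \to \infty$ gives $\xi \in \sigma(h_N) \overline{U_{w_{N+1}}}$ for every $N$. If $z$ is a conical limit point of the $\Gamma^\sigma$-action, then $\{(w_i, \beta_i)\}$ is infinite, and the exponential shrinking estimate from the proof of \Cref{lem:conical_singletons} (via \cite[Cor. 7.12]{Weisman2022} and property \ref{item:flag_sets_proper}) forces the closures $\sigma(h_N) \overline{U_{w_{N+1}}}$ to shrink to the singleton $\psi_\sigma(z)$; hence $\xi = \psi_\sigma(z) \in \Lambda_\sigma$. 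The main obstacle is the case where $z$ is a bounded parabolic point of the $\Gamma^\sigma$-action. There the diagonal $\mc{G}$-path may be finite (ending at a parabolic vertex) and the nested intersection sits inside a single vertex set $U_{w_M}$ rather than shrinking to a point, so one cannot identify $\xi$ with a specific element of $\psi_\sigma(z)$ as above. I would handle this case by using $\sigma$-equivariance of $\psi_\sigma$ to reduce to $z \in \Pi^\sigma_\infty$, splitting off the (trivial) subsequence where $z_n = z$ eventually—here $\xi_n$ lies in the closed set $\psi_\sigma(z)$ by definition—and for the remaining subsequence invoking bounded parabolicity of $z$ in $\biquot$ to produce $\delta_n \in \Stab_{\repquot}(z)$ that translate $z_n$ into a fixed compact subset of $\bquot \minus \{z\}$; after a compatible rearrangement, $\xi$ is realized as an accumulation point of a sequence $\sigma(\delta_n) \eta_n$ with $\eta_n \in B_v$ and $\delta_n$ distinct in $\Stab_{\repquot}(z)$, which places $\xi$ in $\psi_\sigma(z)$ by the construction in \Cref{sec:parabolic_fibers}.
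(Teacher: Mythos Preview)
Your two-case split (conical versus parabolic limit $z$) mirrors the paper's proof, and the broad outline is right, but each case has a real gap.

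In the parabolic case, ``bounded parabolicity of $z$ in $\biquot$'' by itself does not yield $\eta_n \in B_v$: you only get that $\delta_n z_n$ lies in some compact subset of $\biquot \setminus \{p\}$, which says nothing about where $\psi_\sigma(\delta_n z_n)$ sits in $\flags$. The paper fills this by using \Cref{lem:lift_cocompactness} to arrange that a \emph{geodesic lift in $\bgamp$} of $h_n z_n$ lands in the specific compact $K_p$ from \Cref{defn:fund_domains}, and then property \ref{item:parabolic_domain_path} to produce a $\mc{G}$-path to that lift whose first vertex is adjacent to the parabolic vertex $v$; only then does \Cref{lem:gpath_determines_nbhd} force $\psi_\sigma(h_n z_n) \subset B_v$, after which the definition of $\psi_\sigma(p)$ applies exactly as you suggest.

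In the conical case, the diagonal extraction step is not justified. You need, for each fixed $k$, that $\{g_k^{(n)}\}_n$ lies in a bounded (hence finite) subset of $\Gamma$, but nothing you have written gives this: when $v_k^{(n)}$ is a parabolic vertex, the label $\alpha_k^{(n)}$ ranges over an infinite coset $gP \setminus F_v$, and the $k$th partial product can sit arbitrarily far out along $\tilde{c}_n$, past the agreement interval $[0,N_n]$. The paper sidesteps extraction entirely: it first fixes one $\mc{G}$-path $\{(v_i,\alpha_i)\}$ limiting to the geodesic lift $\tilde{z}$, and then invokes \cite[Lem.~5.15]{Weisman2022} to choose, for each large $n$, a $\mc{G}$-path limiting to $\tilde{z}_n$ that \emph{coincides} with $\{(v_i,\alpha_i)\}$ on its first $N$ steps. \Cref{lem:gpath_determines_nbhd} then gives $\xi_n \in \sigma(\alpha_1\cdots\alpha_N)U_{v_{N+1}}$ directly for all large $n$, and the uniform shrinking estimate (via \Cref{lem:perturbed_nesting}, property \ref{item:flag_sets_proper}, and \cite[Cor.~7.12]{Weisman2022}) finishes as you indicate.
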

\begin{proof}
  The proof is similar to the proof of Lemma 9.14 in
  \cite{Weisman2022}. Let $y_n$ be a sequence in $\Lambda_\sigma$, and
  let $x_n = \phi_\sigma(y_n)$. Up to subsequence $x_n$ converges to
  $x \in \biquot$. We want to show that (possibly after further
  extraction) $y_n$ converges to a point in $\phi_\sigma^{-1}(x)$. We
  consider two cases:
  \begin{description}
  \item[Case 1: $x$ is a parabolic point] Write $x = gp$ for
    $g \in \repquot$ and $p \in \Pi$, and let $x_n' = g^{-1}x_n$. By
    $\sigma$-equivariance, it suffices to show that
    $y_n' = \sigma(g^{-1})y_n$ converges to a point in in
    $\phi_\sigma^{-1}(p)$.

    Let $P$ be the stabilizer of $p$ in $\Gamma$, and let
    $P^\sigma = \sigma(P)$. By \Cref{lem:lift_cocompactness}, for each
    $n$ we can find $h_n \in P^\sigma$ so that a geodesic lift
    $\tilde{x}_n'$ of $h_nx_n'$ lies in the compact set
    $K_p \subset \bgamp \minus \{p\}$ in \Cref{defn:fund_domains}.
    Let $v$ be a parabolic vertex of the automaton $\mc{G}$ such that
    $q_v = p$.

    We know that our automaton $\mc{G}$ satisfies condition
    \ref{item:parabolic_domain_path} in
    \Cref{prop:parabolic_approx_path}, with $K_p$ as above. So, there
    is an edge $v \to w$ in $\mc{G}$ and a $\mc{G}$-path limiting to
    $\tilde{x}_n'$ whose first vertex is
    $w$. \Cref{lem:gpath_determines_nbhd} implies that
    $\phi_\sigma^{-1}(h_nx_n')$ lies in $U_w$, therefore in the set
    $B_v$ defined in \Cref{sec:parabolic_fibers}. So, for every $n$,
    we have $\sigma(h_n)y_n' \in B_v$.

    The proof of \Cref{lem:lift_cocompactness} shows that for every
    $n$, there is a bi-infinite geodesic through $\mathrm{id}$ in
    $X^\sigma$ joining $h_nx_n'$ to $p$, so $h_nx_n'$ cannot converge
    to $p$. Since $x_n'$ converges to $p$, a subsequence of $h_n^{-1}$
    must consist of pairwise distinct elements. So, by definition of
    the set $\phi_\sigma^{-1}(p) = \psi_\sigma(p)$, the sequence
    $y_n' = \sigma(h_n^{-1}) \sigma(h_n)y_n'$ accumulates in
    $\phi_\sigma^{-1}(p)$, hence in $\Lambda_\sigma$.
  \item[Case 2: $x$ is a conical limit point] Fix a geodesic ray
    $c:[0, \infty) \to X^\sigma$ limiting to $x$, so that the ideal
    endpoint $\tilde{x}$ of a lift $\tilde{c}$ of $c$ is a geodesic
    lift of $x$. Let $\{(v_i, \alpha_i)\}_{i=1}^\infty$ be a
    $\mc{G}$-path limiting to $\tilde{x}$. By
    \Cref{cor:quasigeodesic_lifts_converge}, there are
    $(1, 18\delta)$-lifts $\tilde{x}_n$ of $x_n$ so that $\tilde{x}_n$
    converges to $\tilde{x}$.

    Let $r > 0$ be given. As in the proof of
    \Cref{lem:coned_unbounded_divergent}, we can employ
    \Cref{lem:perturbed_nesting}, property
    \ref{item:flag_sets_proper}, and \cite[Cor.  7.12]{Weisman2022} to
    see that there is a constant $N$ and a metric $d_{U_{v_1}}$ on an
    open subset $U_{v_1}'$ of $\flags$ containing $\overline{U_{v_1}}$
    such that, for any $\mc{G}$-path $\{(w_j, \beta_j)\}_{j=1}^m$ with
    $w_1 = v_1$ and $m \ge N$, the diameter of
    \[
      \sigma(\beta_1 \cdots \beta_m) U_{w_{m+1}}
    \]
    with respect to the metric $d_{U_{v_1}}$ is less than $r$. It is
    also true that this metric $d_{U_{v_1}}$ induces the subspace
    topology on $U_{v_1} \subset \flags$.

    Now, by \cite[Lem. 5.15]{Weisman2022}, for all sufficiently large
    $n$, we can find a $\mc{G}$-path
    \[
      \{(w_j^{(n)}, \beta_j^{(n)})\}_{j=1}^{M_n}
    \]
    limiting to $\tilde{x}_n$ such that $M_n \ge N$ and
    $w_i^{(n)} = v_i$, $\beta_i^{(n)} = \alpha_i$ for all $i \le
    N$. Then, by \Cref{lem:gpath_determines_nbhd}, we have
    \[
      \phi_\sigma^{-1}(x_n) \subset \sigma(\alpha_1 \cdots \alpha_N)
      U_{v_{N+1}}.
    \]
    Since $\phi_\sigma^{-1}(x)$ must also lie in this set by
    definition, we must have $d_{U_{v_1}}(y_n, y) < r$ for all
    sufficiently large $n$. Since $r$ was arbitrary this completes the
    proof.
  \end{description}
\end{proof}

\begin{prop}
  The map $\phi_\sigma:\Lambda_\sigma \to \bquot$ is continuous.
\end{prop}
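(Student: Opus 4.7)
The plan is to deduce continuity almost immediately from the compactness argument just completed, using the disjointness of fibers established in \Cref{lem:fibers_antipodal}. The key observation is that the compactness proof of $\Lambda_\sigma$ essentially establishes a form of upper semi-continuity for the fiber assignment $z \mapsto \psi_\sigma(z)$: given any sequence $y_n \in \Lambda_\sigma$ with $\phi_\sigma(y_n) = x_n \to x$ in $\biquot$, that argument (in both the conical and parabolic cases) extracts a subsequence of $y_n$ accumulating inside $\psi_\sigma(x)$.

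First I would reduce to a sequential statement: since $\Lambda_\sigma$ is a compact subset of $\flags$ (metrizable) and $\biquot$ is Hausdorff and metrizable, it suffices to show that $y_n \to y$ in $\Lambda_\sigma$ implies $\phi_\sigma(y_n) \to \phi_\sigma(y)$. By compactness of $\biquot$, I further reduce to showing that every convergent subsequence of $\phi_\sigma(y_n)$ has limit equal to $\phi_\sigma(y)$.

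So, suppose $\phi_\sigma(y_{n_k}) \to x'$ in $\biquot$, and set $x = \phi_\sigma(y)$. Applying the just-proved compactness argument to the sequence $(y_{n_k})$ with associated base points $\phi_\sigma(y_{n_k}) \to x'$, I extract a further subsequence along which $y_{n_k}$ accumulates in $\psi_\sigma(x') = \phi_\sigma^{-1}(x')$. But the full sequence $y_{n_k}$ already converges to $y$, so $y \in \psi_\sigma(x')$. Since also $y \in \psi_\sigma(x)$ by definition, \Cref{lem:fibers_antipodal} (disjointness of distinct fibers) forces $x' = x$, which is what we wanted.

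The main obstacle in this proposition was really encapsulated in the compactness argument (the separate parabolic and conical cases each required delicate use of \Cref{lem:lift_cocompactness}, \Cref{lem:gpath_determines_nbhd}, and the exponential-contraction estimate from \cite[Cor.~7.12]{Weisman2022}); once that is in hand, continuity is a formal consequence of upper semi-continuity plus the antipodality-based disjointness of fibers. I do not expect any new difficulty beyond carefully reciting what the compactness proof already shows.
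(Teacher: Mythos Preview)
Your proposal is correct and takes a genuinely different, more economical route than the paper. The paper does not re-invoke the compactness argument as a black box; instead it argues directly via the automaton machinery. Assuming $y_n \to y$ but $z_n = \phi_\sigma(y_n) \to z \ne x = \phi_\sigma(y)$, the paper uses \Cref{lem:preferred_lift_separation} to find $g \in \repquot$ and geodesic lifts $\tilde{x}, \tilde{z}$ of $gx, gz$ with $\bdist(\tilde{x}, \tilde{z}) > \Delta$, then \Cref{cor:quasigeodesic_lifts_converge} to produce $(1,18\delta)$-lifts $\tilde{z}_n$ of $gz_n$ eventually at distance $> \Delta$ from $\tilde{x}$. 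Property \ref{item:distinguish_points} then places $\phi_\sigma^{-1}(gx)$ and $\phi_\sigma^{-1}(gz_n)$ into vertex sets $U_{v_1}, U_{w_1}$ with disjoint closures (via \Cref{lem:gpath_determines_nbhd}), contradicting $\sigma(g)y_n \to \sigma(g)y$. Your argument instead recognizes that the compactness proof already establishes the upper semi-continuity statement ``$\phi_\sigma(y_n) \to x'$ implies a subsequence of $y_n$ converges into $\psi_\sigma(x')$,'' so continuity follows formally from that together with \Cref{lem:fibers_antipodal}; this avoids repeating the lift-and-separate step, at the cost of making the dependence on the two-case compactness argument less transparent.
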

\begin{proof}
  We follow the proof of Lemma 9.15 in \cite{Weisman2022}. Let
  $y \in \Lambda_\sigma$ and let $y_n$ approach $y$. We want to show
  that $\phi_\sigma(y_n)$ approaches $\phi_\sigma(y) = x$. Suppose
  otherwise. Then since $\bigamp$ is compact, up to subsequence
  $z_n = \phi_\sigma(y_n)$ approaches some $z \ne x$. By
  \Cref{lem:preferred_lift_separation}, there is some $g \in \repquot$
  so that for geodesic lifts $\tilde{x}$ of $gx$ and $\tilde{z}$ of
  $gz$, we have $d(\tilde{x}, \tilde{z}) > \Delta$. Since $gz_n$
  approaches $gz$, \Cref{cor:quasigeodesic_lifts_converge} tells us
  that we can find $(1, 18\delta)$-lifts $\tilde{z}_n$ of $gz_n$
  converging to $\tilde{z}$. So, for sufficiently large $n$, we have
  $d(\tilde{x}, \tilde{z}_n) > \Delta$.

  Property \ref{item:distinguish_points} of the automaton then means
  that for any $\mc{G}$-paths $\{(v_i, \alpha_i)\}_{i=1}^N$ and
  $\{(w_j, \beta_j)\}_{j=1}^M$ limiting to $\tilde{x}$ and
  $\tilde{z}_n$ respectively, we have
  $\overline{U}_{v_1} \cap \overline{U}_{w_1} =
  \emptyset$. \Cref{lem:gpath_determines_nbhd} implies that
  $\phi_\sigma^{-1}(x) \subset U_{v_1}$ and
  $\phi_\sigma^{-1}(z_n) \subset U_{w_1}$. Since
  $y \in \phi_\sigma^{-1}(x)$ and
  $y_n \in \phi_\sigma^{-1}(z_n) \subset U_{w_1}$, this contradicts
  the fact that $y_n \to y$.
\end{proof}

\subsection{Defining the repelling strata}

To finish showing that $\sigma$ is an EGF representation, we need to
construct the repelling stratum $C_z^\sigma$ for each point
$z \in \biquot$. We do this separately for conical points and
parabolic points:
\begin{itemize}
\item If $z$ is a conical limit point, set
  $C_z^\sigma = \Opp(\phi_\sigma^{-1}(z))$.
\item If $z$ is a parabolic point, equal to $gp$ for
  $p \in \Pi^\sigma$, $g \in \repquot$, then let $P^\sigma$ be the
  stabilizer of $p$ in $\repquot$. Let $v$ be the parabolic vertex of
  $\mc{G}$ with $q_v = p$, and set
  \[
    C_z^\sigma = \Opp(\phi_\sigma^{-1}(z)) \cap \sigma(gP^\sigma) B_v,
  \]
  where $B_v$ is the open subset defined in
  \Cref{sec:parabolic_fibers}.
\end{itemize}

The next lemma says that these strata satisfy property
\ref{item:repelling_strata_containment} from
\Cref{sec:egf_background}.

\begin{lem}
  \label{lem:weak_repelling_stratum}
  For each $z \in \biquot$, we have
  $\Lambda_\sigma \minus \phi_\sigma^{-1}(z) \subset C_z^\sigma$.
\end{lem}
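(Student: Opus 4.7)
The plan is to split into the conical and parabolic cases. Let $y \in \Lambda_\sigma \minus \phi_\sigma^{-1}(z)$, and set $z' := \phi_\sigma(y) \in \biquot$, so $z' \ne z$. By \Cref{lem:fibers_antipodal}, we have $\psi_\sigma(z') \subset \Opp(\psi_\sigma(z)) = \Opp(\phi_\sigma^{-1}(z))$, and since $y \in \psi_\sigma(z')$, we immediately conclude $y \in \Opp(\phi_\sigma^{-1}(z))$. When $z$ is a conical limit point, this is already all of $C_z^\sigma$ and we are done.

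When $z$ is a parabolic point, writing $z = gp$ for $p \in \Pi^\sigma_\infty$ and $g \in \repquot$, we additionally need to show $y \in \sigma(gP^\sigma) B_v$, where $v$ is the parabolic vertex of $\mc{G}$ with $q_v = p$. By $\sigma$-equivariance of $\phi_\sigma$, the element $y' := \sigma(g^{-1})y$ lies in $\Lambda_\sigma \minus \phi_\sigma^{-1}(p)$, so it suffices to show that $y' \in \sigma(P^\sigma) B_v$; the result will then follow by translating by $\sigma(g)$.

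To show this, set $z'' := \phi_\sigma(y') \in \biquot \minus \{p\}$. By \Cref{lem:lift_cocompactness}, there exists $h \in P^\sigma$ so that some geodesic lift $\tilde{z}''$ of $hz''$ lies in the compact set $K_p$ from \Cref{defn:fund_domains}. Since $\mc{G}$ was chosen to satisfy property \ref{item:parabolic_domain_path} with respect to this $K_p$, there is an edge $v \to w$ in $\mc{G}$ together with a $\mc{G}$-path limiting to $\tilde{z}''$ whose first vertex is $w$. A geodesic ray is in particular a $(2,18\delta)$-quasi-geodesic, so \Cref{lem:gpath_determines_nbhd} applied to this $\mc{G}$-path yields
\[
  \psi_\sigma(hz'') \subset U_w \subset B_v.
\]
By $\sigma$-equivariance of $\psi_\sigma$, $\sigma(h)y' \in \sigma(h)\psi_\sigma(z'') = \psi_\sigma(hz'') \subset B_v$, and hence $y' \in \sigma(h^{-1})B_v \subseteq \sigma(P^\sigma)B_v$. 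Translating back by $\sigma(g)$ gives $y \in \sigma(gP^\sigma)B_v$, which combined with $y \in \Opp(\phi_\sigma^{-1}(z))$ yields $y \in C_z^\sigma$.

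There is no serious obstacle here: the conical case is immediate from the already-established antipodality of $\phi_\sigma$, and the parabolic case is a bookkeeping exercise that uses exactly the two features for which the automaton was engineered — the boundedness of parabolic points via lifts (\Cref{lem:lift_cocompactness}) and the existence of $\mc{G}$-paths starting with a prescribed edge out of a parabolic vertex (property \ref{item:parabolic_domain_path}). The only mildly delicate point is keeping the equivariant reduction $z = gp \rightsquigarrow z = p$ straight, since the $B_v$ attached to the repelling stratum is tied to the vertex $v$ with $q_v = p$ rather than $q_v = z$.
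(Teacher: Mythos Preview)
Your proof is correct and follows essentially the same approach as the paper's: the conical case is handled by antipodality of $\phi_\sigma$, and the parabolic case reduces to $z = p \in \Pi^\sigma_\infty$ via equivariance, then uses \Cref{lem:lift_cocompactness} and property \ref{item:parabolic_domain_path} to place $\phi_\sigma^{-1}(hz'')$ inside a vertex set $U_w \subset B_v$, exactly as the paper does. The only cosmetic differences are variable names and that the paper phrases the final step as ``$C_p^\sigma$ is $P^\sigma$-invariant'' rather than writing out $y' \in \sigma(h^{-1})B_v \subset \sigma(P^\sigma)B_v$ explicitly.
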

\begin{proof}
  If $z$ is a conical limit point, then the claim follows directly
  from the antipodality of $\phi_\sigma$. So we can suppose that $z$
  is a parabolic point. Moreover, if $z = gp$ for $g \in \repquot$ and
  $p \in \Pi^\sigma$, then $C_z^\sigma = \sigma(g)C_p^\sigma$, so by
  $\sigma$-equivariance of $\phi_\sigma$ we can assume that
  $z = p \in \Pi^\sigma$. Let $w$ be a vertex in $\mc{G}$ with
  $q_w = p$ (see property \ref{item:parabolic_edge_condition1}).

  Let $y \in \biquot \minus \{p\}$. By \Cref{lem:lift_cocompactness},
  we can find $h \in P^\sigma$ so that a geodesic lift $\tilde{y}$ of
  $h y$ lies in the fixed compact set
  $K_p \subset \bgamp \minus \{p\}$. By property
  \ref{item:parabolic_domain_path}, there is a $\mc{G}$-path limiting
  to $\tilde{y}$ whose first vertex $v_1$ is connected to $w$ by an
  edge $w \to v_1$ in $\mc{G}$.

  \Cref{lem:gpath_determines_nbhd} tells us that
  $\phi_\sigma^{-1}(hy) \subset U_{v_1}$, and we know that
  $U_{v_1} \subset B_w$ by definition. So, since
  $\phi_\sigma^{-1}(hy)$ is also in $\Opp(\phi_\sigma^{-1}(p))$ by
  antipodality, we conclude that
  \[
    \phi_\sigma^{-1}(hy) \subset C_p^\sigma.
  \]
  Since $C_p^\sigma$ is $P^\sigma$-invariant this holds for
  $\phi_\sigma^{-1}(y)$ as well.
\end{proof}

Next we want to prove that the strata also satisfy property
\ref{item:extended_convergence_prop}, but first we need:
\begin{lemma}
  \label{lem:vertex_intersections_nonempty}
  For every vertex $v \in V(\mc{G})$, the set
  $\Lambda_\sigma \cap U_v$ is nonempty.
\end{lemma}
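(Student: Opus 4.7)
The approach will be, for each vertex $v \in V(\mc{G})$, to construct an explicit point of $\Lambda_\sigma \cap U_v$ as the image under $\psi_\sigma$ of a carefully chosen conical limit point $z \in \biquot$, obtained by running an infinite $\mc{G}$-path starting at $v$ whose trajectory remains close to $\cay(\Gamma)$.

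First I will build an infinite $\mc{G}$-path $\{(v_i, \alpha_i)\}_{i=1}^\infty$ with $v_1 = v$ and $\lcusp{\alpha_i} \le C_1$ for every $i$: at each vertex $v_i$, \Cref{defn:automaton_metric_consts} allows me to choose some $\alpha_i \in T_{v_i}$ of $\dcusp$-length at most $C_1$, and property \ref{item:outgoing_edge} provides an outgoing edge so the path can be continued. Setting $g_n = \alpha_1 \cdots \alpha_n$, \Cref{lem:infinite_codes_conical} and \Cref{lem:gpaths_geodesic_neighborhood} produce a conical limit point $\tilde z \in \bgamp$ to which this $\mc{G}$-path limits together with a geodesic ray $\tilde c:[0,\infty) \to X$ from $\identity$ to $\tilde z$ whose image lies within $\dcusp$-distance $D$ of $\{g_n\}$. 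Since each $\lcusp{\alpha_i} \le C_1$, \Cref{lem:short_jumps_close_to_cay} then forces $\tilde c$ to remain in the $(C_1 + 3D)$-neighborhood of $\cay(\Gamma)$, so by condition \ref{item:perturbed_kernel_bounds} the composition $c := \pi^\sigma_X \circ \tilde c$ is a $(2,18\delta)$-quasi-geodesic ray in $X^\sigma$ based at $\identity$, with $\tilde c$ serving as a lift; in particular $\tilde z$ is a $(2,18\delta)$-lift of the endpoint $z := c(\infty) \in \bquot$.

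The main obstacle will be verifying that $z$ actually lies in $\biquot$, rather than being an isolated parabolic point of $\bquot$ coming from a peripheral subgroup $P \in \pargps$ for which $\ker(\sigma|_P)$ has finite index. To handle this I will use that $\pi^\sigma_X$ is $1$-Lipschitz and sends $\cay(\Gamma)$ into $\cay(\repquot)$, so that the image $c$ itself remains in a bounded neighborhood of $\cay(\repquot)$ inside $X^\sigma$. A quasi-geodesic ray in $X^\sigma$ limiting to \emph{any} parabolic point of $\bquot$ (regardless of whether the stabilizer is finite or infinite) must eventually penetrate the corresponding combinatorial horoball at arbitrary depth; since $c$ cannot do this, $z$ must be a conical limit point, and hence $z \in \biquot$.

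Once this is established, \Cref{lem:perturbed_nesting} applies to the $\mc{G}$-path together with the quasi-geodesic $c$ and its lift $\tilde c$, yielding the $\sigma$-nesting relation $\sigma(\alpha_i)\nbhd{U_{v_{i+1}}}{r} \subset U_{v_i}$ for every $i$. \Cref{lem:conical_singletons}, applied with the $(2,18\delta)$-lift $\tilde z$ of $z$ and the $\mc{G}$-path above, then identifies $\bigcap_{n \ge 0} \sigma(g_n)U_{v_{n+1}}$ with the singleton $\{\psi_\sigma(z)\}$, which is contained in $U_{v_1} = U_v$. This produces a point $\psi_\sigma(z) \in \Lambda_\sigma \cap U_v$, as desired.
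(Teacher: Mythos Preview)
Your proposal is correct and follows essentially the same approach as the paper's proof: build an infinite $\mc{G}$-path from $v$ using short labels ($\lcusp{\alpha_i} \le C_1$), use \Cref{lem:short_jumps_close_to_cay} to keep the tracking geodesic near $\cay(\Gamma)$, apply \ref{item:perturbed_kernel_bounds} so the projected ray is a $(2,18\delta)$-quasi-geodesic in $X^\sigma$, and conclude $\psi_\sigma(z) \in U_v$. One small citation issue: \Cref{lem:gpaths_geodesic_neighborhood} only gives the one-sided containment $\{g_n\} \subset \nbhd{\tilde c}{D}$, whereas \Cref{lem:short_jumps_close_to_cay} requires the full Hausdorff distance between $\{g_n\}$ and $\tilde c([0,\infty)) \cap \cay(\Gamma)$ to be at most $D$; you should invoke \Cref{defn:codings_quasigeodesic_nbhds} (equivalently \Cref{cor:gpaths_track_quasigeodesics}) at that step, as the paper does. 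Your explicit verification that $z \in \biquot$ (via the observation that $c$ stays near $\cay(\repquot)$ and hence cannot limit to any parabolic point of $\bquot$) is a detail the paper leaves implicit, and your use of \Cref{lem:conical_singletons} plus \Cref{lem:perturbed_nesting} in place of the paper's appeal to \Cref{lem:gpath_determines_nbhd} amounts to unpacking the conical case of that lemma directly.
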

\begin{proof}
  Fix a vertex $v = v_1$ in $V(\mc{G})$. Property
  \ref{item:outgoing_edge} says that every vertex of $\mc{G}$ has at
  least one outgoing edge, so there is an infinite $\mc{G}$-path
  $\{(v_i, \alpha_i)\}_{i=1}^\infty$ whose first vertex is $v_1$. We
  can construct this $\mc{G}$-path by choosing each element
  $\alpha_i \in T_{v_i}$ to be minimal with respect to
  $\lcusp{\cdot}$, which means that $\lcusp{\alpha_i} \le C_1$ for
  every index $i$ (by definition of $C_1$).

  By \Cref{lem:infinite_codes_conical}, this $\mc{G}$-path limits to
  some conical point $\tilde{z} \in \bgamp$. Fix a geodesic
  $\tilde{c}:[0, \infty) \to X$ from $\identity$ to $\tilde{z}$, so
  that (from \Cref{defn:codings_quasigeodesic_nbhds}) the sequence
  $g_n = \alpha_1 \cdots \alpha_n$ has Hausdorff distance at most $D$
  from $\tilde{c}([0, \infty)) \cap \cay(\Gamma)$. Then, by
  \Cref{lem:short_jumps_close_to_cay}, the geodesic $\tilde{c}$ lies
  in a $(C_1 + 3D)$-neighborhood of $\cay(\Gamma)$.

  From property \ref{item:perturbed_kernel_bounds}, the image $c$ of
  $\tilde{c}$ in $X^\sigma$ is a $(2, 18\delta)$-quasi-geodesic. Thus,
  if $z = c(\infty)$, the point $\tilde{z}$ is a $(2, 18\delta)$-lift
  of $z$ and $\phi_\sigma^{-1}(z) \subset U_{v_1}$ by
  \Cref{lem:gpath_determines_nbhd}.
\end{proof}

The next lemma shows that, \emph{along parabolic sequences}, the
strata satisfy condition \ref{item:extended_convergence_prop}:
\begin{lemma}
  \label{lem:parabolic_convergence}
  Let $q \in \biquot$ be parabolic, let $h_n$ be a sequence of
  pairwise distinct elements in $\repquot_p = \Stab_{\repquot}(p)$,
  and let $K$ be a compact subset of $C_q^\sigma$. Then $\sigma(h_n)K$
  eventually lies in any open neighborhood $\phi_\sigma^{-1}(p)$.
\end{lemma}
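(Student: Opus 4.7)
The plan is to reduce to a very special case and then read off the conclusion almost directly from the definition of $\psi_\sigma$ on parabolic fibers given in \Cref{sec:parabolic_fibers}. Throughout, write $P^\sigma$ for $\Stab_{\repquot}(p)$ and let $v$ be the parabolic vertex of $\mc{G}$ with $q_v = p$, so that $B_v = \bigcup_{v \to w} U_w$ is the open set used to define $\psi_\sigma(p) = \phi_\sigma^{-1}(p)$.

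\textbf{Step 1: reduce to $q = p$.} Write $q = g p$ for some $g \in \repquot$. By $\sigma$-equivariance of $\phi_\sigma$ we have $\phi_\sigma^{-1}(q) = \sigma(g)\phi_\sigma^{-1}(p)$, and by construction $C_q^\sigma = \sigma(g)C_p^\sigma$. Replacing $K$ by $\sigma(g)^{-1}K$ (still a compact subset of $C_p^\sigma$) and $U$ by $\sigma(g)^{-1}U$, I may assume $q = p$. Note that the hypothesis places $h_n$ in $\repquot_p$, which is exactly the stabilizer involved in the definition of $\psi_\sigma(p)$.

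\textbf{Step 2: reduce to $K \subset B_v$.} By definition, $C_p^\sigma \subset \sigma(P^\sigma) B_v$, so the open sets $\{\sigma(h)B_v : h \in P^\sigma\}$ cover the compact set $K$. A standard argument (cover $K$ by finitely many relatively compact open subsets $V_1, \ldots, V_m$ with $\overline{V_j} \subset \sigma(h_j)B_v$, then set $K_j = K \cap \overline{V_j}$) yields a decomposition $K = K_1 \cup \cdots \cup K_m$ with each $K_j$ compact and $K_j \subset \sigma(h_j)B_v$. Since the conclusion for $K$ follows from the conclusion for each $K_j$, and since replacing $K_j$ by $\sigma(h_j)^{-1}K_j$ and the sequence $(h_n)$ by $(h_n h_j)$ — which is still a sequence of pairwise distinct elements of $P^\sigma$ — does not change the problem, I may assume $K \subset B_v$.

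\textbf{Step 3: unpacking the definition.} Suppose the conclusion fails for some open neighborhood $U$ of $\phi_\sigma^{-1}(p)$. Then after passing to a subsequence there exist $k_n \in K$ with $\sigma(h_n)k_n \notin U$ for every $n$. Extract further subsequences so that $k_n \to k_\infty \in K \subset B_v$ and $\sigma(h_n)k_n$ converges to some $\xi \in \flags \setminus U$. But then $\xi$ is, by construction, an accumulation point of a sequence of the form $\sigma(\gamma_n)\xi_n$ with $\gamma_n = h_n$ pairwise distinct in $P^\sigma = \repquot_p$ and $\xi_n = k_n \in B_v$. The definition of $\psi_\sigma(p)$ in \Cref{sec:parabolic_fibers} then forces $\xi \in \psi_\sigma(p) = \phi_\sigma^{-1}(p) \subset U$, a contradiction.

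\textbf{Main obstacle.} There is essentially no hard step: the fiber $\psi_\sigma(p)$ was designed precisely to encode this uniform convergence along $\sigma(\repquot_p)$-orbits inside $B_v$, so once the reductions in Steps 1 and 2 are performed, the lemma is an immediate compactness argument. The only care required is checking that the reduction in Step 2 is legitimate, i.e.\ that covering $K$ by translates of $B_v$ and absorbing the translating elements into the sequence $(h_n)$ preserves both the pairwise-distinctness hypothesis on $(h_n)$ and the inclusion of $K$ into $B_v$; this is straightforward since $P^\sigma$ is a group acting by left translation.
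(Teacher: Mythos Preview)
Your proof is correct and follows essentially the same approach as the paper: reduce to $q=p$ by equivariance, cover $K$ by finitely many $P^\sigma$-translates of $B_v$, and then read off the conclusion directly from the definition of $\psi_\sigma(p)$ via a compactness/contradiction argument. Your Step~2 spells out the absorption of the translating elements $h_j$ into the sequence $(h_n)$ a bit more explicitly than the paper does, but the underlying argument is identical.
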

\begin{proof}
  By the $\sigma$-equivariance of $\phi_\sigma$ and the repelling
  strata $C_z^\sigma$, we only need to consider the case where
  $q = p \in \Pi^\sigma$. Let $v$ be a parabolic vertex of $\mc{G}$ so
  that $q_v = p$. Since $K$ is a compact subset of $C_p^\sigma$, it is
  covered by finitely many sets of the form $\sigma(h) B_w$, where
  $h \in \repquot_p$, and $w$ is a vertex of $\mc{G}$ with $v \to w$
  an edge in $\mc{G}$. But then, for any sequence $\xi_n \in K$, any
  accumulation point of $\sigma(h_n)\xi_n$ lies in
  $\phi_\sigma^{-1}(p)$, by definition of $\phi_\sigma$. Thus
  $\sigma(h_n)K$ must eventually lie in any neighborhood of
  $\phi_\sigma^{-1}(p)$.
\end{proof}

Now we can show that condition \ref{item:extended_convergence_prop}
holds in general:
\begin{lemma}
  \label{lem:extended_convergence_holds}
  Suppose that a sequence $\gamma_n \in \repquot$ satisfies
  $\gamma_n^{\pm1} \to z_\pm$ for $z_\pm \in \bigamp$, and let
  $K \subset C^\sigma_{z_-}$ be compact. Then $\sigma(\gamma_n)K$
  eventually lies in any neighborhood of $\phi_\sigma^{-1}(z_+)$.
\end{lemma}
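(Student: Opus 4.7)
The plan is to reduce the conclusion to an application of \Cref{lem:q_convergence_sequences}. Specifically, after passing to a subsequence, I will show that $\sigma(\gamma_n)$ is $Q$-divergent with a unique $Q$-limit point $\xi^+ \in \phi_\sigma^{-1}(z_+)$, and that $\sigma(\gamma_n^{-1})$ has unique $Q$-limit point $\xi^- \in \phi_\sigma^{-1}(z_-)$. The inclusion $K \subset C_{z_-}^\sigma \subset \Opp(\phi_\sigma^{-1}(z_-)) \subset \Opp(\xi^-)$, built into the very definition of the repelling stratum $C_{z_-}^\sigma$, will then combine with \Cref{lem:q_convergence_sequences} to force $\sigma(\gamma_n)$ to converge to $\xi^+$ uniformly on $K$, giving the conclusion.

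To locate $\xi^+$, I refine the construction used in \Cref{lem:coned_unbounded_divergent}. Fix a geodesic ray $c_+$ from $\identity$ to $z_+$ in $X^\sigma$, a lift $\tilde{c}_+$ in $X$ ending at $\tilde{z}_+$, and a $\mc{G}$-path $\{(v_i, \alpha_i)\}$ limiting to $\tilde{z}_+$. For each large $n$, form the $(1, 18\delta)$-quasi-geodesic ray $r_n$ in $X^\sigma$ by concatenating $[\identity, \gamma_n]$ with a geodesic ray from $\gamma_n$ to $z_+$, and lift to $\tilde{r}_n$ in $X$ with endpoint $\tilde{z}_+^{(n)}$, passing through a lift $\tilde{\gamma}_n$ of $\gamma_n$. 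Because $\gamma_n \to z_+$ forces $(\gamma_n | z_+)_\identity \to \infty$, initial segments of $\tilde{r}_n$ coincide with $\tilde{c}_+$ on prefixes of length tending to infinity, so $\tilde{z}_+^{(n)} \to \tilde{z}_+$ in $\bgamp$. Invoking \cite[Lem.~5.15]{Weisman2022}, I pick $\mc{G}$-paths $\{(v_i^{(n)}, \alpha_i^{(n)})\}$ limiting to $\tilde{z}_+^{(n)}$ that share the prefix $\{(v_i, \alpha_i)\}_{i=1}^{M_n}$ with the fixed path, with $M_n \to \infty$.

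I then select $N_n < M_n$ so that $g_{N_n} = \alpha_1 \cdots \alpha_{N_n}$ lies within bounded $\dcusp$-distance of $\tilde{\gamma}_n$ (possible by \Cref{cor:gpaths_track_quasigeodesics}), and upgrade to bounded $\dgam$-distance via \Cref{lem:cayley_hausdorff_neighborhoods}. Then \Cref{lem:perturbed_nesting} applied along the path (which tracks the lift of the quasi-geodesic $r_n$) yields
\[
\sigma(g_{N_n}) U_{v_{N_n+1}^{(n)}} \subset U_{v_1}.
\]
The uniformly-exponential shrinking of nested sets (\cite[Cor.~7.12]{Weisman2022}) applied along the fixed prefix collapses these images in Hausdorff distance to the singleton $\{\psi_\sigma(z_+)\} \subseteq \phi_\sigma^{-1}(z_+)$ as $N_n \to \infty$ (this is cleanest when $z_+$ is conical). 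Since the $\dgam$-distance between $g_{N_n}$ and $\tilde{\gamma}_n$ is uniformly bounded, uniform properness (\Cref{lem:uniformly_proper}) places $\sigma(g_{N_n})^{-1}\sigma(\gamma_n)$ in a fixed compact subset of $G$, so $\sigma(\gamma_n)$ inherits the same unique $Q$-limit point $\xi^+ = \psi_\sigma(z_+)$; the symmetric argument applied to $\gamma_n^{-1} \to z_-$ yields $\xi^- \in \phi_\sigma^{-1}(z_-)$.

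The main obstacle will be precisely this refinement---pinning the $Q$-limit point inside $\phi_\sigma^{-1}(z_+)$ rather than just inside some vertex set $U_v$, as in \Cref{lem:coned_unbounded_divergent}. The resolution is the simultaneous threading of the $\mc{G}$-path through $\tilde{\gamma}_n$ and through a long prefix of a fixed path to $\tilde{z}_+$, which is made possible by the prefix-agreement guaranteed by $\gamma_n \to z_+$ together with \cite[Lem.~5.15]{Weisman2022}. A secondary technicality arises when $z_+$ or $z_-$ is parabolic (or when $z_+ = z_-$), since then $\lcon{\gamma_n}^\sigma$ may remain bounded and \Cref{lem:coned_unbounded_divergent} no longer directly applies; I plan to handle this by writing $\gamma_n$ as a peripheral factor times a bounded factor and reducing to \Cref{lem:parabolic_convergence}.
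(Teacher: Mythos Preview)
Your strategy---pin the $Q$-limit point of $\sigma(\gamma_n)$ directly inside $\phi_\sigma^{-1}(z_+)$ and then invoke \Cref{lem:q_convergence_sequences}---is different from the paper's, which instead first places the $Q$-limit points in $\Lambda_\sigma$ via \Cref{lem:coned_unbounded_divergent} and \Cref{lem:vertex_intersections_nonempty}, and only afterwards identifies the fiber by pushing the dynamics down to $\biquot$ through $\phi_\sigma$. The paper's indirect identification step is short: choose $K$ so that $\phi_\sigma(K \cap \Lambda_\sigma)$ is a nonempty open subset of $\biquot$, observe that $\gamma_n$ sends this set toward $z_+'$, and compare with the convergence-group action forcing it toward $z_+$.

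Your direct route has a genuine gap. The concatenation $r_n = [\identity,\gamma_n] * [\gamma_n, z_+)$ is \emph{not} a uniform quasi-geodesic: its additive constant is on the order of $2(\identity \mid z_+)_{\gamma_n}$, and nothing in the hypothesis $\gamma_n \to z_+$ bounds this quantity (in a tree, one can have $\gamma_n \to z_+$ while $d_{X^\sigma}(\gamma_n, [\identity, z_+))$ tends to infinity). Consequently the claim that $\tilde r_n$ and $\tilde c_+$ ``coincide on prefixes of length tending to infinity'' fails---they need not even fellow-travel with uniform constants---so you cannot thread a single $\mc{G}$-path that simultaneously passes near $\tilde\gamma_n$ and shares a long prefix with the fixed path to $\tilde z_+$. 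Two smaller issues compound this: your appeal to \Cref{lem:uniformly_proper} is illegitimate here because that lemma assumes every parabolic fiber $\phi^{-1}(q)$ has empty interior, which is not part of the present hypotheses (and is in any case unnecessary, since bounded $\dgam$-distance between $g_{N_n}$ and $\tilde\gamma_n$ already confines $\sigma(g_{N_n})^{-1}\sigma(\gamma_n)$ to a finite set); and your plan for the bounded-$\lcons{\gamma_n}$ case underestimates the combinatorics---one must write $\gamma_n = g_1 h_1^{(n)}\cdots g_k h_k^{(n)} g_{k+1}$ with several peripheral factors and induct on $k$ using \Cref{lem:parabolic_convergence}, then still argue that the resulting attracting point is the given $z_+$.
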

\begin{proof}
  Consider the sequence of relative lengths $\lcons{\gamma_n}$. After
  taking a subsequence, these either tend to infinity, or else they
  are bounded.

  \begin{description}
  \item[Case 1: $\lcons{\gamma_n} \to \infty$] In this case, by
    applying \Cref{lem:coned_unbounded_divergent} to the constant
    sequence $\sigma$, the sequence $\sigma(\gamma_n)$ is
    $Q$-divergent. Further, because of the previous lemma,
    \Cref{lem:coned_unbounded_divergent} also implies that the
    $Q$-limit points of $\sigma(\gamma_n^\pm)$ all lie in
    $\Lambda_\sigma$. So, after extraction, the sequences
    $\sigma(\gamma_n^{\pm 1})$ respectively have unique $Q$-limit
    points $\xi_\pm \in \Lambda_\sigma$. Let
    $z_\pm' = \phi_\sigma(\xi_\pm)$. Since
    $C_z \subseteq \Opp(\phi_\sigma^{-1}(z))$ for every
    $z \in \biquot$, it follows from
    \Cref{lem:q_convergence_sequences} that, if $K$ is any compact
    subset of $C^\sigma_{z_-'}$, then $\sigma(\gamma_n)K$ eventually
    lies in any neighborhood of $\phi_\sigma^{-1}(z_+')$. So we will
    be done if we can show $z_\pm = z_\pm'$.
    
    However, since $(\repquot, \iquotgps)$ is non-elementary, we can
    pick a compact set $K \subset C_{z_-'}$ so that
    $\phi_\sigma(K \cap \Lambda_\sigma)$ is a nonempty open subset $Y$
    of $\biquot$. Since $\phi_\sigma$ is equivariant and continuous,
    $\gamma_nY$ converges to $z_+'$, and since $\gamma_n$ converges to
    $z_+$ in $\overline{X^\sigma}$, this is only possible if
    $z_+ = z_+'$. An identical argument applied to $\gamma_n^{-1}$ shows
    that $z_- = z_-'$, and we can conclude this case.
  \item[Case 2: $\lcons{\gamma_n}$ is bounded] For this case, we argue
    as in the proof of \Cref{lem:uniformly_proper}. Extract a
    subsequence so that
    \[
      \gamma_n = g_1h_1^{(n)} \cdots g_kh_k^{(n)}g_{k+1},
    \]
    where each $g_i \in \repquot$ is fixed, $h_i^{(n)}$ is a sequence
    of pairwise distinct elements fixing some
    $p_i \in \Pi^\sigma_\infty$, and $g_ip_i \ne p_{i-1}$ for every
    $1 < i \le k$. Let $z_+' = g_1p_1$ and let
    $z_-' = g_{k+1}^{-1}p_k$. By applying
    \Cref{lem:parabolic_convergence} and inducting on $k$, we can see
    that for any compact set $K \subset C^\sigma_{z_-'}$, the set
    $\sigma(\gamma_n)K$ eventually lies in any neighborhood of
    $\phi_\sigma^{-1}(z_+')$. Once again, we can show that
    $z_+ = z_+'$ by choosing $K$ so that its intersection with
    $\Lambda_\sigma$ projects to a nonempty open subset of $\biquot$,
    and again the same argument applied to $\gamma_n^{-1}$ shows that
    $z_-' = z_-$. This completes the proof of the parabolic case.
  \end{description}
\end{proof}

Finally we can prove:
\begin{proposition}
  The representation $\sigma$ is $Q$-extended geometrically finite,
  with boundary extension $\phi_\sigma:\Lambda_\sigma \to \biquot$ and
  repelling strata $C_z^\sigma$ defined above.
\end{proposition}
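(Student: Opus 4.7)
The plan is essentially bookkeeping: nearly all of the ingredients required by \Cref{defn:egf_definition} have been assembled in the preceding sequence of lemmas. Specifically, $\Lambda_\sigma \subseteq \flags$ is a compact $\sigma$-invariant set, and $\phi_\sigma \colon \Lambda_\sigma \to \biquot$ is continuous, surjective, $\sigma$-equivariant, and antipodal (by \Cref{lem:fibers_antipodal} together with the preceding compactness and continuity propositions). Condition \ref{item:extended_convergence_prop} of \Cref{defn:extended_convergence_group} is exactly the content of \Cref{lem:extended_convergence_holds}, and the weaker version \ref{item:repelling_strata_containment} of the repelling-strata condition is given by \Cref{lem:weak_repelling_stratum}.

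What remains is to promote \ref{item:repelling_strata_containment} to the full condition \ref{item:repelling_strata_interior}. This is precisely the role of \Cref{prop:weak_egf_to_egf}, whose sole additional hypothesis is that $\sigma$ be relatively $Q$-divergent. This hypothesis is immediate: as recorded in \Cref{rem:rel_q_div}, applying \Cref{lem:coned_unbounded_divergent} to the constant sequence $\rho_n = \sigma$ shows that for any $\gamma_n \in \Gamma^\sigma$ with $\lcons{\gamma_n} \to \infty$, the sequence $\sigma(\gamma_n)$ is $Q$-divergent. Invoking \Cref{prop:weak_egf_to_egf} then produces a boundary extension (a possibly mild modification of $\phi_\sigma$ and its strata, using the same $\Lambda_\sigma$) witnessing the $Q$-EGF property for $\sigma$, which is what the proposition asserts.

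The hard work of the proof is not in this concluding step but in its inputs: building an automaton-compatible candidate map $\phi_\sigma$ out of $(K,A)$-lifts of boundary points in $\biquot$ (\Cref{sec:parabolic_fibers}), showing that the nesting property along $\mc{G}$-paths survives deformation provided the path tracks a lifted quasigeodesic in $X^\sigma$ (\Cref{lem:perturbed_nesting}), and above all verifying the extended convergence property for \emph{arbitrary} sequences in $\Gamma^\sigma$ (\Cref{lem:extended_convergence_holds}), which required splitting into the unbounded-relative-length case (via \Cref{lem:coned_unbounded_divergent}) and the bounded-relative-length case (reduced inductively to the peripheral setting via \Cref{lem:parabolic_convergence}), and then reconciling the abstract boundary limits $z_\pm$ with the limits $z_\pm'$ recovered from $\phi_\sigma$ using non-elementarity of $(\Gamma^\sigma, \mc{P}^\sigma_\infty)$.
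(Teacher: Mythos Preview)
Your argument matches the paper's almost verbatim: assemble continuity, surjectivity, equivariance, antipodality, conditions \ref{item:repelling_strata_containment} and \ref{item:extended_convergence_prop} from the preceding lemmas, obtain relative $Q$-divergence from \Cref{lem:coned_unbounded_divergent}, and then invoke \Cref{prop:weak_egf_to_egf}.

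There is one point you gloss over. The proposition does not merely assert that $\sigma$ is $Q$-EGF; it asserts that the \emph{specific} map $\phi_\sigma$ and strata $C_z^\sigma$ constructed above serve as a boundary extension. Your appeal to \Cref{prop:weak_egf_to_egf} only yields \emph{some} boundary extension $\hat{\phi}$ with strata $\hat{C}_z$, and the statement of that proposition explicitly allows $\hat{\phi}$ and $\hat{\Lambda}$ to differ from the input data (in particular the appendix construction shrinks conical fibers and may shrink $\Lambda$), so your parenthetical ``using the same $\Lambda_\sigma$'' is not justified by the statement alone. The paper closes this gap by pointing to the actual construction in \Cref{sec:weak_egf_appendix} and observing that, for the particular $\phi_\sigma$ and $C_z^\sigma$ built here (conical fibers already singletons coming from $Q$-limit points, strata already of the form $\Opp(\phi_\sigma^{-1}(z))$ or its intersection with a $P^\sigma$-orbit of open sets), the appendix procedure leaves them unchanged. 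You should add this observation; otherwise you have proved a slightly weaker statement than the one claimed.
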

\begin{proof}
  We have already shown that the map $\phi_\sigma$ is continuous,
  equivariant, surjective, and antipodal, and we know from
  \Cref{lem:weak_repelling_stratum} and
  \Cref{lem:extended_convergence_holds} that conditions
  \ref{item:repelling_strata_containment} and
  \ref{item:extended_convergence_prop} in the definition of an
  extended convergence action are satisfied. From
  \Cref{lem:coned_unbounded_divergent}, $\sigma$ is relatively
  $Q$-divergent, so \Cref{prop:weak_egf_to_egf} applies and $\sigma$
  must be $Q$-EGF.

  If we only consider the statement of \Cref{prop:weak_egf_to_egf}, it
  might seem possible that the $Q$-EGF boundary extension and
  repelling strata could be different from $\phi_\sigma$ and
  $C_z^\sigma$. However, examining the construction in the proof of
  \Cref{prop:weak_egf_to_egf} (see \Cref{sec:weak_egf_appendix}) shows
  that $\phi_\sigma$ and $C_z^\sigma$ will not be modified, so they
  serve as an actual boundary extension and repelling strata for
  $\sigma$.
\end{proof}

\subsection{Semicontinuity of the boundary sets}

We have shown the main part of \Cref{prop:main_prop}. We complete the
proof by observing:
\begin{proposition}
  The set $\Lambda_\sigma$ is contained in the $\eps$-neighborhood of
  $\Lambda$, and has nonempty intersection with
  $\nbhd{\phi^{-1}(Z)}{\eps}$.
\end{proposition}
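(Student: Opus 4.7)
The plan is to derive both assertions directly from how the relative automaton $\mc{G}$ and the vertex sets $\{U_v\}_{v \in V(\mc{G})}$ were chosen at the start of the section: the parameter $\eps$ enters via property \ref{item:g_sets_small}, which forces each $U_v$ to sit inside an $\eps$-neighborhood of some fiber of $\phi$, and the compact set $Z$ enters via property \ref{item:approximate_compact_subset}, which produces, for every $z \in Z$, a $\mc{G}$-path limiting to $z$ whose initial vertex $v_1$ satisfies $U_{v_1} \subset \nbhd{\phi^{-1}(Z)}{\eps}$. Combined with \Cref{lem:gpath_determines_nbhd} and \Cref{lem:vertex_intersections_nonempty}, these features of the automaton translate immediately into the two required assertions.

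For the containment $\Lambda_\sigma \subset \nbhd{\Lambda}{\eps}$, I would pick $\xi \in \Lambda_\sigma$, so that $\xi \in \psi_\sigma(z)$ for some $z \in \biquot$. Lifting a geodesic ray from $\identity$ to $z$ in $X^\sigma$ edge-by-edge to a geodesic ray in $X$ based at $\identity$ produces a geodesic (hence $(2,18\delta)$-) lift $\tilde{z} \in \bgamp$ of $z$. Property \ref{item:g_paths_exist} supplies a $\mc{G}$-path limiting to $\tilde{z}$, and \Cref{lem:gpath_determines_nbhd} (applied with $n=0$) then gives $\psi_\sigma(z) \subset U_{v_1}$, where $v_1$ is its initial vertex. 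Property \ref{item:g_sets_small} places $U_{v_1}$ inside $\nbhd{\phi^{-1}(z')}{\eps}$ for some $z' \in \bgamp$, and thus inside $\nbhd{\Lambda}{\eps}$, which forces $\xi \in \nbhd{\Lambda}{\eps}$.

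For the nonemptiness of $\Lambda_\sigma \cap \nbhd{\phi^{-1}(Z)}{\eps}$, I would pick any $z \in Z$ and apply property \ref{item:approximate_compact_subset} to obtain a $\mc{G}$-path limiting to $z$ with initial vertex $v_1$ satisfying $U_{v_1} \subset \nbhd{\phi^{-1}(Z)}{\eps}$. Then \Cref{lem:vertex_intersections_nonempty} provides a point of $\Lambda_\sigma$ lying in $U_{v_1}$, completing the proof. There is no real obstacle here: both statements are essentially bookkeeping against the automaton's defining properties, and the only mild point to check is that every $z \in \biquot$ admits a $(2,18\delta)$-lift in $\bgamp$, which is immediate since geodesic rays in $X^\sigma$ based at $\identity$ lift edge-by-edge to geodesic rays in $X$ based at $\identity$.
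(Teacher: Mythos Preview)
Your proof is correct and follows essentially the same route as the paper: both parts reduce to the automaton properties \ref{item:g_sets_small} and \ref{item:approximate_compact_subset} together with \Cref{lem:gpath_determines_nbhd} and \Cref{lem:vertex_intersections_nonempty}, and you have identified and applied these ingredients accurately. The paper's argument is terser but identical in substance; your extra detail about producing a geodesic lift of $z$ and invoking \ref{item:g_paths_exist} is implicit in the paper's use of \Cref{lem:gpath_determines_nbhd}.
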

\begin{proof}
  From property \ref{item:g_sets_small}, each vertex set of the
  automaton is contained in an $\eps$-neighborhood of $\Lambda$, and
  by \Cref{lem:gpath_determines_nbhd}, each set $\phi_\sigma^{-1}(z)$
  for $z \in \biquot$ lies in some vertex set $U_v$, so this proves
  the first part of the proposition.

  For the second part, we use property
  \ref{item:approximate_compact_subset} to see that there is at least
  one vertex set $U_v$ contained in the $\eps$-neighborhood of
  $\phi^{-1}(Z)$. From \Cref{lem:vertex_intersections_nonempty},
  $\Lambda_\sigma$ has nonempty intersection with this set.
\end{proof}

\begin{remark}
  \label{rem:bdry_limitset}
  In the special case where the subgroups $\sigma(P)$ are
  $Q$-divergent for all $P \in \pargps$, the construction of the limit
  set $\Lambda_\sigma$ in the proof above ensures that, for each
  $p \in \Pi$, the set $\phi_\sigma^{-1}(p)$ is precisely the
  $Q$-limit set of the subgroup $\sigma(\Gamma_p)$. Thus, in this
  case, the boundary set $\Lambda_\sigma$ is precisely the $Q$-limit
  set of $\sigma(\Gamma)$.
\end{remark}

\section{Relative Anosov representations and continuity of the limit
  set}
\label{sec:relative_anosov}

In this section we prove \Cref{thm:rel_anosov_fillings}, which applies
our main theorem to the special case of \emph{relatively $Q$-Anosov
  representations} defined in work of Kapovich--Leeb
\cite{kl2018relativizing} and Zhu \cite{Z2021}; see also
\cite{CZZ}. There are several characterizations of relative Anosov
representations (see e.g. \cite{zz1}), but we will work with the one
given by the proposition below. Here, and for the rest of the section,
$(\Gamma, \pargps)$ is again a relatively hyperbolic pair, and $Q$ is
a symmetric parabolic subgroup of a semisimple Lie group $G$.
\begin{proposition}[{See \cite[Sec. 4]{Weisman2022}}]
  \label{prop:rel_anosov_characterize}
  Let $\rho:\Gamma \to G$ be a representation. The following are
  equivalent:
  \begin{enumerate}[label=(\roman*)]
  \item $\rho$ is relatively $Q$-Anosov (in the sense of \cite{zz1}).
  \item\label{item:rel_anosov_homeo} $\rho$ is $Q$-EGF, and has a
    boundary extension which is a homeomorphism.
  \item\label{item:rel_anosov_parabolic_singleton} $\rho$ is $Q$-EGF,
    and has a boundary extension $\phi$ such that $\phi^{-1}(p)$ is a
    singleton for each $p \in \Pi$.
  \end{enumerate}
  Moreover, if $\phi:\Lambda \to \bgamp$ is the boundary extension in
  \ref{item:rel_anosov_homeo} or
  \ref{item:rel_anosov_parabolic_singleton}, then $\Lambda$ is the
  $Q$-limit set of $\rho$.
\end{proposition}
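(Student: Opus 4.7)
The equivalence (i) $\Leftrightarrow$ (ii), together with the identification of $\Lambda$ with the $Q$-limit set in this case, is essentially established in \cite[Sec.~4]{Weisman2022}, where the boundary-map formulation of relatively $Q$-Anosov used in \cite{zz1} is translated into the EGF framework. My plan is to invoke that equivalence and focus on the new content, which is the equivalence (ii) $\Leftrightarrow$ (iii). The implication (ii) $\Rightarrow$ (iii) is immediate, since a homeomorphism has singleton fibers everywhere.

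For (iii) $\Rightarrow$ (ii), my plan is to construct a boundary extension for $\rho$ which is a homeomorphism by running the construction from \Cref{sec:main_theorem} with the trivial ``Dehn filling'' $\sigma = \rho$, which lies in any extended Dehn filling space about $\rho$. The resulting $\phi_\rho:\Lambda_\rho \to \bgamp$ has singleton fibers over conical points by \Cref{lem:conical_singletons}, so the remaining task is to show that, under hypothesis (iii), the fibers of $\phi_\rho$ over parabolic points are also singletons. I would argue this by first establishing a preliminary step: under (iii), the subgroup $\rho(\Gamma_p)$ is $Q$-divergent with $Q$-limit set equal to the singleton $\{\xi_p\} := \phi^{-1}(p)$ for each $p \in \Pi$. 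Once this is shown, the description of parabolic fibers in \Cref{sec:parabolic_fibers} combined with \Cref{rem:bdry_limitset} identifies $\phi_\rho^{-1}(p)$ with the $Q$-limit set of $\rho(\Gamma_p)$, which is then $\{\xi_p\}$, as needed.

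The main obstacle I expect will be this $Q$-divergence step. To obtain it I would apply property \ref{item:extended_convergence_prop} of the given boundary extension $\phi$: for any sequence of pairwise distinct elements $\gamma_n \in \Gamma_p$, after extraction $\gamma_n^{\pm 1} \to p$ in the Bowditch compactification, and using property \ref{item:repelling_strata_interior} the set $C_p$ is open and so contains a compact set $K$ with nonempty interior. Then for any open neighborhood $U$ of $\xi_p$ we get $\rho(\gamma_n)K \subset U$ eventually, which combined with \Cref{lem:q_convergence_sequences} forces $\rho(\gamma_n)$ to be $Q$-divergent with unique $Q$-limit $\xi_p$. Once $\phi_\rho$ is known to have singleton fibers everywhere, it is a continuous bijection between compact Hausdorff spaces, hence a homeomorphism, yielding (ii). The ``moreover'' identification of $\Lambda$ with the $Q$-limit set of $\rho(\Gamma)$ is then exactly \Cref{rem:bdry_limitset}, whose hypothesis is the $Q$-divergence of each $\rho(P)$ that was just established.
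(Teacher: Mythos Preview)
The paper does not supply its own proof of this proposition: it is stated with the attribution ``See \cite[Sec.~4]{Weisman2022}'' and then immediately used, so there is no in-paper argument to compare against.

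Your reconstruction via the $\sigma=\rho$ case of the \Cref{sec:main_theorem} construction is a legitimate and essentially correct route to (iii)~$\Rightarrow$~(ii). The $Q$-divergence step for $\rho(\Gamma_p)$ works (though your appeal to \Cref{lem:q_convergence_sequences} is slightly misplaced---you first need the bare definition of $Q$-divergence, with the lemma only entering afterward to pin down uniqueness of the limit flag), and then \Cref{lem:conical_singletons} and \Cref{rem:bdry_limitset} do finish the job. That said, this is a heavy detour through the full relative-automaton and Dehn-filling machinery just to treat the trivial filling; the argument in the cited reference is more direct.

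Two gaps to patch if you pursue your route. First, the constructions in \Cref{sec:automaton,sec:main_theorem} carry standing hypotheses ($\pargps \ne \emptyset$, each $P \in \pargps$ infinite, $\rho$ faithful) that you would need to discharge separately. Second, your ``moreover'' argument identifies the \emph{constructed} $\Lambda_\rho$ with the $Q$-limit set via \Cref{rem:bdry_limitset}, whereas the proposition asserts this for the \emph{given} $\Lambda$ in (iii); you have not shown these coincide, and closing that gap requires either a uniqueness statement for boundary extensions or a direct argument that the conical fibers of the original $\phi$ are already singletons.
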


\begin{remark}
  When the set of peripheral subgroups $\pargps$ is empty, then all
  three of these conditions agree with the definition of a
  (non-relatively) \emph{$Q$-Anosov representation} of the hyperbolic
  group $\Gamma$ into $G$.
\end{remark}

We prove the following, which is a stronger form of
\Cref{thm:rel_anosov_fillings}:
\begin{proposition}
  \label{prop:rel_anosov_fillings}
  Let $\rho$ a relatively $Q$-Anosov representation with $Q$-limit set
  $\Lambda$, and let $W \subseteq \ghom(\Gamma, G; \pargps)$ be an
  extended Dehn filling space about $\rho$. Let $\rho_n \in W$ be a
  sequence of representations converging to $\rho$. Then:
  \begin{enumerate}[label=(\arabic*)]
  \item \label{item:rel_anosov_strong} The sequence $\rho_n$ converges
    strongly to $\rho$, i.e. in $\ghom(\Gamma, G)$.
  \item \label{item:rel_anosov_egf} For sufficiently large $n$,
    $\rho_n:\Gamma^{\rho_n} \to G$ is $Q$-EGF.
  \item\label{item:bdry_sets_converge} There is a sequence of boundary
    sets $\Lambda_n$ for $\rho_n$ converging to $\Lambda$ with respect
    to Hausdorff distance.
  \end{enumerate}
  Moreover, if in addition each $\rho_n$ is $Q$-divergent, the
  boundary sets in part \ref{item:bdry_sets_converge} can all be taken
  to be the $Q$-limit set of $\rho_n$.
\end{proposition}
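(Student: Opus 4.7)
The plan is to deduce all four claims almost directly from \Cref{prop:main_prop}, \Cref{prop:relative_strong_to_strong}, and \Cref{rem:bdry_limitset}, leveraging the extra rigidity of the relatively Anosov hypothesis. By \Cref{prop:rel_anosov_characterize}, the representation $\rho$ admits a boundary extension $\phi:\Lambda \to \bgamp$ which is a homeomorphism---in particular, every parabolic fiber $\phi^{-1}(p)$ is a singleton, hence has empty interior in the (positive-dimensional) flag manifold $\flags$. This is precisely the hypothesis of \Cref{prop:relative_strong_to_strong}, so the assumed convergence of $\rho_n$ to $\rho$ in $\ghom(\Gamma, G; \pargps)$ automatically upgrades to strong convergence in $\ghom(\Gamma, G)$, establishing (1).

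For (2), I would apply \Cref{prop:main_prop} once, with any choice of $\eps > 0$ and any compact $Z \subseteq \bgamp$, to obtain an open neighborhood $O$ of $\rho$ in $\ghom(\Gamma, G; \pargps)$ such that every $\sigma \in O \cap W$ descends to a $Q$-EGF representation of $\Gamma^\sigma$. Since $\rho_n \to \rho$ relatively strongly, eventually $\rho_n \in O \cap W$.

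The substantive work lies in (3). Upper semi-continuity---that $\Lambda_n \subseteq \nbhd{\Lambda}{\eps}$ for large $n$---is delivered directly by the ``moreover'' clause of \Cref{prop:main_prop} applied at radius $\eps$ and $Z = \bgamp$. For the reverse inclusion $\Lambda \subseteq \nbhd{\Lambda_n}{\eps}$, I would cover the compact set $\Lambda$ by finitely many open balls of radius $\eps/2$ about points $\xi_1, \ldots, \xi_k \in \Lambda$, set $z_i = \phi(\xi_i) \in \bgamp$, and invoke \Cref{prop:main_prop} \emph{separately} with each singleton $Z_i = \{z_i\}$ and radius $\eps/2$. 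Because $\phi$ is a homeomorphism, $\phi^{-1}(z_i) = \{\xi_i\}$, so each application yields a neighborhood $O_i$ of $\rho$ in which the produced boundary set meets $\nbhd{\{\xi_i\}}{\eps/2}$. For $\rho_n \in \bigcap_i O_i$, every $\xi_i$---hence, by the cover and the triangle inequality, every point of $\Lambda$---lies within $\eps$ of the boundary set of $\rho_n$. A diagonal argument over $\eps \to 0$ then yields Hausdorff convergence $\Lambda_n \to \Lambda$.

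The main technical obstacle I anticipate is a coherence issue: the boundary set $\Lambda_\sigma$ constructed in the proof of \Cref{prop:main_prop} depends on the choice of relative automaton, whose parameters depend on $\eps$ and $Z$, so different invocations above nominally produce different boundary sets for the same $\rho_n$. Under the additional $Q$-divergence hypothesis this obstacle disappears completely: \Cref{rem:bdry_limitset} identifies every such construction with the canonical $Q$-limit set $\Lambda(\rho_n(\Gamma))$, yielding the ``moreover'' clause immediately. In the general case, one should verify that refining the automaton parameters does not essentially change the produced boundary set, so that a single sufficiently fine choice per $n$ simultaneously satisfies all the required nearness conditions; this should be a routine matter of inspecting the construction in \Cref{sec:main_theorem}.
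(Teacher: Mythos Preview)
Your proposal is correct and follows essentially the same route as the paper: invoke the homeomorphic boundary extension from \Cref{prop:rel_anosov_characterize} to get singleton (hence empty-interior) parabolic fibers, read off (1) and (2) from \Cref{prop:relative_strong_to_strong} and \Cref{prop:main_prop}, and for (3) cover $\Lambda$ by finitely many small pieces and apply the ``nonempty intersection with $\nbhd{\phi^{-1}(Z)}{\eps}$'' clause of \Cref{prop:main_prop} to each. The paper phrases the cover on the $\bgamp$ side (compact $Z_i$ with $\diam \phi^{-1}(Z_i) \le \eps/2$) rather than via singletons, and does not explicitly flag the coherence issue you raise about different automata producing different boundary sets---so your treatment is, if anything, slightly more careful on that point.
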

\begin{proof}
  By \Cref{prop:rel_anosov_characterize}, $\rho$ is $Q$-EGF, with a
  homeomorphic boundary extension $\phi:\Lambda \to \bgamp$. Parts
  \ref{item:rel_anosov_strong} and \ref{item:rel_anosov_egf} are
  immediate consequences of \Cref{prop:main_prop} and
  \Cref{prop:relative_strong_to_strong}. Now, fix $\eps > 0$. We also
  know from \Cref{prop:main_prop} that the boundary sets $\Lambda_n$
  for $\rho_n$ eventually lie in the $\eps$-neighborhood of $\Lambda$,
  so for part \ref{item:bdry_sets_converge}, we just need to show that
  $\Lambda$ is eventually contained in the $\eps$-neighborhood of
  $\Lambda_n$.

  Since $\phi$ is a homeomorphism of compact sets, there is a finite
  family of compact subsets $Z_i$ covering $\bgamp$ such that the
  diameter of each $\phi^{-1}(Z_i)$ is at most $\eps/2$. By
  \Cref{prop:main_prop} again, for sufficiently large $n$, $\Lambda_n$
  has nonempty intersection with the $\eps/2$-neighborhood of each
  $\phi^{-1}(Z_i)$, which means an $\eps$-neighborhood of $\Lambda_n$
  contains $\phi^{-1}(Z_i)$. Since the $Z_i$ cover $\bgamp$, this
  proves part \ref{item:bdry_sets_converge}. The final part of the
  proposition is an immediate consequence of \Cref{rem:bdry_limitset}.
\end{proof}

\subsection{Filling to relative Anosov representations}
\label{sec:relativization}

Usually, it is straightforward to detect whether the Dehn fillings in
\Cref{thm:main_theorem} and \Cref{thm:rel_anosov_fillings} give rise
to (relatively) Anosov representations. The reason is the following
relativization theorem of Wang, which (combined with our main theorem)
immediately implies \Cref{cor:fill_to_anosov}:
\begin{theorem}[{\cite{wang}, see also \cite[Sec. 4]{Weisman2022}}]
  Let $\rho:\Gamma \to G$ be a $Q$-EGF representation. Then $\rho$ is
  (relatively) $Q$-Anosov if and only if $\rho$ restricts to a
  (relatively) $Q$-Anosov representation on each $P \in \pargps$.
\end{theorem}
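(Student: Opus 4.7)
The plan is to reduce both directions of the equivalence to the singleton-fiber criterion of \Cref{prop:rel_anosov_characterize}\ref{item:rel_anosov_parabolic_singleton}, which characterizes relatively $Q$-Anosov representations among $Q$-EGF ones by the condition that the boundary extension $\phi\colon\Lambda \to \bgamp$ satisfies: $\phi^{-1}(p)$ is a singleton for every $p \in \Pi$. Viewing each peripheral $P \in \pargps$ (fixing $p \in \Pi$) as a relatively hyperbolic pair with peripheral structure $\{P\}$ itself---so that its Bowditch boundary is a single point---the condition that $\rho|_P$ is relatively $Q$-Anosov translates into the existence of a unique $\rho(P)$-fixed flag $\xi_p \in \flags$ serving as the whole boundary set of $\rho|_P$; when $P$ happens to be hyperbolic, this recovers the usual $Q$-Anosov condition.

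For the forward direction, I would start from $\rho$ relatively $Q$-Anosov with boundary extension $\phi$ satisfying $\phi^{-1}(p) = \{\xi_p\}$. Equivariance forces $\rho(P)$ to fix $\xi_p$, and specializing condition \ref{item:extended_convergence_prop} to parabolic sequences $\gamma_n \in P$ (which escape in $\overline{\Gamma}$ only to $p$) yields uniform attraction of compact subsets of $C_p$ into arbitrary neighborhoods of $\xi_p$. This is precisely the data required for a boundary extension of $\rho|_P$ with boundary set $\{\xi_p\}$, so $\rho|_P$ is relatively $Q$-Anosov in the stated sense.

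For the reverse direction, assume each $\rho|_P$ has $\rho(P)$-limit set a singleton $\{\xi_p\}$, and let $\phi\colon\Lambda \to \bgamp$ be any EGF boundary extension for $\rho$. I would build a new extension $\phi'\colon\Lambda' \to \bgamp$ by declaring $\phi'^{-1}(gp) := \{\rho(g)\xi_p\}$ for each translate of a parabolic point and using \Cref{lem:conical_singletons} to take singleton fibers over conical points, letting $\Lambda'$ be the union of these fibers. The main obstacle is verifying that $\Lambda'$ is closed in $\flags$ and that $\phi'$ is continuous; the delicate point is continuity at a parabolic $p$, where one must show that $z_n \to p$ in $\bgamp$ forces $\phi'^{-1}(z_n) \to \xi_p$ in $\flags$. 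Here I would exploit the bounded parabolicity of $p$ (cocompactness of $P$ on $\bgamp \setminus \{p\}$) to reduce, after $P$-translation, to sequences in a fixed compact subset of $\bgamp \setminus \{p\}$, then combine the uniform attraction of $\rho(P)$ at $\xi_p$ coming from the peripheral Anosov hypothesis with the extended convergence dynamics of $\rho$ on all of $\Gamma$ to pin down the limit. Antipodality of $\phi'$ inherits from that of $\phi$, combined with the fact that the attractor $\xi_p$ of the divergent $\rho(P)$-sequences must be antipodal to any flag left invariant by the transverse dynamics. Once the axioms of an extended convergence action are checked for $\phi'$, \Cref{prop:rel_anosov_characterize} delivers the conclusion.
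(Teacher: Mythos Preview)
The paper does not actually prove this theorem: it is stated as a cited result of Wang (with a pointer also to \cite[Sec.~4]{Weisman2022}), and is invoked solely to derive \Cref{cor:fill_to_anosov} from the main theorem. So there is no proof in the paper to compare against.

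That said, your sketch is broadly on the right track, and it aligns with the characterization in \Cref{prop:rel_anosov_characterize}\ref{item:rel_anosov_parabolic_singleton}. A few comments on the reverse direction, which is the substantive one. First, you should explicitly argue that the peripheral attractor $\xi_p$ lies in the original fiber $\phi^{-1}(p)$: this follows because $\rho(P)$ is $Q$-divergent with $Q$-limit set $\{\xi_p\}$, and any $Q$-limit point of a sequence $\rho(\gamma_n)$ with $\gamma_n \to p$ must land in $\phi^{-1}(p)$ (cf.\ \Cref{lem:q_pts_in_fiber}). Once you know $\xi_p \in \phi^{-1}(p)$, antipodality of your new $\phi'$ is immediate from antipodality of $\phi$, and your vaguer justification (``antipodal to any flag left invariant by the transverse dynamics'') is unnecessary. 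Second, your continuity argument at a parabolic point is the right idea but underspecified: after using bounded parabolicity to write $z_n = h_n y_n$ with $h_n \in P$ and $y_n$ in a fixed compact $K_p \subset \bigamp \setminus \{p\}$, you need to know both that $\phi'^{-1}(y_n)$ stays in a compact subset of the repelling stratum $C_p$ (which uses compactness of $\Lambda'$ and antipodality), and that $\rho(h_n)$ contracts that compact into any neighborhood of $\xi_p$ (which is exactly the peripheral Anosov hypothesis). You have the ingredients but the logical order matters: compactness of $\Lambda'$ is itself what you are trying to establish, so the argument needs to be organized so that you prove closure of $\Lambda'$ and continuity of $\phi'$ simultaneously, or derive one from the other via a sequential criterion.
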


\section{The rank one case}
\label{sec:rank_one}

In this section we prove
\Cref{thm:rank_one_dehn_filling,thm:rank_one_relative_strong} from the
introduction. Below, let $G$ be a \textbf{rank one} semisimple Lie
group, and let $\X$ be the associated Riemannian symmetric space. We
say that a representation $\rho:\Gamma \to G$ of a finitely generated
group $\Gamma$ is \emph{geometrically finite} if it has finite kernel
and geometrically finite image. We connect geometrically finite
representations to EGF representations via \Cref{prop:egf_gf} below,
but first we introduce some terminology.
\begin{definition}
  \label{defn:horospherical_subgroups}
  A \emph{horospherical subgroup} of $H < G$ is any subgroup with
  noncompact closure in $G$ which preserves a horosphere in the
  symmetric space $\X$. If $\rho:\Gamma \to G$ is a representation,
  then the \emph{$\rho$-horospherical subgroups} are the maximal
  subgroups of $\Gamma$ mapping to horospherical subgroups of $G$.
\end{definition}

\begin{remark}
  Horospherical subgroups are more usually called ``parabolic
  subgroups'' in the context of both rank-one Lie groups and
  convergence groups. However, this terminology conflicts with the
  definition of a parabolic subgroup of a semisimple Lie group (which
  in this case is the \emph{full} stabilizer in $G$ of a point in
  $\dee \X$, including hyperbolic elements), so we avoid it.
\end{remark}

Due to work of Bowditch \cite{Bowditch2012} \cite{BowditchGF},
whenever $\rho:\Gamma \to G$ is geometrically finite, then $\Gamma$ is
relatively hyperbolic, relative to any maximal collection $\pargps$ of
pairwise non-conjugate $\rho$-horospherical subgroups. In this case,
since $\rho$ has finite kernel and discrete image, each group in
$\pargps$ is a finite extension of a virtually nilpotent group. Then
it follows from Gromov's theorem on groups with polynomial growth
\cite{gromov1981} that each group in $\pargps$ is itself virtually
nilpotent (see also \cite[Prop. 2.4]{TZ24} for a direct elementary
proof of this fact).

It is also true that in this situation, the representation $\rho$ is
EGF with respect to any choice of $\pargps$ as above. More precisely,
we have the following:
\begin{proposition}[{See \cite[Prop 2.8 and Prop. 2.11]{GW2024}}]
  \label{prop:egf_gf}
  Let $(\Gamma, \pargps)$ be a relatively hyperbolic pair, and let
  $\rho:\Gamma \to G$ be a representation. Suppose that every group in
  $\pargps$ is virtually nilpotent. Then the following are equivalent:
  \begin{enumerate}
  \item $\rho$ is geometrically finite, and $\pargps$ contains a
    conjugate of each $\rho$-horospherical subgroup of $\Gamma$.
  \item $\rho$ is EGF with respect to $\pargps$.
  \end{enumerate}
  In this situation, if $\Lambda_\rho \subseteq \dee \X$ is the limit
  set of $\rho(\Gamma)$, there is a unique EGF boundary extension
  $\phi:\Lambda_\rho \to \bgamp$. Moreover, if $\pargps$ only contains
  $\rho$-horospherical subgroups, this boundary extension is a
  homeomorphism.
\end{proposition}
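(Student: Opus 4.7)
The plan is to prove the two implications separately, leveraging the fact that in rank one the flag manifold $\flags$ coincides with the visual boundary $\dee \X$ and antipodality of a pair of flags reduces to their being distinct. Under this identification, an antipodal map $\phi:\Lambda \to \bigamp$ becomes merely a well-defined equivariant map whose fibers over \emph{distinct} points are pairwise disjoint.

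For the direction (2) $\Rightarrow$ (1), I would start with the boundary extension $\phi:\Lambda \to \bigamp$ and the extended convergence axiom \ref{item:extended_convergence_prop}, and translate them directly into the classical convergence-group framework on $\dee \X$. The plan is to show: (a) every sequence of pairwise distinct elements of $\rho(\Gamma)$ has a subsequence converging on compacta to a constant map away from a single point of $\Lambda$, using the relative $Q$-divergence that follows from EGF and the rank-one fact that $Q$-divergence is ordinary divergence in $G$; (b) the image of each conical limit point of $\bigamp$ is a conical limit point of $\rho(\Gamma)$ in $\dee \X$, and the image of each bounded parabolic point is a bounded parabolic fixed point whose stabilizer is horospherical (using that peripherals are virtually nilpotent, hence their images under $\rho$ are horospherical once discreteness is known). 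Finite kernel then follows because any infinite-order kernel element would have to fix all of $\Lambda$, contradicting non-elementarity, and geometrical finiteness follows by the Beardon--Maskit criterion applied to the dynamical decomposition of $\Lambda$ into conical and bounded parabolic points.

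For (1) $\Rightarrow$ (2), the plan is to use Bowditch's theorems \cite{Bowditch99, Bowditch2012, BowditchGF}: the limit set $\Lambda_\rho$ of a rank-one geometrically finite group is a convergence-group limit set in which every point is conical or bounded parabolic, while $\bigamp$ has the analogous decomposition, and both actions are by $\Gamma$. I would define $\phi:\Lambda_\rho \to \bigamp$ by matching these dynamical data: conical points in $\bigamp$ are sent to the (uniquely determined) conical limit points in $\Lambda_\rho$ that realize the same convergence dynamics via $\rho$, and each parabolic fixed point $p$ of $P \in \ipargps$ is sent to the unique horospherical fixed point of $\rho(P)$ (which exists because $\rho(P)$ is virtually nilpotent and discrete, hence horospherical by the hypothesis on $\pargps$). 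For the repelling strata I would set $C_z = \dee \X \minus \phi^{-1}(z)$ at conical $z$ and, at a parabolic $p$, take $C_p$ to be the complement of any compact subset of the horoball neighborhood of $\phi^{-1}(p)$; axiom \ref{item:repelling_strata_interior} follows directly from compactness of $\Lambda_\rho$ and injectivity of $\phi$, while \ref{item:extended_convergence_prop} reduces to standard rank-one convergence on the conical side and to uniform horospherical contraction on the parabolic side (sequences in a horospherical subgroup push compacta in the complement of their fixed point into arbitrarily small neighborhoods of that fixed point).

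The uniqueness and homeomorphism clauses will follow at the end. Uniqueness: any EGF boundary extension must send each conical point of $\bigamp$ to a conical limit point of $\rho(\Gamma)$ with matching dynamics, and each parabolic point of $\bigamp$ to the horospherical fixed point of the image of its stabilizer, so $\phi$ is forced. Homeomorphism when $\pargps$ contains only horospherical subgroups: injectivity follows because distinct parabolic points in $\bigamp$ stabilize non-conjugate horospherical subgroups, which fix distinct points of $\dee \X$; combined with continuity and compactness this yields the homeomorphism. The main obstacle I expect is the careful verification of \ref{item:extended_convergence_prop} at parabolic points, where one needs uniform control of horospherical dynamics on compact subsets of $C_p$, together with the bookkeeping to handle both finite peripheral subgroups in $\pargps$ and the possibility that some $P \in \pargps$ has $\rho(P)$ finite; both of these are handled cleanly once one distinguishes $\pargps$ from $\ipargps$ as in \Cref{sec:bowditch_boundary}.
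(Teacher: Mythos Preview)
The paper does not contain a proof of this proposition. It is stated with a citation to \cite[Prop.~2.8 and Prop.~2.11]{GW2024} and is used as a black box in \Cref{sec:rank_one}; no argument is supplied. Consequently there is no ``paper's own proof'' to compare your proposal against.

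That said, your outline is broadly the right shape for how such a result is proved, and in particular your reduction of rank-one antipodality to mere distinctness and your use of the Beardon--Maskit/Bowditch conical--plus--bounded-parabolic criterion are the expected ingredients. A couple of points where your sketch would need tightening: first, in the direction (1) $\Rightarrow$ (2) you describe $\phi:\Lambda_\rho \to \bigamp$ but then specify it by saying where points of $\bigamp$ go, which is the wrong direction; you really mean to build the inverse correspondence and then invert, and you should say so. Second, your treatment of the case where some $P \in \pargps$ is \emph{not} $\rho$-horospherical is incomplete. In that case $\rho(P)$, being a discrete image of a virtually nilpotent group, is either finite or virtually cyclic with a loxodromic, and the fiber $\phi^{-1}(p)$ over the corresponding parabolic point $p \in \bigamp$ is not a single point (for the loxodromic case it is the pair of fixed points in $\dee\X$). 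This is precisely why the ``moreover'' clause distinguishes the case where $\pargps$ consists only of $\rho$-horospherical subgroups, and your construction of $\phi$ and the strata $C_p$ needs to accommodate these non-singleton fibers before the homeomorphism claim is even on the table.
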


Note that in the statement of the proposition, $\pargps$ could contain
subgroups which are \emph{not} $\rho$-horospherical; this matters
because (especially in the context of Dehn filling) we want to be able
to consider deformations $\rho'$ of $\rho$ where the
$\rho$-horospherical subgroups are \emph{not} in correspondence with
the $\rho'$-horospherical subgroups.

Our main task in this section is to prove the proposition below, which
immediately implies \Cref{thm:rank_one_dehn_filling}. Afterwards we
will prove \Cref{thm:rank_one_relative_strong} as a corollary.
\begin{proposition}
  \label{prop:relative_strong_implies_edf}
  Let $\rho:\Gamma \to G$ be a geometrically finite representation and
  let $\pargps$ be a maximal collection of pairwise non-conjugate
  $\rho$-horospherical subgroups (so in particular $(\Gamma, \pargps)$
  is a relatively hyperbolic pair and $\rho$ is EGF with respect to
  $\pargps$ by \Cref{prop:egf_gf}).

  Then, if $\rho_n:\Gamma \to G$ is a sequence of representations
  converging to $\rho$ in $\ghom(\Gamma, G; \pargps)$, the subset
  \[
    \{\rho\} \cup \{\rho_n\}_{n \in \N}
  \]
  is an extended Dehn filling space about $\rho$ (with respect to the
  peripheral structure $\pargps$ and the boundary extension
  $\phi:\Lambda_\rho \to \bgamp$ given in \Cref{prop:egf_gf}).
\end{proposition}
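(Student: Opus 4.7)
The plan is to verify the condition of \Cref{defn:edf} at $\rho$ by a contradiction argument. By \Cref{prop:egf_gf}, $\rho$ admits an EGF boundary extension $\phi\colon\Lambda_\rho\to\bgamp$ which is a \emph{homeomorphism}, so each fiber $\phi^{-1}(p)=\{\xi_p\}$ is the singleton fixed point of $\rho(\Gamma_p)$ on $\dee\X$, and I take the repelling stratum $C_p = \dee\X\setminus\{\xi_p\}$. Since $\rho_n\to\rho$ in $\ghom(\Gamma,G;\pargps)$, every open neighborhood of $\rho$ eventually contains $\rho_n$, so, given admissible data $(p,U,F,K)$ satisfying $\rho(\Gamma_p\setminus F)K\subset U$, it suffices to show that $(\rho_n(\Gamma_p)\setminus\rho_n(F))K\subset U$ for all sufficiently large $n$. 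Assuming otherwise, after passing to a subsequence one finds $g_n\in\Gamma_p$ and $\xi_n\in K$ with $\rho_n(g_n)\notin\rho_n(F)$ and $\rho_n(g_n)\xi_n\notin U$. Extract so that $\xi_n\to\xi\in K$; the argument then splits on whether $\rho_n(g_n)$ stays bounded in $G$ or escapes to infinity.

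If $\rho_n(g_n)\to\infty$, the rank-one convergence group property on $\dee\X$ supplies, after extraction, points $\xi^\pm\in\dee\X$ with $\rho_n(g_n)$ converging to the constant map $\xi^+$ uniformly on compacta of $\dee\X\setminus\{\xi^-\}$ (compare \Cref{lem:q_convergence_sequences}). Combining the Chabauty convergence $\overline{\rho_n(\Gamma_p)}\to\rho(\Gamma_p)$ with the fact that $\rho(\Gamma_p)$ is horospherical at $\xi_p$, I would show $\xi^\pm=\xi_p$: any divergent sequence sitting inside closed subgroups approximating a discrete horospherical group must concentrate at its horospherical fixed point on $\dee\X$. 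This forces $\rho_n(g_n)\xi_n\to\xi_p\in U$, contradicting $\rho_n(g_n)\xi_n\notin U$.

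In the bounded case, extract $\rho_n(g_n)\to g\in G$; by Chabauty, $g\in\rho(\Gamma_p)$, so write $g=\rho(g_0)$ for some $g_0\in\Gamma_p$. If $g_0$ may be chosen in $\Gamma_p\setminus F$, the hypothesis gives $\rho(g_0)\xi\in U$, and openness of $U$ yields $\rho_n(g_n)\xi_n\in U$ for large $n$, a contradiction. Otherwise $\rho^{-1}(\rho(g_0))\subseteq F$; setting $k_n=g_ng_0^{-1}$ gives $\rho_n(k_n)\to 1$, and the desired contradiction reduces to showing $\rho_n(k_n)=1$ for large $n$, since then $\rho_n(g_n)=\rho_n(g_0)\in\rho_n(F)$.

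The main obstacle is this last implication. Chabauty convergence of the peripheral closures to the discrete group $\rho(\Gamma_p)$ gives local Hausdorff convergence near $1$, but does not by itself prevent nontrivial elements of $\rho_n(\Gamma_p)$ from clustering near the identity. The required rank-one input is: for $\sigma_n\to\sigma$ in $\ghom(\Gamma_p,G)$ with $G$ rank one and $\sigma(\Gamma_p)$ discrete horospherical (hence virtually nilpotent with finite kernel, since $\rho$ is geometrically finite), any sequence $k_n\in\Gamma_p$ with $\sigma_n(k_n)\to 1$ eventually satisfies $\sigma_n(k_n)=1$. The strategy for this rigidity statement is to analyze the Chabauty limits of the cyclic subgroups $\overline{\langle\sigma_n(k_n)\rangle}\subseteq\overline{\sigma_n(\Gamma_p)}$: any accumulation point is a closed subgroup of $\sigma(\Gamma_p)$, hence discrete, which via the rank-one Margulis lemma and a case analysis on the type (elliptic, parabolic, loxodromic) of $\sigma_n(k_n)$ forces either $\sigma_n(k_n)=1$ or the limiting subgroup to contain a one-parameter subgroup, incompatible with discreteness of $\sigma(\Gamma_p)$. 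This mirrors the familiar Dehn-filling-theoretic picture in which the kernels $\ker(\sigma_n|_{\Gamma_p})$ tend to infinity along prescribed sublattices of $\Gamma_p$, avoiding any fixed finite subset of $\Gamma_p\setminus\{1\}$ for large $n$.
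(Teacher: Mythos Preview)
Your contradiction setup and case split are sound, and your bounded case is essentially the paper's argument. The paper, however, avoids the unbounded case entirely by a different device: it uses convergence of the invariant hulls $\Hull(\rho_n(P)) \to \{\xi_p\}$ (cited from \cite{GW2024}) together with the geometric \Cref{lem:rank_one_compact_closure}, which says that elements of $G$ preserving a convex set near $\xi_p$ and sending $K$ outside $U$ form a relatively compact set. Since each $\rho_n(h_n)$ preserves $\Hull(\rho_n(P))$, this forces $\{\rho_n(h_n)\}$ to be bounded in $G$ outright, so there is no unbounded case to analyze.

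Your treatment of the unbounded case is where the real work is hidden. The assertion that $\xi^\pm = \xi_p$ is correct, but ``any divergent sequence sitting inside closed subgroups approximating a discrete horospherical group must concentrate at its horospherical fixed point'' is precisely the nontrivial content; proving it cleanly essentially requires the hull-convergence statement the paper invokes, or something equivalent. Chabauty convergence of $\overline{\rho_n(\Gamma_p)}$ alone does not obviously pin down the dynamics of individual divergent elements on $\dee\X$; you need control on the invariant convex sets. So this step, as written, is a gap rather than a different route.

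On the final ``main obstacle'' (uniform discreteness), you have correctly isolated the issue and your Margulis/cyclic-subgroup sketch is plausible. The paper dispatches it by citing a uniform-discreteness result: since discreteness is open in $\chab(G)$ and in fact uniformly so near a discrete limit (\cite[Prop.~3.4]{BHK}, \cite[Prop.~4.1]{GW2024}), there is a fixed identity neighborhood $O$ with $\rho_n(P)\cap O=\{1\}$ for all large $n$, which immediately gives $\rho_n(k_n)=1$. This is cleaner than a case analysis on isometry type and is worth knowing as a black box.
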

In the special case where each $\rho_n$ in the proposition is
faithful, then the result above is a direct consequence of Proposition
4.4 in \cite{GW2024}. In fact, very little work is needed to adapt the
proof of the special case to the general statement, since the key
lemma we need in both cases is exactly the same.

We state this key lemma below. To set this up, for any subset
$V \subseteq \overline{\X}$, let $\mc{K}(V)$ denote the set of
elements $g \in G$ which preserve some nonempty convex subset
$C \subset V$. Then we have the following:
\begin{lemma}[{see \cite[Lem. 4.6]{GW2024}}]
  \label{lem:rank_one_compact_closure}
  Let $p \in \dee \X$, and let $K, K'$ be compact subsets of $\dee \X$
  not containing $p$. Then there exists an open neighborhood $V$ of
  $p$ in $\overline{\X}$, disjoint from $K$ and $K'$, so that the set
  of elements $g \in \mc{K}(V)$ satisfying $gK \cap gK' \ne \emptyset$
  has compact closure in $G$.
\end{lemma}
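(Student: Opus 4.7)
The plan is to argue by contradiction, using the convergence-group dynamics of the rank-one Lie group $G$ on its compactification $\overline{\X}$. First I shrink the neighborhood: because $K, K'$ are compact in $\dee \X$ and do not contain $p$, I choose $V$ to be an open neighborhood of $p$ in $\overline{\X}$ small enough that even its closure $\overline{V}$ remains disjoint from $K \cup K'$. Suppose for contradiction that we have a sequence $g_n \in \mc{K}(V)$ escaping every compact subset of $G$ yet satisfying the translation–intersection hypothesis in the statement, so that $g_n K$ meets the target set for every $n$.

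After extraction, the convergence-group action of $G$ on $\overline{\X}$ produces points $a, b \in \dee \X$ with $g_n \to a$ uniformly on compact subsets of $\overline{\X} \setminus \{b\}$ and $g_n^{-1} \to b$ uniformly on compact subsets of $\overline{\X} \setminus \{a\}$. The crucial step is to show that both $a$ and $b$ lie in $\overline{V}$. For this I use that $g_n$ preserves a nonempty convex subset $C_n \subset V$: picking $y_n \in C_n$, the entire cyclic orbit $\{g_n^k y_n : k \in \Z\}$ lies in $C_n \subset V$, and as $k \to \pm\infty$ this orbit accumulates onto the attracting and repelling limit points of $g_n$ acting on $\overline{\X}$ (the two axis endpoints in the hyperbolic case, the unique fixed point on $\dee \X$ in the parabolic case, and the boundary accumulation point of an elliptic rotation whose center escapes to $\dee \X$). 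Hence these dynamical limit points lie in $\overline{V}$ for every $n$, and taking $n \to \infty$ places the convergence-group limits $a, b$ themselves in $\overline{V}$.

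The contradiction is then immediate. Since $b \in \overline{V}$ and $K \cap \overline{V} = \emptyset$, we have $b \notin K$, so $g_n|_K \to a$ uniformly; thus $g_n K$ is eventually contained in any prescribed neighborhood of $a$. But $a \in \overline{V}$ and $K' \cap \overline{V} = \emptyset$, so a sufficiently small neighborhood of $a$ can be chosen disjoint from $K'$. Consequently $g_n K \cap K' = \emptyset$ for all large $n$, contradicting the hypothesis.

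The main obstacle will be the middle step, namely forcing both $a$ and $b$ into $\overline{V}$, because $C_n$ is allowed to be very degenerate (it may be a single point of $\dee \X$, a horoball whose only boundary point is the parabolic fixed point, or an axis one of whose endpoints is the only one lying in $V$). In such degenerate cases one cannot simply extract two distinct orbit-accumulation directions from $C_n$ alone. Handling this requires a case analysis based on the classification of rank-one isometries into elliptic, parabolic, and hyperbolic elements, together with the explicit description of the invariant convex sets each type preserves; in the trickier cases it is necessary to apply the cyclic-orbit argument symmetrically to $g_n^{-1} \in \mc{K}(V)$ as well, in order to pin both dynamical limit points inside $\overline{V}$.
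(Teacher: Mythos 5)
You have correctly located the difficulty in showing that both convergence-group limits $a, b$ lie in $\overline{V}$, but the remedy you sketch does not close the gap: $g_n^{-1}$ preserves exactly the same invariant convex set $C_n$ as $g_n$, so the orbit $\{g_n^k y_n\}_{k \in \Z}$ is the same whether you consider $g_n$ or $g_n^{-1}$, and ``applying the cyclic-orbit argument symmetrically'' produces no new information. The degenerate singleton case you worry about is in fact fatal to the argument as written: if $g_n$ is hyperbolic and $C_n$ is the singleton consisting of one of its two fixed points (a legitimate nonempty convex subset of $\overline{\X}$), the orbit of the only available $y_n \in C_n$ pins only one limit. Concretely, in $\X = \H^2$ (upper half-plane) with $p = \infty$, take $g_n(z) = nz$ and $C_n = \{\infty\} \subset V$; then $a = \infty \in \overline{V}$ but $b = 0$ lies nowhere near $\overline{V}$, and if $0 \in K \cap K'$ (which the hypotheses do not forbid) then $g_n K \cap K' \ne \emptyset$ for all $n$. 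Your claim ``$a, b \in \overline{V}$'' is therefore false under the literal reading of $\mc{K}(V)$.

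What is true is that if $C_n$ contains \emph{any} point $y_n$ other than a single fixed point of $g_n$, then the orbit $\{g_n^k y_n\}$ accumulates on both fixed points $a_n^\pm$, forcing $a_n^\pm \in \overline{V}$, after which one shows $\{a, b\}$ lies in the accumulation set of $\{a_n^\pm\}$. This is the argument you need, but it requires explicitly ruling out the singleton case, either through a more restrictive reading of ``convex subset'' (for instance, closures in $\overline{\X}$ of convex subsets of $\X$) or through the precise hypotheses of \cite[Lem.\ 4.6]{GW2024}. Note that in the intended application the convex set is $\Hull(\rho_n(P))$, which contains the full limit set $\Lambda(\rho_n(P))$ and hence \emph{both} fixed points of any hyperbolic $\rho_n(h_n) \in \rho_n(P)$, so the degenerate case does not arise there; your proof should make this explicit rather than deferring it to an unspecified case analysis that, as sketched, cannot succeed.
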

\begin{remark}
  The cited lemma in \cite{GW2024} only states a special case of this
  result, but its proof applies verbatim to the general situation
  above.
\end{remark}

\begin{proof}[Proof of \Cref{prop:relative_strong_implies_edf}]
  We proceed by contradiction, and suppose that the set
  $\{\rho\} \cup \{\rho_n\}_{n \in \N}$ is \emph{not} an extended Dehn
  filling space. Since $\rho_n$ converges to $\rho$ in
  $\ghom(\Gamma, G; \pargps)$, and each $P \in \pargps$ is
  $\rho$-horospherical, this can only occur if there exists:
  \begin{itemize}
  \item A $\rho$-horospherical subgroup $P \in \pargps$, with unique
    fixed point $p \in \bgamp$,
  \item An open neighborhood $U$ of $\phi^{-1}(p)$ in $\dee \X$,
  \item A compact subset $K \subset \dee \X \minus U$,
  \item A finite set $F \subset P$, satisfying $\rho(h)K \subset U$
    for every $h \in P \minus F$, and
  \item A sequence of elements $h_n \in P$ with
    $\rho_n(h_n) \notin \rho_n(F)$,
  \end{itemize}
  such that $\rho_n(h_n)K \notin U$ for every $n$.
   
  For any subgroup $H < G$, let $\Lambda(H)$ denote the limit set of
  $H$ in $\dee \X$, let $\Fix(H)$ denote the fixed-point set of $H$ in
  $\overline{\X}$, and let $\Hull(H)$ denote the convex hull in
  $\overline{\X}$ of $\Lambda(H) \cup \Fix(H)$. Since each
  $P \in \pargps$ is virtually nilpotent, and the restrictions
  $\rho_n|_P$ converge to $\rho|_P$ in the compact-open topology on
  $\Hom(P, G)$, it follows that $\Hull(\rho_n(P))$ converges to
  $\Hull(\rho(P))$, with respect to Hausdorff distance on any fixed
  metrization of $\overline{\X}$ (see \cite[Lem. 4.5]{GW2024}). In
  particular, $\Hull(\rho_n(P))$ is nonempty for all large $n$, since
  $\Hull(\rho(P))$ is the singleton $\{p\}$.

  Now, applying \Cref{lem:rank_one_compact_closure}, there is an open
  neighborhood $V$ of $p$ in $\overline{\X}$ such that the set of
  elements $g \in \mc{K}(V)$ satisfying $gK \notin U$ is relatively
  compact in $G$. Then by the previous paragraph, $\Hull(\rho_n(P))$
  eventually lies in $V$, and since each $\Hull(\rho_n(P))$ is a
  nonempty $\rho_n(h_n)$-invariant convex set, it follows that the set
  of elements appearing in the sequence $\rho_n(h_n)$ is relatively
  compact in $G$. Thus, a subsequence of $\rho_n(h_n)$ converges to
  some $g \in G$.

  Since $\rho_n$ converges to $\rho$ in $\ghom(\Gamma, G; \pargps)$,
  the restrictions $\rho_n|_P$ converge strongly to the restriction
  $\rho|_P$, and so actually $g = \rho(h)$ for some $h \in P$. Since
  $\rho_n(h_n)K$ is not contained in $U$, we must also have
  $\rho(h)K \not\subset U$ and thus $h$ lies in the finite set $F$.

  We also know that $\rho_n(h)$ converges to $\rho(h)$, so
  $\rho_n(h_n^{-1}h)$ converges to the identity. Now, the set of
  discrete subgroups of $G$ is open in the space $\chab(G)$ of closed
  subgroups of $G$ with the Chabauty topology (see \cite[Prop
  3.4]{BHK}), which tells us that $\rho_n(P)$ is discrete for
  sufficiently large $n$. But in fact the proof of the cited
  proposition gives us a slightly stronger statement (see \cite[Prop
  4.1]{GW2024}), implying that the subgroups $\rho_n(P)$ are
  \emph{uniformly} discrete---there is a neighborhood $O$ of the
  identity in $G$ so that $\rho_n(P) \cap O = \{\identity\}$ for all
  sufficiently large $n$. This implies that $\rho_n(h_n) = \rho_n(h)$
  for all sufficiently large $n$, which contradicts the fact that
  $\rho_n(h_n) \notin \rho_n(F)$ for all $n$.
\end{proof}

\begin{proof}[Proof of \Cref{thm:rank_one_relative_strong}]
  The implication \ref{item:rank1_rel_strong} $\implies$
  \ref{item:rank1_edf} is the content of
  \Cref{prop:relative_strong_implies_edf}, and the implication
  \ref{item:rank1_edf} $\implies$ \ref{item:rank1_strong} follows from
  \Cref{thm:rel_anosov_fillings} since geometrically finite
  representations are always relatively Anosov. So we will be done if
  we show \ref{item:rank1_strong} $\implies$
  \ref{item:rank1_rel_strong}.

  For this, we can argue similarly to the proof of Proposition 4.4 in
  \cite{GW2024}. Assume that the sequence of subgroups
  $\rho_n(\Gamma)$ converges to the group $\rho(\Gamma)$ in the
  Chabauty topology. We wish to show that for any peripheral subgroup
  $P \in \pargps$, the subgroups $\rho_n(P)$ converge to $\rho(P)$ in
  the Chabauty topology. We employ
  \Cref{prop:chab_converge}. Algebraic convergence of representations
  passes to restrictions, which gives us \ref{item:chab_contain_1} in
  the proposition. So, we just need to show condition
  \ref{item:chab_contain_2}, meaning we need to show that if $h_n$ is
  a sequence in $P$, and $\rho_n(h_n)$ converges to $g \in G$, then
  $g \in \rho(P)$. So, suppose $\rho_n(h_n) \to g \in G$. Since we
  assume that $\rho_n \to \rho$ strongly, we know that
  $g \in \rho(\Gamma)$.

  We know that $P$ is virtually nilpotent, which means that
  $\rho_n(P)$ is as well. Arguing as in the proof of the previous
  proposition, this means that the $\rho_n$-invariant set
  $\Hull(\rho_n(P)) \subset \overline{\X}$ must converge to the
  singleton $\{z\}$, where $z \in \dee \X$ is the unique point fixed
  by $\rho(P)$. It follows that the limit $g$ of $\rho_n(h_n)$ must
  fix $z$. However, since $P$ is precisely the stabilizer of $z$ for
  the $\rho$-action of $\Gamma$ on $\dee \X$, we have $g \in \rho(P)$,
  as required.
\end{proof}

\appendix

\section{Relative $Q$-divergence and extended convergence actions}
\label{sec:weak_egf_appendix}

In this section of the paper, we prove
\Cref{prop:weak_egf_to_egf}. For convenience, we restate the
proposition below:

\begin{proposition}
  \label{prop:weak_egf_to_egf_app}
  Let $(\Gamma, \pargps)$ be a relatively hyperbolic pair, let $Q$ be
  a symmetric parabolic subgroup of a semisimple Lie group $G$, and
  let $\rho:\Gamma \to G$ be a representation. Suppose that there
  exists a compact $\rho$-invariant set $\Lambda \subset \flags$, an
  equivariant surjective antipodal map $\phi:\Lambda \to \bigamp$, and
  open sets $C_z \subset \flags$ for each $z \in \bigamp$ satisfying
  conditions \ref{item:repelling_strata_containment} and
  \ref{item:extended_convergence_prop} above.

  Then, if $\rho$ is relatively $Q$-divergent, there exists a
  (possibly different) equivariant surjective antipodal map
  $\hat{\phi}:\hat{\Lambda} \to \bigamp$ and open sets
  $\{\hat{C_z}\}_{z \in \bigamp}$ satisfying conditions
  \ref{item:repelling_strata_interior} and
  \ref{item:extended_convergence_prop} in
  \Cref{defn:extended_convergence_group}; in other words, $\rho$ is
  $Q$-EGF.
\end{proposition}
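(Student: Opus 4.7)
The plan is to keep $\hat{\Lambda} = \Lambda$ and $\hat{\phi} = \phi$, and to replace the repelling strata by
\[
  \hat{C}_z \;:=\; \Opp(\phi^{-1}(z)),
\]
the open set of flags antipodal to every element of the fiber $\phi^{-1}(z) \subseteq \Lambda$. As a preliminary step, I will first verify that $\phi$ is automatically continuous: a putative sequence $\xi_n \to \xi$ in $\Lambda$ with $\phi(\xi_n) \to z' \neq z = \phi(\xi)$ can be translated by a suitable $\gamma_n \in \Gamma$ obtained from the bounded parabolic / conical dichotomy for $\bigamp$, and the extended convergence property \ref{item:extended_convergence_prop} together with antipodality of $\phi$ then forces incompatible limits. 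Once $\phi$ is continuous, $\phi^{-1}(K) \subseteq \Lambda$ is compact for every compact $K \subseteq \bigamp$, so each $\hat{C}_z$ is open and $\bigcap_{z \in K} \hat{C}_z = \Opp(\phi^{-1}(K))$ is open as well.

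Property \ref{item:repelling_strata_interior} for $\{\hat{C}_z\}$ is then immediate: for any compact $K \subset \bigamp$ and any $\xi \in \phi^{-1}(\bigamp \setminus K)$, antipodality of $\phi$ forces $\xi \in \Opp(\phi^{-1}(K)) = \bigcap_{z \in K} \hat{C}_z$, and this set is open, hence itself an open neighborhood of $\xi$. The substantive work is then to verify the extended convergence property \ref{item:extended_convergence_prop} for the new family $\{\hat{C}_z\}$, and this is where relative $Q$-divergence becomes essential.

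Fix $\gamma_n \in \Gamma$ with $\gamma_n \to z_+$ and $\gamma_n^{-1} \to z_-$ in $\overline{\Gamma}$, a compact $K \subset \hat{C}_{z_-}$, and an open $U \supseteq \phi^{-1}(z_+)$. Relative $Q$-divergence gives that $\rho(\gamma_n)$ is $Q$-divergent, so after extracting a subsequence Lemma \ref{lem:q_convergence_sequences} provides unique $Q$-limits $\eta_+$ of $\rho(\gamma_n)$ and $\eta_-$ of $\rho(\gamma_n^{-1})$, with $\rho(\gamma_n) \to \eta_+$ uniformly on compact subsets of $\Opp(\eta_-)$. The crux is to identify $\eta_\pm \in \phi^{-1}(z_\pm)$: choose any $\xi_0 \in C_{z_+} \cap \Opp(\eta_+)$, which is nonempty since $C_{z_+}$ is open and nonempty (it contains $\Lambda \setminus \phi^{-1}(z_+)$), while $\Opp(\eta_+)$ is open and dense in $\flags$. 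On one hand, $\rho(\gamma_n^{-1})\xi_0 \to \eta_-$ by $Q$-convergence; on the other hand, \ref{item:extended_convergence_prop} applied to $\gamma_n^{-1}$ with the original strata forces all accumulation points of $\rho(\gamma_n^{-1})\xi_0$ into $\phi^{-1}(z_-)$. Hence $\eta_- \in \phi^{-1}(z_-)$, and the symmetric argument yields $\eta_+ \in \phi^{-1}(z_+)$. It follows that $K \subset \hat{C}_{z_-} = \Opp(\phi^{-1}(z_-)) \subseteq \Opp(\eta_-)$, and uniform convergence of $\rho(\gamma_n)$ to $\eta_+ \in U$ on $K$ yields $\rho(\gamma_n)K \subset U$ eventually. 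A standard subsequence argument then promotes this to the full sequence.

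The main obstacle is precisely this identification $\eta_\pm \in \phi^{-1}(z_\pm)$. Without the relative $Q$-divergence hypothesis one could not guarantee that $\rho(\gamma_n)$ is $Q$-divergent at all, so there would be no $Q$-limits $\eta_\pm$ to speak of, and no way to couple the flag dynamics of $\rho$ with the convergence group dynamics on $\bigamp$. The preliminary continuity of $\phi$ is a secondary but nontrivial point, handled via a separate case analysis using Bowditch's description of $\bigamp$ as the union of conical limit points and bounded parabolic points.
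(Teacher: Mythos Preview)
There is a genuine gap in your verification of \ref{item:extended_convergence_prop} for the new strata. You assert that relative $Q$-divergence gives that $\rho(\gamma_n)$ is $Q$-divergent whenever $\gamma_n \to z_+$ and $\gamma_n^{-1} \to z_-$ in $\overline{\Gamma}$, but this is not what relative $Q$-divergence says: by definition it only applies to sequences with $\lcon{\gamma_n} \to \infty$. If $z_- = p$ is a parabolic point and $\gamma_n^{-1} = h_n$ lies in the peripheral subgroup $P = \Stab_\Gamma(p)$, then $h_n \to p$ in $\overline{\Gamma}$ while $\lcon{h_n}$ stays bounded, so $\rho(h_n)$ need not be $Q$-divergent at all. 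Indeed, allowing non-$Q$-divergent peripheral images is precisely what distinguishes EGF representations from relatively Anosov ones.

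In this situation your choice $\hat{C}_p = \Opp(\phi^{-1}(p))$ can genuinely fail \ref{item:extended_convergence_prop}: the original stratum $C_p$ may be a \emph{proper} open subset of $\Opp(\phi^{-1}(p))$, chosen exactly so that the $\rho(P)$-dynamics has the required attracting behavior there, and nothing forces this behavior to extend to all of $\Opp(\phi^{-1}(p))$. Concretely, for a compact $K \subset \Opp(\phi^{-1}(p)) \setminus C_p$ and a sequence $h_n \to \infty$ in $P$, you have neither the $Q$-convergence mechanism (no $Q$-divergence) nor the original hypothesis \ref{item:extended_convergence_prop} (since $K \not\subset C_p$), so there is no way to conclude $\rho(h_n)K \to \phi^{-1}(p)$. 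The paper repairs this by treating conical and parabolic points differently: for conical $z$ it does take $\hat{C}_z = \Opp(\hat\phi^{-1}(z))$ as you propose, but for parabolic $p$ it sets $\hat{C}_p = \Opp(\hat\phi^{-1}(p)) \cap \bigcup_{\gamma \in P}\rho(\gamma)C_p$, retaining the original $C_p$ as the essential ingredient. The verification of \ref{item:extended_convergence_prop} then splits into two cases according to whether $\lcon{\gamma_n}$ is bounded (use the original strata) or tends to infinity (use $Q$-divergence, essentially your argument), and the verification of \ref{item:repelling_strata_interior} for the parabolic strata requires a separate argument.
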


For the proof of the proposition, fix a representation
$\rho:\Gamma \to G$, a map $\phi:\Lambda \to \bigamp$, and open sets
$\{C_z\}_{z \in \bigamp}$ as in the statement, and assume that $\rho$
is relatively $Q$-divergent. We have the following basic observation.
\begin{lemma}
  \label{lem:q_pts_in_fiber}
  Suppose that $\gamma_n$ is a sequence in $\Gamma$ converging to some
  $z \in \bigamp$. If $\rho(\gamma_n)$ is $Q$-divergent, then every
  $Q$-limit point of $\rho(\gamma_n)$ lies in $\phi^{-1}(z)$.
\end{lemma}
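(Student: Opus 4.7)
The plan is to combine the dynamical consequences of $Q$-divergence (as packaged in \Cref{lem:q_convergence_sequences}) with the extended convergence condition \ref{item:extended_convergence_prop}. The key point is that relative $Q$-divergence lets us pass to a subsequence in which $\rho(\gamma_n)$ has nice uniform convergence behavior on the complement of a single $Q$-flag, and we then use this to funnel a carefully chosen compact set inside a repelling stratum toward the fiber $\phi^{-1}(z)$.

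In detail, I would first extract a subsequence (still denoted $\gamma_n$) so that: in $\overline{\Gamma}$ we have $\gamma_n^{-1} \to z_-$ for some $z_- \in \overline{\Gamma}$; the sequence $\rho(\gamma_n)$ has $\xi$ as its \emph{unique} $Q$-limit point (possible since $\xi$ was a $Q$-limit point to begin with and a $Q$-divergent sequence admits such a subsequence); and $\rho(\gamma_n^{-1})$ has a unique $Q$-limit point $\xi' \in \flags$. Since $\gamma_n \to z \in \bigamp$ forces $\lcon{\gamma_n} \to \infty$, the same holds for $\gamma_n^{-1}$, so relative $Q$-divergence of $\rho$ implies $\rho(\gamma_n^{-1})$ is itself $Q$-divergent and $z_- \in \bigamp$. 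Applying \Cref{lem:q_convergence_sequences}, we obtain that $\rho(\gamma_n) \to \xi$ uniformly on compact subsets of the open dense set $\Opp(\xi')$.

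Next I would pick a test compact set. By condition \ref{item:repelling_strata_containment} together with antipodality, $C_{z_-}$ is a nonempty open subset of $\flags$ (the degenerate case $|\bigamp|=1$ can be treated separately). Since $\Opp(\xi')$ is open and dense, $C_{z_-} \cap \Opp(\xi')$ is nonempty and open, so I can choose a compact set $K \subset C_{z_-} \cap \Opp(\xi')$ with nonempty interior. By condition \ref{item:extended_convergence_prop}, for every open neighborhood $U$ of $\phi^{-1}(z)$ we have $\rho(\gamma_n) K \subset U$ for all large $n$. On the other hand, by the uniform convergence from the previous step, $\rho(\gamma_n) K \to \{\xi\}$ in Hausdorff distance. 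Hence $\xi$ lies in the closure of every open neighborhood of $\phi^{-1}(z)$.

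To conclude $\xi \in \phi^{-1}(z)$, I would argue by contradiction and consider two subcases. If $\xi \notin \Lambda$, then since $\Lambda$ is compact and $\phi^{-1}(z) \subset \Lambda$, we find an open neighborhood $U$ of $\Lambda$ whose closure avoids $\xi$, contradicting the previous paragraph. If instead $\xi \in \Lambda$ with $\phi(\xi) = z'\ne z$, the antipodality of $\phi$ gives $\phi^{-1}(z) \subset \Opp(\xi)$; here the hard part is to pass from the inclusion $\phi^{-1}(z) \subset \Opp(\xi)$ to an open neighborhood of $\phi^{-1}(z)$ whose closure still avoids $\xi$. I expect this to be the main technical obstacle, and would resolve it by applying the antipodality argument not to $\xi$ directly but to a small closed neighborhood of $\xi$ disjoint from the compact set $\phi^{-1}(z)$ (exploiting compactness of $\phi^{-1}(z)$, which follows from equivariance, antipodality, and compactness of $\Lambda$ combined with condition \ref{item:repelling_strata_containment}). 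Together these two cases yield $\xi \in \phi^{-1}(z)$.
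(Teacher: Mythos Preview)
Your proposal is correct and follows essentially the same route as the paper: pass to a subsequence with unique $Q$-limit flags $\xi, \xi'$, pick a compact $K$ in $C_{z_-} \cap \Opp(\xi')$, and compare the two facts $\rho(\gamma_n)K \to \{\xi\}$ and $\rho(\gamma_n)K \subset U$ for every neighborhood $U$ of $\phi^{-1}(z)$. Two small points are worth flagging.

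First, the claim that $\gamma_n \to z \in \bigamp$ forces $\lcon{\gamma_n} \to \infty$ is false: if $z$ is parabolic and $\gamma_n$ lies in its stabilizer, the coned-off length stays bounded. Fortunately you do not need this claim. The $Q$-divergence of $\rho(\gamma_n^{-1})$ follows directly from the hypothesis and symmetry of $Q$, and $z_- \in \bigamp$ follows simply because $\gamma_n$ (hence $\gamma_n^{-1}$) leaves every finite subset of $\Gamma$. The paper does not invoke relative $Q$-divergence here at all.

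Second, your closing case analysis is unnecessary. The fiber $\phi^{-1}(z)$ is a closed subset of the compact set $\Lambda$, hence closed in $\flags$. So once you know $\xi$ lies in the closure of every open neighborhood of $\phi^{-1}(z)$, you are done: take $U = \nbhd{\phi^{-1}(z)}{\eps}$ for arbitrary $\eps > 0$ and conclude $d(\xi, \phi^{-1}(z)) = 0$. The paper finishes in exactly this way, in a single line.
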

\begin{proof}
  Let $\xi$ be a $Q$-limit point of $\rho(\gamma_n)$. Via
  \Cref{lem:q_convergence_sequences}, we can find a flag $\xi^-$ and
  extract a subsequence so that $\rho(\gamma_n)$ converges to the
  constant map $\xi$ uniformly on compact subsets of $\Opp(\xi^-)$. We
  can extract a further subsequence so that $\gamma_n^{-1}$ converges
  to some $z_- \in \bigamp$.

  Since $\Opp(\xi^-)$ is dense in $\flags$, it has nonempty
  intersection with the open set $C_{z_-}$. Let $K$ be a nonempty
  compact subset in this intersection. Then it follows from condition
  \ref{item:extended_convergence_prop} that $\rho(\gamma_n)K$
  eventually lies in an arbitrary small neighborhood of
  $\phi^{-1}(z)$; since $\rho(\gamma_n)K$ must converge to $\{\xi\}$
  this means that $\xi \in \phi^{-1}(z)$.
\end{proof}

Using this lemma, we show that one can shrink the set $\Lambda$ into a
certain nice form, while still preserving all of the assumptions. For
each conical limit point $z \in \bigamp$, define
$\Psi_z \subset \flags$ to be the union of the set of $Q$-limit points
of all sequences $\rho(\gamma_n)$, where $\gamma_n \to z$ in the
Bowditch compactification of $\Gamma$. (Note that since $z$ is
conical, any sequence $\gamma_n$ converging to $z$ must satisfy
$\lcon{\gamma_n} \to \infty$ and therefore by assumption
$\rho(\gamma_n)$ is $Q$-divergent and $\Psi_z$ is nonempty.)

Then, define a subset $\hat{\Lambda} \subset \flags$ via:
\[
  \hat{\Lambda} = \left(\bigcup_{\substack{z \in \bigamp,\\z \textrm{
          parabolic}}}\phi^{-1}(z)\right) \cup
  \left(\bigcup_{\substack{z \in \bigamp,\\z \textrm{ conical}}}
    \Psi_z\right).
\]

\begin{lemma}
  The set $\hat{\Lambda}$ is a compact $\rho$-invariant subset of
  $\Lambda$.
\end{lemma}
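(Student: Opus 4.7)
The plan is to verify three properties of $\hat\Lambda$: the containment $\hat\Lambda \subseteq \Lambda$, $\rho$-invariance, and closedness in $\flags$ (whence compactness, since $\Lambda$ is compact). The first two are essentially immediate from the definitions: parabolic fibers $\phi^{-1}(z)$ lie in $\Lambda$ by construction, and for conical $z$ every $\xi \in \Psi_z$ lies in $\phi^{-1}(z) \subseteq \Lambda$ by \Cref{lem:q_pts_in_fiber}. Equivariance of the parabolic part follows from equivariance of $\phi$; for the conical part, if $\gamma_n \to z$ witnesses $\xi \in \Psi_z$, then $\gamma\gamma_n \to \gamma z$ (still conical, since the $\Gamma$-action preserves the conical/parabolic partition of $\bigamp$) and $\rho(\gamma)\xi$ is a $Q$-limit of $\rho(\gamma\gamma_n)$, so $\rho(\gamma)\Psi_z \subseteq \Psi_{\gamma z}$.

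The substantive step will be closedness. Suppose $\xi_n \in \hat\Lambda$ with $\xi_n \to \xi$, and set $z_n = \phi(\xi_n)$; after extraction $z_n \to z \in \bigamp$. The goal is to construct a sequence $\eta_n \in \Gamma$ with $\eta_n \to z$ in $\overline\Gamma$ such that $\rho(\eta_n)$ is $Q$-divergent and $\xi$ is a $Q$-limit of $\rho(\eta_n)$. This will finish the proof: if $z$ is conical, $\xi \in \Psi_z \subseteq \hat\Lambda$ by definition; if $z$ is parabolic, $\xi \in \phi^{-1}(z) \subseteq \hat\Lambda$ by \Cref{lem:q_pts_in_fiber}.

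I will obtain $\eta_n$ by a diagonal extraction. When $z_n$ is conical, I would use the witnessing sequence $(\gamma^{(n)}_m)_m$ from the definition of $\Psi_{z_n}$: by \Cref{lem:q_convergence_sequences}, after extracting in $m$ one has $\xi_n^- \in \flags$ with $\rho(\gamma^{(n)}_m) \to \xi_n$ uniformly on compact subsets of $\Opp(\xi_n^-)$. Extracting in $n$ so that $\xi_n^- \to \xi^- \in \flags$, and then taking $m_n$ large enough, the diagonal $\eta_n := \gamma^{(n)}_{m_n}$ will satisfy $\eta_n \to z$ in $\overline\Gamma$ and $\rho(\eta_n) \to \xi$ uniformly on compacts of $\Opp(\xi^-)$; convergence to a constant on a nonempty open set immediately gives $Q$-divergence with unique $Q$-limit $\xi$. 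When $z_n$ is parabolic, I would exploit finiteness of $\Gamma$-orbits in $\Pi_\infty$ to write $z_n = \sigma_n z_0$ for a fixed $z_0 \in \Pi_\infty$; then $\xi_n = \rho(\sigma_n)\hat\xi_n$ with $\hat\xi_n \in \phi^{-1}(z_0)$, and extracting $\hat\xi_n \to \hat\xi$ reduces the question to analyzing the single sequence $(\sigma_n)$. The main obstacle I expect is the subcase where both $z_n$ and $z$ are parabolic and $|\sigma_n|_{\hat\Gamma}$ stays bounded: then relative $Q$-divergence does not directly apply to $\rho(\sigma_n)$, and concluding $\phi(\xi) = z$ requires combining property \ref{item:extended_convergence_prop} applied to peripheral stabilizers with the antipodality of $\phi$ and property \ref{item:repelling_strata_containment}.
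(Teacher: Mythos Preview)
Your containment and invariance arguments match the paper. For compactness, however, you are missing a trivializing observation: since $\hat\Lambda\subseteq\Lambda$ and $\Lambda$ is compact, the limit $\xi$ already lies in $\Lambda$, and continuity of $\phi$ gives $\phi(\xi)=\lim\phi(\xi_n)=z$ immediately. Hence when $z$ is parabolic one has $\xi\in\phi^{-1}(z)\subseteq\hat\Lambda$ with no further argument; there is no need to construct $\eta_n$, and your stated ``main obstacle'' (both $z_n$ and $z$ parabolic with $|\sigma_n|_{\hat\Gamma}$ bounded) is vacuous.

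The only case requiring work is $z$ conical. Your diagonal extraction from witnessing sequences is fine when the $z_n$ are conical, but your plan for parabolic $z_n$ via $z_n=\sigma_n z_0$ has real gaps you did not flag: you need $\sigma_n\to z$ in $\overline\Gamma$, which fails if $\sigma_n^{-1}\to z_0$ under the convergence action, and you need the limit $\hat\xi\in\phi^{-1}(z_0)$ to lie in $\Opp$ of the $Q$-limit of $\rho(\sigma_n^{-1})$, which fails for the same reason. The paper sidesteps this entirely by choosing the approximating sequence based on $z$ rather than on the $z_n$: it fixes a single sequence $\gamma_m$ converging \emph{conically} to $z$ (with repelling point $v$), extracts so that $\rho(\gamma_m^{-1})$ has unique $Q$-limit $\xi^-\in\phi^{-1}(v)$, and then uses the conical dynamics to choose $m_n\to\infty$ with $\gamma_{m_n}^{-1}z_n$ bounded away from $v$. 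Antipodality of $\phi$ then traps $\rho(\gamma_{m_n}^{-1})\xi_n\in\phi^{-1}(\gamma_{m_n}^{-1}z_n)$ inside a fixed compact subset of $\Opp(\xi^-)$, so applying $\rho(\gamma_{m_n})$ forces $\xi_n$ to converge to the $Q$-limit $\xi^+$ of $\rho(\gamma_m)$; since $\xi_n\to\xi$ this gives $\xi=\xi^+\in\Psi_z$. This argument treats conical and parabolic $z_n$ uniformly.
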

\begin{proof}
  That $\hat{\Lambda} \subset \Lambda$ is an immediate consequence of
  \Cref{lem:q_pts_in_fiber}. The $\rho$-invariance of $\hat{\Lambda}$
  follows from $\rho$-invariance of $\Lambda$, as well as the
  $\Gamma$-invariance of the set of conical limit points in
  $\bigamp$. So, we just need to show that $\hat{\Lambda}$ is compact.

  Consider a sequence of flags $\xi_n \in \hat{\Lambda}$, and extract
  a subsequence so that $\xi_n \to \xi$. We wish to show
  $\xi \in \hat{\Lambda}$. Since $\hat{\Lambda} \subset \Lambda$,
  there is a sequence of points $z_n \in \bigamp$ such that
  $\xi_n \in \phi^{-1}(z_n)$. By compactness of $\Lambda$ and
  continuity of $\phi$, we know $\xi \in \Lambda$ and
  $z_n \to z = \phi(\xi)$. If $z$ is parabolic, then
  $\phi^{-1}(z) \subset \hat{\Lambda}$, hence $\xi \in \hat{\Lambda}$,
  so assume $z$ is conical.

  Let $\gamma_m$ be a sequence in $\Gamma$ converging conically to
  $z$. By definition this means there are distinct points
  $u, v \in \bigamp$ such that $\gamma_m^{-1}z \to u$ and
  $\gamma_m^{-1}K \to \{v\}$ for all compact sets
  $K \subset \bigamp \minus \{z\}$; note that this implies
  $\gamma_m^{-1} \to v$ in the Bowditch compactification
  $\overline{\Gamma}$. Since $\gamma_m$ converges to a conical point,
  we must have $\lcon{\gamma_m} \to \infty$, so $\rho(\gamma_m)$ is
  $Q$-divergent. After extracting a subsequence, we can apply
  \Cref{lem:q_pts_in_fiber} and \Cref{lem:q_convergence_sequences} to
  see that there is a flag $\xi^- \in \phi^{-1}(v)$ such that
  $\rho(\gamma_n)$ converges to the constant map $\xi$, uniformly on
  compacts in $\Opp(\xi^-)$.

  Now, since $z_n$ converges to $z$, for each $n$ we can find an index
  $m_n$ so that $m_n \to \infty$, but $\gamma_{m_n}^{-1}z_n$ does
  \emph{not} accumulate at $v$. Then, since $\phi$ is antipodal and
  equivariant, this means that
  $\rho(\gamma_{m_n}^{-1})\xi_n \in \phi^{-1}(\gamma_{m_n}^{-1}z_n)$
  lies in a compact subset of $\Opp(\xi^-)$. Therefore
  $\xi_n = \rho(\gamma_{m_n})\rho(\gamma_{m_n}^{-1})\xi_n$ must
  converge to $\xi$, and we are done.
\end{proof}

\begin{remark}
  It turns out that actually each set $\Psi_z$ defined above must be a
  singleton (see \cite[Prop. 4.8]{Weisman2022}), but we will not show
  this at the moment.
\end{remark}

Restrict $\phi$ to obtain a new map
$\hat{\phi}:\hat{\Lambda} \to \bigamp$. This map is also equivariant,
surjective, and antipodal. Next, we define a new family $\hat{C}_z$ of
repelling strata. For each conical limit point $z \in \bigamp$, define
\[
  \hat{C}_z = \Opp(\hat{\phi}^{-1}(z)).
\]
Then, for each parabolic point $p \in \Pi$, let $P$ be the stabilizer
of $p$ in $\Gamma$, and define
\[
  \hat{C}_p = \Opp(\hat{\phi}^{-1}(p)) \cap \bigcup_{\gamma \in
    P}\rho(\gamma)C_p.
\]
Finally, for an arbitrary parabolic point $z \in \bigamp$, write
$z = gp$ for $p \in \Pi$, $g \in \Gamma$, and define
$\hat{C}_z = \rho(g)\hat{C}_p$.

The sets $\hat{C}_z$ are nonempty open sets satisfying condition
\ref{item:repelling_strata_containment}; this is a consequence of
transversality of the map $\hat{\phi}$ and the fact that the original
strata $C_z$ satisfied \ref{item:repelling_strata_containment}. Next
we show:
\begin{proposition}
  \label{prop:weak_to_convergence}
  The map $\hat{\phi}:\hat{\Lambda} \to \bigamp$ and sets $\hat{C}_z$
  satisfy condition \ref{item:extended_convergence_prop} in
  \Cref{defn:extended_convergence_group}.
\end{proposition}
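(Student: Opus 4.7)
The plan is to prove \Cref{prop:weak_to_convergence} by contradiction and subsequence extraction. Fix a sequence $\gamma_n \in \Gamma$ with $\gamma_n \to z_+$ and $\gamma_n^{-1} \to z_-$ in $\overline{\Gamma}$ (where $z_\pm \in \bigamp$), a compact set $K \subseteq \hat{C}_{z_-}$, and an open neighborhood $U$ of $\hat{\phi}^{-1}(z_+)$ in $\flags$. Suppose for contradiction that $\rho(\gamma_n) K \not\subseteq U$ for infinitely many $n$. Extract $\xi_n \in K$ with $\rho(\gamma_n)\xi_n \notin U$, and after a further extraction assume that $\xi_n \to \xi \in K$ and $\rho(\gamma_n)\xi_n \to \eta \in \flags \setminus U$.

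Because $z_+ \in \bigamp$, the relative length $\lcon{\gamma_n}$ tends to infinity, so relative $Q$-divergence of $\rho$ gives $Q$-divergence of $\rho(\gamma_n)$; since $Q$ is symmetric, the sequence $\rho(\gamma_n^{-1})$ is $Q$-divergent as well. Extract a further subsequence so that $\rho(\gamma_n)$ and $\rho(\gamma_n^{-1})$ have unique $Q$-limit points $\xi_+$ and $\xi_-$, respectively. By \Cref{lem:q_convergence_sequences}, $\rho(\gamma_n)$ then converges to the constant map $\xi_+$ uniformly on compact subsets of $\Opp(\xi_-)$, and by \Cref{lem:q_pts_in_fiber}, $\xi_\pm \in \phi^{-1}(z_\pm)$.

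The central step is to verify that in fact $\xi_\pm \in \hat{\phi}^{-1}(z_\pm)$. For parabolic $z_\pm$ this is automatic since $\hat{\phi}^{-1}(z_\pm) = \phi^{-1}(z_\pm)$. For conical $z_\pm$ it follows from the definition of $\Psi_{z_\pm}$, which contains every $Q$-limit point of $\rho(\delta_n)$ for sequences $\delta_n \to z_\pm$ in $\overline{\Gamma}$; take $\delta_n = \gamma_n$ or $\delta_n = \gamma_n^{-1}$. In particular $\xi_+ \in \hat{\phi}^{-1}(z_+) \subset U$. Since $K \subseteq \hat{C}_{z_-} \subseteq \Opp(\hat{\phi}^{-1}(z_-))$ and $\xi_- \in \hat{\phi}^{-1}(z_-)$, the limit $\xi \in K$ lies in $\Opp(\xi_-)$. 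Choosing a compact neighborhood of $\xi$ inside the open set $\Opp(\xi_-)$, the uniform convergence forces $\rho(\gamma_n)\xi_n \to \xi_+ \in U$, contradicting $\eta \notin U$ and completing the proof.

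The main obstacle is purely bookkeeping: one must ensure that both $\xi_+$ and $\xi_-$ land inside the new fibers $\hat{\phi}^{-1}(z_\pm)$, which are potentially strictly smaller than the original fibers $\phi^{-1}(z_\pm)$ at conical points. The definition of $\hat{\Lambda}$ via the sets $\Psi_z$ is calibrated precisely so that every $Q$-limit point arising from a sequence converging conically to $z$ is automatically contained in $\hat{\phi}^{-1}(z)$; once this matching is noted, the verification of \ref{item:extended_convergence_prop} is exactly the standard convergence-action argument carried through the lift-and-extract scheme above.
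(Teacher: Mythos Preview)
Your argument contains a genuine gap. The claim ``Because $z_+ \in \bigamp$, the relative length $\lcon{\gamma_n}$ tends to infinity'' is false: a sequence $\gamma_n$ lying in a single peripheral coset $gP$ with $\lgam{\gamma_n} \to \infty$ converges to the parabolic point $gp \in \bigamp$ in the Bowditch compactification, but has $\lcon{\gamma_n}$ uniformly bounded. In this situation relative $Q$-divergence gives you nothing, and $\rho(\gamma_n)$ need not be $Q$-divergent at all, so your entire extraction of $\xi_\pm$ and the appeal to \Cref{lem:q_convergence_sequences} collapses.

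This missing case is exactly what the paper's proof treats separately. When $\lcon{\gamma_n}$ is bounded, both $z_\pm$ are parabolic, and one must instead use the specific form of the new stratum $\hat{C}_{z_-}$: it is built as (an intersection with) a $\rho(\Gamma_{z_-})$-saturated union of translates of the \emph{original} stratum $C_p$, so the compact set $K$ decomposes into finitely many pieces $K_i \subset \rho(g_i)C_p$, and one can push each piece through the already-verified condition \ref{item:extended_convergence_prop} for the original data $\phi$, $C_z$. Your proof never invokes the original strata $C_z$ and so cannot recover this case. In the unbounded-$\lcon{\gamma_n}$ case your argument is correct and essentially coincides with the paper's first case.
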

\begin{proof}
  Consider a sequence $\gamma_n \in \Gamma$ such that
  $\gamma_n^{\pm 1} \to z_\pm \in \bigamp$, and let
  $K \subset \hat{C}_{z_-}$ be compact. Fix an arbitrary subsequence
  of $\gamma_n$. We will show that there is a further subsequence so
  that $\rho(\gamma_n)K$ lies in an arbitrarily small neighborhood of
  $\hat{\phi}^{-1}(z_+)$. After extraction we can assume that either
  $\lcon{\gamma_n} \to \infty$, or else $\lcon{\gamma_n}$ is bounded.

  In the first case, the sequences $\rho(\gamma_n^{\pm 1})$ are both
  $Q$-divergent by assumption, and by \Cref{lem:q_pts_in_fiber} all of
  the $Q$-limit points of $\rho(\gamma_n^{\pm1})$ respectively lie in
  $\hat{\phi}^{-1}(z_\pm)$. We have constructed the strata $\hat{C}_z$
  so that $\hat{C}_z \subseteq \Opp(\hat{\phi}^{-1}(z))$ for every
  $z \in \bigamp$. In particular, this implies that
  $K \subset \hat{C}_{z_-}$ is opposite to every $Q$-limit point of
  the sequence $\rho(\gamma_n^{-1})$. Thus by
  \Cref{lem:q_convergence_sequences}, $\rho(\gamma_n)K$ eventually
  lies in an arbitrarily small neighborhood of the $Q$-limit points of
  $\rho(\gamma_n)$, hence in an arbitrarily small neighborhood of
  $\hat{\phi}^{-1}(z_+)$.

  Otherwise, if $\lcon{\gamma_n}$ is bounded, then both $z_+$ and
  $z_-$ must be parabolic. Observe that, by definition,
  $\hat{C}_{z_-}$ is contained in a union of sets of the form
  $\rho(g)C_q$, where $q \in \bigamp$ is a parabolic point satisfying
  $gq = z_-$. The compact set $K$ is contained in a finite union of
  these sets, and in fact $K$ can be written as a finite union
  $K_1, \ldots, K_n$ of compact pieces, each of which lies in such a
  set $\rho(g_i)C_{q_i}$. Each $g_i$ is fixed (independent of $n$), so
  $g_i^{-1}\gamma_n^{-1} \to g_i^{-1}z_-$, and $\gamma_ng_i \to
  z_+$. Then, as the original repelling strata $C_{q_i}$ already
  satisfied condition \ref{item:extended_convergence_prop}, the set
  \[
    \rho(\gamma_n)K_i = \rho(\gamma_ng_i) \rho(g_i^{-1})K_i
  \]
  eventually lies in an arbitrarily small neighborhood of
  $\phi^{-1}(z_+) = \hat{\phi}^{-1}(z_+)$, and therefore the same is
  true for $K$.
\end{proof}

The proposition below completes the proof of
\Cref{prop:weak_egf_to_egf_app}.

\begin{proposition}
  The map $\hat{\phi}:\hat{\Lambda} \to \bigamp$ and sets $\hat{C}_z$
  satisfy condition \ref{item:repelling_strata_interior} in
  \Cref{defn:extended_convergence_group}.
\end{proposition}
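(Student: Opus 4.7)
The proposition will follow from a uniform local version: for each $\xi \in \hat\Lambda$ with $\hat\phi(\xi) = z'$ and each $z \in \bigamp$ with $z \neq z'$, I will produce open neighborhoods $V \ni \xi$ in $\flags$ and $W \ni z$ in $\bigamp$ so that $V \subseteq \hat{C}_w$ for every $w \in W$. Given this local statement, for a compact $K \subseteq \bigamp$ and a fixed $\xi \in \hat\phi^{-1}(\bigamp \setminus K)$, compactness of $K$ produces a finite subcover by $W_{z_i}$'s, and the finite intersection $V = \bigcap V_{z_i}$ is an open neighborhood of $\xi$ inside $\bigcap_{z \in K}\hat{C}_z$; taking a union over $\xi$ yields the required neighborhood of $\hat\phi^{-1}(\bigamp \setminus K)$. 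As a preliminary I would check that $\xi \in \hat{C}_z$ whenever $\hat\phi(\xi) \neq z$: for conical $z$ this is antipodality of $\hat\phi$, and for parabolic $z = gp$ it follows because $\rho(g^{-1})\xi$ lies in $\hat\Lambda \setminus \hat\phi^{-1}(p)$, hence in both $\Opp(\hat\phi^{-1}(p))$ (antipodality) and in $C_p$ (using \ref{item:repelling_strata_containment} for the original data), so in $\hat{C}_p$.

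To prove the local statement I argue by contradiction: suppose there are sequences $\xi_n \to \xi$ in $\flags$ and $w_n \to z$ in $\bigamp$ with $\xi_n \notin \hat{C}_{w_n}$. Upper semi-continuity of $w \mapsto \hat\phi^{-1}(w)$ (from compactness of $\hat\Lambda$ and continuity of $\hat\phi$) combined with openness of the antipodal relation for compact sets yields $\xi_n \in \Opp(\hat\phi^{-1}(w_n))$ eventually; this already handles the case of conical $w_n$, since there $\hat{C}_{w_n} = \Opp(\hat\phi^{-1}(w_n))$, giving an immediate contradiction. After extraction we may therefore assume $w_n = g_n p$ is parabolic for a fixed $p \in \Pi$, with $g_n$ unbounded in $\cay(\Gamma)$ (otherwise $w_n$ is eventually the constant $z$ and the openness of $\hat{C}_z$ suffices). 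Because $\hat{C}_p$ is $\rho(P)$-invariant, the set $\hat{C}_{w_n} = \rho(g_n)\hat{C}_p$ does not depend on the choice of representative in the coset $\{g \in \Gamma : gp = w_n\}$, and I will take $g_n$ to be a representative minimizing $\lcusp{g_n}$. The main obstacle, and where the principal geometric content of the argument lies, is to show that with this cusp-minimal choice and after extracting a subsequence, $g_n^{-1}$ converges in $\overline{\Gamma}$ to some $z_- \neq p$; the analogous statement fails for an arbitrary choice of $g_n$ (one can append long $P$-tails on the right to force $g_n^{-1} \to p$), so the cusp-minimality is genuinely essential.

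This geometric lemma is a triangle argument in the $\delta$-hyperbolic space $X$: cusp-minimality of $g_n$ places it (up to $O(\delta)$) at the closest point of the horoball $\mc{H}(w_n) = g_n \mc{H}(p)$ to $\identity$, so the geodesic from $\identity$ to $w_n$ in $X$ enters $\mc{H}(w_n)$ within $O(\delta)$ of $g_n$; translating by $g_n^{-1}$, the geodesic from $g_n^{-1}$ to $p$ enters $\mc{H}(P)$ within $O(\delta)$ of $\identity$, which forces the Gromov product $(g_n^{-1}|p)_{\identity}$ to be uniformly bounded and hence $g_n^{-1} \not\to p$. Once $g_n^{-1} \to z_- \neq p$ is in hand, convergence-group dynamics give $g_n \to z$ (since $p \neq z_-$ and $g_n p = w_n \to z$). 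The proof then concludes by invoking condition \ref{item:extended_convergence_prop} for the new data, already verified in \Cref{prop:weak_to_convergence}: applied to $\alpha_n = g_n^{-1}$ with a small closed ball $K$ around $\xi$ inside $\hat{C}_z$ and with $U = \hat{C}_p$ (an open set containing $\hat\phi^{-1}(z_-)$ because $z_- \neq p$), it yields $\rho(g_n^{-1})K \subseteq \hat{C}_p$ for all large $n$, equivalently $K \subseteq \rho(g_n)\hat{C}_p = \hat{C}_{w_n}$; since $\xi_n \in K$ eventually, this contradicts $\xi_n \notin \hat{C}_{w_n}$.
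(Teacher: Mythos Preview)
Your proof is correct and follows essentially the same strategy as the paper's: reduce to a contradiction with a sequence of (eventually parabolic) points $w_n \to z$, pick a well-chosen representative $g_n$ of the coset stabilizing $w_n$, show geometrically that $g_n^{-1}$ cannot accumulate on $p$, and then invoke the already-established condition \ref{item:extended_convergence_prop} (\Cref{prop:weak_to_convergence}) to push a small ball around $\xi$ into $\hat{C}_p$. The only real differences are cosmetic: the paper takes $g_n$ to be the first entry point of a geodesic from $w$ to $z_n$ into the horoball, while you take $g_n$ cusp-minimal (equivalently, the entry point of a geodesic from $\identity$ to $w_n$); and the paper deduces $g_n \to z$ by a direct geodesic-ray argument, while you deduce it from convergence-group dynamics via $z_- \ne p$. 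Both choices work for the same reason, and your packaging via the local neighborhoods $V, W$ is equivalent to the paper's pointwise formulation.
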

\begin{proof}
  Fix $K \subset \bigamp$ compact. It suffices to show that for every
  $w \in \bigamp \minus K$, the set $\bigcap_{z \in K} \hat{C}_z$
  contains a neighborhood of $\hat{\phi}^{-1}(w)$. Suppose for a
  contradiction that this is not the case. We have already seen that
  the strata $\hat{C}_z$ satisfy condition
  \ref{item:repelling_strata_containment}, so the only possibility is
  that there is a sequence of points $z_n \in K$ so that for every
  neighborhood $U$ of $\hat{\phi}^{-1}(w)$, the set
  $\hat{C}_n = \hat{C}_{z_n}$ eventually fails to contain $U$.

  Up to a subsequence, every $z_n$ is either conical or
  parabolic. Suppose first that every $z_n$ is conical, in which case
  $\hat{C}_n = \Opp(\hat{\phi}^{-1}(z_n))$ for every $n$. Thus, there
  is a sequence of flags $\xi_n \in \hat{\phi}^{-1}(z_n)$, and a
  sequence of flags $\eta_n \in \flags$, so that $\eta_n$ accumulates
  in $\hat{\phi}^{-1}(w)$, and $\xi_n, \eta_n$ are not
  transverse. After extraction, $\xi_n$ converges to
  $\xi \in \hat{\phi}^{-1}(K)$ and $\eta_n$ converges to
  $\eta \in \hat{\phi}^{-1}(w)$. The flags $\xi$ and $\eta$ are not
  transverse, which contradicts transversality of $\hat{\phi}$.

  So, now consider the case where each $z_n$ is parabolic. Let
  $X = X(\Gamma, \pargps)$ be the cusped space, and fix a bi-infinite
  geodesic $c_n:(-\infty, \infty) \to X$ with its forward endpoint at
  $z_n$ and backward endpoint at $w$. This geodesic must eventually
  enter a combinatorial horoball in $X$ centered at $z_n$, so let
  $\gamma_n \in \Gamma$ be the earliest point where this occurs. Then,
  $\gamma_n^{-1}z_n$ is a parabolic point in $\bigamp$, in the
  boundary of a combinatorial horoball in $X$ passing through the
  identity, and so up to subsequence $\gamma_n^{-1}z_n = p$ for a
  fixed parabolic point $p \in \Pi$.

  We may assume that $z_n$ converges to some $z \in K$. Now, if the
  sequence $\gamma_n$ is bounded in $\cay(\Gamma)$, then eventually
  $z_n = z$ for all $n$, and therefore an open neighborhood of
  $\hat{\phi}^{-1}(w)$ is contained in every $\hat{C}_n = \hat{C}_z$
  by condition \ref{item:repelling_strata_containment}. So, assume
  that $\lgam{\gamma_n} \to \infty$. We claim that in this case,
  $\gamma_n$ converges to $z$. To see this, observe that since $z_n$
  does not accumulate at $w$, the bi-infinite geodesic $c_n$ passes
  within a uniform distance of the identity in $X$. This means that
  the point $\gamma_n$ must actually lie uniformly close (in $X$) to a
  point $r_n(t_n)$, where $r_n$ is a geodesic ray in $X$ from the
  identity to $z_n$, and $t_n \to \infty$. Since $z_n \to z$ it
  follows that $r_n(t_n)$, hence $\gamma_n$, tends to $z$.

  Now, since our chosen strata satisfy condition
  \ref{item:repelling_strata_containment}, and $w \ne z$, there is a
  neighborhood $U$ of $\hat{\phi}^{-1}(w)$ whose closure lies in
  $\hat{C}_z$. Extract a subsequence so that $\gamma_n^{-1} \to y$ for
  some $y \in \bigamp$. From \Cref{prop:weak_to_convergence}, the
  repelling strata satisfy condition
  \ref{item:extended_convergence_prop}, so $\rho(\gamma_n^{-1})U$
  eventually lies in an arbitrarily small neighborhood of
  $\hat{\phi}^{-1}(y)$. However, we also know that $y \ne p$: there is
  a bi-infinite geodesic $\gamma_n^{-1}c_n$ in $X$ from $p$ to
  $\gamma_n^{-1}w$ passing through the identity, and since $w \ne z$,
  we have $\gamma_n^{-1}w \to y$ and therefore there is also a
  geodesic in $X$ joining $p$ to $y$.

  By \ref{item:repelling_strata_containment} again, the interior of
  $\hat{C}_p$ contains $\hat{\phi}^{-1}(y)$, and therefore eventually
  $\hat{C}_p$ contains $\rho(\gamma_n^{-1})U$. Then by equivariance,
  $\hat{C}_n = \rho(\gamma_n)\hat{C}_p$ contains $U$ for all
  sufficiently large $n$. This contradicts our original assumption.
\end{proof}

\printbibliography

@book {bridsonhaefliger,
    AUTHOR = {Bridson, Martin R. and Haefliger, Andr\'{e}},
     TITLE = {Metric spaces of non-positive curvature},
    SERIES = {Grundlehren der mathematischen Wissenschaften [Fundamental
              Principles of Mathematical Sciences]},
    VOLUME = {319},
 PUBLISHER = {Springer-Verlag, Berlin},
      YEAR = {1999},
     PAGES = {xxii+643},
      ISBN = {3-540-64324-9},
   MRCLASS = {53C23 (20F65 53C70 57M07)},
  MRNUMBER = {1744486},
MRREVIEWER = {Athanase Papadopoulos},
       DOI = {10.1007/978-3-662-12494-9},
       URL = {https://doi.org/10.1007/978-3-662-12494-9},
}

@incollection {Bowditch99,
    AUTHOR = {Bowditch, B. H.},
     TITLE = {Convergence groups and configuration spaces},
 BOOKTITLE = {Geometric group theory down under ({C}anberra, 1996)},
     PAGES = {23--54},
 PUBLISHER = {de Gruyter, Berlin},
      YEAR = {1999},
   MRCLASS = {20F34 (20E08 57S30)},
  MRNUMBER = {1714838},
MRREVIEWER = {Fr\'{e}d\'{e}ric Paulin},
}

@article{Bowditch2012,
  title = {Relatively Hyperbolic Groups},
  author = {Bowditch, B. H.},
  date = {2012-05},
  journaltitle = {International Journal of Algebra and Computation},
  shortjournal = {Int. J. Algebra Comput.},
  volume = {22},
  number = {03},
  pages = {1250016},
  langid = {english},
}

@article{GGKW2017,
  title = {Anosov Representations and Proper Actions},
  author = {Guéritaud, François and Guichard, Olivier and Kassel, Fanny and Wienhard, Anna},
  date = {2017},
  journaltitle = {Geometry \& Topology},
  shortjournal = {Geom. Topol.},
  volume = {21},
  number = {1},
  pages = {485--584},
  mrnumber = {3608719},
}

@article{GM2008,
  title = {Dehn Filling in Relatively Hyperbolic Groups},
  author = {Groves, Daniel and Manning, Jason Fox},
  date = {2008},
  journaltitle = {Israel Journal of Mathematics},
  shortjournal = {Israel J. Math.},
  volume = {168},
  pages = {317--429},
  mrnumber = {2448064},
}

@article{KLP2017,
  title = {Anosov Subgroups: Dynamical and Geometric Characterizations},
  shorttitle = {Anosov Subgroups},
  author = {Kapovich, Michael and Leeb, Bernhard and Porti, Joan},
  date = {2017},
  journaltitle = {European Journal of Mathematics},
  shortjournal = {Eur. J. Math.},
  volume = {3},
  number = {4},
  pages = {808--898},
  urldate = {2022-11-13},
  mrnumber = {3736790},
  file = {/home/teddy/math/tex/zotero/storage/Y3PKURMY/Kapovich et al. - 2017 - Anosov subgroups dynamical and geometric characte.pdf;/home/teddy/math/tex/zotero/storage/IQ2AHDR4/publdoc.html}
}

@article{Labourie2006,
  title = {Anosov Flows, Surface Groups and Curves in Projective Space},
  author = {Labourie, François},
  date = {2006},
  journaltitle = {Inventiones Mathematicae},
  shortjournal = {Invent. Math.},
  volume = {165},
  number = {1},
  pages = {51--114},
  urldate = {2022-11-13},
  mrnumber = {2221137},
  file = {/home/teddy/math/tex/zotero/storage/QQMBNPC5/Labourie - 2006 - Anosov flows, surface groups and curves in project.pdf;/home/teddy/math/tex/zotero/storage/TTW6EAQB/publdoc.html}
}

@misc{MMW22,
  doi = {10.48550/ARXIV.2206.14914},
  url = {https://arxiv.org/abs/2206.14914},
  author = {Mann, Kathrynn and Manning, Jason Fox and Weisman, Theodore},
  keywords = {Group Theory (math.GR), Geometric Topology (math.GT), FOS: Mathematics, FOS: Mathematics},
  title = {Stability of hyperbolic groups acting on their boundaries},
  publisher = {arXiv},
  year = {2022},
  copyright = {arXiv.org perpetual, non-exclusive license}
}

@article{Osin2007,
  title = {Peripheral Fillings of Relatively Hyperbolic Groups},
  author = {Osin, Denis V.},
  date = {2007},
  journaltitle = {Inventiones Mathematicae},
  shortjournal = {Invent. Math.},
  volume = {167},
  number = {2},
  pages = {295--326},
  urldate = {2022-11-13},
  mrnumber = {2270456},
  file = {/home/teddy/math/tex/zotero/storage/T96W52UU/Osin - 2007 - Peripheral fillings of relatively hyperbolic group.pdf;/home/teddy/math/tex/zotero/storage/9V8FJIDF/publdoc.html}
}

@online{Weisman2022,
  title = {An Extended Definition of {{Anosov}} Representation for Relatively Hyperbolic Groups},
  author = {Weisman, Theodore},
  date = {2022-09-01},
  number = {arXiv:2205.07183},
  eprint = {arXiv:2205.07183},
  eprinttype = {arxiv},
  urldate = {2022-11-13},
  pubstate = {preprint},
  keywords = {Mathematics - Geometric Topology,Mathematics - Group Theory},
  file = {/home/teddy/math/tex/zotero/storage/MJ3ASLA5/Weisman - 2022 - An extended definition of Anosov representation fo.pdf;/home/teddy/math/tex/zotero/storage/WF838UZU/2205.html}
}

@article {gerasimov,
    AUTHOR = {Gerasimov, Victor},
     TITLE = {Expansive convergence groups are relatively hyperbolic},
   JOURNAL = {Geom. Funct. Anal.},
  FJOURNAL = {Geometric and Functional Analysis},
    VOLUME = {19},
      YEAR = {2009},
    NUMBER = {1},
     PAGES = {137--169},
      ISSN = {1016-443X,1420-8970},
   MRCLASS = {20F67 (20F65 30F40 57M07)},
  MRNUMBER = {2507221},
MRREVIEWER = {Vassilis\ Metaftsis},
       DOI = {10.1007/s00039-009-0718-7},
       URL = {https://doi.org/10.1007/s00039-009-0718-7},
}

@article {bogopolski97infinite,
    AUTHOR = {Bogopol'ski{\u i}, O. V.},
     TITLE = {Infinite commensurable hyperbolic groups are bi-{L}ipschitz
              equivalent},
   JOURNAL = {Algebra i Logika},
  FJOURNAL = {Sibirski{\u i} Fond Algebry i Logiki. Algebra i Logika},
    VOLUME = 36,
      YEAR = 1997,
    NUMBER = 3,
     PAGES = {259--272, 357},
      ISSN = {0373-9252},
   MRCLASS = {57M07 (30F45 54H11)},
  MRNUMBER = 1485595,
       DOI = {10.1007/BF02671613},
       URL = {https://doi.org/10.1007/BF02671613},
}

@article {GMS2019,
    AUTHOR = {Groves, Daniel and Manning, Jason Fox and Sisto, Alessandro},
     TITLE = {Boundaries of {D}ehn fillings},
   JOURNAL = {Geom. Topol.},
  FJOURNAL = {Geometry \& Topology},
    VOLUME = {23},
      YEAR = {2019},
    NUMBER = {6},
     PAGES = {2929--3002},
      ISSN = {1465-3060,1364-0380},
   MRCLASS = {20F67 (57M07)},
  MRNUMBER = {4039183},
MRREVIEWER = {Jens\ Harlander},
       DOI = {10.2140/gt.2019.23.2929},
       URL = {https://doi.org/10.2140/gt.2019.23.2929},
}

@article {AGM2009,
    AUTHOR = {Agol, Ian and Groves, Daniel and Manning, Jason Fox},
     TITLE = {Residual finiteness, {QCERF} and fillings of hyperbolic
              groups},
   JOURNAL = {Geom. Topol.},
  FJOURNAL = {Geometry \& Topology},
    VOLUME = {13},
      YEAR = {2009},
    NUMBER = {2},
     PAGES = {1043--1073},
      ISSN = {1465-3060,1364-0380},
   MRCLASS = {20E26 (20F67 57M07)},
  MRNUMBER = {2470970},
MRREVIEWER = {Zoran\ \v Suni\'c},
       DOI = {10.2140/gt.2009.13.1043},
       URL = {https://doi.org/10.2140/gt.2009.13.1043},
}

@ARTICLE{GW2024,
       author = {{Guilloux}, Antonin and {Weisman}, Theodore},
        title = "{Limits of limit sets in rank-one symmetric spaces}",
      journal = {arXiv e-prints},
     keywords = {Mathematics - Geometric Topology, 57M50},
         year = 2024,
        month = jul,
          eid = {arXiv:2407.04301},
        pages = {arXiv:2407.04301},
          doi = {10.48550/arXiv.2407.04301},
archivePrefix = {arXiv},
       eprint = {2407.04301},
 primaryClass = {math.GT},
       adsurl = {https://ui.adsabs.harvard.edu/abs/2024arXiv240704301G},
      adsnote = {Provided by the SAO/NASA Astrophysics Data System}
}

@ARTICLE{Weisman2023,
       author = {{Weisman}, Theodore},
        title = "{Examples of extended geometrically finite representations}",
      journal = {arXiv e-prints},
     keywords = {Mathematics - Geometric Topology, Mathematics - Group Theory},
         year = 2023,
        month = nov,
          eid = {arXiv:2311.18653},
        pages = {arXiv:2311.18653},
          doi = {10.48550/arXiv.2311.18653},
archivePrefix = {arXiv},
       eprint = {2311.18653},
 primaryClass = {math.GT},
       adsurl = {https://ui.adsabs.harvard.edu/abs/2023arXiv231118653W},
      adsnote = {Provided by the SAO/NASA Astrophysics Data System}
}

@article{McMullen,
 Author = {McMullen, Curtis T.},
 Title = {Hausdorff dimension and conformal dynamics. {I}: {Strong} convergence of {Kleinian} groups},
 FJournal = {Journal of Differential Geometry},
 Journal = {J. Differ. Geom.},
 ISSN = {0022-040X},
 Volume = {51},
 Number = {3},
 Pages = {471--515},
 Year = {1999},
 Language = {English},
 DOI = {10.4310/jdg/1214425139},
 Keywords = {37F35,30F40,37C45,37F30},
 zbMATH = {1465068},
 Zbl = {1023.37028}
}

@article {BowditchGF,
    AUTHOR = {Bowditch, B. H.},
     TITLE = {Geometrical finiteness with variable negative curvature},
   JOURNAL = {Duke Math. J.},
  FJOURNAL = {Duke Mathematical Journal},
    VOLUME = {77},
      YEAR = {1995},
    NUMBER = {1},
     PAGES = {229--274},
      ISSN = {0012-7094,1547-7398},
   MRCLASS = {53C21 (53C20 57R99)},
  MRNUMBER = {1317633},
MRREVIEWER = {Boris\ N.\ Apanasov},
       DOI = {10.1215/S0012-7094-95-07709-6},
       URL = {https://doi.org/10.1215/S0012-7094-95-07709-6},
}

@Article{BHK,
 Author = {Bridson, Martin R. and de la Harpe, Pierre and Kleptsyn, Victor},
 Title = {The {Chabauty} space of closed subgroups of the three-dimensional {Heisenberg} group},
 FJournal = {Pacific Journal of Mathematics},
 Journal = {Pac. J. Math.},
 ISSN = {1945-5844},
 Volume = {240},
 Number = {1},
 Pages = {1--48},
 Year = {2009},
 Language = {English},
 DOI = {10.2140/pjm.2009.240.1},
 Keywords = {22D05,22E25,22E40},
 zbMATH = {5541985},
 Zbl = {1159.22001}
}

@article {kl2018relativizing,
    AUTHOR = {Kapovich, Michael and Leeb, Bernhard},
     TITLE = {Relativizing characterizations of {A}nosov subgroups, {I}},
      NOTE = {With an appendix by Gregory A. Soifer},
   JOURNAL = {Groups Geom. Dyn.},
  FJOURNAL = {Groups, Geometry, and Dynamics},
    VOLUME = {17},
      YEAR = {2023},
    NUMBER = {3},
     PAGES = {1005--1071},
      ISSN = {1661-7207,1661-7215},
   MRCLASS = {22E40 (20F65 20F67 53C35)},
  MRNUMBER = {4622401},
       DOI = {10.4171/ggd/723},
       URL = {https://doi.org/10.4171/ggd/723},
}

@article {CZZ,
    AUTHOR = {Canary, Richard and Zhang, Tengren and Zimmer, Andrew},
     TITLE = {Cusped {H}itchin representations and {A}nosov representations
              of geometrically finite {F}uchsian groups},
   JOURNAL = {Adv. Math.},
  FJOURNAL = {Advances in Mathematics},
    VOLUME = {404},
      YEAR = {2022},
     PAGES = {Paper No. 108439, 67},
      ISSN = {0001-8708,1090-2082},
   MRCLASS = {22E40 (20H10 30F35 37F32 57M60)},
  MRNUMBER = {4418884},
MRREVIEWER = {Herbert\ Abels},
       DOI = {10.1016/j.aim.2022.108439},
       URL = {https://doi.org/10.1016/j.aim.2022.108439},
}

@article {Z2021,
    AUTHOR = {Zhu, Feng},
     TITLE = {Relatively dominated representations},
   JOURNAL = {Ann. Inst. Fourier (Grenoble)},
  FJOURNAL = {Universit\'e{} de Grenoble. Annales de l'Institut Fourier},
    VOLUME = {71},
      YEAR = {2021},
    NUMBER = {5},
     PAGES = {2169--2235},
      ISSN = {0373-0956,1777-5310},
   MRCLASS = {22E40 (20F67 37D30 53C35)},
  MRNUMBER = {4398259},
MRREVIEWER = {Boris\ Hasselblatt},
       DOI = {10.5802/aif.3449},
       URL = {https://doi.org/10.5802/aif.3449},
}

@ARTICLE{zz1,
       author = {{Zhu}, Feng and {Zimmer}, Andrew},
        title = "{Relatively Anosov representations via flows I: theory}",
      journal = {arXiv e-prints},
     keywords = {Mathematics - Geometric Topology, Mathematics - Differential Geometry, Mathematics - Dynamical Systems},
         year = 2022,
        month = jul,
          eid = {arXiv:2207.14737},
        pages = {arXiv:2207.14737},
          doi = {10.48550/arXiv.2207.14737},
archivePrefix = {arXiv},
       eprint = {2207.14737},
 primaryClass = {math.GT},
       adsurl = {https://ui.adsabs.harvard.edu/abs/2022arXiv220714737Z},
      adsnote = {Provided by the SAO/NASA Astrophysics Data System}
}

@ARTICLE{MMW24,
       author = {{Mann}, Kathryn and {Manning}, Jason Fox and
                  {Weisman}, Theodore},
        title = "{Topological stability of relatively hyperbolic groups acting on their boundaries}",
      journal = {arXiv e-prints},
     keywords = {Mathematics - Group Theory, Mathematics - Geometric Topology},
         year = 2024,
        month = feb,
          eid = {arXiv:2402.06144},
        pages = {arXiv:2402.06144},
          doi = {10.48550/arXiv.2402.06144},
archivePrefix = {arXiv},
       eprint = {2402.06144},
 primaryClass = {math.GR},
       adsurl = {https://ui.adsabs.harvard.edu/abs/2024arXiv240206144M},
      adsnote = {Provided by the SAO/NASA Astrophysics Data System}
}

@ARTICLE{HealyHruska10,
       author = {{Burns Healy}, Brendan and {Hruska}, G. Christopher},
        title = "{Cusped spaces and quasi-isometries of relatively hyperbolic groups}",
      journal = {arXiv e-prints},
     keywords = {Mathematics - Group Theory, Mathematics - Geometric Topology},
         year = 2020,
        month = oct,
          eid = {arXiv:2010.09876},
        pages = {arXiv:2010.09876},
          doi = {10.48550/arXiv.2010.09876},
archivePrefix = {arXiv},
       eprint = {2010.09876},
 primaryClass = {math.GR},
       adsurl = {https://ui.adsabs.harvard.edu/abs/2020arXiv201009876B},
      adsnote = {Provided by the SAO/NASA Astrophysics Data System}
}

@misc{wang,
      title="{Notions of Anosov representation of relatively hyperbolic groups}", 
      author={Tianqi Wang},
      year={2023},
      eprint={2309.15636},
      archivePrefix={arXiv},
      primaryClass={math.GT}
}

@article {Tukia94,
    AUTHOR = {Tukia, Pekka},
     TITLE = {Convergence groups and {G}romov's metric hyperbolic spaces},
   JOURNAL = {New Zealand J. Math.},
  FJOURNAL = {New Zealand Journal of Mathematics},
    VOLUME = {23},
      YEAR = {1994},
    NUMBER = {2},
     PAGES = {157--187},
      ISSN = {1171-6096},
   MRCLASS = {30F40 (20H10 57M07 57S05 57S30)},
  MRNUMBER = {1313451},
MRREVIEWER = {A. Hinkkanen},
}

@article {yaman2004topological,
    AUTHOR = {Yaman, Asli},
     TITLE = {A topological characterisation of relatively hyperbolic
              groups},
   JOURNAL = {J. Reine Angew. Math.},
  FJOURNAL = {Journal f\"{u}r die Reine und Angewandte Mathematik. [Crelle's
              Journal]},
    VOLUME = {566},
      YEAR = {2004},
     PAGES = {41--89},
      ISSN = {0075-4102},
   MRCLASS = {20F67 (54H11)},
  MRNUMBER = {2039323},
MRREVIEWER = {Jane Gilman},
       DOI = {10.1515/crll.2004.007},
       URL = {https://doi.org/10.1515/crll.2004.007},
}

@article {gw2012anosov,
    AUTHOR = {Guichard, Olivier and Wienhard, Anna},
     TITLE = {Anosov representations: domains of discontinuity and
              applications},
   JOURNAL = {Invent. Math.},
  FJOURNAL = {Inventiones Mathematicae},
    VOLUME = {190},
      YEAR = {2012},
    NUMBER = {2},
     PAGES = {357--438},
      ISSN = {0020-9910},
   MRCLASS = {22F30 (32G15 53C30 53D25)},
  MRNUMBER = {2981818},
MRREVIEWER = {Pablo Su\'{a}rez-Serrato},
       DOI = {10.1007/s00222-012-0382-7},
       URL = {https://doi.org/10.1007/s00222-012-0382-7},
}

@article {bps2019anosov,
    AUTHOR = {Bochi, Jairo and Potrie, Rafael and Sambarino, Andr\'{e}s},
     TITLE = {Anosov representations and dominated splittings},
   JOURNAL = {J. Eur. Math. Soc. (JEMS)},
  FJOURNAL = {Journal of the European Mathematical Society (JEMS)},
    VOLUME = {21},
      YEAR = {2019},
    NUMBER = {11},
     PAGES = {3343--3414},
      ISSN = {1435-9855},
   MRCLASS = {22E40 (20F67 37B99 37D30 53C35)},
  MRNUMBER = {4012341},
MRREVIEWER = {Alejandro Ucan-Puc},
       DOI = {10.4171/JEMS/905},
       URL = {https://doi.org/10.4171/JEMS/905},
}

@ARTICLE{TZ24,
       author = {{Tsouvalas}, Konstantinos and {Zhu}, Feng},
        title = "{Topological restrictions on relatively Anosov representations}",
      journal = {arXiv e-prints},
     keywords = {Mathematics - Group Theory, 22E40 (Primary) 20F67, 20F65, 57M07 (Secondary)},
         year = 2024,
        month = jan,
          eid = {arXiv:2401.03050},
        pages = {arXiv:2401.03050},
          doi = {10.48550/arXiv.2401.03050},
archivePrefix = {arXiv},
       eprint = {2401.03050},
 primaryClass = {math.GR},
       adsurl = {https://ui.adsabs.harvard.edu/abs/2024arXiv240103050T},
      adsnote = {Provided by the SAO/NASA Astrophysics Data System}
}

@article{gromov1981,
  title={Groups of polynomial growth and expanding maps (with an appendix by Jacques Tits)},
  author={Gromov, Michael},
  journal={Publications Math{\'e}matiques de l'IH{\'E}S},
  volume={53},
  pages={53--78},
  year={1981}
}

@article{gm2021,
  title={Quasiconvexity and Dehn filling},
  author={Groves, Daniel and Manning, Jason Fox},
  journal={American Journal of Mathematics},
  volume={143},
  number={1},
  pages={95--124},
  year={2021},
  publisher={Johns Hopkins University Press}
}

@book{schwartz07,
 ISBN = {9780691128108},
 URL = {http://www.jstor.org/stable/j.ctt6wq0j8},
 author = {Richard Evan Schwartz},
 publisher = {Princeton University Press},
 title = {Spherical CR Geometry and Dehn Surgery (AM-165)},
 urldate = {2024-12-31},
 year = {2007}
}

@article {Schroeder,
    AUTHOR = {Schroeder, Viktor},
     TITLE = {A cusp closing theorem},
   JOURNAL = {Proc. Amer. Math. Soc.},
  FJOURNAL = {Proceedings of the American Mathematical Society},
    VOLUME = {106},
      YEAR = {1989},
    NUMBER = {3},
     PAGES = {797--802},
      ISSN = {0002-9939,1088-6826},
   MRCLASS = {53C20 (53C22 58F17)},
  MRNUMBER = {957267},
MRREVIEWER = {Karsten\ Grove},
       DOI = {10.2307/2047438},
       URL = {https://doi.org/10.2307/2047438},
}

@article {acosta2016,
    AUTHOR = {Acosta, Miguel},
     TITLE = {Spherical {CR} {D}ehn surgeries},
   JOURNAL = {Pacific J. Math.},
  FJOURNAL = {Pacific Journal of Mathematics},
    VOLUME = {284},
      YEAR = {2016},
    NUMBER = {2},
     PAGES = {257--282},
      ISSN = {0030-8730,1945-5844},
   MRCLASS = {57R17 (32V05)},
  MRNUMBER = {3544300},
MRREVIEWER = {Emmanuel\ Opshtein},
       DOI = {10.2140/pjm.2016.284.257},
       URL = {https://doi.org/10.2140/pjm.2016.284.257},
}

@article {acosta2019,
    AUTHOR = {Acosta, Miguel},
     TITLE = {Spherical {CR} uniformization of {D}ehn surgeries of the
              {W}hitehead link complement},
   JOURNAL = {Geom. Topol.},
  FJOURNAL = {Geometry \& Topology},
    VOLUME = {23},
      YEAR = {2019},
    NUMBER = {5},
     PAGES = {2593--2664},
      ISSN = {1465-3060,1364-0380},
   MRCLASS = {57M50 (22E40 32V05)},
  MRNUMBER = {4019899},
       DOI = {10.2140/gt.2019.23.2593},
       URL = {https://doi.org/10.2140/gt.2019.23.2593},
}

@article {clm2020,
    AUTHOR = {Choi, Suhyoung and Lee, Gye-Seon and Marquis, Ludovic},
     TITLE = {Convex projective generalized {D}ehn filling},
   JOURNAL = {Ann. Sci. \'Ec. Norm. Sup\'er. (4)},
  FJOURNAL = {Annales Scientifiques de l'\'Ecole Normale Sup\'erieure.
              Quatri\`eme S\'erie},
    VOLUME = {53},
      YEAR = {2020},
    NUMBER = {1},
     PAGES = {217--266},
      ISSN = {0012-9593,1873-2151},
   MRCLASS = {57R18},
  MRNUMBER = {4093439},
MRREVIEWER = {Christopher\ K.\ Atkinson},
       DOI = {10.24033/asens.2421},
       URL = {https://doi.org/10.24033/asens.2421},
}

@article {mr2018,
    AUTHOR = {Martelli, Bruno and Riolo, Stefano},
     TITLE = {Hyperbolic {D}ehn filling in dimension four},
   JOURNAL = {Geom. Topol.},
  FJOURNAL = {Geometry \& Topology},
    VOLUME = {22},
      YEAR = {2018},
    NUMBER = {3},
     PAGES = {1647--1716},
      ISSN = {1465-3060,1364-0380},
   MRCLASS = {57M50},
  MRNUMBER = {3780443},
MRREVIEWER = {Masakazu\ Teragaito},
       DOI = {10.2140/gt.2018.22.1647},
       URL = {https://doi.org/10.2140/gt.2018.22.1647},
}

@article {gr1970,
    AUTHOR = {Garland, H. and Raghunathan, M. S.},
     TITLE = {Fundamental domains for lattices in ({R}-)rank {$1$}
              semisimple {L}ie groups},
   JOURNAL = {Ann. of Math. (2)},
  FJOURNAL = {Annals of Mathematics. Second Series},
    VOLUME = {92},
      YEAR = {1970},
     PAGES = {279--326},
      ISSN = {0003-486X},
   MRCLASS = {22.50 (10.00)},
  MRNUMBER = {267041},
MRREVIEWER = {J.\ A.\ Wolf},
       DOI = {10.2307/1970838},
       URL = {https://doi.org/10.2307/1970838},
}

@article {margulis,
    AUTHOR = {Margulis, G. A.},
     TITLE = {Arithmeticity of the irreducible lattices in the semisimple
              groups of rank greater than {$1$}},
   JOURNAL = {Invent. Math.},
  FJOURNAL = {Inventiones Mathematicae},
    VOLUME = {76},
      YEAR = {1984},
    NUMBER = {1},
     PAGES = {93--120},
      ISSN = {0020-9910,1432-1297},
   MRCLASS = {22E40 (20G25)},
  MRNUMBER = {739627},
MRREVIEWER = {James\ E.\ Humphreys},
       DOI = {10.1007/BF01388494},
       URL = {https://doi.org/10.1007/BF01388494},
}

@misc {BDLM,
     AUTHOR = {Ballas, Samuel and Danciger, Jeffrey and Lee, Gye-Seon and
          Marquis, Ludovic},
     TITLE = {Exotic real projective Dehn filling spaces},
     NOTE = {To appear},
}

\end{document}